\tikzset{%
	symbol/.style={%
		draw=none,
		every to/.append style={%
			edge node={node [sloped, allow upside down, auto=false]{$#1$}}}
	}
}
\newtheorem{Theorem}{Theorem}
\newtheorem{conjecture}[Theorem]{Conjecture}
\newtheorem{proposition}[Theorem]{Proposition}
\newtheorem{lemma}[Theorem]{Lemma}
\newtheorem{corollary}[Theorem]{Corollary}
\theoremstyle{definition}
\newtheorem{example}[Theorem]{Example}
\newtheorem{remark}[Theorem]{Remark}
\newtheorem{definition}[Theorem]{Definition}
\DeclareMathOperator{\Cscr}{\mathscr{C}}
\DeclareMathOperator{\Dscr}{\mathscr{D}}
\DeclareMathOperator{\Escr}{\mathscr{E}}
\DeclareMathOperator{\Uscr}{\mathscr{U}}
\DeclareMathOperator{\Abb}{\mathbb{A}}
\DeclareMathOperator{\Rbb}{\mathbb{R}}
\DeclareMathOperator{\Sbb}{\mathbb{S}}
\DeclareMathOperator{\Tbb}{\mathbb{T}}
\DeclareMathOperator{\mfrak}{\mathfrak{m}}
\DeclareMathOperator{\Xfrak}{\mathfrak{X}}
\DeclareMathOperator{\Yfrak}{\mathfrak{Y}}
\DeclareMathOperator{\yon}{\mathbf{y}}
\DeclareMathOperator{\Tan}{\mathbf{Tan}}
\DeclareMathOperator{\fTan}{\mathfrak{Tan}}
\DeclareMathOperator{\QCoh}{\mathbf{QCoh}}
\DeclareMathOperator{\id}{id}
\DeclareMathOperator{\Set}{\mathbf{Set}}
\DeclareMathOperator{\R}{\mathbb{R}}
\DeclareMathOperator{\N}{\mathbb{N}}
\DeclareMathOperator{\KAlg}{\mathnormal{K}-\mathbf{Alg}}
\DeclareMathOperator{\FSch}{\mathbf{FSch}}
\DeclareMathOperator{\Q}{\mathbb{Q}}
\DeclareMathOperator*{\colim}{colim}
\DeclareMathOperator{\Ind}{Ind}
\DeclareMathOperator{\Spec}{Spec}
\DeclareMathOperator{\Sch}{\mathbf{Sch}}
\DeclareMathOperator{\Free}{Free}
\DeclareMathOperator{\Forget}{Forget}
\DeclareMathOperator{\Cat}{\mathbf{Cat}}
\DeclareMathOperator{\Sym}{Sym}
\DeclareMathOperator{\Spf}{Spf}
\DeclareMathOperator{\op}{op}
\DeclareMathOperator{\fCat}{\mathfrak{Cat}}
\let\emptyset\varnothing
\let\epsilon\varepsilon
\newcommand{\Dbeqstack}[1]{\mathnormal{D}^{\mathnormal{b}}_{\text{eq}}(\underline{[\mathnormal{G} \backslash \mathnormal{X}]}_{\bullet};\overline{\Q}_{\ell})}
\DeclareMathOperator{\lin}{\operatorname{lin}}
\DeclareMathOperator{\Dlin}{\mathnormal{D}\operatorname{lin}}
\DeclareMathOperator{\Weil}{\mathbf{Weil}}
\DeclareMathOperator{\APoly}{\mathnormal{A}-\mathbf{Poly}}
\DeclareMathOperator{\AMod}{\mathnormal{A}-\mathbf{Mod}}
\DeclareMathOperator{\AModfd}{\AMod_{f.d.}}
\DeclareMathOperator{\Smooth}{\R-\mathbf{Smooth}}
\DeclareMathOperator{\RVectfd}{\R-\mathbf{Vec}_{f.d.}}
\DeclareMathOperator{\RVect}{\R-\mathbf{Vec}}
\let\emptyset\varnothing
\let\epsilon\varepsilon
\newcommand{\pullbackcorner}[1][dr]{\save*!/#1-1.2pc/#1:(-1,1)@^{|-}\restore}
\newcommand{\ul}[1]{\underline{{#1}}}
\newcommand{\Zar}[1]{\mathbf{Zar}({#1})}
\numberwithin{Theorem}{section}
\title{Tangent Ind-Categories}
\author{Geoff Vooys}
\date{\today}
\begin{document}

\begin{abstract}
In this paper we show that if $\Cscr$ is a tangent category then the Ind-category $\Ind(\Cscr)$ is a tangent category as well with a tangent structure which locally looks like the tangent structure on $\Cscr$. Afterwards we give a pseudolimit description of $\Ind(\Cscr)_{/X}$ when $\Cscr$ admits finite products, show that the $\Ind$-tangent category of a representable tangent category remains representable (in the sense that it has a microlinear object), and we characterize the differential bundles in $\Ind(\Cscr)$ when $\Cscr$ is a Cartesian differential category. Finally we compute the $\Ind$-tangent category for the categories $\mathbf{CAlg}_{A}$ of commutative $A$-algebras, $\Sch_{/S}$ of schemes over a base scheme $S$, $\APoly$ (the Cartesian differential category of $A$-valued polynomials), and $\Smooth$ (the Cartesian differential category of Euclidean spaces). In particular, during the computation of $\Ind(\Sch_{/S})$ we give a definition of what it means to have a formal tangent scheme over a base scheme $S$.
\end{abstract}

\subjclass{Primary 18F40}
\keywords{Tangent category, $\Ind$-category, $\Ind$-completion, Cartesian differential category, formal scheme}

\maketitle
\tableofcontents

\section{Introduction}

This paper is a study of two different categorical geometric concepts and the ways in which they interact with each other. On one hand, we have the categorification of tangent bundles and differential geometry (as discovered by Rosick{\'y} in \cite{Rosicky} and later rediscovered by Cockett and Crutwell in \cite{GeoffRobinDiffStruct}) and the categorical approach to differential reasoning by the semantics of the tangent bundle functor itself; on the other hand we have $\Ind$-categories (as discovered by Grothendieck in \cite{SGA4} in its relation to presheaf toposes the mechanics involved in taking filtered colimits) which give an inductive cocompletion of a category and allow for well-behaved ``at the limit'' geoemtric arguments. In particular, $\Ind$-completions and $\Ind$-arguemnts often carry local, nearly infinitesimal, information and behave as though they are completions of spaces in certain adic topologies. Because of the geometric similarities between studying infinitesimal neighborhoods of successive tangent vectors and their geometry together with the completions of a space in an adic topology involving tangent vectors, in this paper we examine how these categorical structures interact. Let us first recall in more detail how tangent categories are incarnated.

On one hand, tangent categories are a categorical generalization of the tangent bundle of a (smooth) manifold and provide a category-theoretic setting in which to do differential geometry. Originally discovered by Rosick{\'y} in \cite{Rosicky} and later rediscovered and refined by Cockett and Crutwell in \cite{GeoffRobinDiffStruct}, tangent categories have given connections between differential geometry, higher category theory, (de Rham) cohomology, linear logic, programming, machine learning, and have become ubiquitous in both theoretical computer science and pure mathematics. These have been studied in detail in many of these different contexts, and in \cite{GarnerEmbedding} we even see some interactions of enriched tangent structures. Moreover, work of Crutwell and Lemay (cf.\@ \cite{GeoffJS}) has recently shown that the category $\Sch_{/S}$ of schemes over a base scheme $S$ admits a tangent structure as well. 

On the other hand, ind-categories arise as the free filtered cocompletion of a category $\Cscr$ and were first studied in detail by Grothendieck in \cite{SGA4}. These categories are important in algebraic geoemtry, as by \cite{Strickland} there is a close relationship with the categories $\FSch_{/S}$ and $\Ind(\Sch_{/S})$ where $\FSch_{/S}$ is the category of formal schemes over $S$. This means in particular that ind-categories can be thought of as formal completions of objects along some closed subobject. We then can think of each object in $\Ind(\Cscr)$, by way of formal analogy, as an object which is (adically) completed along some closed subobject and hence admits all formal infinitesimal neighborhoods along that subobject. Our intuition as we proceed is to study these formal infinitesimal neighborhoods in these objects when they \emph{also} have a notion of tangent in the following sense: if we have infinitesimal neighborhoods and we have tangent vectors, then we should expect to be able to complete the tangent space to allow infinitesimal neighborhoods of the tangent vectors themselves. Note that this sits in contrast to the fact that if we take the free cocompletion of the category $\Cscr$ (as modeled, for instance, by the presheaf topos $[\Cscr^{\op},\Set]$), we cannot expect $[\Cscr^{\op},\Set]$ to be a tangent category even when $\Cscr$ is --- cf.\@ \cite{GeoffRobinDiffStruct} for details. The difference here is that our infinitesimal neighborhoods are sufficiently well behaved (via their filtrations) whereas the free cocompletion has colimits which are too wild to witness  and interact well with the categorical limits that describe tangent-theoretic information.

\subsection{Results and Structure of the Paper}
We begin this paper with a quick review of tangent categories, their morphisms, and the  category (and $2$-category) of tangent categories. Afterwards we familiarize ourselves with the ind-construction and get to know the $\Ind$ pseudofunctor $\Ind:\fCat \to \fCat$. We then study the interaction of the tangent structures as they pass through the $\Ind$ pseudofunctor and lift to the ind-case. This culminates in our first big result, which is Theorem \ref{Thm: Ind Tangent Category}.
\begin{Theorem}[{cf.\@ Theorem \ref{Thm: Ind Tangent Category}}]\label{Thm: Ind tangent cat but intro version}
	Let $(\Cscr,\Tbb)$ be a tangent category. Then the category $(\Ind(\Cscr),\Ind(\Tbb))$ is a tangent category where $\Ind(\Tbb)$ is the tangent structure
\[
\Ind(\Tbb) := (\Ind(T), \hat{p}, \Ind(0), \hat{+}, \hat{\ell}, \hat{c})
\]
where $\Ind(T)$ is the indicization of $T$ induced by Proposition \ref{Prop: Ind functors} and $\hat{p}, \Ind(0), \hat{+}, \hat{\ell},$ and $\hat{c}$ are the natural transformations constructed in Lemmas \ref{Lemma: Existence of Indp and Ind0}, \ref{Lemma: Existence of ind-bundle addition}, \ref{Lemma: Existence of Ind-vertical lift}, and Lemma \ref{Lemma: Existence of ind-canonical flip}, respectively.
\end{Theorem}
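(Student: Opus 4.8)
The plan is to treat the theorem as a pure verification: every component of $\Ind(\Tbb)$ --- namely $\Ind(T)$, $\hat p$, $\Ind(0)$, $\hat+$, $\hat\ell$, and $\hat c$ --- has already been produced in Proposition~\ref{Prop: Ind functors} and in Lemmas~\ref{Lemma: Existence of Indp and Ind0}, \ref{Lemma: Existence of ind-bundle addition}, \ref{Lemma: Existence of Ind-vertical lift}, and \ref{Lemma: Existence of ind-canonical flip}, so what is left is to check the axioms of a tangent structure in the sense of \cite{GeoffRobinDiffStruct}: naturality of the six data; existence of the pullback powers $\Ind(T)_nM$ of $\hat p_M$ and their preservation by each iterate $\Ind(T)^m$; the commutative-monoid (additive bundle) axioms for $(\Ind(T)M,\hat p_M,\Ind(0)_M,\hat+_M)$ over $M$; the equational coherences relating $\hat\ell$ and $\hat c$ (that $\hat c\,\hat c=\id$, $\hat c\,\hat\ell=\hat\ell$, the lift hexagon, and compatibility with $\hat p$, $\Ind(0)$, $\hat+$); and the universality of the vertical lift. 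I would organise the argument around one reduction principle, dispatch all the equational axioms at once, and then treat the two limit-type axioms separately.

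The reduction principle is the following. Write $\iota\colon\Cscr\hookrightarrow\Ind(\Cscr)$ for the canonical fully faithful embedding. Every object of $\Ind(\Cscr)$ is a filtered colimit of objects of $\iota(\Cscr)$, and --- by the behaviour of $\Ind$ on functors and natural transformations (Proposition~\ref{Prop: Ind functors}) and by the constructions in the cited Lemmas --- each of $\Ind(T)$, its iterates, and each of $\hat p,\Ind(0),\hat+,\hat\ell,\hat c$ preserves filtered colimits and restricts along $\iota$ to the corresponding datum of $\Tbb$. Hence two natural transformations between filtered-colimit-preserving endofunctors of $\Ind(\Cscr)$ that agree after precomposition with $\iota$ coincide, and more generally any diagram of such functors and transformations commutes as soon as it commutes after precomposition with $\iota$. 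Since the restriction along $\iota$ of any such diagram is the corresponding diagram for $\Tbb$, which commutes by hypothesis, this yields at one stroke naturality, the additive-bundle identities, and every $\hat\ell$--$\hat c$ coherence. First I would state and prove this principle cleanly, taking care in particular that the domain of $\hat+$, i.e.\ the pullback $\Ind(T)_2M$, and the domain of the universality square are themselves assembled from $\iota$ and filtered colimits in a way compatible with the principle, so that transformations out of them are also controlled by their restrictions to $\Cscr$.

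The two genuinely non-formal points are the limit conditions. For the pullback powers I would work inside $\PSh(\Cscr)$, where all small limits and colimits exist, finite limits commute with filtered colimits, and $\Ind(\Cscr)$ is closed under filtered colimits: for $A\in\Cscr$ the $n$-fold pullback $T_nA$ of $p_A$ exists in $\Cscr$ and is preserved by the Yoneda embedding, so $\Ind(T)_n(\iota A)=\iota(T_nA)$; and for $M=\colim_i\iota A_i$ the pullback of $\hat p_M$ against itself, computed in $\PSh(\Cscr)$, equals $\colim_i\iota(T_nA_i)$, which lies in $\Ind(\Cscr)$, so $\Ind(T)_nM$ exists and equals $\colim_i\iota(T_nA_i)$. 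This is exactly the content needed to make sense of $\hat+$, so I would deduce the general statement from the $n=2$ case of Lemma~\ref{Lemma: Existence of ind-bundle addition}. Preservation of $\Ind(T)_nM$ by $\Ind(T)^m$ then follows because $\Ind(T)^m$ preserves filtered colimits and preserves these pullbacks on $\iota(\Cscr)$, since $T^m$ does so in $\Cscr$. The universality of the vertical lift asserts that a certain square is a pullback preserved by every iterate of the tangent functor; on $\iota(\Cscr)$ this is the universality axiom of $\Cscr$, and it passes to arbitrary $M$ again by the interchange of finite limits with filtered colimits --- a filtered colimit of pullback squares is a pullback square, and isomorphisms in $\Ind(\Cscr)$ are detected on filtered presentations.

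The main obstacle I anticipate is precisely the bookkeeping around these limits. Because $\Cscr$ is assumed only to be a tangent category and need not be finitely complete, $\Ind(\Cscr)$ need not have all finite limits, so one must verify by hand that the pullbacks $\Ind(T)_nM$ exist, are computed as filtered colimits of the $T_nA_i$, interchange correctly with filtered colimits, and are preserved by every $\Ind(T)^m$; equivalently one must pin down that indicization commutes with $n$-fold tangent pullbacks, $\Ind(T)_n=\Ind(T_n)$, which is the precise meaning of the slogan that $\Ind(\Tbb)$ ``locally looks like'' $\Tbb$. Once that identification is in place, the remainder of the proof is the transport of finitely many commuting diagrams along the filtered-colimit-preserving pseudofunctor $\Ind$, i.e.\ routine application of the reduction principle.
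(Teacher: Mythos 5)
Your proposal is sound and, at bottom, follows the same strategy as the paper: each tangent axiom is reduced to its counterpart in $\Cscr$, and the two limit-type axioms are handled by passing to presheaves, where filtered colimits commute with finite limits and $\Ind(\Cscr)$ is closed under filtered colimits --- this is exactly how the paper proves $\Ind(T)_2\cong\Ind(T_2)$, the preservation of the pullback powers by $\Ind(T)^m$, and the universality of the vertical lift, via the fully faithful and conservative realization $L:\Ind(\Cscr)\to[\Cscr^{\op},\Set]$. Where you genuinely differ is the mechanism for the equational axioms: the paper verifies them by hand, componentwise on ind-objects $(X_i)_{i\in I}$, using the explicit formula $\Ind(\alpha)_{\ul{X}}=(\alpha_{X_i})$ and conjugation by the compositors $\phi_{T,T}$ (the additive-bundle, bundle-morphism, and $\hat{\ell}$--$\hat{c}$ coherence proofs are all of this shape), whereas you package everything into a single density principle: natural transformations between filtered-colimit-preserving endofunctors of $\Ind(\Cscr)$ are determined by their restrictions along $\iota$, so each diagram commutes because its restriction is the corresponding diagram for $\Tbb$. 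That principle is valid (colimit cocones are jointly epimorphic and $\Ind$ of a functor preserves filtered colimits), and it buys a cleaner, uniform argument; its cost is precisely the coherence bookkeeping you flag but do not carry out --- one must check that $\Ind(T)^n\circ\iota$, $\Ind(T)_n\circ\iota$, and the restrictions of $\hat{p},\Ind(0),\hat{+},\hat{\ell},\hat{c}$ are identified with $\iota\circ T^n$, $\iota\circ T_n$, $p$, $0$, $+$, $\ell$, $c$ compatibly with the compositors, which is the computation the paper performs explicitly. Two small points to fix: the universality axiom in Definition \ref{Defn: Tangent Categpry} is stated as an equalizer rather than a pullback square, so either run your filtered-colimit argument on the equalizer directly (it works verbatim) or invoke the standard equivalence of the two formulations; and it is the functors involved, not the transformations $\hat{p},\hat{+},\hat{\ell},\hat{c}$ themselves, that preserve filtered colimits.
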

After establishing this we show a functoriality result in Theorem \ref{Thm: Functoriality of ind-category} (which, as a corollary, allows us to deduce that $\Ind:\fCat \to \fCat$ restricts to a pseudofunctor $\Ind:\fTan \to \fTan$). 
\begin{Theorem}[{cf.\@ Theorem \ref{Thm: Functoriality of ind-category}}]\label{Thm: Ind tangent functoriality but intro version}
Let $(F,\alpha):(\Cscr, \Tbb) \to (\Dscr, \Sbb)$ be a morphism of tangent categories. Then the induced map $(\Ind(F), \hat{\alpha}):(\Ind(\Cscr), \Ind(\Tbb)) \to (\Ind(\Dscr), \Ind(\Sbb))$ is a morphism of tangent categories where $\hat{\alpha}$ is the natural transformation:
\[
\begin{tikzcd}
\Ind(\Cscr) \ar[rr, bend left = 105, ""{name = UU}]{}{\Ind(F) \circ \Ind(T)} \ar[rr, bend left = 25, ""{name = U}]{}[description]{\Ind(F \circ T)} \ar[rr, bend right = 15, ""{name = L}]{}[description]{\Ind(S \circ F)} \ar[rr, bend right = 70, swap, ""{name = LL}]{}{\Ind(S) \circ \Ind(F)} & & \Ind(\Dscr) \ar[from = UU, to = U, Rightarrow, shorten >= 4pt, shorten <= 4pt]{}{\phi_{T, F}} \ar[from = U, to = L, Rightarrow, shorten >= 4pt, shorten <= 4pt]{}{\Ind(\alpha)} \ar[from = L, to = LL, Rightarrow, shorten >= 4pt, shorten <= 4pt]{}{\phi_{F, S}^{-1}}
\end{tikzcd}
\]
Furthermore, $(\Ind(F), \hat{\alpha})$ is a strong tangent morphism if and only if $(F, \alpha)$ is a strong tangent morphism.
\end{Theorem}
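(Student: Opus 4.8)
The plan is to check directly that $(\Ind(F),\hat\alpha)$ satisfies the five axioms of a morphism of tangent categories, and then to deal with the strength equivalence as a separate and easier point; in effect this theorem is the ``morphism half'' of the statement that $\Ind$ restricts to a pseudofunctor on $\fTan$. The organizing observation --- already used in the special case of identity morphisms to prove Theorem~\ref{Thm: Ind Tangent Category} --- is that \emph{every} natural transformation occurring here has the same shape: $\hat p$, $\Ind(0)$, $\hat+$, $\hat\ell$, $\hat c$ (from Lemmas~\ref{Lemma: Existence of Indp and Ind0}, \ref{Lemma: Existence of ind-bundle addition}, \ref{Lemma: Existence of Ind-vertical lift}, and \ref{Lemma: Existence of ind-canonical flip}) and now $\hat\alpha$ are all gotten by applying the pseudofunctor $\Ind$ to the corresponding transformation of $\Tbb$, of $\Sbb$, or to $\alpha$, and then pasting in the coherence isomorphisms $\phi$ of $\Ind$ (together with the unit comparison $\Ind(\id)\cong\id$) needed to reconcile $\Ind(T^{n})$ with $\Ind(T)^{n}$. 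Since $\Ind$ is a pseudofunctor it carries commuting diagrams of $2$-cells to commuting diagrams of $2$-cells; so each morphism-of-tangent-categories axiom for $(\Ind(F),\hat\alpha)$ becomes, once the prescribed coherence $2$-cells are inserted and then absorbed using the pentagon and triangle laws for $\Ind$ and the naturality of $\phi$, precisely the $\Ind$-image of the corresponding axiom for $(F,\alpha)$, which holds by hypothesis.

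I would carry this out axiom by axiom. For compatibility of $\hat\alpha$ with $\hat p$ and with $\Ind(0)$, only single copies of $\phi_{T,F}$, $\phi_{F,S}$, and one unitor appear, so the reduction to the base axioms for $(F,\alpha)$ is immediate. For $\hat+$ one must in addition match the canonical comparison $\Ind(F)\bigl(\Ind(T)_{2}X\bigr)\to\Ind(T)_{2}\bigl(\Ind(F)X\bigr)$ with the $\Ind$-lift of the corresponding comparison $F(T_{2}(-))\to T_{2}(F(-))$ for $F$; this is exactly where the description of the pullback powers $\Ind(T)_{n}$ in $\Ind(\Cscr)$ as filtered colimits of the $T_{n}$ (from the proof of Theorem~\ref{Thm: Ind Tangent Category}), together with the fact that $\Ind(F)$ acts fibrewise over a filtered index as $F$, gets used. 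For $\hat\ell$ and $\hat c$ the diagrams involve $\Ind(T)^{2}$, hence two nested coherence cells: one first rewrites $\Ind(T)\circ\Ind(T)\circ\Ind(F)$ and $\Ind(F)\circ\Ind(T)\circ\Ind(T)$ as $\Ind(T^{2})\circ\Ind(F)$ and $\Ind(F)\circ\Ind(T^{2})$ by means of the associativity coherence of $\Ind$, and only then slots in the $\Ind$-image of the base $\ell$- and $c$-axioms. An equivalent and, in the write-up, more economical route is to verify each axiom componentwise at a filtered diagram $X=(X_{i})_{i\in I}$ of $\Cscr$: by construction every arrow in the diagram is the filtered colimit of the corresponding arrow on the $X_{i}$, so the axiom at $X$ is the filtered colimit, computed in $\Ind(\Dscr)$, of the (valid) axiom at each $X_{i}$.

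For the final clause, note first that $\phi_{T,F}$ and $\phi_{F,S}$ are invertible, being coherence $2$-cells of $\Ind$, so $\hat\alpha$ is a natural isomorphism if and only if $\Ind(\alpha)$ is. If $\alpha$ is invertible then so is $\Ind(\alpha)$, because a pseudofunctor preserves invertible $2$-cells. Conversely, whiskering with the canonical fully faithful embedding $\iota_{\Cscr}\colon\Cscr\hookrightarrow\Ind(\Cscr)$ and using the structural isomorphisms $\Ind(G)\circ\iota_{\Cscr}\cong\iota_{\Dscr}\circ G$ supplied by the pseudofunctoriality of $\Ind$, one identifies the whiskered $2$-cell $\Ind(\alpha)\ast\iota_{\Cscr}$ with $\iota_{\Dscr}\ast\alpha$; hence if $\Ind(\alpha)$ is invertible then each $\iota_{\Dscr}(\alpha_{X})$ is invertible, and since $\iota_{\Dscr}$ is fully faithful it reflects isomorphisms, so $\alpha$ is invertible. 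Thus $\hat\alpha$ is invertible precisely when $\alpha$ is. If the notion of strong tangent morphism in use also requires the underlying functor to preserve the tangent pullbacks, one supplements this with the observation that $\Ind(F)$ preserves $\Ind(T)_{n}$-pullbacks if and only if $F$ preserves $T_{n}$-pullbacks --- again because the former are filtered colimits of the latter and $\Ind(F)$ is locally $F$ --- and the two equivalences combine to yield the claim.

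The main obstacle, such as it is, will be the $2$-categorical diagram chase behind the $\hat\ell$- and $\hat c$-axioms: nothing conceptually deep intervenes, but arranging the coherence isomorphisms $\phi_{T,F}$, $\phi_{F,S}$, the iterated cell $\phi_{T,T}$, and the associativity and unit coherences of $\Ind$ so that the resulting pasted $2$-cell visibly coincides with the $\Ind$-image of the base axiom demands care. The componentwise-over-$I$ description is the safest way to keep this under control, since it turns each identity of $2$-cells in $\Ind(\Dscr)$ into a filtered colimit of honest equalities of morphisms in $\Dscr$ coming straight from the axioms for $(F,\alpha)$.
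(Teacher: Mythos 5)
Your proposal is correct and follows essentially the same route as the paper: reduce each tangent-morphism axiom for $(\Ind(F),\hat{\alpha})$ to the $\Ind$-image of the corresponding axiom for $(F,\alpha)$ by pasting and absorbing the compositor $2$-cells (the paper illustrates only the first such pasting and omits the rest), and settle the strength claim by restricting along the embedding of constant objects in one direction and using that $\Ind$ preserves isomorphisms and the tangent pullbacks/equalizers in the other. The only cosmetic difference is that the paper phrases your whiskering-with-$\iota_{\Cscr}$ step as evaluation on constant ind-objects, and you should remember that the same filtered-colimit argument you give for pullbacks also covers the equalizers required in the definition of a strong tangent morphism.
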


We also show how the $\Ind$ construction interacts with the forgetful functor $\Forget:\fTan \to \fCat$ and the free tangent functor $\Free:\fCat \to \fTan$. More explicitly we show that the diagram
\[
\begin{tikzcd}
\fTan \ar[r]{}{\Ind} \ar[d, swap]{}{\Forget} & \fTan \ar[d]{}{\Forget} \\
\fCat \ar[r, swap]{}{\Ind} & \fCat
\end{tikzcd}
\]
commutes strictly but in the other case that there is a pseudonatural transformation
\[
\begin{tikzcd}
\fCat \ar[d, swap]{}{\Free} \ar[r, ""{name = U}]{}{\Ind} & \fCat \ar[d]{}{\Free} \\
\fTan \ar[r, swap, ""{name = D}]{}{\Ind} & \fTan \ar[from = U, to = D, Rightarrow, shorten <= 4pt, shorten >= 4pt]{}{\alpha}
\end{tikzcd}
\]
which cannot be a pseudonatural equivalence. In particular, we show that $\Ind$ does not commute with $\Free$, even up to pseudonatural equivalence.

After presenting these theorems we show how the $\Ind$-tangent structure interacts with various constructions which are important in the tangent category literature. In particular, we show that the $\Ind$-tangent category of a representable tangent category remains representable and then determine when, for a Cartesian differential category $\Cscr$, an object in $\Ind(\Cscr)$ is a differential bundle over the terminal object $\top_{\Ind(\Cscr)}$. As a corollary we give a necessary and sufficient condition for recognizing when, for a Cartesian differential category $\Cscr$, $\Ind(\Cscr)$ is not a Cartesian differential category.

\begin{proposition}[{cf.\@ Proposition \ref{Prop: Reple internal hom of inding things}}]
Let $\Cscr$ be a Cartesian closed tangent category. If $\Cscr$ is representable then there is an object $\ul{D}$ in $\Ind(\Cscr)$ for which $\Ind(T) \cong [\ul{D},-]$.
\end{proposition}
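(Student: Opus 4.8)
The plan is to exploit the fact that, for a representable tangent category $\Cscr$, representability is witnessed by an infinitesimal object $D$ together with a natural isomorphism $T \cong [D,-]$ of endofunctors on $\Cscr$, and to push this data through the $\Ind$ construction. First I would set $\ul{D}$ to be the image of $D$ under the canonical (fully faithful) embedding $\Cscr \hookrightarrow \Ind(\Cscr)$, so $\ul{D}$ is the constant ind-object on $D$. The crucial input is Theorem \ref{Thm: Ind tangent cat but intro version} (i.e. Theorem \ref{Thm: Ind Tangent Category}), which tells us that $\Ind(T)$ is the tangent endofunctor on $\Ind(\Cscr)$, together with Proposition \ref{Prop: Ind functors}, which describes $\Ind(T)$ on a filtered diagram $\ul{X} = \colim_{i} X_i$ by $\Ind(T)(\ul{X}) \cong \colim_i T(X_i)$. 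So the natural isomorphism $T \cong [D,-]$ in $\Cscr$ gives, levelwise, $\Ind(T)(\ul{X}) \cong \colim_i [D, X_i]$, and the task reduces to identifying this filtered colimit with an internal hom $[\ul{D}, \ul{X}]$ computed in $\Ind(\Cscr)$.

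The key step is therefore to establish that $\Ind(\Cscr)$ has an internal hom out of $\ul{D}$ and that it is computed ``pointwise'' on representable ind-objects: $[\ul{D}, \ul{X}] \cong \colim_i [D, X_i]$ where the colimit is taken in $\Ind(\Cscr)$ along the embedded diagram $i \mapsto [D, X_i]$ in $\Cscr$. The cleanest route is to verify the universal property directly. For any ind-object $\ul{Y} = \colim_j Y_j$, I would compute
\[
\Hom_{\Ind(\Cscr)}\bigl(\ul{Y} \times \ul{D},\, \ul{X}\bigr) \cong \lim_j \colim_i \Hom_{\Cscr}(Y_j \times D, X_i),
\]
using that $\ul{Y} \times \ul{D} \cong \colim_j (Y_j \times D)$ (products in $\Ind(\Cscr)$ of a filtered diagram with a constant ind-object are computed levelwise, since $\Cscr$ has finite products and the embedding preserves them — this is the standing hypothesis that lets us even speak of Cartesian closure here) and the standard Ind-Hom formula $\Hom_{\Ind(\Cscr)}(\colim_j A_j, \colim_i B_i) \cong \lim_j \colim_i \Hom_{\Cscr}(A_j, B_i)$. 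Then the adjunction $\Hom_{\Cscr}(Y_j \times D, X_i) \cong \Hom_{\Cscr}(Y_j, [D, X_i])$ in $\Cscr$ turns the right-hand side into $\lim_j \colim_i \Hom_{\Cscr}(Y_j, [D, X_i]) \cong \Hom_{\Ind(\Cscr)}(\ul{Y}, \colim_i [D,X_i])$, again by the Ind-Hom formula. Tracking naturality in $\ul{Y}$ then exhibits $\colim_i [D, X_i]$ as the internal hom $[\ul{D}, \ul{X}]$.

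Finally I would assemble these identifications into a natural isomorphism of functors $\Ind(T) \cong [\ul{D}, -]$ on all of $\Ind(\Cscr)$: both sides send $\ul{X} = \colim_i X_i$ to $\colim_i [D, X_i]$ (the left side via Proposition \ref{Prop: Ind functors} and the isomorphism $T \cong [D,-]$, the right side by the computation just sketched), and one checks that these agree compatibly with the transition maps and with morphisms of ind-objects, which amounts to chasing the naturality of $T \cong [D,-]$ through the colimit. The main obstacle I anticipate is bookkeeping rather than conceptual: one must be careful that the internal hom $[\ul{D},-]$ really exists as a functor on the whole category $\Ind(\Cscr)$ (not merely on representable ind-objects) and that all the colimits in question are genuinely filtered so that the Ind-Hom formula and the levelwise computation of the relevant products are legitimate; the finite-product hypothesis on $\Cscr$, combined with $D$ being a single object (hence $\ul{D}$ constant), is exactly what makes $-\times\ul{D}$ commute with the filtered colimits and keeps everything inside the image of the embedding where the computations are controlled.
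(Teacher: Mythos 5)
Your proposal is correct and follows essentially the same route as the paper: take $\ul{D}$ to be the constant ind-object at $D$, use the levelwise adjunction $(-)\times D \dashv [D,-]$ together with the $\Ind$-hom formula to show $[\ul{D},-]$ exists and coincides with $\Ind([D,-])$, and then identify $\Ind([D,-]) \cong \Ind(T)$ via the indicization of the natural isomorphism $T \cong [D,-]$. The only difference is packaging: the paper cites its Propositions \ref{Prop: Ind preserves adjoints} and \ref{Prop: Reple internal hom of inding things} for the adjunction transfer and the pointwise internal hom, whereas you verify those same hom-set computations inline.
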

\begin{proposition}[{cf.\@ Proposition \ref{Prop: Ind of CDC has all diff objs means C is Dlinear}}]
Let $\Cscr$ be an $A$-linear Cartesian differential category for a commutative rig $A$. Then $\Cscr = \Cscr_{\Dlin}$ if and only if every object in $\Ind(\Cscr)$ is a differential bundle over the terminal object $\top_{\Ind(\Cscr)}$.
\end{proposition}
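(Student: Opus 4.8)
The plan is to obtain this equivalence directly from the characterization of differential bundles over $\top_{\Ind(\Cscr)}$ developed earlier in this section, which (in the form relevant here) identifies an object $X$ of $\Ind(\Cscr)$ as a differential bundle over the terminal object precisely when $X$ admits a presentation $X \cong \colim_i X_i$ as a filtered colimit of objects of $\Cscr$ all of whose transition maps are $D$-linear in $\Cscr$ --- equivalently, when $X$ lies in the essential image of the functor $\Ind(\Cscr_{\Dlin}) \to \Ind(\Cscr)$ induced by the inclusion $\Cscr_{\Dlin} \hookrightarrow \Cscr$. Granting this, one direction is immediate and the substance is entirely in the converse, which I would establish in contrapositive form: from any morphism of $\Cscr$ that is not $D$-linear I would build a \emph{single} ind-object that is not a differential bundle over $\top_{\Ind(\Cscr)}$.

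The direction ``$\Cscr = \Cscr_{\Dlin} \Rightarrow$ every object is a differential bundle over $\top_{\Ind(\Cscr)}$'' is the easy one. If every morphism of $\Cscr$ is $D$-linear, then the transition maps of any filtered presentation of any object of $\Ind(\Cscr)$ are $D$-linear, so the cited characterization applies verbatim. (More concretely: when all maps are $D$-linear one has $D[f] = f\circ\pi_1$ and hence $Tf = \langle f\circ\pi_0, D[f]\rangle = f\times f$, so $T = (-)\times(-)$ strictly; thus $\Ind(T)\cong (-)\times(-)$ on $\Ind(\Cscr)$ because the diagonal is cofinal in any filtered index category, and the pointwise differential-object structure on the $X_i$ --- zero $\top \to X_i$, fibrewise addition $X_i\times X_i \to X_i$, and lift $X_i \to X_i\times X_i$ --- glues along any presentation because $D$-linear maps are additive, while the universality of the lift is inherited because filtered colimits commute with the relevant finite limits in $\Ind(\Cscr)$.)

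For the converse, suppose $\Cscr \neq \Cscr_{\Dlin}$ and fix $f\colon A \to B$ in $\Cscr$ which is not $D$-linear. Put $C := A\times B$ and let $e\colon C \to C$ be the endomorphism $e = \langle \pi_0,\ (+_B)\circ\langle f\circ\pi_0,\ \pi_1\rangle\rangle$, i.e.\ ``$e(a,b) = (a, f(a)+b)$''. The chain rule gives $D[e]\big((a,b),(a',b')\big) = \big(a',\, D[f](a,a') + b'\big)$, so $e$ is $D$-linear if and only if $f$ is; iterating, $e^{n}(a,b) = (a,\, n\!\cdot\! f(a) + b)$ and $D[e^{n}]\big((a,b),(a',b')\big) = \big(a',\, n\!\cdot\! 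D[f](a,a') + b'\big)$, where $n\!\cdot\!(-)$ is the $\mathbb{N}$-action coming from the $A$-linear structure. Set $X := \colim\big(C \xrightarrow{e} C \xrightarrow{e} C \to \cdots\big)$ in $\Ind(\Cscr)$ (if necessary one replaces this by a strictly growing variant built from $f$, e.g.\ the telescope $A\times B^{n} \to A\times B^{n+1}$ inserting a coordinate equal to $f(a)$, so as to guarantee $X$ is not an object of $\Cscr$). By the cited characterization it suffices to show that $X$ admits no presentation $X \cong \colim_j W_j$ with each $W_j$ in $\Cscr$ and all transition maps $D$-linear. From such a presentation the calculus of morphisms of ind-objects produces --- after passing to cofinal subchains, using that $X$ is a sequential colimit --- an interleaved ladder
\[
C \xrightarrow{\alpha_0} W_{0} \xrightarrow{\beta_0} C \xrightarrow{\alpha_1} W_{1} \xrightarrow{\beta_1} C \longrightarrow \cdots
\]
of morphisms of $\Cscr$ in which every composite $\beta_k\circ\alpha_k$ is a power $e^{d_k}$ of $e$ (with $d_k \ge 1$) and every composite $\ell_k := \alpha_{k+1}\circ\beta_k$ is $D$-linear, being a composite of transition maps of the $W$-diagram. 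Differentiating the identities $\alpha_{k+1}\circ e^{d_k} = \ell_k\circ\alpha_k$ with the chain rule, substituting the formulas above for $e^{d_k}$ and $D[e^{d_k}]$, and using the $D$-linearity of $\ell_k$, the $D$-linearity propagates down the ladder and forces $D[f] = f\circ\pi_1$, contradicting the choice of $f$. Hence $X$ is not a differential bundle over $\top_{\Ind(\Cscr)}$, which is what we needed.

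The principal obstacle is that last step: extracting honest $D$-linearity of $f$ from the ladder. The difficulty is that the comparison maps $\alpha_k,\beta_k$ are a priori arbitrary morphisms of $\Cscr$ --- only their composites $\alpha_{k+1}\circ\beta_k$ are known to be $D$-linear --- so linearity has to be carried one rung at a time through the chain rule and the explicit shape of the iterates $e^{n}$, rather than read off at \emph{once}. A secondary but necessary point is to be sure the telescope does not collapse to an object of $\Cscr$ (where the claim would be vacuous); this is handled by choosing a strictly growing presentation as indicated above, whose colimit is not finitely presentable in $\Ind(\Cscr)$. Carrying out the chain-rule propagation in detail, and verifying this non-collapse, is where the real work lies; the rest is bookkeeping with the characterization theorem.
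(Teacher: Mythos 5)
Your overall architecture is genuinely different from the paper's, but as it stands it has two concrete gaps, either of which is fatal. First, the ``characterization'' you take as input is not available: what is proved before this point (Proposition \ref{Prop: Diff Bunlde in Ind} and Corollary \ref{Cor: Diff Bundle over Fin Comp Category}) is only the \emph{sufficiency} half --- a presentation by differential bundles with linear ($=$ $D$-linear, by Proposition \ref{Prop: Dlinear is bundle map linear}) transition maps yields a differential object of $\Ind(\Cscr)$. The \emph{necessity} half, that every differential bundle over $\top_{\Ind(\Cscr)}$ lies in the essential image of $\Ind(\Cscr_{\Dlin})$, is exactly Corollary \ref{Cor: Dlin nes}, which the paper deduces \emph{from} the proposition you are proving; citing it here makes the plan circular, and your converse leans on it essentially (``it suffices to show that $X$ admits no presentation with $D$-linear transition maps''). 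The paper's converse avoids this entirely: it applies the hypothesis to $\mathbbm{2}$-indexed diagrams $F$ with $F(\varphi)=f$ and argues that the differential-bundle structure over $\underline{\top}$ forces $(F(\varphi),\id_{\top})$ to be a linear bundle morphism, hence $f$ to be $D$-linear --- you have correctly sensed that relating a structure on the ind-object to a chosen presentation is the delicate point, but your way of handling it does not go through.

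Second, even granting the biconditional, your witness object is not a witness. Whenever the hom-modules admit negatives --- e.g.\ $A$ a ring, or the $\R$-linear CDC $\Smooth$, cases the proposition must cover --- the map $e(a,b)=(a,f(a)+b)$ is invertible, so $\colim(C\xrightarrow{e}C\to\cdots)$ is isomorphic in $\Ind(\Cscr)$ to the constant object $C$, a differential object. The ``strictly growing'' variant collapses too: the levelwise maps $\phi_n\colon A\times B^n\to A\times B^n$, $(a,b_1,\dots,b_n)\mapsto(a,\,b_1-f(a),\dots,b_n-f(a))$, satisfy $\phi_{n+1}\circ(\text{insert }f(a))=(\text{insert }0)\circ\phi_n$, so your $f$-insertion telescope is isomorphic to the $0$-insertion telescope, whose transition maps are $D$-linear; by Corollary \ref{Cor: Diff Bundle over Fin Comp Category} your $X$ \emph{is} a differential bundle over $\top_{\Ind(\Cscr)}$. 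In particular this isomorphism produces exactly the interleaved ladder you describe --- all composites $\alpha_{k+1}\circ\beta_k$ $D$-linear, each $\beta_k\circ\alpha_k$ a transition map --- for a completely arbitrary non-$D$-linear $f$, so the claimed chain-rule propagation ``forcing $D(f)=f\circ\pi_1$'' cannot be a formal consequence of the ladder; any repair would have to exploit the absence of negatives and hence could not prove the statement for all commutative rigs. Since the propagation step is also the one part you left unexecuted, the converse direction remains unproved. (Your forward direction is fine and matches the paper's.)
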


We close this paper with a review of the tangent structure on the categories $\mathbf{CAlg}_{A}$ of commutative algebras over a commutative rig $A$; $\Sch_{/S}$ over a base scheme $S$; the $A$-linear Cartesian differential category $\APoly$ of polynomials over, again, a commutative rig $A$; and then the Cartesian differential category $\Smooth$ of Euclidean spaces and smooth maps between them. Of particular interest are a definition of a formal tangent scheme (together with an explicit calculation) together with the following characterizations of the differential objects in $\Ind(\APoly)$ and $\Ind(\Smooth)$.
\begin{Theorem}[{cf.\@ Theorem \ref{Thm: Diff Obs in APoly are Modules}}]
	The category of differential objects in $\Ind(\APoly)$ together with linear bundle maps between them is equivalent to $\Ind(\AModfd)$. In particular, there is an equivalence of categories of $\mathbf{Diff}(\Ind(\APoly))$ and the category $\AMod$ of $A$-modules.
\end{Theorem}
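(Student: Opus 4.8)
The plan is to realise the equivalence as a composite
\[
\mathbf{Diff}(\Ind(\APoly)) \;\simeq\; \Ind\bigl(\mathbf{Diff}(\APoly)\bigr) \;\simeq\; \Ind(\AModfd) \;\simeq\; \AMod,
\]
in which the middle equivalence is merely the $\Ind$-image of the (recalled) description of the differential objects of $\APoly$, the last is a routine identification, and the first carries all of the real content. For the middle link one recalls, as part of the review of the tangent structure on $\APoly$, that every representable $A^{n}$ carries its canonical commutative-monoid structure and is thereby a differential object of $\APoly$, that these exhaust $\mathbf{Diff}(\APoly)$ up to isomorphism, and that a linear bundle morphism $A^{n}\to A^{m}$ is exactly an $A$-linear map, i.e.\@ an $m\times n$ matrix over $A$; thus $\mathbf{Diff}(\APoly)\simeq\AModfd$ compatibly with morphisms, and applying the pseudofunctor $\Ind$ (Proposition \ref{Prop: Ind functors}) gives $\Ind(\mathbf{Diff}(\APoly))\simeq\Ind(\AModfd)$. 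For the last link, the canonical functor $\Ind(\AModfd)\to\AMod$ sending a filtered diagram to its colimit of $A$-modules is fully faithful because each $A^{n}$ is finitely presentable in $\AMod$ (the functor $\Hom_{A}(A^{n},-)\cong(-)^{n}$ preserves filtered colimits) and is essentially surjective because every $A$-module is a filtered colimit of finite free $A$-modules.

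The heart of the proof is therefore the first equivalence $\mathbf{Diff}(\Ind(\APoly))\simeq\Ind(\mathbf{Diff}(\APoly))$, for which I would use the comparison functor $\Ind(\mathbf{Diff}(\APoly))\to\mathbf{Diff}(\Ind(\APoly))$ (a filtered diagram of differential objects of $\APoly$ has a colimit in $\Ind(\APoly)$ whose additive bundle structure passes to the colimit, since finite products commute with filtered colimits in $\Ind(\APoly)$). Full faithfulness is the easy direction: since each $A^{n}$, and hence each object of $\mathbf{Diff}(\APoly)$, is compact in $\Ind(\APoly)$, one has the usual computation $\Hom_{\Ind(\APoly)}(\colim_{i}X_{i},\colim_{j}Y_{j})=\lim_{i}\colim_{j}\Hom_{\APoly}(X_{i},Y_{j})$, and being a linear bundle morphism is detected termwise along the filtered diagrams, so linear bundle maps between differential objects presented over $\APoly$ are exactly the morphisms of the corresponding object of $\Ind(\mathbf{Diff}(\APoly))$. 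For essential surjectivity I would appeal to Theorem \ref{Thm: Ind Tangent Category} together with the characterization of differential bundles over the terminal object in $\Ind(\Cscr)$ (cf.\@ also Proposition \ref{Prop: Ind of CDC has all diff objs means C is Dlinear}): because $\Ind(\Tbb)$ --- and in particular the structure maps $\hat p,\Ind(0),\hat{+},\hat{\ell},\hat{c}$ --- is computed pointwise along filtered diagrams, a differential-object structure on $\ul X=\colim_{i\in I}X_{i}$ amounts to finitely many morphisms between finite powers of $\ul X$ subject to finitely many equations in $\Ind(\APoly)$; passing to a cofinal subcategory of $I$ one spreads the additive bundle structure out into genuine additive (equivalently, linear) bundle structures on the $X_{i}$ with linear transition maps, and each differential-object axiom holds at some cofinal stage because it is a finitary equality of $\APoly$-morphisms, which the maps out of a fixed compact $X_{i}$ into the filtered colimit reflect. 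This exhibits $(\ul X,\dots)$ as a filtered colimit in $\mathbf{Diff}(\Ind(\APoly))$ of differential objects coming from $\APoly$, i.e.\@ as an object in the image of $\Ind(\mathbf{Diff}(\APoly))$; composing the three equivalences then yields the theorem, the final sentence of the statement being the case of the last link.

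I expect the principal obstacle to be exactly this essential-surjectivity step: one must verify that \emph{all} of the differential-object data and coherences --- not merely the underlying commutative monoid, but also its interaction with the vertical lift and the canonical flip --- are finitary enough to be descended to a common cofinal stage, and it is precisely the ``$\Ind(\Tbb)$ locally looks like $\Tbb$'' content of Theorem \ref{Thm: Ind Tangent Category} that licenses this; a little additional bookkeeping is needed to ensure that the transition maps one extracts can be taken linear, so that the resulting filtered diagram genuinely lives in $\mathbf{Diff}(\APoly)$. A secondary point deserving care is essential surjectivity in the last link $\Ind(\AModfd)\to\AMod$, i.e.\@ that every $A$-module really is a filtered colimit of finite free $A$-modules: this is transparent when $A$ is a field (a vector space is the filtered union of its finite-dimensional subspaces), and in general should be extracted from the presentation of $\AMod$ as a locally finitely presentable category together with the compactness of the $A^{n}$.
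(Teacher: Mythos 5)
Your overall chain is the same as the paper's: identify the differential objects of $\Ind(\APoly)$ with the $\Ind$-category of the $D$-linear part of $\APoly$, identify that with $\Ind(\AModfd)$, and then collapse $\Ind(\AModfd)$ to $\AMod$. The difference is in how the first identification is obtained. The paper gets it in one line from Corollary \ref{Cor: Dlin nes}: a filtered diagram $F:I\to\APoly$ is a differential object of $\Ind(\APoly)$ precisely when every transition map $F(\varphi)$ is $D$-linear, so $F$ factors through $\APoly_{\Dlin}\simeq\AModfd$ and hence $\mathbf{Diff}(\Ind(\APoly))\simeq\Ind(\APoly_{\Dlin})$; the final link is a citation of Corollary 6.3.5 of Kashiwara--Schapira. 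You instead re-prove the content of that corollary by hand, as an exchange equivalence $\mathbf{Diff}(\Ind(\APoly))\simeq\Ind(\mathbf{Diff}(\APoly))$, with full faithfulness coming from compactness of the $X_i$ and essential surjectivity from a spreading-out argument over a cofinal stage. What the paper's route buys is that all of the delicate work is already packaged in Section 4.3 (Corollary \ref{Cor: Diff Bundle over Fin Comp Category} for sufficiency, the $\mathbbm{2}$-argument in Proposition \ref{Prop: Ind of CDC has all diff objs means C is Dlinear} for necessity); what your route buys is an explicit morphism-level statement, since the paper's proof is silent about why linear bundle maps in $\Ind(\APoly)$ correspond exactly to morphisms of $\Ind(\AModfd)$, which your full-faithfulness step addresses.

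The one place your sketch is genuinely thinner than what you would need, if you insist on not citing Corollary \ref{Cor: Dlin nes}, is the essential-surjectivity step: a differential object is not specified by ``finitely many morphisms subject to finitely many equations'' alone --- the differential bundle axioms also require that certain pullbacks exist and are preserved by all powers of the tangent functor, and that the lift is universal (again a pullback condition), and limit conditions of this kind do not descend to a cofinal stage by compactness alone. You would either have to verify these separately in $\APoly$ (where they can be checked concretely) or simply invoke the corollary you mention only parenthetically; as written, the ``finitary equality'' reduction is an overstatement. On the last link, your worry is well placed --- for a general crig $A$ the objects of $\APoly_{\Dlin}$ correspond to finite \emph{free} modules, and ``every $A$-module is a filtered colimit of finite free modules'' is not automatic (local finite presentability only yields filtered colimits of finitely presented modules) --- but this caveat applies equally to the paper's own proof, which asserts the analogous decomposition before citing Kashiwara--Schapira, so it is not a defect introduced by your argument.
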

\begin{Theorem}[{cf.\@ Theorem \ref{Thm: Diff objects in IndSmooth are Real Vecs}}]
The category of differential objects in $\Ind(\Smooth)$ together with linear bundle maps between them is equivalent to $\Ind(\RVectfd)$. In particular, there is an equivalence of categories between $\mathbf{Diff}(\Ind(\Smooth))$ and the category $\RVect$ of $\R$ vector spaces.
\end{Theorem}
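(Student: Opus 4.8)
The plan is to mirror the proof of Theorem \ref{Thm: Diff Obs in APoly are Modules} as closely as possible, transporting the argument for $\APoly$ over to $\Smooth$, and then to deduce the final sentence as a purely category-theoretic corollary about filtered cocompletions of a category with split idempotents. First I would recall the known description of the differential objects in $\Smooth$ itself: since $\Smooth$ is a Cartesian differential category whose objects are the Euclidean spaces $\R^n$, one checks that $\mathbf{Diff}(\Smooth)$ with linear bundle maps is equivalent to $\RVectfd$, the category of finite-dimensional real vector spaces, with the equivalence sending $\R^n$ to itself and a linear bundle map to its (necessarily linear) underlying map. This is the ``finite-dimensional model'' that plays the role $\AModfd$ played in the polynomial case.

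Next I would show that $\mathbf{Diff}$ commutes with $\Ind$ in the relevant sense, i.e.\@ that there is an equivalence $\mathbf{Diff}(\Ind(\Smooth)) \simeq \Ind(\mathbf{Diff}(\Smooth))$ compatible with linear bundle maps. The key input here is Theorem \ref{Thm: Ind Tangent Category}, which tells us that $\Ind(\Tbb)$ is ``locally'' the tangent structure $\Tbb$, together with the pseudolimit description of $\Ind(\Cscr)_{/X}$ (and in particular $\Ind(\Smooth)_{/\top}$, since $\top_{\Ind(\Smooth)}$ is the image of $\top_{\Smooth} = \R^0$). Concretely: a differential object in $\Ind(\Smooth)$ is an object $\ul{A} = \colim_{i} A_i$ of $\Ind(\Smooth)$ equipped with the bundle structure maps of a differential object; because the defining diagrams of a differential object involve only finite limits and the maps $\hat p, \Ind(0), \hat+, \hat\ell, \hat c$, which are computed levelwise by the constructions in Lemmas \ref{Lemma: Existence of Indp and Ind0}--\ref{Lemma: Existence of ind-canonical flip}, such a structure is (up to refining the filtered index category and using that the diagrams are finite) given by a filtered system of differential objects in $\Smooth$ and linear bundle maps between them. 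Conversely any such filtered system assembles to a differential object in $\Ind(\Smooth)$. This gives $\mathbf{Diff}(\Ind(\Smooth)) \simeq \Ind(\mathbf{Diff}(\Smooth)) \simeq \Ind(\RVectfd)$, which is the first assertion.

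For the final sentence I would invoke the standard fact that for a small additive (or merely idempotent-complete) category $\mathcal{A}$, the Ind-completion $\Ind(\mathcal{A})$ is equivalent to the category of ``small'' objects inside a suitable cocomplete category; applied to $\mathcal{A} = \RVectfd$ this yields $\Ind(\RVectfd) \simeq \RVect$, since every real vector space is the filtered colimit of its finite-dimensional subspaces and conversely a filtered colimit of finite-dimensional spaces is again a vector space, with morphisms matching up because $\Hom(\R^n, -)$ commutes with filtered colimits. Composing the two equivalences gives $\mathbf{Diff}(\Ind(\Smooth)) \simeq \RVect$.

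I expect the main obstacle to be the middle step: making rigorous the claim that an arbitrary differential-object structure on a formal colimit $\ul{A}$ in $\Ind(\Smooth)$ can be descended to a filtered diagram of genuine differential objects in $\Smooth$. The structure maps are morphisms in $\Ind(\Smooth)$, hence themselves only defined ``up to refinement of the index category,'' and one must check that the finitely many commuting diagrams (the additive bundle axioms, linearity of the projection, and the compatibility with $\hat\ell$ and $\hat c$) can be realized simultaneously over a cofinal filtered subcategory — this is where the finiteness of the axiom diagrams and the filteredness of the index category must be combined carefully, exactly as in the $\APoly$ case but now without the convenient algebraic description of morphisms that polynomials afford. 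Once that descent is in hand, everything else is bookkeeping and an appeal to the already-established structural results.
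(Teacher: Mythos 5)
Your overall decomposition --- $\mathbf{Diff}(\Ind(\Smooth)) \simeq \Ind(\mathbf{Diff}(\Smooth)) \simeq \Ind(\RVectfd) \simeq \RVect$ --- is the same skeleton the paper uses (its proof is literally mutatis mutandis the proof of Theorem \ref{Thm: Diff Obs in APoly are Modules}), and your final step agrees with the paper's appeal to \cite[Corollary 6.3.5]{KashiwaraSchapira}. The genuine gap is the middle step, which you flag but do not close, and the route you sketch for it is not the one that works cleanly. Your plan is to re-index: represent the structure maps of a differential object $\ul{A}$ levelwise over a suitable refinement of the index category and realize ``the finitely many commuting diagrams'' simultaneously there. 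But the differential-bundle axioms are not purely equational: they demand the existence of the pullback powers $E_n$, their preservation by all $T^m$, and the universality of the lift (a pullback square preserved by every $T^n$). Realizing the commuting diagrams levelwise gives no control over these limit-type conditions at each level, so the refinement argument as described does not yet produce a filtered system of genuine differential objects in $\Smooth$.

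What closes the gap --- and what the paper's $\APoly$ proof actually invokes --- is Corollary \ref{Cor: Dlin nes}, which applies verbatim to the $\R$-linear CDC $\Smooth$: a filtered diagram $F:I \to \Smooth$ is a differential object in $\Ind(\Smooth)$ if and only if every transition map $F(\varphi)$ is $D$-linear. No re-indexing is needed, because every object of $\Smooth$ is already a differential object (Proposition \ref{Prop: CDC is tan cat and all obs are diff bundles over 1}) and linear bundle maps coincide with $D$-linear maps (Proposition \ref{Prop: Dlinear is bundle map linear}); the only content is the $D$-linearity of the transition maps, extracted by restricting along single arrows of $I$ as in Proposition \ref{Prop: Ind of CDC has all diff objs means C is Dlinear}. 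Since $\Smooth_{\Dlin} \simeq \RVectfd$, this yields $\mathbf{Diff}(\Ind(\Smooth)) \simeq \Ind(\Smooth_{\Dlin}) \simeq \Ind(\RVectfd)$ directly, and then your last step $\Ind(\RVectfd) \simeq \RVect$ (finite-dimensional spaces are compact and every real vector space is the filtered colimit of its finite-dimensional subspaces) finishes exactly as in the paper.
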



\subsection*{Acknowledgments}
I would like to thank JS Lemay for many interesting conversations regarding this material and suggesting many of the directions in which to examine the $\Ind$-construction and the $\Ind$-tangent structure\footnote{Much of the material in Sections 4.2, 4.3, 5.1, 5.3, and 5.4 was examined and developed following our conversations.}. I'd also like to thank JS, Rick Blute, and Dorette Pronk for reading early drafts of this paper and providing helpful suggestions. Additionally I'd like to thank Rory Lucyshyn-Wright for interesting conversations regarding the $\Ind$-pseudofunctor. Some of this work was presented at the 2023 Foundational Methods in Computer Science conference, and I'm grateful to the organizers for the hospitality and intellectually stimulating environment which allowed me to produce Sections 4 -- 5 of this paper.

\section{A Review of Tangent Categories}
Let us recall what it means for a category to be a tangent category following R.\@ Cockett and G.\@ Crutwell in \cite{GeoffRobinDiffStruct}. These categories are an abstraction of what it means to have a category and tangent bundle functor, together with all the relations and structure we require/expect of such a bundle object. In what follows below, if $\alpha:FX \to X$ is a natural transformation of an endofunctor $F:\Cscr \to \Cscr$, we write $F_2X$ for the pullback
\[
\xymatrix{
F_2X \ar[r]^-{\pi_1} \ar[d]_{\pi_2} \pullbackcorner & FX \ar[d]^{\alpha_X} \\
FX \ar[r]_-{\alpha_X} & X
}
\]
if it exists in $\Cscr$.

\begin{definition}[{\cite[Definition 2.3]{GeoffRobinDiffStruct}}]\label{Defn: Tangent Categpry}
A category $\Cscr$ has a tangent structure $\Tbb = (T,p,0,+,\ell,c)$ when the following six axioms hold:
\begin{enumerate}
	\item $T:\Cscr \to \Cscr$ is a functor equipped with a natural transformation $p:T \Rightarrow \id_{\Cscr}$ such that for every object $X$ in $\Cscr$ all pullback powers of $p_X:TX \to X$ exist and for all $n \in \N$ the functors $T^n$ preserve these pullback powers.
	\item There are natural transformations $+:T_2 \Rightarrow T$ and $0:\id_{\Cscr} \to T$ for which each map $p_X:TX \to X$ describes an additive bundle in $\Cscr$, i.e., $p:TX \to X$ is an internal commutative monoid in $\Cscr_{/X}$ with addition and unit given by $+$ and $0$, respectively.
	\item There is a natural transformation $\ell:T \Rightarrow T^2$ such that for any $X \in \Cscr_0$, the squares
	\[
	\begin{tikzcd}
	TX \ar[r]{}{\ell_X} \ar[d, swap]{}{p_X} & T^2X \ar[d]{}{Tp_X} & TX \times_{X} TX \ar[d, swap]{}{+_X} \ar[rr]{}{\langle\ell_X\circ \pi_1, \ell_X\circ \pi_2\rangle} & & T^2X \times_{TX} T^2X \ar[d]{}{+_{TX}} & X \ar[d, swap]{}{0_X} \ar[r]{}{0_X} & TX \ar[d]{}{0_{TX}} \\
	X \ar[r, swap]{}{0_X} & TX & TX  \ar[rr, swap]{}{\ell_X} & & T^2X & TX \ar[r, swap]{}{\ell_X} & T^2X
	\end{tikzcd}
	\]
	all commute, i.e., $(\ell_X, 0_X)$ is a morphism of bundles in $\Cscr$ (cf.\@ \cite[Definition 2.2]{GeoffRobinDiffStruct}). 
	\item There is a natural transformation $c:T^2 \Rightarrow T^2$ such that for all $X \in \Cscr_0$ the squares
	\[
	\begin{tikzcd}
	T^2X \ar[r]{}{c_X} \ar[d, swap]{}{Tp_X} & T^2X \ar[d]{}{p_{TX}} & T^2X \times_{TX} T^2X \ar[d, swap]{}{(T \ast +)_{X}} \ar[rr]{}{\langle c_X \circ \pi_1, c_X\circ \pi_2\rangle} & & T^2X \times_{TX}T^2X \ar[d]{}{(+ \ast T)_X} & TX \ar[d, swap]{}{(T \ast 0)_X} \ar[r]{}{\id_{TX}} & TX \ar[d]{}{(0 \ast T)_X} \\
	TX \ar[r, swap]{}{\id_{TX}} & TX & T^2X \ar[rr, swap]{}{c_X} & & T^2X & T^2X \ar[r, swap]{}{c_X} & T^2X
	\end{tikzcd}
	\]
	commute, i.e., $(c_X,\id_{TX})$ describes a bundle morphism in $\Cscr$
	\item We have the equations $c^2 = \id_{\Cscr}, c \circ \ell = \ell$, and the diagrams
	\[
	\begin{tikzcd}
	T \ar[r]{}{\ell} \ar[d, swap]{}{\ell} & T^2 \ar[d]{}{T \ast \ell} & T^3 \ar[d, swap]{}{c \ast T} \ar[r]{}{T \ast c} & T^3 \ar[r]{}{c \ast T} & T^3\ar[d]{}{c \ast T} \\
	T^2 \ar[r, swap]{}{\ell \ast T} & T^3 & T^3 \ar[r, swap]{}{T \ast c} & T^3 \ar[r, swap]{}{c \ast T} & T^3
	\end{tikzcd}
	\begin{tikzcd}
	T^2 \ar[d, swap]{}{c} \ar[r]{}{\ell \ast T} & T^3 \ar[r]{}{T \ast c} & T^3 \ar[d]{}{c \ast T} \\
	T^2 \ar[rr, swap]{}{T \ast \ell} & & T^3
	\end{tikzcd}
	\]
	commute in the functor category $[\Cscr,\Cscr]$ where $F \ast \alpha$ and $\alpha \ast F$ denote the left and right whiskering of a natural transformation $\alpha$ by a functor $F$, respectively.
	\item The diagram
	\[
	\begin{tikzcd}
	T_2X \ar[rrrr]{}{(T \ast +)_X \circ \langle \ell \circ \pi_1, 0_{TX} \circ\pi_2 \rangle} & &	& & T^2X \ar[rrr, shift left = 0.5ex]{}{T(p_X)} \ar[rrr, swap, shift left = -0.5ex]{}{0_X \circ p_X \circ p_{TX}} & & & TX
	\end{tikzcd}
	\]
	is an equalizer in $\Cscr$.
\end{enumerate}
Finally, a category $\Cscr$ is said to be a tangent category if it admits a tangent structure $\Tbb$.
\end{definition}
\begin{remark}
	In \cite{GeoffRobinDiffStruct}, the axioms above are named and are given intuitive explanations. While we will not recall this here in complete detail, we will at least recall the stated content of each axiom:
	\begin{itemize}
		\item Axiom 1 names $T$ the tangent functor of the tangent structure $\Tbb$ on $\Cscr$ and $p$ the bundle map.
		\item Axiom 2 describes the object $TX \to X$ as an additive tangent bundle in $\Cscr$.
		\item Axiom 3 names $\ell$ as the vertical lift which lifts a tangent to a tangent of a tangent.
		\item  Axiom 4 names $c$ as the canonical flip which looks like the interchange of mixed partial derivatives.
		\item Axiom 5 gives the coherence relations $\ell$ and $c$ must satisfy.
		\item Axiom 6 describes the universality of the vertical lift.
	\end{itemize}
Note also that when we say that $\Cscr$ is a tangent category, we really mean that $(\Cscr,\Tbb)$ is a category equipped with a specific tangent structure $\Tbb$ and have simply left the explicit mention of the tangent structure $\Tbb$ out. In fact, this is an abuse of notation; it is possible for a category to have multiple distinct tangent structures so we will only say $\Cscr$ is a tangent category to mean that $(\Cscr,\Tbb)$ is a category equipped with a specific (potentially unspecified) tangent structure. 
\end{remark}
Equally as important to tangent categories are the morphisms of tangent categories. These come in two flavours: one, which is lax, and another which is strong. While we will usually work with strong tangent morphisms, it is important for Theorem \ref{Thm: Functoriality of ind-category} to have the full definition.
\begin{definition}[{\cite[Definition 2.7]{GeoffRobinDiffStruct}}]\label{Defn: Tangent Morphism}
Let $(\Cscr,\Tbb) = (\Cscr,T,p,0,+,\ell,c)$ and $(\Dscr,\Sbb) = (\Dscr,S,q,0^{\prime},\oplus,\ell^{\prime},c^{\prime})$ be tangent categories. A morphism of tangent categories is a pair $(F,\alpha):(\Cscr,\Tbb) \to (\Dscr,\Sbb)$ where $F:\Cscr \to \Dscr$ is a functor and $\alpha$ is a natural transformation
\[
\begin{tikzcd}
\Cscr \ar[bend left = 30, r, ""{name = U}]{}{F \circ T} \ar[r, bend right = 30, swap, ""{name = L}]{}{S \circ F} & \Dscr \ar[from = U, to = L, Rightarrow, shorten <= 4pt, shorten >= 4pt]{}{\alpha}
\end{tikzcd}
\]
for which the diagrams of functors and natural transformations
\[
\begin{tikzcd}
F \circ T \ar[r]{}{\alpha} \ar[dr, swap]{}{F \ast p} & S \circ F \ar[d]{}{q \ast F} \\
 & F
\end{tikzcd}
\begin{tikzcd}
F \ar[r]{}{F \ast 0} \ar[dr, swap]{}{0^{\prime} \ast F} & F \circ T \ar[d]{}{\alpha} \\
 & S \circ F
\end{tikzcd}
\begin{tikzcd}
F \circ T_2 \ar[r]{}{F \ast\alpha_2} \ar[d, swap]{}{F \ast +} & S_2 \circ F \ar[d]{}{\oplus \ast F} \\
F \circ T \ar[r, swap]{}{\alpha} & S \circ F
\end{tikzcd}
\]
\[
\begin{tikzcd}
F \circ T \ar[rr]{}{\alpha} \ar[d, swap]{}{F \ast \ell} & & S \circ F \ar[d]{}{\ell^{\prime} \ast F} \\
F \circ T^2 \ar[rr, swap]{}{(S \ast \alpha) \circ (\alpha \ast T)} & & S^2 \circ F
\end{tikzcd}
\begin{tikzcd}
F \circ T^2 \ar[rr]{}{(S \ast \alpha) \circ (\alpha \ast T)} \ar[d, swap]{}{F \ast c} & & S^2 \circ F \ar[d]{}{c^{\prime} \ast F} \\
F \circ T^2 \ar[rr, swap]{}{(S \ast \alpha) \circ (\alpha \ast T)} & & S^2 \circ F
\end{tikzcd}
\]
commute. Additionally, we say that the morphism $(F,\alpha)$ is strong if $\alpha$ is a natural isomorphism and if $F$ preserves the equalizers and pullbacks of the tangent structure $(\Cscr,\Tbb)$.
\end{definition}

To finish up this section we give a short description of the $1$ and $2$-categories of tangent categories, tangent morphisms, and their natural transformations.
\begin{definition}
The category $\Tan$ is defined as follows:
\begin{itemize}
	\item Objects: Tangent categories $(\Cscr, \Tbb)$.
	\item Morphisms: Tangent morphisms $(F,\alpha):(\Cscr, \Tbb) \to (\Dscr, \Sbb)$.
	\item Composition: The composition of two tangent morphisms $(F, \alpha):(\Cscr, \Tbb) \to (\Dscr, \Sbb)$ and $(G, \beta):(\Dscr, \Sbb) \to (\Escr, \Rbb)$ is defined to be the pair 
	\[
	\big(G \circ F, (\beta \ast F) \circ (G \ast \alpha)\big).
	\]
	\item Identities: The identity on a tangent category is $(\id_{\Cscr}, e):(\Cscr, \Tbb) \to (\Cscr, \Tbb)$ where $e$ is the natural transformation witnessing the equality $\id_{\Cscr} \circ T = T \circ \id_{\Cscr}$.
\end{itemize}
\end{definition}
\begin{definition}
The $2$-category $\fTan$ is defined by:
\begin{itemize}
	\item Zero-and-one-morphisms: As in $\Tan$.
	\item $2$-morphisms: A $2$-morphism as in the displayed diagram
	\[
	\begin{tikzcd}
	(\Cscr, \Tbb) \ar[rr, bend left = 30, ""{name = U}]{}{(F, \alpha)} \ar[rr, bend right = 30, swap, ""{name = L}]{}{(G, \beta)} & & (\Dscr, \Sbb) \ar[from = U, to = L, Rightarrow, shorten <= 4pt, shorten >= 4pt]{}{\rho}
	\end{tikzcd}
	\]
	is a natural transformation $\rho:F \Rightarrow G:\Cscr \to \Dscr$ for which the diagram
	\[
	\begin{tikzcd}
	F \circ T \ar[r]{}{\alpha} \ar[d, swap]{}{\rho \ast T} & S \circ F \ar[d]{}{S \ast \rho} \\
	G \circ T \ar[r, swap]{}{\beta} & S \circ G
	\end{tikzcd}
	\]
	of functors and natural transformations above commutes. Said natural transformations are called tangent transformations
	\item Vertical and horizontal composition: As in  $\fCat$.
	\item Vertical composition identities: As in $\fCat$.
\end{itemize}
\end{definition}

There are two canonical forgetful functors, $\Tan \to \Cat$ and $\fTan \to \fCat$, which are defined by $(\Cscr, \Tbb) \mapsto \Cscr$ on $0$-morphisms (objects), $(F, \alpha) \mapsto F$ on $1$-morphisms, and by $\rho \mapsto \rho$ on $2$-morphisms. Ultimately our main two theorems in this paper (cf.\@ Theorems \ref{Thm: Ind Tangent Category} and \ref{Thm: Functoriality of ind-category}) can be summaried as showing that the $\Ind$-pseudofunctor defined on $\fCat$ can be restricted/extended to a pseudofunctor $\Ind:\fTan \to \fTan$ and that the diagram
\[
\begin{tikzcd}
\fTan \ar[d, swap]{}{\Forget} \ar[r]{}{\Ind} & \fTan \ar[d]{}{\Forget} \\
\fCat \ar[r, swap]{}{\Ind} & \fCat
\end{tikzcd}
\]
commutes strictly; cf.\@ Corollary \ref{Cor: Commute strictly diagram} below.

\section{Tangent Ind-Categories}

In this section we examine tangent structures on ind-completions of tangent categories. The general intuition here is that if we have a category of geometric objects in which we can form tangent bundles, when we take formal completions of these objects along closed subobjects we can make a formal tangent object along these completions. As an application, we will show and discuss the Zariski tangent structure on the category of formal schemes. We begin this journey with a recollection of basic properties of the ind-category.

\begin{remark}
	We have chosen to follow the more explicit and hands-on definition of tangent categories for the majority this section (as opposed to the Weil algebra approach) for use later in describing equivariant tangent ind-categories and introducing the tangent structure on equivariant formal schemes. While this does mean there is more explicit combinatorial work to be done, it has the benefit of making the theory more explicit and amenable to calculations in practice. That being said, we will review Weil algebras briefly before discussing the free tangent functor.
\end{remark}

If $\Cscr$ is a category, the ind-category $\Ind(\Cscr)$ is the free filtered formal cocompletion of $\Cscr$. The construction and presentation of $\Ind(\Cscr)$ we use is essentially the one in \cite[Section I.8.2]{SGA4}; an alternative but equivalent presentation is in \cite[Chapter 6]{KashiwaraSchapira}. Where necessary in what follows, we write $\yon:\Cscr \to [\Cscr^{\op},\Set]$ for the Yoneda embedding of $\Cscr$. We also will generally omit size-related issues in our discussion, but warn here that we should be careful to work with categories which are $\Uscr$-small for some fixed Grothendieck universe $\Uscr$ in order to avoid issues such as having large hom-sets. That being said, we will not generally worry about size-related issues in this paper.

\begin{definition}
	If $\Cscr$ is a locally small category, its presheaf-valued ind-category $\Ind_{\mathbf{PSh}}(\Cscr)$ is defined as follows:
	\begin{itemize}
		\item Objects: Presheaves $P \in [\Cscr^{\op},\Set]$ for which there is an isomorphism of functors
		\[
		P \cong \lim_{\substack{\longrightarrow \\ i \in I}} \Cscr(-,X_i) = \lim_{\substack{\longrightarrow \\ i \in I}} \yon X_i
		\]
		where $I$ is a filtered category.
		\item Morphisms: For any objects $P,Q \in \Ind(\Cscr)_0$, we define
		\[
		\Ind(\Cscr)(P,Q) := [\Cscr^{\op},\Set](P,Q).
		\]
		\item Composition and identities: As in $[\Cscr^{\op},\Set]$.
	\end{itemize}
	We will call the objects of $\Ind_{\mathbf{PSh}}(\Cscr)$ ind-presheaves of $\Cscr$.
\end{definition}
In \cite{KashiwaraSchapira} the category $\Ind_{\mathbf{PSh}}(\Cscr)$ is taken as the definition of the ind-category of $\Cscr$. We will take an approach more like the one used in \cite{SGA4} where we think of $\Ind(\Cscr)$ as the category of filtered colimits by keeping track of the filtered diagram in $\Cscr$ which defines the colimit. The only difficult part of this more ``representation agnostic'' view is that it is more difficult to define hom-sets. However, from the presheaf description we have above and the co(Yoneda Lemma) we get natural isomorphisms which tell us how to define the $\Ind$-hom sets. Fix two $\Ind$-presheaves $P$ and $Q$ over $\Cscr$ and write each as a filtered colimit
\[
P \cong \lim_{\substack{\longrightarrow \\ i \in I}}\yon(X_i)
\]
and
\[
Q \cong \lim_{\substack{\longrightarrow \\ j \in J}}\yon(X_j).
\] 
We then have by definition, the co(Yoneda Lemma), the Yoneda Lemma, and the fact that (co)limits in $[\Cscr^{\op},\Set]$ are representable (cf.\@ \cite[Equation I.8.2.5.1]{SGA4})
\begin{align*}
\Ind_{\mathbf{PSh}}(\Cscr)(P,Q) &\cong [\Cscr^{\op},\Set]\left(\lim_{\substack{\longrightarrow \\ i \in I}} \yon X_i, \lim_{\substack{\longrightarrow \\ j \in J}} \yon Y_j\right) \cong \lim_{\substack{\longleftarrow \\ i \in I}} [\Cscr^{\op},\Set]\left(\yon X_i, \lim_{\substack{\longrightarrow \\ j \in J}} \yon Y_j\right) \\
&\cong \lim_{\substack{\longleftarrow \\ i \in I}}\left(\lim_{\substack{\longrightarrow \\ j \in J}}[\Cscr^{\op},\Set](\yon X_i, \yon Y_j)\right) \cong \lim_{\substack{\longleftarrow \\ i \in I}}\left(\lim_{\substack{\longrightarrow \\ j \in J}}\Cscr(X_i,Y_j)\right).
\end{align*}
In particular, this leads us to a definition of hom-sets in a ``representation agnostic'' formulation of $\Ind(\Cscr)$.
\begin{definition}[{\cite[Section I.8.2]{SGA4}}]
	If $\Cscr$ is a category then we define an ind-object of $\Cscr$ to be a functor $F:I \to \Cscr$ where $I$ is a filtered category. If no confusion is likely to arise, we will write ind-objects as systems $\ul{X} = (X_i)_{i \in I} = (X_i)$ where $X_i := F(i)$ for all $i \in I_0$ and leave the transition morphisms $F(f), f \in I_1$ and even the functor $F$ implicit.
\end{definition}
\begin{definition}[{\cite[Section I.8.2]{SGA4}}]
	The $\Ind$-category $\Ind(\Cscr)$ of a category $\Cscr$ is defined as follows:
	\begin{itemize}
		\item Objects: Ind-objects $\ul{X}$ of $\Cscr$.
		\item Morphisms: We define the hom-sets $\Ind(\Cscr)(\ul{X},\ul{Y})$ for $\ul{X} = (X_i)_{i \in I}$ and $\ul{Y} = (Y_j)_{j \in J}$ via
		\[
		\Ind(\Cscr)(\ul{X},\ul{Y}) := \lim_{\substack{\longleftarrow \\ i \in I}}\left(\lim_{\substack{\longrightarrow \\ j \in J}}\Cscr(X_i,Y_j)\right).
		\]
		\item Composition: Induced by degree-wise composition after taking the filtered limit of the filtered colimit of maps $X_i \to Y_j$.
		\item Identities: $\id_{(X_i)}$ is the identity transformation for the functor $\ul{X}:I \to \Cscr$.
	\end{itemize}
	
\end{definition}
\begin{remark}\label{Remark: Morphisms in ind between same indexing cat}
	While the definition of hom-sets above looks complicated in general, defining morphisms between ind-objects that have the same indexing category is straightforward. In the case $\ul{X} = (X_i)_{i \in I}$ and $\ul{Y} = (Y_i)_{i \in I}$ then a morphism $\rho:\ul{X} \to \ul{Y}$ is given by a collection of maps $\rho_i:X_i \to Y_i$ for all $i \in I$ which are compatible with the transition morphisms of each functor. In particular, $\rho$ is a natural transformation
	\[
	\begin{tikzcd}
	I \ar[rr, bend left = 30, ""{name = U}]{}{F} \ar[rr, bend right = 30, swap, ""{name = L}]{}{G} & & \Cscr \ar[from = U, to = L, Rightarrow, shorten <= 4pt, shorten >= 4pt]{}{\rho}
	\end{tikzcd}
	\]
	when $F$ and $G$ are the functors representing $\ul{X}$ and $\ul{Y}$, respectively.
\end{remark}
When working with $\Ind(\Cscr)$ it is frequently helpful to represent each object as a presheaf (and in fact we will use this technique below); we will thus follow \cite{SGA4} and define the functor $L:\Ind(\Cscr) \to [\Cscr^{\op},\Set]$ as follows:
\begin{itemize}
	\item For any ind-object $\ul{X} = (X_i)_{i \in I}$ define $L(\ul{X})$ to be the presheaf
	\[
	L(\ul{X}) = \lim_{\substack{\longrightarrow \\ i \in I}}\yon X_i.
	\]
	\item For any morphism $\rho:\ul{X} \to \ul{Y}$ in $\Ind(\Cscr)$ we define $L(\rho)$ to be the image of $\rho$ under the natural isomorphism
	\[
	\Ind(\Cscr)(\ul{X},\ul{Y}) = \lim_{\substack{\longleftarrow \\ i \in I}}\left(\lim_{\substack{\longrightarrow \\ j \in J}}\Cscr(X_i,Y_j)\right) \cong [\Cscr^{\op},\Set]\left(\lim_{\substack{\longrightarrow \\ i \in I}} \yon X_i, \lim_{\substack{\longrightarrow \\ j \in J}} \yon Y_j\right) = [\Cscr^{\op},\Set](L\ul{X},L\ul{Y}).
	\]
\end{itemize}
\begin{proposition}[{\cite[Section I.8.2.4; cf.\@ Line I.8.2.4.8]{SGA4}}]\label{Prop: L functor on ind is fully faithful}
	The functor $L:\Ind(\Cscr) \to [\Cscr^{\op}, \Set]$ is fully faithful, exact, and has essential image equal to the category $\Ind_{\mathbf{PSh}}(\Cscr)$. In particular, $\Ind(\Cscr) \simeq \Ind_{\mathbf{PSh}}(\Cscr)$.
\end{proposition}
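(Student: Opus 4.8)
The plan is to establish the three clauses of the proposition — that $L$ is fully faithful, that it is exact, and that its essential image is $\Ind_{\mathbf{PSh}}(\Cscr)$ — in that order, the first and third being essentially formal consequences of the way $L$ and $\Ind(\Cscr)$ were set up, and the second reducing to the compatibility of filtered colimits with finite limits in a presheaf category. The closing assertion $\Ind(\Cscr) \simeq \Ind_{\mathbf{PSh}}(\Cscr)$ is then just the standard fact that a fully faithful functor is an equivalence onto its essential image.

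First I would check that $L$ is a genuine functor and that it is fully faithful. On objects $L$ is $\ul{X} = (X_i)_{i \in I} \mapsto \varinjlim_{i}\yon X_i$, and on morphisms it acts precisely through the chain of natural isomorphisms displayed just above, identifying $\Ind(\Cscr)(\ul X,\ul Y) = \varprojlim_i\varinjlim_j\Cscr(X_i,Y_j)$ with $[\Cscr^{\op},\Set]\big(\varinjlim_i \yon X_i,\varinjlim_j \yon Y_j\big) = [\Cscr^{\op},\Set](L\ul X,L\ul Y)$. Hence full faithfulness is immediate once one knows this isomorphism is natural in both $\ul X$ and $\ul Y$; that same naturality is exactly what forces $L$ to respect composition (respect for identities being clear), so it is also what makes $L$ a functor in the first place. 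Naturality is a routine diagram chase through the four isomorphisms whose composite gives the displayed one — pulling the colimit over $I$ out of the first hom-argument as a limit over $I$, using that $[\Cscr^{\op},\Set](\yon X_i,-)$ is evaluation at $X_i$ together with pointwise computation of colimits of presheaves, and a final application of the Yoneda lemma — none of which is subtle; this is the content of \cite[I.8.2.5.1]{SGA4}.

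Next, for the essential image: since $L(\ul X) = \varinjlim_{i\in I}\yon X_i$ with $I$ filtered, every object in the image of $L$ already has the shape defining an object of $\Ind_{\mathbf{PSh}}(\Cscr)$, so the essential image is contained in $\Ind_{\mathbf{PSh}}(\Cscr)$. Conversely, given $P \cong \varinjlim_{i\in I}\Cscr(-,X_i)$ with $I$ filtered, I would note that the diagram $I \to [\Cscr^{\op},\Set]$ realizing this colimit lands, by definition, in the essential image of the Yoneda embedding, and hence — by full faithfulness of $\yon$ — factors, uniquely up to isomorphism, as $\yon$ composed with an honest functor $I \to \Cscr$, i.e. an ind-object $\ul X = (X_i)_{i\in I}$ of $\Cscr$; then $L(\ul X) \cong P$. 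Therefore the essential image is all of $\Ind_{\mathbf{PSh}}(\Cscr)$, and combined with full faithfulness this yields $\Ind(\Cscr) \simeq \Ind_{\mathbf{PSh}}(\Cscr)$.

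Finally, exactness of $L$, in the sense that it commutes with finite limits and with filtered colimits — this is the one clause I expect to require real bookkeeping rather than just conceptual input. Preservation of filtered colimits amounts to the observation that a filtered colimit of objects of the form $\varinjlim_i \yon X_i$ is again of that form (one combines the indexing categories into a single filtered category), i.e. that $\Ind_{\mathbf{PSh}}(\Cscr)$ is closed under filtered colimits inside $[\Cscr^{\op},\Set]$. Preservation of finite limits rests on two ingredients: that finite limits commute with filtered colimits pointwise in $\Set$, hence in $[\Cscr^{\op},\Set]$; and the description (from \cite[I.8.2]{SGA4}) of a finite limit of ind-objects indexed by filtered categories $I_1,\dots,I_n$ as an ind-object over the filtered category $I_1\times\cdots\times I_n$ whose terms are the corresponding finite limits in $\Cscr$, when those exist. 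Granting the latter, $L$ visibly transports such a limit to the corresponding limit of presheaves. So the main obstacle is not conceptual but the reindexing-of-filtered-diagrams machinery needed to make the formation of limits in $\Ind(\Cscr)$ precise; for the purposes of this paper one may alternatively just invoke \cite[I.8.2.4.8]{SGA4} for this last clause rather than reproving it.
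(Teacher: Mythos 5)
Your proposal is correct, but it is worth noting that the paper does not actually prove this proposition: it is imported verbatim from SGA4 (the bracketed citation is the ``proof''), and the statement is then used as a black box later (conservativity of $L$, transfer of pullbacks and equalizers, commutation of filtered colimits with finite limits). What you have written is essentially a reconstruction of the standard SGA4/Kashiwara--Schapira argument, and each of your three steps is sound: full faithfulness is indeed built into the definition of $L$ on morphisms once the defining isomorphism $\Ind(\Cscr)(\ul X,\ul Y)\cong[\Cscr^{\op},\Set](L\ul X,L\ul Y)$ is checked to be natural and compatible with composition; the essential-image argument via full faithfulness of $\yon$ (transition maps of a diagram of representables descend uniquely to $\Cscr$) is exactly right; and your closure-under-filtered-colimits observation does yield preservation of filtered colimits, since a full subcategory closed under colimits of a given shape computes them as in the ambient category. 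Two small remarks. First, ``exact'' here should be read as left exactness plus commutation with filtered colimits, which is your reading and the only one consistent with how the paper uses the result --- $L$ does \emph{not} preserve finite coproducts in general, since $\yon(X\sqcup Y)\not\cong\yon X\sqcup\yon Y$. Second, the only genuinely nontrivial bookkeeping, namely re-indexing an arbitrary finite diagram in $\Ind(\Cscr)$ (whose objects live over different filtered categories and whose morphisms are not levelwise) as a filtered colimit of diagrams in $\Cscr$ before applying the commutation of filtered colimits with finite limits, is the part you defer to SGA4; that is a legitimate move here, and it mirrors what the paper itself does by citing rather than proving the proposition.
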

This means that when checking to see if certain objects are isomorphic in the ind-category, where constructions are often more straightforward to perform (there is less worry about making sure you haven't done something weird and misused the Yoneda Lemma), and then check for isomorphisms through the functor $L$.\footnote{Because $L$ is fully faithful, it is in particular conservative so isomorphisms $L\ul{X} \cong L\ul{Y}$ come uniquely from isomorphisms $\ul{X} \cong \ul{Y}$.} We now recall and discuss the construction of ind-functors so that we can then build ind-natural transformations and introduce the ind-pseudofunctor.

\begin{proposition}[{\cite[Section I.8.6.1]{SGA4}}]\label{Prop: Ind functors}
	If $F:\Cscr \to \Dscr$ is a functor then there is a functor $\Ind(F):\Ind(\Cscr) \to \Ind(\Dscr)$ for which given a composable pair of functors $\Cscr \xrightarrow{F} \Dscr \xrightarrow{G} \Escr$ there is a natural isomorphism
	\[
	\Ind(G \circ F) \cong \Ind(G) \circ \Ind(F).
	\]
\end{proposition}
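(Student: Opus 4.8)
The plan is to build $\Ind(F)$ by postcomposing ind-objects with $F$, to equip it with the induced action on the double limits defining the $\Ind$-hom-sets, and then to read off the composition isomorphism either directly or through the presheaf model of Proposition~\ref{Prop: L functor on ind is fully faithful}. First I would set, on objects, $\Ind(F)(\ul{X}) := (F X_i)_{i \in I}$ for an ind-object $\ul{X} = (X_i)_{i \in I}$; this is again an ind-object of $\Dscr$, since if $X \colon I \to \Cscr$ is the defining functor with $I$ filtered, then $F \circ X \colon I \to \Dscr$ has the same (filtered) indexing category. On morphisms, for $\ul{X} = (X_i)_{i \in I}$ and $\ul{Y} = (Y_j)_{j \in J}$ the functor $F$ induces maps $\Cscr(X_i, Y_j) \to \Dscr(F X_i, F Y_j)$ that commute with the transition morphisms of $\ul{X}$ (contravariantly in $i$) and of $\ul{Y}$ (covariantly in $j$), so they pass to a map
\[
\lim_{\substack{\longleftarrow \\ i \in I}} \lim_{\substack{\longrightarrow \\ j \in J}} \Cscr(X_i, Y_j) \longrightarrow \lim_{\substack{\longleftarrow \\ i \in I}} \lim_{\substack{\longrightarrow \\ j \in J}} \Dscr(F X_i, F Y_j) = \Ind(\Dscr)\big(\Ind(F)\ul{X}, \Ind(F)\ul{Y}\big),
\]
and I would let $\Ind(F)(\rho)$ be the image of $\rho$ under this map.

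Since $F$ preserves identities and composition and the displayed map is computed levelwise on representatives, it is routine that $\Ind(F)(\id_{\ul X}) = \id_{\Ind(F)\ul X}$ and $\Ind(F)(\sigma \circ \rho) = \Ind(F)(\sigma)\circ\Ind(F)(\rho)$, so $\Ind(F)$ is a functor. To cross-check against the construction in \cite{SGA4} and to make later coherences transparent, I would also observe that the cocontinuous left Kan extension $\operatorname{Lan}_{\yon}(\yon_{\Dscr}\circ F) \colon [\Cscr^{\op},\Set] \to [\Dscr^{\op},\Set]$ preserves filtered colimits and carries $\yon X$ to $\yon(F X)$, hence restricts along the equivalence $L$ of Proposition~\ref{Prop: L functor on ind is fully faithful} to a functor on ind-presheaves that agrees with $\Ind(F)$ up to the canonical natural isomorphism $\operatorname{Lan}_{\yon}(\yon_{\Dscr} F)\big(\varinjlim_{i} \yon X_i\big) \cong \varinjlim_{i} \yon(F X_i)$.

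Finally, for the isomorphism $\Ind(G\circ F) \cong \Ind(G)\circ\Ind(F)$, note that on objects $(G\circ F)\circ X = G\circ(F\circ X)$ and on hom-sets the levelwise actions agree strictly, so with the postcomposition definition one in fact has an equality $\Ind(G\circ F) = \Ind(G)\circ\Ind(F)$; the honest natural \emph{iso}morphism of the statement materializes only in the presheaf model, where it is the pseudofunctoriality constraint for left Kan extension along Yoneda applied to $\yon_{\Escr}\circ G$ and $\yon_{\Dscr}\circ F$, transported back along $L$. I expect the only real work to be bookkeeping: checking that the levelwise action of $F$ on hom-sets is simultaneously compatible with the inner filtered colimit and the outer cofiltered limit in the $\Ind$-hom formula, and — if one prefers the isomorphism to the strict equality — verifying its naturality in $\ul X$ and the triangle with identity functors. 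No essential difficulty should arise beyond unwinding the definitions.
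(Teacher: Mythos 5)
Your construction is correct and is essentially the one the paper relies on: the paper gives no proof of this proposition (it is quoted from SGA4), but Remark \ref{Remark: Ind functor and morphisms} records exactly your definition of $\Ind(F)$ — postcomposition $\ul{X} \mapsto F \circ \ul{X}$ on objects and the induced map on the $\lim\,\colim$ hom-sets on morphisms — so your argument fills in the same route. Your additional observation that with this explicit definition one even gets a strict equality $\Ind(G\circ F)=\Ind(G)\circ\Ind(F)$ (hence a fortiori the asserted natural isomorphism) is also sound; the paper instead carries an abstract compositor $\phi_{F,G}$ (as one must in the presheaf/Kan-extension model you mention), which is why it later proves Lemma \ref{Lemma: Ind of composition on morphism from same index category agree} to identify the two composites on morphisms between ind-objects with a common indexing category.
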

\begin{remark}\label{Remark: Ind functor and morphisms}
The construction of the functor $\Ind(F):\Ind(\Cscr) \to \Ind(\Dscr)$ is important for our applications, so we will describe it here in the two main ways of working with the category $\Ind(\Cscr)$: the reduced $\ul{X} = (X_i)$ language and the more formal language $\ul{X}:I \to \Cscr$. In the first case, the functor $\Ind(F)(\ul{X})$ acts by
\[
\Ind(F)(\ul{X}) := (FX_i).
\]
In the second case we find that
\[
\Ind(F)(\ul{X}) = F \circ \ul{X}:I \to \Dscr.
\]
The assignment on morphisms is more complicated to describe. If we have a morphism \[
\rho \in \Ind(\Cscr)((X_i)_{i \in I}, (Y_j)_{j \in J})
\] 
then $\Ind(F)(\rho)$ is induced by the assignment
\[
\lim_{I}\left(\colim_{J}\rho_{ij}\right) \mapsto \lim_{I}\left(\colim_{J}F(\rho_{ij})\right)
\]
However, there is a clean description in case $\ul{X}, \ul{Y}:I \to \Cscr$ are functors defined on the same indexing category. In this case a morphism $\rho:\ul{X} \to \ul{Y}$ is exactly a natural transformation
\[
\begin{tikzcd}
I \ar[rr, bend left = 30, ""{name = Up}]{}{\ul{X}} \ar[rr, bend right =30, ""{name = Down}, swap]{}{\ul{Y}} & & \Cscr \ar[from = Up, to = Down, Rightarrow, shorten <= 4pt, shorten >= 4pt]{}{\rho}
\end{tikzcd}
\]
and so we define $\Ind(F)$ algebraically by
\[
\Ind(F)(\rho_i) = (F\rho_i)
\]
and, in the categorical perspective,
\[
\Ind(F)(\rho) = F \ast \rho.
\]
\end{remark}
\begin{corollary}\label{Cor: Ind is rigidified}
For any category $\Cscr$ there is an equality $\Ind(\id_{\Cscr}) = \id_{\Ind(\Cscr)}$.
\end{corollary}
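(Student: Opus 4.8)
The plan is to unwind the explicit description of the functor $\Ind(F)$ recorded in Remark~\ref{Remark: Ind functor and morphisms} in the special case $F = \id_{\Cscr}$, and to check that it reproduces $\id_{\Ind(\Cscr)}$ on the nose — both on objects and on arrows — rather than merely up to natural isomorphism. Since both $\Ind(\id_{\Cscr})$ and $\id_{\Ind(\Cscr)}$ are already known to be functors (the former by Proposition~\ref{Prop: Ind functors}), it suffices to compare their values on objects and on morphisms.

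First I would treat objects. An ind-object $\ul{X} = (X_i)_{i \in I}$ is by definition a functor $\ul{X}\colon I \to \Cscr$ out of a filtered category $I$, and Remark~\ref{Remark: Ind functor and morphisms} gives $\Ind(F)(\ul{X}) = F \circ \ul{X}$. Taking $F = \id_{\Cscr}$ yields $\Ind(\id_{\Cscr})(\ul{X}) = \id_{\Cscr} \circ \ul{X} = \ul{X}$, so $\Ind(\id_{\Cscr})$ and $\id_{\Ind(\Cscr)}$ agree on objects. Next I would treat morphisms. Fix ind-objects $\ul{X} = (X_i)_{i \in I}$, $\ul{Y} = (Y_j)_{j \in J}$ and a morphism $\rho \in \Ind(\Cscr)(\ul{X}, \ul{Y}) = \lim_{i \in I}\bigl(\colim_{j \in J}\Cscr(X_i, Y_j)\bigr)$. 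By Remark~\ref{Remark: Ind functor and morphisms}, $\Ind(F)(\rho)$ is the image of $\rho$ under the map
\[
\lim_{i \in I}\left(\colim_{j \in J}\Cscr(X_i, Y_j)\right) \longrightarrow \lim_{i \in I}\left(\colim_{j \in J}\Dscr(FX_i, FY_j)\right)
\]
obtained by applying $F$ to each hom-set $\Cscr(X_i, Y_j)$ and then passing to the double (co)limit (equivalently, in the same-index presentation, $\Ind(F)(\rho) = F \ast \rho$). For $F = \id_{\Cscr}$ the map $\Cscr(X_i, Y_j) \to \Cscr(X_i, Y_j)$ induced on each hom-set is the identity, hence so is the induced map on the double (co)limit, and therefore $\Ind(\id_{\Cscr})(\rho) = \rho = \id_{\Ind(\Cscr)}(\rho)$.

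I do not anticipate a genuine obstacle; the single point that wants care is confirming that the construction of $\Ind(F)$ furnished by Proposition~\ref{Prop: Ind functors} (following \cite[Section I.8.6.1]{SGA4}) is honestly rigid — that it involves no choice of colimit representatives which would weaken the conclusion from an equality to a mere isomorphism. This is precisely what the formulas $\Ind(F)(\ul{X}) = F \circ \ul{X}$ and $\Ind(F)(\rho_i) = (F\rho_i)$ of Remark~\ref{Remark: Ind functor and morphisms} encode, so once those are taken as given the corollary is immediate. One could alternatively route the argument through the fully faithful embedding $L$ of Proposition~\ref{Prop: L functor on ind is fully faithful}, but since $L$ only detects isomorphisms rather than strict equalities this would not by itself deliver the desired equality, so the direct computation above is the cleaner path.
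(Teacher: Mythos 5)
Your proposal is correct and follows essentially the same route as the paper: check the object assignment directly from the explicit construction, and for morphisms observe that applying $\id_{\Cscr}$ to each $\rho_{ij}$ in the double (co)limit is the identity assignment. Your extra remarks on the rigidity of the construction are fine but not needed beyond what the paper's proof already uses.
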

\begin{proof}
The verification that $\id_{\Ind(\Cscr)}(\ul{X}) = \ul{X} = \Ind(\id_{\Cscr})(\ul{X})$ for any object $\ul{X}$ is trivial and omitted. For the case of morphisms note that if $\lim(\colim \rho_{ij}):(X_i) \to (Y_j)$ is any morphism then $\Ind(\id_{\Cscr})$ acts on $\lim(\colim \rho_{ij})$ via
\[
\lim(\colim \rho_{ij}) \mapsto \lim\bigg(\colim\big(\id_{\Cscr}(\rho_{ij})\big)\bigg) = \lim(\colim(\rho_{ij})),
\]
which is exactly the identity assignment.
\end{proof}
\begin{remark}
As we proceed in this paper, for any composable pair of functors $\Cscr \xrightarrow{F} \Dscr \xrightarrow{G} \Escr$ we write $\phi_{F,G}$ for the compositor natural isomorphism:
\[
\begin{tikzcd}
\Ind(\Cscr) \ar[rr, bend left = 30, ""{name = U}]{}{\Ind(G) \circ \Ind(F)} \ar[rr, swap, bend right = 30, ""{name = L}]{}{\Ind(G \circ F)} & & \Ind(\Escr) \ar[from = U, to = L, Rightarrow, shorten <= 4pt, shorten >= 4pt]{}{\phi_{F,G}} \ar[from = U, to = L, Rightarrow, shorten <= 4pt, shorten >= 4pt, swap]{}{\cong}
\end{tikzcd}
\]
\end{remark}

While in general we do only have a natural isomorphism $\Ind(G \circ F) \cong \Ind(G) \circ \Ind(F)$, the lemma we prove below (for use later when we prove that $\hat{\ell}$ and $\hat{c}$ are components of bundle maps) shows that these compositor natural isomorphisms are actually relatively well-behaved in the sense that the action of $\Ind(G) \circ \Ind(F)$ and $\Ind(G \circ F)$ on certain classes of morphisms are the same.
\begin{lemma}\label{Lemma: Ind of composition on morphism from same index category agree}
	Let $F:\Cscr \to \Dscr$ and $G:\Dscr \to \Escr$ be functors. If $\ul{X}, \ul{Y}:I \to \Cscr$ are two objects in $\Ind(\Cscr)$ with ${\rho} = (\rho_i)_{i \in I}:\ul{X} \to \ul{Y}$ an ind-morphism, then
	\[
	\big(\Ind(G) \circ \Ind(F)\big)(\ul{\rho}) = \Ind(G \circ F)(\ul{\rho}).
	\]
\end{lemma}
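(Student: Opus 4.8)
The plan is to reduce everything to the clean ``same indexing category'' description of $\Ind$ on morphisms recorded in Remark~\ref{Remark: Ind functor and morphisms}, and then to invoke the strict associativity of whiskering in the $2$-category $\fCat$.

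First I would observe that, since $\ul{X}$ and $\ul{Y}$ are represented by functors $\ul{X}, \ul{Y}: I \to \Cscr$ on the \emph{same} index category $I$, the ind-morphism $\ul{\rho} = (\rho_i)_{i \in I}$ is precisely a natural transformation $\ul{\rho}: \ul{X} \Rightarrow \ul{Y}$, so Remark~\ref{Remark: Ind functor and morphisms} gives $\Ind(F)(\ul{\rho}) = F \ast \ul{\rho}$. The crucial point is now that applying an ind-functor does not change the index category: $\Ind(F)(\ul{X}) = F \circ \ul{X}$ and $\Ind(F)(\ul{Y}) = F \circ \ul{Y}$ are again represented by functors on the \emph{same} index category $I$ (namely $F \circ \ul{X}, F \circ \ul{Y}: I \to \Dscr$). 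Hence the same description of $\Ind(G)$ on morphisms applies to the natural transformation $F \ast \ul{\rho}$, and we get
\[
\big(\Ind(G) \circ \Ind(F)\big)(\ul{\rho}) = \Ind(G)(F \ast \ul{\rho}) = G \ast (F \ast \ul{\rho}).
\]
On the other hand $G \circ F: \Cscr \to \Escr$ is a single functor and $\ul{X}, \ul{Y}$ share the index category $I$, so the same remark yields $\Ind(G \circ F)(\ul{\rho}) = (G \circ F) \ast \ul{\rho}$. It therefore suffices to check $G \ast (F \ast \ul{\rho}) = (G \circ F) \ast \ul{\rho}$, which is the strict associativity of whiskering in $\fCat$: componentwise both sides send $i \in I_0$ to $G(F(\rho_i))$. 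Since moreover $\big(\Ind(G) \circ \Ind(F)\big)(\ul{X}) = G \circ F \circ \ul{X} = \Ind(G \circ F)(\ul{X})$ and likewise for $\ul{Y}$ (composition of functors being strictly associative), the two ind-morphisms in the statement have literally the same source and target, so this equality of whiskerings is exactly the claimed equality. If one prefers an even more pedestrian route, the componentwise description of $\Ind(-)$ on morphisms in Remark~\ref{Remark: Ind functor and morphisms} gives $\Ind(F)(\rho_i) = (F\rho_i)$ and $\Ind(G)(F\rho_i) = (GF\rho_i) = \big((G\circ F)(\rho_i)\big) = \Ind(G \circ F)(\rho_i)$ directly.

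I do not expect a genuine obstacle here: the only subtlety is making sure the special ``same index category'' formula for $\Ind(-)$ on morphisms is legitimately applied twice — once to push $\ul{\rho}$ through $\Ind(F)$, and once to push the resulting $F \ast \ul{\rho}$ through $\Ind(G)$ — which is allowed precisely because ind-functors act on objects by post-composition with the indexing functor and so preserve the indexing category. Everything else is bookkeeping with whiskering, and no use of the compositor isomorphisms $\phi_{F,G}$ is needed (indeed the content of the lemma is that on this class of morphisms $\phi_{F,G}$ acts as the identity).
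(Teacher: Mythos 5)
Your proof is correct and follows essentially the same route as the paper: reduce to the whiskering description $\Ind(F)(\ul{\rho}) = F \ast \ul{\rho}$ from Remark \ref{Remark: Ind functor and morphisms} (applicable twice since ind-functors preserve the indexing category), and then conclude via the strict associativity of whiskering that $G \ast (F \ast \ul{\rho}) = (G \circ F) \ast \ul{\rho}$; the paper writes this last step through identity natural transformations $\iota_F, \iota_G$ and the interchange law, but it is the same computation.
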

\begin{proof}
	Because both $\ul{X}$ and $\ul{Y}$ have the same indexing category, it follows from Remark \ref{Remark: Ind functor and morphisms}	
	\[
	\Ind(G \circ F)(\ul{\rho}) = (G \circ F) \ast \rho
	\]
	Similarly, we find that
	\[
	\Ind(F)({\rho}) = F \ast \rho = \iota_F \ast \rho
	\]
	where $\iota_F$ is the identity natural transformation on $F$. Then
	\[
	\Ind(G)\big(F\rho_i\big)_{i \in I} = G \ast (\iota_F\ast \rho) = \iota_G\ast(\iota_F \ast \rho) = (\iota_G\ast\iota_F) \ast \rho = (\iota_{G \circ F}) \ast \rho.
	\]
	Thus we conclude that
	\[
	\Ind(G \circ F)(\rho_i) = (G \circ F) \ast \rho= \iota_{G\circ F} \ast \rho = \iota_G \ast (\iota_F \ast \rho) = \Ind(G)\big(\Ind(F)(\rho)\big),
	\]
	as was desired.
\end{proof}

\begin{proposition}\label{Prop: Ind nat transform induced by nat transform}
	Let $\Cscr, \Dscr$ be categories with functors $F,G:\Cscr \to \Dscr$ and let $\alpha:F \Rightarrow G$ be a natural transformation. Then there is a natural transformation
	\[
	\Ind(\alpha):\Ind(F) \Rightarrow \Ind(G).
	\]
\end{proposition}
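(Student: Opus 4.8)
The plan is to define $\Ind(\alpha)$ componentwise by whiskering $\alpha$ against each ind-object, and then to verify naturality by reducing to the naturality square of $\alpha$ in $\Cscr$.

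First I would fix an ind-object $\ul{X} = (X_i)_{i\in I}$, represented by a functor $\ul{X}\colon I \to \Cscr$. By Remark \ref{Remark: Ind functor and morphisms} we have $\Ind(F)(\ul{X}) = F \circ \ul{X}$ and $\Ind(G)(\ul{X}) = G \circ \ul{X}$, both objects of $\Ind(\Dscr)$ indexed by the \emph{same} filtered category $I$. By Remark \ref{Remark: Morphisms in ind between same indexing cat}, a morphism between ind-objects sharing an index category is precisely a natural transformation of the representing functors, so I would set
\[
\Ind(\alpha)_{\ul{X}} := \alpha \ast \ul{X} \colon F \circ \ul{X} \Rightarrow G \circ \ul{X},
\]
whose $i$-th component is $\alpha_{X_i}\colon FX_i \to GX_i$ (in the reduced language, $\Ind(\alpha)_{\ul X} = (\alpha_{X_i})_{i\in I}$). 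That this is a legitimate morphism of $\Ind(\Dscr)$ — i.e. that the $\alpha_{X_i}$ are compatible with the transition maps of $\ul{X}$ — is exactly the naturality of $\alpha$ applied to each transition morphism $X_i \to X_{i'}$ of $\ul{X}$.

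Next I would check that $\Ind(\alpha)$ is natural: given $\rho \in \Ind(\Cscr)(\ul{X}, \ul{Y})$ with $\ul{X} = (X_i)_{i\in I}$ and $\ul{Y} = (Y_j)_{j\in J}$, I must show $\Ind(G)(\rho) \circ \Ind(\alpha)_{\ul{X}} = \Ind(\alpha)_{\ul{Y}} \circ \Ind(F)(\rho)$ in $\Ind(\Dscr)$. Unwinding the definition of the hom-sets, $\rho$ is represented by a family of morphisms $\rho_i\colon X_i \to Y_{j(i)}$ of $\Cscr$ (one chosen representative for the class of $\rho_i$ in $\colim_{j\in J} \Cscr(X_i, Y_j)$, compatible over $I$), and by Remark \ref{Remark: Ind functor and morphisms} the morphisms $\Ind(F)(\rho)$ and $\Ind(G)(\rho)$ are represented by $(F\rho_i)$ and $(G\rho_i)$ respectively. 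Composing representatives, the left-hand side is represented at level $i$ by $G\rho_i \circ \alpha_{X_i}\colon FX_i \to GY_{j(i)}$ and the right-hand side by $\alpha_{Y_{j(i)}} \circ F\rho_i$, and these agree by the naturality square of $\alpha$ at $\rho_i$; since equality of morphisms in $\Ind(\Dscr)$ is tested levelwise in the limit over $I$, this gives naturality. Alternatively — and perhaps more cleanly, to avoid bookkeeping with the choices $j(i)$ — one can pass through the fully faithful functor $L\colon \Ind(\Dscr) \to [\Dscr^{\op}, \Set]$ of Proposition \ref{Prop: L functor on ind is fully faithful}, under which $L\,\Ind(F)(\ul X) = \colim_{i} \yon(FX_i)$ and morphisms become honest natural transformations of presheaves, and verify the square objectwise there.

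The only genuinely delicate point is this last step's handling of the $\lim$–$\colim$ presentation of $\Ind$-morphisms: one must check that composing the chosen representatives of $\Ind(F)(\rho)$, $\Ind(G)(\rho)$, $\Ind(\alpha)_{\ul X}$, $\Ind(\alpha)_{\ul Y}$ genuinely reduces the naturality square to a family of commuting squares in $\Cscr$, using filteredness of $I$ and $J$ to align index choices over any finite subdiagram. Everything else is formal. Finally, I would record (for later use in the pseudofunctoriality of $\Ind$) that $\alpha \mapsto \Ind(\alpha)$ is functorial in $\alpha$: it visibly sends identity $2$-cells to identities and respects vertical composition, since whiskering does, and it is compatible with horizontal composition against the functors $\Ind(-)$ via the compositor isomorphisms of Proposition \ref{Prop: Ind functors}.
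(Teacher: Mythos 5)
Your proposal is correct and follows essentially the same route as the paper: define $\Ind(\alpha)_{\ul{X}} = (\alpha_{X_i})_{i\in I}$ componentwise and reduce the naturality square in $\Ind(\Dscr)$ to the naturality squares of $\alpha$ in $\Cscr$ together with compatibility with the transition morphisms (the paper packages this as a commuting cube over the components $\rho_{ij}$, which is the same bookkeeping you carry out with chosen representatives or, alternatively, through $L$).
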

\begin{proof}
	Fix $\ul{X} = (X_i) \in \Ind(\Cscr)_0$. We define $\Ind(\alpha)_{\ul{X}}:\Ind(F)(X_i) \to \Ind(G)(X_i)$ by setting (cf.\@ Remark \ref{Remark: Morphisms in ind between same indexing cat})
	\[
	\Ind(\alpha)_{\ul{X}} := (\alpha_{X_i})_{i \in I}.
	\]
	To check that this is a natural transformation fix a map $\rho:(X_i)_{i \in I} \to (Y_j)_{j \in J}$ so that the sequences $(\rho_{ij})$ are a family of morphisms compatible with the transition morphisms of $\ul{X}$ and $\ul{Y}$. In particular, for any $f:i \to i^{\prime}$ in $I$ and $g:j \to j^{\prime}$ in $J$ the square
	\[
	\xymatrix{
		X_i \ar[r]^-{\rho_{ij}} \ar[d]_{f_{\ast}} & Y_j \ar[d]^{g_{\ast}} \\
		X_{i^{\prime}} \ar[r]_-{\rho_{i^{\prime}j^{\prime}}} & Y_{j^{\prime}}
	}
	\] 
	commutes. To prove naturality, it suffices to show that for any any $f:i \to i^{\prime}$ in $I$ and for any $g:j \to j^{\prime}$ in $J$, the cube
	\[
	\begin{tikzcd}
	FX_i \ar[dr, swap]{}{F\rho_{ij}} \ar[rr]{}{\alpha_{X_i}} \ar[dd, swap]{}{F(f_{\ast})} & & GX_i  \ar[dd, near start]{}{G(f_{\ast})} \ar[dr]{}{G\rho_{ij}} \\
	& FY_j  & & GY_j \ar[dd]{}{G(g_{\ast})} \\
	FX_{i^{\prime}} \ar[dr, swap]{}{F(\rho_{i^{\prime}j^{\prime}})} \ar[rr, near start]{}{\alpha_{X_{i^{\prime}}}} & & GX_{i^{\prime}} \ar[dr, near start]{}{G\rho_{i^{\prime}j^{\prime}}} \\
	& FY_{j^{\prime}} \ar[rr, swap]{}{\alpha_{Y_{j^{\prime}}}} & & GY_{j^{\prime}}
	\ar[from = 2-2, to = 2-4, crossing over, near start]{}{\alpha_{Y_j}} \ar[from = 2-2, to = 4-2, crossing over, near start]{}{F(g_{\ast})}	
	\end{tikzcd}
	\]
	commutes; note that we are writing $f_{\ast}$ for the image of $f:i \to i^{\prime}$ as a transition morphism of $\ul{X}$ and similarly for $g_{\ast}$ as  a transtion morphism of $\ul{Y}$. However, note that the squares
	\[
	\begin{tikzcd}
	FX_i \ar[r]{}{\alpha_{X_i}} \ar[d, swap]{}{F(f_{\ast})} & GX_i \ar[d]{}{G(f_{\ast})} \\
	FX_{i^{\prime}} \ar[r, swap]{}{\alpha_{X_{i^{\prime}}}} & GX_{i^{\prime}}
	\end{tikzcd}\quad
	\begin{tikzcd}
	FY_j \ar[r]{}{\alpha_{Y_j}} \ar[d, swap]{}{F(g_{\ast})} & GY_j \ar[d]{}{G(g_{\ast})} \\
	FY_{j^{\prime}} \ar[r, swap]{}{\alpha_{Y_j}} & GY_{j^{\prime}}
	\end{tikzcd}
	\]
	commute by the naturality of $\alpha$. Similarly, the squares
	\[
	\begin{tikzcd}
	FX_i \ar[r]{}{F\rho_{ij}} \ar[d, swap]{}{\alpha_{X_i}} & FY_j \ar[d]{}{\alpha_{Y_j}} \\
	GX_{i} \ar[r, swap]{}{G\rho_{ij}} & GY_j
	\end{tikzcd}\quad
	\begin{tikzcd}
	FX_{i^{\prime}} \ar[r]{}{F(\rho_{i^{\prime}j^{\prime}})} \ar[d, swap]{}{\alpha_{X_{i^{\prime}}}} & FY_{j^{\prime}} \ar[d]{}{\alpha_{Y_j^{\prime}}} \\
	GX_{i^{\prime}}  \ar[r, swap]{}{G(\rho_{i^{\prime}j^{\prime}})} & GY_{j^{\prime}}
	\end{tikzcd}
	\]
	commute by assumption of the $\rho_{ij}$ being compatible with the transition morphisms and the functoriality of $F$ and $G$. Thus the cube commutes, which shows that $\Ind(\alpha)$ is indeed natural after taking the filtered limit of the filtered colimit of maps.
\end{proof}
\begin{corollary}\label{Cor: Ind is strict 2-mor}
Given a two-cell of categories
\[
\begin{tikzcd}
\Cscr \ar[rr, bend left = 50, ""{name = U}]{}{F} \ar[rr, ""{name = M}]{}[description]{G} \ar[rr, bend right = 50, swap, ""{name = L}]{}{H} & & \Dscr \ar[from = U, to = M, Rightarrow, shorten <= 6pt]{}{\alpha} \ar[from = M, to = L, Rightarrow, shorten <= 6pt, shorten >= 4pt]{}{\beta}
\end{tikzcd}
\]
we have
\[
\Ind(\beta \circ \alpha) = \Ind(\beta) \circ \Ind(\alpha).
\]
\end{corollary}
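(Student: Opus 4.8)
The plan is to prove the equality of the two natural transformations $\Ind(\beta\circ\alpha)$ and $\Ind(\beta)\circ\Ind(\alpha)$ componentwise, since both are natural transformations $\Ind(F)\Rightarrow\Ind(H)$ between functors $\Ind(\Cscr)\to\Ind(\Dscr)$, and two natural transformations agree precisely when all of their components do. First I would fix an arbitrary ind-object $\ul{X}=(X_i)_{i\in I}$ of $\Cscr$ and unwind the explicit description supplied by Proposition~\ref{Prop: Ind nat transform induced by nat transform}: the component $\Ind(\alpha)_{\ul{X}}$ is the ind-morphism $(\alpha_{X_i})_{i\in I}\colon \Ind(F)(\ul{X})=(FX_i)\to(GX_i)=\Ind(G)(\ul{X})$, and likewise $\Ind(\beta)_{\ul{X}}=(\beta_{X_i})_{i\in I}$, while $\Ind(\beta\circ\alpha)_{\ul{X}}=\big((\beta\circ\alpha)_{X_i}\big)_{i\in I}=(\beta_{X_i}\circ\alpha_{X_i})_{i\in I}$, using that vertical composition of natural transformations in $[\Cscr,\Dscr]$ is computed componentwise.

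Second I would identify the composite $\big(\Ind(\beta)\circ\Ind(\alpha)\big)_{\ul{X}}=\Ind(\beta)_{\ul{X}}\circ\Ind(\alpha)_{\ul{X}}$ inside $\Ind(\Dscr)$. The key observation is that $\Ind(F)(\ul{X})$, $\Ind(G)(\ul{X})$, and $\Ind(H)(\ul{X})$ all carry the \emph{same} indexing category $I$, so by Remark~\ref{Remark: Morphisms in ind between same indexing cat} the morphisms in play are simply natural transformations of the underlying functors $I\to\Dscr$, and composition in $\Ind(\Dscr)$ of such morphisms is the degree-wise (i.e.\ componentwise) composition. Hence $\Ind(\beta)_{\ul{X}}\circ\Ind(\alpha)_{\ul{X}}=(\beta_{X_i})_{i\in I}\circ(\alpha_{X_i})_{i\in I}=(\beta_{X_i}\circ\alpha_{X_i})_{i\in I}$, which is exactly $\Ind(\beta\circ\alpha)_{\ul{X}}$ from the previous step. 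Since $\ul{X}$ was arbitrary, the two natural transformations coincide.

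The only point where the argument is not entirely formal — and hence the main obstacle — is justifying that composition in $\Ind(\Dscr)$, restricted to morphisms between ind-objects sharing a common indexing category, is literally componentwise. This is built into the definition of $\Ind(\Dscr)$ (``composition induced by degree-wise composition after taking the filtered limit of the filtered colimit of maps'') together with the identification in Remark~\ref{Remark: Morphisms in ind between same indexing cat}; alternatively one can transport the computation along the fully faithful embedding $L\colon\Ind(\Dscr)\to[\Dscr^{\op},\Set]$ of Proposition~\ref{Prop: L functor on ind is fully faithful}, under which the morphisms above become the maps of filtered colimits $\colim_i\yon(FX_i)\to\colim_i\yon(GX_i)$ induced degree-wise by the $\alpha_{X_i}$, whose composition is then manifestly degree-wise. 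Either way, once this indexing bookkeeping is settled the corollary follows immediately, and I do not expect any further difficulty.
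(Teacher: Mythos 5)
Your proposal is correct and follows essentially the same route as the paper: both fix an arbitrary $\ul{X}=(X_i)_{i\in I}$, apply the explicit componentwise description of Proposition \ref{Prop: Ind nat transform induced by nat transform}, and use that composition of ind-morphisms over a common indexing category is degree-wise to get $\Ind(\beta\circ\alpha)_{\ul{X}}=(\beta_{X_i}\circ\alpha_{X_i})_{i\in I}=\Ind(\beta)_{\ul{X}}\circ\Ind(\alpha)_{\ul{X}}$. Your extra care in justifying the degree-wise composition (via Remark \ref{Remark: Morphisms in ind between same indexing cat} or the embedding $L$) only makes explicit what the paper leaves implicit.
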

\begin{proof}
Let $\ul{X} = (X_i)_{i \in I}$ be an object of $\Ind(\Cscr)_0$. We then calculate from Proposition \ref{Prop: Ind nat transform induced by nat transform} that
\[
\Ind(\beta \circ \alpha)_{\ul{X}} = \left((\beta \circ \alpha)_{X_i}\right)_{i \in I} = (\beta_{X_i} \circ \alpha_{X_i})_{i \in I} = (\beta_{X_i})_{i \in I} \circ (\alpha_{X_i})_{i \in I} = \Ind(\beta)_{\ul{X}} \circ \Ind(\alpha)_{\ul{X}}.
\]
\end{proof}

We now need to know how the $\Ind$-assignment interacts with horizontal composition of natural transformations in order to verify that it is pseudofunctorial in $\fCat$. Recall that pseudofuctoriality declares immediately that if we have a $2$-cell
\[
\begin{tikzcd}
\Cscr \ar[rr, bend left = 30, ""{name = UL}]{}{F} \ar[rr, swap, bend right = 30, ""{name = LL}]{}{G}  & & \Dscr \ar[from = UL, to = LL, Rightarrow, shorten <= 4pt, shorten >= 4pt]{}{\alpha} \ar[rr, bend left = 30, ""{name = UR}]{}{H} \ar[rr, bend right = 30, swap, ""{name = LR}]{}{K} & & \Escr \ar[from = UR, to = LR, Rightarrow, shorten <= 4pt, shorten >= 4pt]{}{\beta}
\end{tikzcd}
\]
then we have the identity
\[
\Ind(\beta) \ast \Ind(\alpha) = \phi_{G,K}^{-1} \circ \Ind(\beta \ast \alpha) \circ \phi_{F,H};
\]
in particular, establishing the above identity is equivalent to establishing pseudofunctoriality on $2$-morphisms.
\begin{lemma}
The $\Ind$-assignment is pseudofunctorial on $2$-morphisms in the sense that if we have natural transformations $\alpha:F \Rightarrow G:\Cscr \to \Dscr$ and if $\beta:H \Rightarrow K:\Dscr \to \Escr$ then 
\[
\Ind(\beta \ast \alpha) = \phi_{G,K}^{-1} \circ \Ind(\beta) \ast \Ind(\alpha) \circ \phi_{F,H}.
\]
\end{lemma}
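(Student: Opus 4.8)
The plan is to prove the identity by evaluating both natural transformations at an arbitrary ind-object and checking that the resulting morphisms in $\Ind(\Escr)$ agree; since a natural transformation is determined by its components, this suffices. So fix $\ul{X} = (X_i)_{i \in I}$ in $\Ind(\Cscr)$. Everything in sight --- the functors $\Ind(H) \circ \Ind(F)$, $\Ind(H \circ F)$, $\Ind(K) \circ \Ind(G)$, and $\Ind(K \circ G)$ --- agrees on $\ul{X}$: each sends it to an ind-object over the \emph{same} indexing category $I$, namely $(HFX_i)_{i \in I}$ or $(KGX_i)_{i \in I}$. Hence all the morphisms we will compose live between ind-objects indexed by $I$, and by Remark \ref{Remark: Morphisms in ind between same indexing cat} each is just a family of $\Escr$-morphisms composed degreewise, with no filtered (co)limit subtleties intervening.

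The first substantive step is to dispose of the compositors $\phi_{F,H}$ and $\phi_{G,K}$. By Remark \ref{Remark: Ind functor and morphisms} the functors $\Ind(H) \circ \Ind(F)$ and $\Ind(H \circ F)$ literally coincide on $\ul{X}$, and by Lemma \ref{Lemma: Ind of composition on morphism from same index category agree} they coincide on every ind-morphism whose domain and codomain are indexed by a common category; together with Corollary \ref{Cor: Ind is rigidified} this forces $\phi_{F,H}$, and likewise $\phi_{G,K}$, to act as the identity on all the ind-objects occurring here (in particular at $\ul{X}$, $\Ind(F)(\ul{X})$, and $\Ind(G)(\ul{X})$). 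Consequently the right-hand side evaluated at $\ul{X}$ reduces to the single morphism $\big(\Ind(\beta) \ast \Ind(\alpha)\big)_{\ul{X}}$, and the whole claim collapses to the equality $\Ind(\beta \ast \alpha)_{\ul{X}} = \big(\Ind(\beta) \ast \Ind(\alpha)\big)_{\ul{X}}$.

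For the remaining computation I would expand both sides using the explicit formulas. On the left, Proposition \ref{Prop: Ind nat transform induced by nat transform} gives $\Ind(\beta \ast \alpha)_{\ul{X}} = \big((\beta \ast \alpha)_{X_i}\big)_{i \in I}$, and the standard formula for a Godement product rewrites the $i$-th component as $K(\alpha_{X_i}) \circ \beta_{FX_i}$. On the right, the interchange expression for a whiskered composite gives $\big(\Ind(\beta) \ast \Ind(\alpha)\big)_{\ul{X}} = \Ind(K)\big(\Ind(\alpha)_{\ul{X}}\big) \circ \Ind(\beta)_{\Ind(F)(\ul{X})}$; by Proposition \ref{Prop: Ind nat transform induced by nat transform} we have $\Ind(\alpha)_{\ul{X}} = (\alpha_{X_i})_{i \in I}$ and $\Ind(\beta)_{\Ind(F)(\ul{X})} = (\beta_{FX_i})_{i \in I}$, while by Remark \ref{Remark: Ind functor and morphisms} applying $\Ind(K)$ to the $I$-indexed morphism $(\alpha_{X_i})_{i \in I}$ yields $\big(K(\alpha_{X_i})\big)_{i \in I}$. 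Composing degreewise (Remark \ref{Remark: Morphisms in ind between same indexing cat}) produces $\big(K(\alpha_{X_i}) \circ \beta_{FX_i}\big)_{i \in I}$, which matches the left-hand side componentwise, as desired.

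The main obstacle is the bookkeeping in the second step: keeping careful track of which compositor isomorphisms appear, between which pairs of functors, and verifying that each acts as an identity on the ind-objects that actually occur. Once that rigidity is in hand --- which is exactly what Lemma \ref{Lemma: Ind of composition on morphism from same index category agree} and Corollary \ref{Cor: Ind is rigidified} provide --- the rest is the routine unwinding of the horizontal composite via the degreewise formulas of Proposition \ref{Prop: Ind nat transform induced by nat transform} and Remark \ref{Remark: Ind functor and morphisms}.
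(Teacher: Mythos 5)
Your strategy is the same as the paper's: note that every object and morphism occurring in $\Ind(\beta \ast \alpha)$ and $\Ind(\beta) \ast \Ind(\alpha)$ lives over a common indexing category, invoke Lemma \ref{Lemma: Ind of composition on morphism from same index category agree}, and verify the identity componentwise. Your Godement-product computation --- both sides reduce at $\ul{X} = (X_i)_{i \in I}$ to the family $\big(K(\alpha_{X_i}) \circ \beta_{FX_i}\big)_{i \in I}$ --- is exactly the ``straightforward verification'' the paper leaves implicit, and it is correct.

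The one soft spot is your handling of the compositors. The fact that $\Ind(H) \circ \Ind(F)$ and $\Ind(H \circ F)$ agree on objects (Remark \ref{Remark: Ind functor and morphisms}) and on morphisms between same-indexed ind-objects (Lemma \ref{Lemma: Ind of composition on morphism from same index category agree}) does not by itself \emph{force} $\phi_{F,H}$ to have identity components: two functors that are literally equal still admit plenty of non-identity natural automorphisms, and Corollary \ref{Cor: Ind is rigidified} concerns $\Ind(\id_{\Cscr})$, not compositors, so it does not supply the missing uniqueness. What you actually need --- and what is true --- is that the compositor furnished by the construction behind Proposition \ref{Prop: Ind functors} is the canonical one, whose components on the data in play are identities because $\Ind(F)$ is defined by post-composition ($\Ind(F)(\ul{X}) = F \circ \ul{X}$, levelwise on morphisms), so $\Ind(H)\circ\Ind(F)$ and $\Ind(H\circ F)$ agree strictly; alternatively, carry $\phi_{F,H}$ and $\phi_{G,K}$ along and use their naturality to slide them past the levelwise-defined morphisms, as the paper does with $\theta = \phi_{T,T}$ in the proof of Proposition \ref{Prop: Ind-canonical flip is bundle map}. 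With that justification repaired, your argument is complete and agrees with the intended proof.
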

\begin{proof}
This is a straightforward verifiction using Lemma \ref{Lemma: Ind of composition on morphism from same index category agree} and the observation that every object and morphism in sight in the definitions of $\Ind(\beta \ast \alpha)$ and $\Ind(\beta) \ast \Ind(\alpha)$ involve only the same indexing categories.
\end{proof}


%

We now will endeavour to show that if $(\Cscr,\Tbb)$ is a tangent category then the ind-category $\Ind(\Cscr)$ naturally inherits an ind-tangent structure which we will call $\Ind(\Tbb)$ from $\Tbb$. We begin this study by first showing the existence of the natural transformations $\Ind(p)$ and $\Ind(0)$; constructing these is straightforward, while the natural transformations $\hat{+}, \hat{\ell}$, and $\hat{c}$ take a little more work to define for technical $2$-categorical reasons: the $2$-functor $\Ind:\fCat \to \fCat$ is a pseudofunctor and not strictly functorial.

\begin{lemma}\label{Lemma: Existence of Indp and Ind0}
	If $\Cscr$ is a tangent category then there are ind-bundle and ind-zero natural transformations $\Ind(p):\Ind(T) \Rightarrow \id_{\Ind(\Cscr)}$ and $\Ind(0):\id_{\Ind(\Cscr)} \Rightarrow \Ind(T)$.
\end{lemma}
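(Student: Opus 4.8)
The plan is to transport the bundle projection $p$ and the zero section $0$ through the $\Ind$-construction using the machinery already in place, with essentially no new work required. By hypothesis $p \colon T \Rightarrow \id_{\Cscr}$ and $0 \colon \id_{\Cscr} \Rightarrow T$ are the natural transformations witnessing Axioms~1 and~2 of Definition~\ref{Defn: Tangent Categpry}. Applying Proposition~\ref{Prop: Ind nat transform induced by nat transform} to each of these yields natural transformations $\Ind(p) \colon \Ind(T) \Rightarrow \Ind(\id_{\Cscr})$ and $\Ind(0) \colon \Ind(\id_{\Cscr}) \Rightarrow \Ind(T)$.

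The only point to reconcile is that the statement asks for the (co)domain to be the \emph{strict} identity functor $\id_{\Ind(\Cscr)}$ rather than $\Ind(\id_{\Cscr})$. This is exactly the content of Corollary~\ref{Cor: Ind is rigidified}, which gives the equality $\Ind(\id_{\Cscr}) = \id_{\Ind(\Cscr)}$ on the nose (not merely up to coherent isomorphism). Substituting this equality into the two natural transformations produced above yields $\Ind(p) \colon \Ind(T) \Rightarrow \id_{\Ind(\Cscr)}$ and $\Ind(0) \colon \id_{\Ind(\Cscr)} \Rightarrow \Ind(T)$, as required.

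For concreteness — and because the explicit formulas will be needed in the later lemmas verifying the remaining tangent axioms — I would also record the action of these transformations: on an ind-object $\ul{X} = (X_i)_{i \in I}$ we set $\Ind(p)_{\ul{X}} = (p_{X_i})_{i \in I}$ and $\Ind(0)_{\ul{X}} = (0_{X_i})_{i \in I}$, using the description from Remark~\ref{Remark: Morphisms in ind between same indexing cat} of morphisms between ind-objects sharing an indexing category; naturality is precisely the cube-chasing argument of the proof of Proposition~\ref{Prop: Ind nat transform induced by nat transform}, specialized to $\alpha \in \{p, 0\}$. There is no real obstacle here: the lemma is a direct instance of the general constructions, and the only subtlety — ensuring one lands in the literal identity functor rather than in $\Ind(\id_{\Cscr})$ — is dispatched by the rigidity statement of Corollary~\ref{Cor: Ind is rigidified}. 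The genuine difficulty is deferred to the subsequent lemmas constructing $\hat{+}$, $\hat{\ell}$, and $\hat{c}$, where the failure of $\Ind$ to be strictly (rather than pseudo-) functorial forces the compositor isomorphisms $\phi_{F,G}$ into the picture.
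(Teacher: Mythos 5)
Your proposal is correct and follows exactly the paper's route: the paper's proof is the one-line application of Corollary~\ref{Cor: Ind is rigidified} and Proposition~\ref{Prop: Ind nat transform induced by nat transform} to $p$ and $0$, which is precisely what you do (your explicit componentwise formulas $(p_{X_i})$, $(0_{X_i})$ are just the unwinding of that proposition and are consistent with how the paper later uses these transformations).
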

\begin{proof}
	Apply Corollary \ref{Cor: Ind is rigidified} and Proposition \ref{Prop: Ind nat transform induced by nat transform} to the natural transformations $p:T \Rightarrow \id_{\Cscr}$ and $0:\id_{\Cscr} \Rightarrow T$.
\end{proof}

Because every tangent category $\Cscr$ admits (finite) pullbacks against the bundle maps $p_X:TX \to X$ for every object $X \in \Cscr_0$, it follows from \cite[Proposition I.8.9.1.c]{SGA4} that $\Ind(\Cscr)$ admits finite pullbacks agains the ind-bundle maps $\Ind(p)_{\ul{X}}:\Ind(T)\ul{X} \to \ul{X}$ for every $\ul{X} \in \Ind(\Cscr)_0$. In order to have our tangent structure $\Ind(\Tbb)$ be essentially given as the indicization of the tangent structure $\Tbb$, we nee now must establish a natural isomorphism (and hence also the existence of) $\Ind(T) {}_{\Ind(p)}\times_{\Ind(p)} \Ind(T) = \Ind(T)_2 \cong \Ind(T_2)$. Afterwards, we'll establish that compositional powers of $\Ind(T)$ preserve and commute with these pullback powers before constructiong the ind-bundle addition and proving that $\Ind(p)_{\ul{X}}$ is a commutative monoid in $\Ind(\Cscr)_{/\ul{X}}$.

\begin{proposition}\label{Prop: Ind of Tangent pullback is tangent pullback of ind}
	If $(\Cscr,\Tbb)$ is a tangent category then there is a natural isomorphism of functors $\Ind(T_2) \cong \Ind(T)_2:\Ind(\Cscr) \to \Ind(\Cscr)$.
\end{proposition}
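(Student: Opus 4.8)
The plan is to build a canonical comparison natural transformation $\theta\colon \Ind(T_2) \Rightarrow \Ind(T)_2$ from the universal property of the pullback, and then verify that it is a pointwise isomorphism by transporting the question, via the fully faithful exact embedding $L$ of Proposition \ref{Prop: L functor on ind is fully faithful}, into the presheaf category $[\Cscr^{\op},\Set]$, where filtered colimits commute with finite limits.

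First I would note that $T_2$ is a genuine endofunctor of $\Cscr$ (Axiom 1 guarantees the pullback powers of $p$ exist and are preserved by the $T^n$), so $\Ind(T_2)$ is defined by Proposition \ref{Prop: Ind functors}, and $\Ind(T)_2$ is a well-defined endofunctor of $\Ind(\Cscr)$ since $\Ind(\Cscr)$ admits the relevant pullbacks by \cite[Proposition I.8.9.1.c]{SGA4}. The projections $\pi_1,\pi_2\colon T_2 \Rightarrow T$ satisfy $p \circ \pi_1 = p\circ\pi_2\colon T_2 \Rightarrow \id_{\Cscr}$. Applying the $\Ind$-assignment on $2$-cells (Proposition \ref{Prop: Ind nat transform induced by nat transform}), which respects vertical composition by Corollary \ref{Cor: Ind is strict 2-mor} and sends $\id_{\Cscr}$ to $\id_{\Ind(\Cscr)}$ by Corollary \ref{Cor: Ind is rigidified}, produces natural transformations $\Ind(\pi_1),\Ind(\pi_2)\colon \Ind(T_2) \Rightarrow \Ind(T)$ with $\Ind(p)\circ\Ind(\pi_1) = \Ind(p)\circ\Ind(\pi_2)$. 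Since pullbacks in the functor category $[\Ind(\Cscr),\Ind(\Cscr)]$ are computed objectwise and exist objectwise, the universal property of $\Ind(T)_2 = \Ind(T)\times_{\id_{\Ind(\Cscr)}}\Ind(T)$ (taken with respect to $\Ind(p)$ from Lemma \ref{Lemma: Existence of Indp and Ind0}) yields the comparison $\theta\colon \Ind(T_2)\Rightarrow \Ind(T)_2$, which is natural by construction.

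It remains to show each $\theta_{\ul{X}}$ is an isomorphism, and since $L$ is fully faithful, hence conservative, it suffices to show $L(\theta_{\ul{X}})$ is an isomorphism. Writing $\ul{X} = (X_i)_{i\in I}$ and using $\Ind(F)(\ul{X}) = (FX_i)_{i\in I}$ (Remark \ref{Remark: Ind functor and morphisms}) together with $L(\ul{Z}) = \colim_{i\in I}\yon(Z_i)$, I would compute
\[
L\big(\Ind(T_2)\ul{X}\big) \;=\; \colim_{i\in I}\yon(T_2X_i) \;\cong\; \colim_{i\in I}\big(\yon(TX_i)\times_{\yon(X_i)}\yon(TX_i)\big),
\]
the last isomorphism holding because $\yon$ preserves the defining pullback of $T_2X_i$. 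As $I$ is filtered and finite limits commute with filtered colimits in $\Set$, hence in $[\Cscr^{\op},\Set]$, this colimit is the pullback of $\colim_{i\in I}\yon(TX_i) = L(\Ind(T)\ul{X})$ against itself over $\colim_{i\in I}\yon(X_i) = L(\ul{X})$; and since $L$ is exact it carries the pullback $\Ind(T)_2\ul{X}$ to precisely that pullback. Chasing the projections through, the composite isomorphism $L(\Ind(T_2)\ul{X})\cong L(\Ind(T)_2\ul{X})$ is exactly $L(\theta_{\ul{X}})$, so $\theta_{\ul{X}}$ is an isomorphism, and $\theta$ is a natural isomorphism.

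I expect the main obstacle to be the bookkeeping in the last step: identifying the pullback $\Ind(T)_2\ul{X}$ taken abstractly inside $\Ind(\Cscr)$ (via the \cite{SGA4} reference) with the levelwise pullback $(T_2X_i)_{i\in I}$, and checking that the filtered-colimit/finite-limit interchange isomorphism is compatible with the projections, so that it genuinely equals $L(\theta_{\ul{X}})$ rather than merely being some abstract isomorphism with the same source and target. Everything else is formal once $\theta$ has been set up through universal properties.
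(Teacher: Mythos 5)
Your proposal is correct and follows essentially the same route as the paper: both verify the isomorphism by passing through the fully faithful exact functor $L$, using that Yoneda preserves the pullback defining $T_2X_i$, that filtered colimits commute with finite limits in $[\Cscr^{\op},\Set]$, and conservativity of $L$. Your one refinement — building the comparison $\theta\colon\Ind(T_2)\Rightarrow\Ind(T)_2$ first from $\Ind(\pi_1),\Ind(\pi_2)$ and the universal property, so that naturality and the identification of the interchange isomorphism with $L(\theta_{\ul{X}})$ are handled explicitly — tightens the step the paper dispatches with its closing remark that the objectwise isomorphism is natural because it is induced by universal properties.
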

\begin{proof}
	First that the pullback powers $\Ind(T_n)\ul{X} = \Ind(T)\ul{X} \times_{\ul{X}} \cdots \times_{\ul{X}} \Ind(T)\ul{X}$ follows from the fact that for any objects $\ul{X} = (X_i), \ul{Y} = (Y_i), \ul{Z} = (Z_i)$ the object $\ul{W} = (X_i \times_{Z_i} Y_i)$ represents the pullback $\ul{X} \times_{\ul{Z}} \ul{Y}$ in $\Ind(\Cscr)$; the verification of this fact is routine and straightforward.
	
	We use the presheaf realization functor $L:\Ind(\Cscr) \to [\Cscr^{\op},\Set]$ to prove the remainder of proposition, i.e., that $\Ind(T)_2 \cong \Ind(T_2)$.  On objects we have that on one hand
	\[
	\Ind(T)_2\ul{X} = \Ind(T)\ul{X} \times_{\ul{X}} \Ind(T)\ul{X} = (TX_i) \times_{(X_i)} (TX_i)
	\]
	while on the other hand
	\[
	\Ind(T_2)\ul{X} = (T_2X_i) = (TX_i \times_{X_i} TX_i).
	\]
	Taking the image under $L$ and using that filtered colimits commute with finite limits in $[\Cscr^{\op},\Set]$ we find
	\begin{align*}
	L\big(\Ind(T_2)\ul{X}\big) &= \lim_{\substack{\longrightarrow \\ i \in I}}\yon(TX_i \times_{X_i} TX_i) \cong \lim_{\substack{\longrightarrow \\ i \in I}}\left( \yon TX_i \times_{\yon X_i} \yon TX_i\right) \cong \lim_{\substack{\longrightarrow \\ i \in I}}\yon TX_i \times_{\colim \yon X_i} \lim_{\substack{\longrightarrow \\ i \in I}} \yon TX_i \\
	&\cong L(TX_i) \times_{LX_i} L(TX_i) \cong L\left(\Ind(T)\ul{X} \times_{\ul{X}} \Ind(T)\ul{X}\right) = L\left(\Ind(T)_2\ul{X}\right).
	\end{align*}
	From the fact that $L$ is conservative, we conclude that $\Ind(T)_2\ul{X} \cong \Ind(T_2)\ul{X}$; that this isomorphism is natural in $\ul{X}$ because it is induced by the universal property of the limit together with limit preservation isomorphisms.
\end{proof}
\begin{corollary}\label{Cor: Ind Tangent pullbacks are tangent ind pullbacks for any m}
	For any tangent category $(\Cscr,\Tbb)$ and any $m \in \N$ there is an isomorphism $\Ind(T)_m  \cong \Ind(T_m)$.
\end{corollary}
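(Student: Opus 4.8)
The plan is to bootstrap from Proposition~\ref{Prop: Ind of Tangent pullback is tangent pullback of ind} by an induction on $m$. The cases $m = 0$ (where $T_0 = \id_{\Cscr}$ and $\Ind(T)_0 = \id_{\Ind(\Cscr)}$) and $m = 1$ (where both sides are $\Ind(T)$) are immediate, and $m = 2$ is precisely Proposition~\ref{Prop: Ind of Tangent pullback is tangent pullback of ind}. For the inductive step I would use the standard decomposition $T_{m+1}X = T_mX \times_X TX$ of the $(m{+}1)$-fold pullback power of $p_X$ as an iterated binary pullback, together with the two structural inputs already in play in the proof of Proposition~\ref{Prop: Ind of Tangent pullback is tangent pullback of ind}: first, that pullbacks in $\Ind(\Cscr)$ of ind-objects indexed by a common filtered category $I$ (with structure maps realized degreewise) are computed degreewise, so that the $(m{+}1)$-fold pullback power of $\Ind(p)_{\ul{X}} = (p_{X_i})_{i \in I}$ exists and is represented by $(T_{m+1}X_i)_{i \in I} = \Ind(T_{m+1})\ul{X}$ (cf.\ Remark~\ref{Remark: Ind functor and morphisms}); and second, that the presheaf realization functor $L\colon \Ind(\Cscr) \to [\Cscr^{\op},\Set]$ of Proposition~\ref{Prop: L functor on ind is fully faithful} is fully faithful and exact, hence conservative, while filtered colimits commute with finite limits in $[\Cscr^{\op},\Set]$.

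Concretely, fixing $\ul{X} = (X_i)_{i \in I}$, I would compute
\[
L\big(\Ind(T_{m+1})\ul{X}\big) = \colim_{i \in I}\, \yon\big(T_mX_i \times_{X_i} TX_i\big) \cong \colim_{i \in I}\big(\yon T_mX_i \times_{\yon X_i} \yon TX_i\big) \cong L\big(\Ind(T_m)\ul{X}\big) \times_{L\ul{X}} L\big(\Ind(T)\ul{X}\big),
\]
where the second isomorphism uses that $\yon$ preserves limits and the third uses the commutation of filtered colimits with finite limits. The induction hypothesis supplies $\Ind(T_m)\ul{X} \cong \Ind(T)_m\ul{X}$, so applying $L$ and using exactness rewrites the right-hand side as $L\big(\Ind(T)_m\ul{X} \times_{\ul{X}} \Ind(T)\ul{X}\big) = L\big(\Ind(T)_{m+1}\ul{X}\big)$; conservativity of $L$ then yields $\Ind(T_{m+1})\ul{X} \cong \Ind(T)_{m+1}\ul{X}$. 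Naturality in $\ul{X}$ is automatic, since the isomorphism is assembled from the universal properties of the finite and filtered limits in sight, exactly as at the end of the proof of Proposition~\ref{Prop: Ind of Tangent pullback is tangent pullback of ind}.

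The only step needing a moment's care is the degreewise computation of the $m$-ary pullback power in $\Ind(\Cscr)$ --- that is, checking that $(T_{m+1}X_i)_{i\in I}$, with the evident projections, really carries the universal cone exhibiting the iterated pullback $\Ind(T)_m\ul{X} \times_{\ul{X}} \Ind(T)\ul{X}$. This is the same routine verification that was invoked for $m = 2$ inside the proof of Proposition~\ref{Prop: Ind of Tangent pullback is tangent pullback of ind}, and it follows formally from the binary case by associating the fibre product, so it introduces no new obstacle. Alternatively, one may skip the induction altogether and rerun the $L$-argument of that Proposition verbatim with $m$-fold fibre products in place of binary ones; I would present the inductive version, as it is marginally cleaner to write.
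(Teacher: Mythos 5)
Your argument is correct and rests on exactly the same machinery as the paper: the paper's own proof of this corollary is simply ``mutatis mutandis to the proof of Proposition~\ref{Prop: Ind of Tangent pullback is tangent pullback of ind}'', i.e.\ rerunning the $L$-based argument (degreewise pullbacks of ind-objects over a common index category, exactness and conservativity of $L$, and commutation of filtered colimits with finite limits in $[\Cscr^{\op},\Set]$) with $m$-fold fibre products in place of binary ones. Your induction via $T_{m+1}X \cong T_mX \times_X TX$ is just a mild repackaging of that same argument, and you already note the direct $m$-ary rerun as the alternative, which is what the paper does.
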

\begin{proof}
	This follows mutatis mutandis to the proof of Proposition \ref{Prop: Ind of Tangent pullback is tangent pullback of ind}.
\end{proof}
\begin{corollary}\label{Cor: Ind Tangent functor commutes with Ind tangent pullbacks}
	Let $(\Cscr,\Tbb)$ be a tangent category and let $m, n \in \N$. Then there is a natural isomorphism of functors
	\[
	\Ind(T)^n \circ \Ind(T)_m \cong \Ind(T)_m \circ \Ind(T)^n.
	\]
\end{corollary}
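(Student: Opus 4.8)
The plan is to reduce the statement to the corresponding fact about the base tangent category $(\Cscr,\Tbb)$ and then transport it along the $\Ind$-machinery already in place. First I would assemble two identifications: iterating the compositor isomorphism of Proposition~\ref{Prop: Ind functors} gives $\Ind(T)^n \cong \Ind(T^n)$ for every $n \in \N$, while Corollary~\ref{Cor: Ind Tangent pullbacks are tangent ind pullbacks for any m} gives $\Ind(T)_m \cong \Ind(T_m)$. Using Proposition~\ref{Prop: Ind functors} once more to re-split a composite, these combine into natural isomorphisms
\[
\Ind(T)^n \circ \Ind(T)_m \;\cong\; \Ind(T^n) \circ \Ind(T_m) \;\cong\; \Ind(T^n \circ T_m), \qquad \Ind(T)_m \circ \Ind(T)^n \;\cong\; \Ind(T_m \circ T^n).
\]
Hence it suffices to produce a natural isomorphism $T^n \circ T_m \cong T_m \circ T^n$ of endofunctors of $\Cscr$ and then apply the $\Ind$-pseudofunctor to it via Proposition~\ref{Prop: Ind nat transform induced by nat transform}.

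For the base-category isomorphism I would induct on $n$, the case $n = 0$ being trivial. For $n = 1$: the object $T_m(TX)$ is by definition the $m$-fold pullback power of $p_{TX}\colon T^2X \to TX$, whereas Axiom~1 of Definition~\ref{Defn: Tangent Categpry} says $T$ preserves the pullback power $T_mX$, so $T(T_mX)$ is the $m$-fold pullback power of $T(p_X)\colon T^2X \to TX$. The canonical flip matches these two diagrams: Axiom~4 yields $p_{TX} \circ c_X = T(p_X)$ and $c_X$ is an isomorphism, so post-composing the legs of one pullback cone with $c_X$ turns it into a pullback cone for the other; this produces $T(T_mX) \cong T_m(TX)$, natural in $X$ because $c$, $p$, and the comparison maps furnished by the relevant universal properties are all natural. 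The inductive step is then formal: $T^{n+1} \circ T_m = T \circ (T^n \circ T_m) \cong T \circ (T_m \circ T^n) \cong (T_m \circ T) \circ T^n = T_m \circ T^{n+1}$, using the inductive hypothesis and the $n=1$ case applied at the object $T^nX$.

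Finally I would string everything together: applying $\Ind$ to $T^n \circ T_m \cong T_m \circ T^n$ gives $\Ind(T^n \circ T_m) \cong \Ind(T_m \circ T^n)$ — and this is again a natural isomorphism since $\Ind$ respects vertical composition and identities of $2$-cells, by Corollary~\ref{Cor: Ind is strict 2-mor} — and composing with the two displayed isomorphisms yields the desired $\Ind(T)^n \circ \Ind(T)_m \cong \Ind(T)_m \circ \Ind(T)^n$. The only place where genuine content enters is the $n=1$ base-category step, where one must recognize the two iterated-tangent pullbacks as limits of the $p_{TX}$- and $T(p_X)$-diagrams and use the flip to identify them; the rest is transport along compositor isomorphisms, and the main thing to watch is inserting each $\phi$ with the correct variance so that the final composite really is a natural transformation of functors $\Ind(\Cscr) \to \Ind(\Cscr)$.
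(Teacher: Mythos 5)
Your proof is correct and follows essentially the same route as the paper: transport the base-category isomorphism $T^n \circ T_m \cong T_m \circ T^n$ through $\Ind$ via Proposition \ref{Prop: Ind nat transform induced by nat transform}, then conjugate by the compositor isomorphisms and invoke Corollary \ref{Cor: Ind Tangent pullbacks are tangent ind pullbacks for any m} to replace $\Ind(T^n)$ and $\Ind(T_m)$ by $\Ind(T)^n$ and $\Ind(T)_m$. The only difference is that you spell out the canonical-flip-and-induction argument establishing $T^n \circ T_m \cong T_m \circ T^n$ in $\Cscr$, which the paper simply asserts as a known consequence of $(\Cscr,\Tbb)$ being a tangent category.
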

\begin{proof}
	Since $\Cscr$ is a tangent category, we have natural a natural isomorphism $T^n \circ T_m \cong T_m \circ T^n$. Apply Proposition \ref{Prop: Ind nat transform induced by nat transform} to this natural isomorphism to get a natural isomorphism $\Ind(T^n \circ T_m) \cong \Ind(T_m \circ T^n)$. Finally, conjugating by the to the natural isomorphism by the $\Ind$-compositor $\phi_{f,g}:\Ind(f) \circ \Ind(g) \to \Ind(g \circ f)$ multiple times together together with using Corollary \ref{Cor: Ind Tangent pullbacks are tangent ind pullbacks for any m} gives
	\begin{align*}
	\Ind(T)^n \circ \Ind(T)_m &\cong \Ind(T^n) \circ \Ind(T_m) \cong  \Ind(T^n \circ T_m) \cong \Ind(T_m \circ T^n) \cong \Ind(T_m) \circ \Ind(T^n) \\
	&\cong \Ind(T)_m \circ \Ind(T)^n.
	\end{align*}
\end{proof}

Let us now prove the existence of the ind-addition natural transformation $\Ind(+)$ and prove that for any $\ul{X} \in \Ind(\Cscr)_0$, $\Ind(p)_{\ul{X}}$ is a commutative monoid in $\Ind(\Cscr)_{/\ul{X}}$.
\begin{lemma}\label{Lemma: Existence of ind-bundle addition}
	Let $(\Cscr,\Tbb)$ be a tangent category. Then there is an ind-addition natural transformation
	\[
	\hat(+):\Ind(T)_2 \Rightarrow \Ind(T).
	\]
\end{lemma}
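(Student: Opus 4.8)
The plan is to construct $\hat{+}$ by transporting the indicization of the bundle addition $+$ across the natural isomorphism already established in Proposition~\ref{Prop: Ind of Tangent pullback is tangent pullback of ind}. Recall that in a tangent category the assignment $X \mapsto T_2X = TX\times_X TX$ is functorial (a map $f\colon X\to Y$ induces $T_2f$ by the universal property of the pullback, using that $Tf$ commutes with $p$), and that $+\colon T_2\Rightarrow T$ is a natural transformation of endofunctors of $\Cscr$. Applying Proposition~\ref{Prop: Ind nat transform induced by nat transform} to $+$ therefore yields a natural transformation $\Ind(+)\colon \Ind(T_2)\Rightarrow\Ind(T)$, whose component at $\ul{X}=(X_i)_{i\in I}$ is the degreewise family $(+_{X_i})_{i\in I}$ in the sense of Remark~\ref{Remark: Morphisms in ind between same indexing cat}.

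Next, let $\psi\colon \Ind(T)_2 \xrightarrow{\ \cong\ } \Ind(T_2)$ be the natural isomorphism of Proposition~\ref{Prop: Ind of Tangent pullback is tangent pullback of ind} (which also guarantees that the relevant ind-pullbacks exist, so that $\Ind(T)_2$ is defined in the first place). I would then simply set
\[
\hat{+} \;:=\; \Ind(+)\circ\psi \;\colon\; \Ind(T)_2 \Longrightarrow \Ind(T),
\]
and record its pointwise description: under the identification $\Ind(T)_2\ul{X}\cong (TX_i\times_{X_i}TX_i)_{i\in I}$, the component $\hat{+}_{\ul{X}}$ is the ind-morphism $(+_{X_i})_{i\in I}$, which is legitimate precisely because the fibrewise additions $+_{X_i}$ are compatible with the transition maps of $\ul{X}$ --- this compatibility is exactly the naturality of $+$ together with the functoriality of $T$ and $T_2$.

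The only remaining point is that $\hat{+}$ is natural, and this is automatic: it is a composite of the natural transformation $\Ind(+)$ with the natural isomorphism $\psi$, and composites of natural transformations are natural. So there is nothing computationally delicate at this step; the substantive input (existence of the ind-pullbacks and the isomorphism $\Ind(T)_2\cong\Ind(T_2)$) has already been supplied by Proposition~\ref{Prop: Ind of Tangent pullback is tangent pullback of ind}. I expect the only mild obstacle is the bookkeeping of keeping $\Ind(T)_2$ and $\Ind(T_2)$ distinct and inserting $\psi$ in the right places; the genuinely laborious part of the story --- verifying that $\Ind(p)_{\ul X}$ equipped with $\hat{+}$ and $\Ind(0)_{\ul X}$ is an internal commutative monoid in $\Ind(\Cscr)_{/\ul X}$ --- lies outside the scope of this lemma and would be carried out afterwards by pushing the corresponding monoid identities for $(\Cscr,\Tbb)$ through the $\Ind$ pseudofunctor and $\psi$.
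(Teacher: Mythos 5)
Your construction is exactly the paper's: both define $\hat{+}$ as the composite of the natural isomorphism $\Ind(T)_2 \cong \Ind(T_2)$ from Proposition \ref{Prop: Ind of Tangent pullback is tangent pullback of ind} with the indicization $\Ind(+):\Ind(T_2)\Rightarrow\Ind(T)$ supplied by Proposition \ref{Prop: Ind nat transform induced by nat transform}, with naturality automatic. You also correctly defer the commutative-monoid verification, which the paper handles separately in Proposition \ref{Prop: Additive Ind Bundle}.
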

\begin{proof}
	We define $\Ind(+)$ to be the natural transformation displayed below
	\[
	\xymatrix{
		\Ind(T)_2 \ar[r]^-{\cong} & \Ind(T_2) \ar[r]^-{\Ind(+)} & \Ind(T)
	}
	\]
	where $\Ind(+)$ is the indicization of the natural transformation $+:T_2 \Rightarrow T$ of Proposition \ref{Prop: Ind nat transform induced by nat transform}.
\end{proof}
\begin{proposition}\label{Prop: Additive Ind Bundle}
	For any tangent category $(\Cscr,\Tbb)$ and any $\ul{X} \in \Ind(\Cscr)_0$, the object $\hat(p)_{\ul{X}}:\Ind(T)\ul{X} \to \ul{X}$ is a commutative monoid in $\Ind(\Cscr)_{/\ul{X}}$ with unit $\Ind(0)_{\ul{X}}$ and addition $\hat{+}$.
\end{proposition}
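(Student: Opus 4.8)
The plan is to reduce the commutative-monoid axioms to the corresponding facts in $\Cscr$ by working levelwise over the filtered indexing category. Write $\ul{X} = (X_i)_{i \in I}$. By Corollary \ref{Cor: Ind Tangent pullbacks are tangent ind pullbacks for any m}, for each $n \in \N$ there is a natural isomorphism $\Ind(T)_n\ul{X} \cong \Ind(T_n)\ul{X} = (T_nX_i)_{i \in I}$, and the first thing I would check is the (routine) diagram chase showing that these isomorphisms identify the projections $\pi_k\colon \Ind(T)_n\ul{X} \to \Ind(T)\ul{X}$ with the levelwise projections $(\pi_k\colon T_nX_i \to TX_i)_{i \in I}$, and more generally that they are compatible with the pairing maps into the pullbacks and with the whiskered maps such as $\Ind(T)\ast\hat{+}$. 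This is immediate since the isomorphism of Proposition \ref{Prop: Ind of Tangent pullback is tangent pullback of ind} is produced via the universal property of the finite limit in question. Under these identifications, the construction of $\hat{p}$ and $\Ind(0)$ in Lemma \ref{Lemma: Existence of Indp and Ind0} (via Proposition \ref{Prop: Ind nat transform induced by nat transform}) gives $\hat{p}_{\ul{X}} = (p_{X_i})_{i \in I}$ and $\Ind(0)_{\ul{X}} = (0_{X_i})_{i \in I}$, and the construction of $\hat{+}$ in Lemma \ref{Lemma: Existence of ind-bundle addition} gives $\hat{+}_{\ul{X}} = (+_{X_i})_{i \in I}$.

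Next I would observe that the defining equations of an internal commutative monoid on $\hat{p}_{\ul{X}}$ in $\Ind(\Cscr)_{/\ul{X}}$ --- associativity, the two unit laws, and commutativity --- are each an equality between two composites of morphisms of $\Ind(\Cscr)$ built out of $\hat{+}$, $\Ind(0)$, $\hat{p}$, identities, pullback projections, the unique maps to $\ul{X}$, and pairing maps into the $\Ind(T)_n\ul{X}$. Every object occurring in these composites is one of the $\Ind(T)_n\ul{X}$, hence indexed by the single filtered category $I$; so by Remark \ref{Remark: Morphisms in ind between same indexing cat}, together with the fact that composition of ind-morphisms over a fixed indexing category is computed levelwise, each such morphism is a levelwise family over $I$, and two of them agree in $\Ind(\Cscr)$ exactly when they agree at every $i \in I$. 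Concretely, at level $i$ each of these equations becomes the corresponding equation among $+_{X_i}$, $0_{X_i}$, $p_{X_i}$, and the projections of $T_nX_i$ in $\Cscr_{/X_i}$.

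Finally, Axiom 2 of Definition \ref{Defn: Tangent Categpry}, applied to $\Cscr$ at the object $X_i$, says precisely that $p_{X_i}\colon TX_i \to X_i$ is an internal commutative monoid in $\Cscr_{/X_i}$ with unit $0_{X_i}$ and addition $+_{X_i}$; hence all the level-$i$ equations hold for every $i \in I$, and therefore so do the corresponding equations in $\Ind(\Cscr)_{/\ul{X}}$. This establishes that $\hat{p}_{\ul{X}}$ is a commutative monoid in $\Ind(\Cscr)_{/\ul{X}}$ with unit $\Ind(0)_{\ul{X}}$ and addition $\hat{+}$.

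The main obstacle is the bookkeeping in the first step: one must be careful that the isomorphisms $\Ind(T)_n\ul{X} \cong \Ind(T_n)\ul{X}$ are \emph{coherently} compatible with all the structure maps that appear when the commutative-monoid axioms are written out (projections, diagonal/pairing maps, and the whiskered addition maps), so that the levelwise description of $\hat{+}$ and its iterates on the higher pullback powers is literally correct rather than merely correct up to isomorphism; once that compatibility is pinned down, everything else is formal. An essentially equivalent alternative, which I would mention but not use as the primary argument, is to apply the fully faithful exact functor $L\colon\Ind(\Cscr) \to [\Cscr^{\op},\Set]$ and use that finite products --- and hence internal commutative monoid structures --- are preserved by the filtered colimit $L(\ul{X}) = \colim_{i} \yon X_i$ in $[\Cscr^{\op},\Set]$.
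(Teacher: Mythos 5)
Your proposal is correct and follows essentially the same route as the paper: both reduce the commutative-monoid diagrams to their levelwise counterparts via the mediating isomorphism $\Ind(T)_2 \cong \Ind(T_2)$ (so that $\hat{+}$, $\Ind(0)$, $\hat{p}$ are given by the families $(+_{X_i})$, $(0_{X_i})$, $(p_{X_i})$) and then invoke the additive bundle axiom for each $p_{X_i}$ in $\Cscr$. The only cosmetic difference is that the paper concludes by passing through the fully faithful functor $L$, which you mention as your alternative, while your primary phrasing checks equality of ind-morphisms over a common index category levelwise --- both are fine.
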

\begin{proof}
	We must show that in $\Ind(\Cscr)_{/\ul{X}}$ the diagrams
	\[
	\xymatrix{
		\Ind(T)\ul{X} \ar[r]^-{\cong} \ar@{=}[drrr] \ar[d]_{\cong} & \Ind(T)\ul{X} \times_{\ul{X}} \ul{X} \ar[rr]^-{\id_{\Ind(T)\ul{X}} \times \Ind(0)_{\ul{X}}} & & \Ind(T)_2\ul{X}\ar[d]^{\hat{+}_{\ul{X}}} \\
		\ul{X} \times_{\ul{X}} \Ind(T)\ul{X} \ar[rr]_-{\Ind(0)_{\ul{X}} \times \id_{\Ind(T)\ul{X}}} & & \Ind(T)_2\ul{X} \ar[r]_-{\hat{+}_{\ul{X}}} & \Ind(T)\ul{X}
	}
	\]
	\[
	\xymatrix{
		\Ind(T)_2\ul{X} \ar[r]^-{s} \ar[dr]_{\hat{+}_{\ul{X}}} & \Ind(T)_2\ul{X} \ar[d]^{\hat{+}_{\ul{X}}} \\
		& \Ind(T)\ul{X}
	}
	\]
	\[
	\begin{tikzcd}
	(\Ind(T)_2\ul{X}) \times_{\ul{X}} \Ind(T)\ul{X} \ar[rr]{}{\hat{+}_{\ul{X}} \times \id_{\Ind(T)\ul{X}}} \ar[d, swap]{}{\cong} & & \Ind(T)_2\ul{X} \ar[d]{}{\hat{+}_{\ul{X}}} \\
	\Ind(T)\ul{X} \times (\Ind(T)_2\ul{X}) \ar[dr, swap]{}{\id_{\Ind(T)\ul{X}} \times \hat{+}_{\ul{X}}} & & \Ind(T)\ul{X} \\
	& \Ind(T)_2\ul{X} \ar[ur, swap]{}{\hat{+}_{\ul{X}}}
	\end{tikzcd}
	\]
	commute. To prove the commutativity of these diagrams we note that $\Ind(T)_2\ul{X}$ carries the natural isomorphism $\Ind(T)_2 \cong \Ind(T_2)$ which mediates between the pullback in $\Ind(\Cscr)$ and the ind-object given by the pullbacks $(TX_i \times_{X_i} TX_i)$ and the map $\hat{+}_{\ul{X}}$ is defined by first using this mediating isomorphism before acting on the ind-object $(TX_i \times_{X_i} TX_i)$. As such, it follows by naturality to verify each of the diagrams above on the corresponding incarnation of ind-objects whose components are all given $i$-locally as the pullback of objects in $\Cscr$. More explicitly, to verify the first diagram it suffices to show that the diagram
	\[
	\xymatrix{
		(TX_i) \ar[r]^-{\cong} \ar@{=}[drrr] \ar[d]_{\cong} & (TX_i \times_{X_i} X_i) \ar[rr]^-{\id_{TX_i} \times 0_{X_i}} & & (T_2X_i) \ar[d]^{+_{X_i}} \\
		(X_i \times_{X_i} TX_i) \ar[rr]_-{0_{X_i} \times \id_{TX_i}} & & (T_2X_i) \ar[r]_-{+_{X_i}} & (TX_i)
	}
	\]
	commutes in $\Ind(\Cscr)$. However, because $TX_i$ is a bundle over $X_i$ for all $i \in I$, so the diagram
	\[
	\xymatrix{
		TX_i \ar[r]^-{\cong} \ar@{=}[drrr] \ar[d]_{\cong} & TX_i \times_{X_i} X_i \ar[rr]^-{\id_{TX_i} \times 0_{X_i}} & & T_2X_i \ar[d]^{+_{X_i}} \\
		X_i \times_{X_i} TX_i \ar[rr]_-{0_{X_i} \times \id_{TX_i}} & & T_2X_i \ar[r]_-{+_{X_i}} & TX_i
	}
	\]
	for each $i \in I$. Thus by taking the image of the diagram under the functor $L$ it follows that the diagram of ind-presheaves commutes and so via the fact that $L$ is fully faithful, it follows that the diagram commutes in $\Ind(\Cscr)$ as well. The commutativity of the other diagrams is verified mutatis mutandis to this diagrams, and so are omitted.
\end{proof}

We now build the ind-vertical lift $\Ind(\ell):\Ind(T) \Rightarrow \Ind(T)^2$ and prove that it induces a bundle morphism in $\Ind(\Cscr)$.
\begin{lemma}\label{Lemma: Existence of Ind-vertical lift}
	If $(\Cscr, \Tbb)$ is a tangent category then there is an ind-vertical lift transformation $\hat{\ell}:\Ind(T) \Rightarrow \Ind(T)^2$.
\end{lemma}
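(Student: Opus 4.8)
The plan is to mimic the construction of $\hat{+}$ from Lemma \ref{Lemma: Existence of ind-bundle addition}: take the tangent-category datum $\ell : T \Rightarrow T^2$, push it through the $\Ind$-construction to get a natural transformation into $\Ind(T^2)$, and then correct the target along the compositor isomorphism so that it lands in $\Ind(T)^2 = \Ind(T) \circ \Ind(T)$ rather than in $\Ind(T \circ T)$. The only subtlety is purely $2$-categorical bookkeeping — the fact that $\Ind:\fCat \to \fCat$ is a pseudofunctor, not a strict one — and this is handled exactly by a single use of $\phi_{T,T}$.

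Concretely, I would proceed as follows. First, recall from Axiom 3 of Definition \ref{Defn: Tangent Categpry} that $\ell : T \Rightarrow T^2 = T \circ T$ is a natural transformation of endofunctors of $\Cscr$. Applying Proposition \ref{Prop: Ind nat transform induced by nat transform} to $\ell$ produces a natural transformation $\Ind(\ell) : \Ind(T) \Rightarrow \Ind(T \circ T)$, which on an object $\ul{X} = (X_i)_{i \in I}$ is given by $\Ind(\ell)_{\ul X} = (\ell_{X_i})_{i \in I}$ in the sense of Remark \ref{Remark: Morphisms in ind between same indexing cat}. Second, invoke Proposition \ref{Prop: Ind functors} with $F = G = T$ to obtain the compositor natural isomorphism $\phi_{T,T} : \Ind(T) \circ \Ind(T) \xRightarrow{\ \cong\ } \Ind(T \circ T)$, and hence its inverse $\phi_{T,T}^{-1} : \Ind(T \circ T) \xRightarrow{\ \cong\ } \Ind(T)^2$. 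Finally, define
\[
\hat{\ell} := \phi_{T,T}^{-1} \circ \Ind(\ell) : \Ind(T) \Longrightarrow \Ind(T)^2,
\]
which is a natural transformation as a vertical composite of natural transformations; this is the desired ind-vertical lift.

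I do not expect a genuine obstacle here, since this is essentially a formal construction rather than a computation; the work of checking that $\hat\ell$ actually behaves like a vertical lift (i.e.\@ that $(\hat\ell_{\ul X}, \Ind(0)_{\ul X})$ is a bundle morphism, and the coherences of Axiom 5) is deferred to the statements following this lemma, and will be proved by transporting the corresponding $\Cscr$-level diagrams through the fully faithful, exact functor $L:\Ind(\Cscr) \to [\Cscr^{\op},\Set]$ and using that they hold $i$-locally — compare the proof of Proposition \ref{Prop: Additive Ind Bundle} and the role played there by Lemma \ref{Lemma: Ind of composition on morphism from same index category agree}. The one point worth stating explicitly in the write-up is the identification, via Corollary \ref{Cor: Ind is rigidified} and the compositor, of $\Ind(T)^2\ul X$ with the ind-object $(T^2 X_i)_{i \in I}$, so that $\hat\ell_{\ul X}$ is (up to this mediating isomorphism) just the component-wise lift $(\ell_{X_i})_{i\in I}$; this is what makes the later bundle-morphism verifications reduce cleanly to the $\Cscr$-level axioms.
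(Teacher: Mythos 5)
Your construction is exactly the paper's: apply Proposition \ref{Prop: Ind nat transform induced by nat transform} to $\ell$ to get $\Ind(\ell):\Ind(T)\Rightarrow\Ind(T^2)$ and then compose with the compositor isomorphism identifying $\Ind(T^2)$ with $\Ind(T)^2$ (the paper writes this arrow as $\phi_{T,T}$, you as $\phi_{T,T}^{-1}$, but it is the same mediating isomorphism), deferring the bundle-morphism and coherence checks to the subsequent propositions just as the paper does. Correct, and essentially identical in approach.
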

\begin{proof}
	We define $\hat{\ell}$ as in the diagram of functors and natural transformations
	\[
	\xymatrix{
		\Ind(T) \ar[r]^-{\Ind{\ell}} & \Ind(T^2) \ar[r]^-{\phi_{T,T}} & \Ind(T)^2
	}
	\]
	where $\Ind{\ell}:\Ind(T) \Rightarrow \Ind(T^2)$ is the transformation induced by applying Proposition \ref{Prop: Ind nat transform induced by nat transform} and $\phi_{T,T}:\Ind(T^2) \cong \Ind(T)^2$ is the compositor isomorphism. 
\end{proof}
\begin{proposition}\label{Prop: Ind vertical lift is part of a bundle map}
	If $(\Cscr,\Tbb)$ is a tangent category then for any object $\ul{X}$ of $\Ind(\Cscr)$, the pair of morphisms $(\hat{\ell}_{\ul{X}},\Ind(0)_{\ul{X}})$ describes a morphism of bundles in $\Ind(\Cscr)$.
\end{proposition}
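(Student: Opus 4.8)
The plan is to verify that $(\hat{\ell}_{\ul{X}}, \Ind(0)_{\ul{X}})$ is a bundle morphism by checking the three defining squares of Axiom~3, now for the bundles $\Ind(p)_{\ul{X}} \colon \Ind(T)\ul{X} \to \ul{X}$ and $(\Ind(T) \ast \Ind(p))_{\ul{X}} \colon \Ind(T)^2\ul{X} \to \Ind(T)\ul{X}$: the square relating $\hat{\ell}$, $\Ind(p)$, $\Ind(T) \ast \Ind(p)$ and $\Ind(0)$; the additivity square involving $\hat{+}_{\ul{X}}$ and $\hat{+}_{\Ind(T)\ul{X}}$; and the square expressing $\hat{\ell}_{\ul{X}} \circ \Ind(0)_{\ul{X}} = \Ind(0)_{\Ind(T)\ul{X}} \circ \Ind(0)_{\ul{X}}$. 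Exactly as in the proof of Proposition~\ref{Prop: Additive Ind Bundle}, the strategy is to push everything through the presheaf realization functor $L \colon \Ind(\Cscr) \to [\Cscr^{\op}, \Set]$, which is fully faithful by Proposition~\ref{Prop: L functor on ind is fully faithful} and hence conservative, so it suffices to verify commutativity of the image of each square under $L$.

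Write $\ul{X} = (X_i)_{i \in I}$. First I would rewrite every object occurring in the three squares as an ind-object indexed by $I$ whose $i$-th component is the corresponding object of $\Cscr$ built from $X_i$: we have $\Ind(T)\ul{X} = (TX_i)$, and via the compositor $\phi_{T,T}$ together with Lemma~\ref{Lemma: Ind of composition on morphism from same index category agree} also $\Ind(T)^2\ul{X} = (T^2X_i)$; the pullback $\Ind(T)_2\ul{X} \cong \Ind(T_2)\ul{X} = (T_2X_i)$ by Proposition~\ref{Prop: Ind of Tangent pullback is tangent pullback of ind}; and $\Ind(T)^2\ul{X} \times_{\Ind(T)\ul{X}} \Ind(T)^2\ul{X} \cong (T^2X_i \times_{TX_i} T^2X_i)$ by Corollary~\ref{Cor: Ind Tangent pullbacks are tangent ind pullbacks for any m} applied at the objects $TX_i$, combined with Corollary~\ref{Cor: Ind Tangent functor commutes with Ind tangent pullbacks}. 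Under these identifications $\hat{\ell}_{\ul{X}}$ is, by its definition in Lemma~\ref{Lemma: Existence of Ind-vertical lift} and again Lemma~\ref{Lemma: Ind of composition on morphism from same index category agree}, the ind-morphism $(\ell_{X_i})_{i \in I}$; likewise $\Ind(0)_{\ul{X}} = (0_{X_i})_{i \in I}$, $\Ind(p)_{\ul{X}} = (p_{X_i})_{i \in I}$, $(\Ind(T) \ast \Ind(p))_{\ul{X}} = (Tp_{X_i})_{i \in I}$, $\hat{+}_{\ul{X}} = (+_{X_i})_{i \in I}$, and $\hat{+}_{\Ind(T)\ul{X}} = (+_{TX_i})_{i \in I}$, with the whiskered projections being the $i$-local projections. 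Consequently, after transporting along these isomorphisms, each of the three squares becomes a diagram of ind-objects over $I$ whose $i$-th component is precisely the corresponding square from Axiom~3 for $(\Cscr,\Tbb)$ evaluated at $X_i$.

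Since $(\Cscr,\Tbb)$ is a tangent category, each of those component squares commutes in $\Cscr$, hence in $[\Cscr^{\op},\Set]$ after applying $\yon$. Applying $L$, using that $L$ sends an ind-object to the filtered colimit of the $\yon X_i$ and that filtered colimits commute in $[\Cscr^{\op},\Set]$ with the finite limits appearing in the pullbacks above (which is what makes the identifications natural — cf. the proof of Proposition~\ref{Prop: Ind of Tangent pullback is tangent pullback of ind}), the image of each square under $L$ is the filtered colimit over $I$ of commuting squares of presheaves, hence commutes. Therefore each square commutes in $\Ind_{\mathbf{PSh}}(\Cscr)$, and since $L$ is fully faithful the squares already commute in $\Ind(\Cscr)$. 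This establishes that $(\hat{\ell}_{\ul{X}}, \Ind(0)_{\ul{X}})$ is a morphism of bundles; the verification is then entirely \emph{mutatis mutandis} to the proof of Proposition~\ref{Prop: Additive Ind Bundle}.

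The step requiring the most care is the bookkeeping around the compositor $\phi_{T,T}$ and the matching of the pullback $\Ind(T)^2\ul{X} \times_{\Ind(T)\ul{X}} \Ind(T)^2\ul{X}$ with the componentwise ind-object $(T^2X_i \times_{TX_i} T^2X_i)$ — that is, making sure that $\hat{\ell}_{\ul{X}}$, built from $\Ind(\ell)$ post-composed with a compositor isomorphism, genuinely acts $i$-locally as $\ell_{X_i}$, and that the additivity square really does take place over the pullback computed componentwise. This is exactly where Lemma~\ref{Lemma: Ind of composition on morphism from same index category agree} does the work: because $\ul{X}$, $\Ind(T)\ul{X}$, $\Ind(T)^2\ul{X}$, and the relevant pullbacks are all represented by functors on the single indexing category $I$, the compositor and the various mediating isomorphisms act as identities on the morphisms in play, so the reduction to the componentwise picture is legitimate. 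Once that reduction is in place the remaining verification is mechanical, invoking Axiom~3 for $(\Cscr,\Tbb)$ one index at a time.
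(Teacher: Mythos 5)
Your proof is correct and takes essentially the same route as the paper's: reduce every morphism in the three bundle-morphism squares to its componentwise description over the common indexing category $I$ (with the compositor bookkeeping handled by Lemma~\ref{Lemma: Ind of composition on morphism from same index category agree}) and invoke the vertical-lift axioms of $(\Cscr,\Tbb)$ at each $X_i$. The only difference is cosmetic: you route the final commutativity check through the realization functor $L$, as the paper does for Proposition~\ref{Prop: Additive Ind Bundle}, whereas the paper's own proof of this proposition simply computes the composites componentwise directly in $\Ind(\Cscr)$.
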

\begin{proof}
	We must prove that the diagrams
	\[
	\begin{tikzcd}
	\Ind(T)\ul{X} \ar[rr]{}{\hat{\ell}_{\ul{X}}} \ar[d, swap]{}{\Ind(p)_{\ul{X}}} & & \Ind(T)^2\ul{X} \ar[d]{}{(\Ind(T) \ast \Ind(p))_{\ul{X}}} \\
	\ul{X} \ar[rr, swap]{}{\Ind(0)_{\ul{X}}} & & \Ind(T)\ul{X}
	\end{tikzcd}
	\]
	\[
	\begin{tikzcd}
	\Ind(T)\ul{X} \times_{\ul{X}} \Ind(T)\ul{X} \ar[rrrr]{}{\langle \hat{\ell}_{\ul{X}} \circ \pi_1, \hat{\ell}_{\ul{X}} \circ \pi_2\rangle} \ar[d, swap]{}{\hat{+}_{\ul{X}}} & & & &\Ind(T)^2\ul{X} \times_{\Ind(T)\ul{X}} \Ind(T)^2\ul{X} \ar[d]{}{(\hat{+} \ast \Ind(T))_{\ul{X}}} \\
	\Ind(T)\ul{X} \ar[rrrr, swap]{}{\hat{\ell}_{\ul{X}}} & & & & \Ind(T)^2\ul{X}
	\end{tikzcd}
	\]
	\[
	\begin{tikzcd}
	\ul{X} \ar[rr]{}{\Ind(0)_{\ul{X}}} \ar[d, swap]{}{\Ind(0)_{\ul{X}}} & & \Ind(T)\ul{X} \ar[d]{}{(\Ind(0) \ast \Ind(T))_{\ul{X}}} \\
	\Ind(T)\ul{X} \ar[rr, swap]{}{\hat{\ell}_{\ul{X}}} & & \Ind(T)^2\ul{X}
	\end{tikzcd}
	\]
	commute in $\Ind(\Cscr)$. Write $\ul{X} = (X_i)_{i \in I}$ and consider that the first diagram has bottom edge calculated by
	\[
	\Ind(0)_{\ul{X}} \circ \Ind(p)_{\ul{X}} = (0_{X_i}) \circ (p_{X_i}) = (0_{X_i} \circ p_{X_i}) = ((T \ast p)_{X_i} \circ \ell_{X_i})
	\]
	because $\Cscr$ is a tangent category. Alternatively, by Lemma \ref{Lemma: Existence of Ind-vertical lift} we find that the upper half of the diagram is calculated by
	\[
	\big(\Ind(T) \ast \Ind(p)\big)_{\ul{X}} \circ \hat{\ell}_{X_i} = \big((T \ast p)_{X_i}\big) \circ (\ell_{X_i}) = \big((T \ast p)_{X_i} \circ \ell_{X_i}\big) = \Ind(0)_{\ul{X}} \circ \Ind(p)_{\ul{X}},
	\]
	so the first diagram indeed commutes. The third diagram is verified to commute similarly, so it suffices to show now that the second diagram commutes. For this we note that on one hand
	\[
	\hat{\ell}_{\ul{X}} \circ \hat{+}_{\ul{X}} = (\ell_{X_i})_{i \in I} \circ (0_{X_i})_{i \in I} = (\ell_{X_i} \circ 0_{X_i})_{i \in I} = \big((+ \ast T)_{X_i} \circ \langle \ell_{X_i} \circ \pi_{1,i}, \ell_{X_i}\circ \pi_{2,i}\rangle\big)_{i \in I}
	\]
	because $\Cscr$ is a tangent category. Now on the other hand we note that
	\begin{align*}
	\big(\hat{+} \ast \Ind(T)\big)_{\ul{X}} \circ \left\langle \hat{\ell}_{\ul{X}} \circ \pi_1, \hat{\ell}_{\ul{X}} \circ \pi_2 \right\rangle &= \left((+ \ast T)_{X_i}\right) \circ \left\langle (\ell_{X_i}) \circ (\pi_{1,i}), \ell_{X_i} \circ (\pi_{2,i})\right\rangle \\
	& = \big((+ \ast T)_{X_i}\big) \circ \left\langle (\ell_{X_i} \circ \pi_{1,i}), (\ell_{X_i} \circ \pi_{2,i}) \right\rangle \\
	&= \big((+ \ast T)_{X_i}\big) \circ \big(\langle \ell_{X_i} \circ \pi_{1,i}, \ell_{X_i} \circ \pi_{2,i} \rangle\big) \\
	&= \left((+ \ast T)_{X_i} \circ \langle \ell_{X_i}\circ \pi_{1,i}, \ell_{X_i} \circ \pi_{2,i}\rangle\right)
	\end{align*}
	so it follows that the diagram indeed commutes.
\end{proof}

We now provide the existence of the ind-canonical flip. This amounts to being able to commute partial derivative operators within formal $2$-jets of our tangent object $\Ind(T)\ul{X} \to \ul{X}$. Afterwards we will prove that $\hat{c}$ is one of the components of a bundle morphism.
\begin{lemma}\label{Lemma: Existence of ind-canonical flip}
	Let $(\Cscr,\Tbb)$ be a tangent category. Then there is an ind-canonical flip natural transformation
	\[
	\hat{c}:\Ind(T)^2 \Rightarrow \Ind(T)^2.
	\]
\end{lemma}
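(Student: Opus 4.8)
The plan is to construct $\hat{c}$ in exactly the same manner as the ind-vertical lift $\hat{\ell}$ was built in Lemma \ref{Lemma: Existence of Ind-vertical lift}. Since $c \colon T^2 \Rightarrow T^2$ is a natural transformation, Proposition \ref{Prop: Ind nat transform induced by nat transform} immediately supplies an indicized natural transformation $\Ind(c) \colon \Ind(T^2) \Rightarrow \Ind(T^2)$. Its domain and codomain are $\Ind(T^2) = \Ind(T \circ T)$ rather than the functor $\Ind(T)^2 = \Ind(T) \circ \Ind(T)$ we want, so I would correct this by conjugating with the compositor isomorphism $\phi_{T,T}$, precisely the same piece of $2$-categorical bookkeeping that was needed for the vertical lift.

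Concretely, I would define $\hat{c}$ to be the vertical composite of natural transformations

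\[
\xymatrix{
\Ind(T)^2 \ar[r]^-{\phi_{T,T}^{-1}} & \Ind(T^2) \ar[r]^-{\Ind(c)} & \Ind(T^2) \ar[r]^-{\phi_{T,T}} & \Ind(T)^2,
}
\]

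which is manifestly a natural transformation $\Ind(T)^2 \Rightarrow \Ind(T)^2$. It is worth recording, for use in the subsequent verifications, the $i$-local description of $\hat{c}$: for an object $\ul{X} = (X_i)_{i \in I}$, every transformation occurring in the composite above is defined over the single fixed indexing category $I$, so by Lemma \ref{Lemma: Ind of composition on morphism from same index category agree} the component $\hat{c}_{\ul{X}}$ is, modulo the mediating compositor isomorphisms, the ind-morphism $(c_{X_i})_{i \in I}$.

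I do not anticipate any genuine difficulty here: the entire content of the statement is that $c$ exists and that $\Ind$ is a pseudofunctor (so that $\phi_{T,T}$ is available and invertible). The only point demanding care is to fix the direction of $\phi_{T,T}$ once and for all, consistently with the choice already made for $\hat{\ell}$, so that the later verifications --- that $(\hat{c}_{\ul{X}}, \id_{\Ind(T)\ul{X}})$ is a bundle morphism, that $\hat{c}^2 = \id$ and $\hat{c} \circ \hat{\ell} = \hat{\ell}$, and the coherence identities of Axiom 5 --- can each be checked $i$-locally by passing through the fully faithful presheaf realization functor $L$, exactly as was done for the vertical lift in Proposition \ref{Prop: Ind vertical lift is part of a bundle map}.
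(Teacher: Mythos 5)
Your construction is exactly the paper's: apply Proposition \ref{Prop: Ind nat transform induced by nat transform} to $c$ to obtain $\Ind(c)\colon\Ind(T^2)\Rightarrow\Ind(T^2)$ and conjugate by the compositor isomorphism between $\Ind(T)^2$ and $\Ind(T^2)$, and your $i$-local description $\hat{c}_{\ul X} = \theta^{-1}\circ (c_{X_i})\circ\theta$ is precisely what the paper uses in the later verifications. The only (cosmetic) discrepancy is the direction assigned to the symbol $\phi_{T,T}$, on which the paper itself is not consistent, so your remark about fixing it once and for all is well taken.
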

\begin{proof}
	We define the ind-canonical flip via the diagram
	\[
	\begin{tikzcd}
	\Ind(T)^2 \ar[d, swap, Rightarrow]{}{\hat{c}} \ar[r, Rightarrow]{}{\phi_{T,T}} & \Ind(T^2) \ar[d, Rightarrow]{}{\Ind{c}} \\
	\Ind(T)^2 & \Ind(T^2) \ar[l, Rightarrow]{}{\phi_{T,T}^{-1}}
	\end{tikzcd}
	\]
	where $\phi_{T,T}$ is the compositor isomorphism $\theta:\Ind(T)^2 \xRightarrow{\cong}\Ind(T^2)$ and $\Ind{c}$ is the indicization of the canonical flip $c:T^2 \Rightarrow T^2$ asserted by Proposition \ref{Prop: Ind nat transform induced by nat transform}.
\end{proof}
\begin{proposition}\label{Prop: Ind-canonical flip is bundle map}
	If $(\Cscr, \Tbb)$ is a tangent category then for any object $\ul{X}$ of $\Ind(\Cscr)$, the pair of morphisms $(\id_{\Ind(T)\ul{X}}, \hat{c}_{\ul{X}})$ describe a bundle morphism.
\end{proposition}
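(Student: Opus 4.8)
The plan is to follow the strategy of the proof of Proposition \ref{Prop: Ind vertical lift is part of a bundle map}. By \cite[Definition 2.2]{GeoffRobinDiffStruct} (cf.\@ the squares in Definition \ref{Defn: Tangent Categpry}(4)), to say that $(\id_{\Ind(T)\ul{X}}, \hat{c}_{\ul{X}})$ describes a bundle morphism amounts to verifying three commuting squares in $\Ind(\Cscr)$, now with $\hat{c}$, $\Ind(p)$, $\hat{+}$, and $\Ind(0)$ in place of $c$, $p$, $+$, and $0$: the projection square relating $\hat{c}_{\ul{X}}$ to the whiskered bundle maps $(\Ind(T) \ast \Ind(p))_{\ul{X}}$ and $(\Ind(p) \ast \Ind(T))_{\ul{X}}$; the additivity square relating $\langle \hat{c}_{\ul{X}} \circ \pi_1, \hat{c}_{\ul{X}} \circ \pi_2\rangle$ to $(\Ind(T) \ast \hat{+})_{\ul{X}}$ and $(\hat{+} \ast \Ind(T))_{\ul{X}}$ on the pullback $\Ind(T)^2\ul{X} \times_{\Ind(T)\ul{X}} \Ind(T)^2\ul{X}$; and the zero square relating $\hat{c}_{\ul{X}}$ to $(\Ind(T) \ast \Ind(0))_{\ul{X}}$ and $(\Ind(0) \ast \Ind(T))_{\ul{X}}$. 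Each of these will be reduced, via the presheaf realization functor $L$ and the fact that $(\Cscr,\Tbb)$ is a tangent category, to the corresponding square for the bundle morphism $(c_{X_i},\id_{TX_i})$ supplied by Definition \ref{Defn: Tangent Categpry}(4).

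First I would pin down the $i$-local description of every map in sight. Writing $\ul{X} = (X_i)_{i \in I}$, Corollary \ref{Cor: Ind Tangent pullbacks are tangent ind pullbacks for any m} identifies $\Ind(T)^2\ul{X} \cong \Ind(T^2)\ul{X}$ with the ind-object $(T^2X_i)_{i \in I}$, and Corollary \ref{Cor: Ind Tangent functor commutes with Ind tangent pullbacks} together with Proposition \ref{Prop: Ind of Tangent pullback is tangent pullback of ind} identifies the relevant pullbacks $\Ind(T)^2\ul{X} \times_{\Ind(T)\ul{X}} \Ind(T)^2\ul{X}$ with $(T^2X_i \times_{TX_i} T^2X_i)_{i \in I}$. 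Under these identifications the definition of $\hat{c}$ in Lemma \ref{Lemma: Existence of ind-canonical flip} as the conjugate $\phi_{T,T}^{-1} \circ \Ind(c) \circ \phi_{T,T}$ shows that $\hat{c}_{\ul{X}}$ acts on $(T^2X_i)_{i \in I}$ simply as $(c_{X_i})_{i \in I}$: since $c_{X_i}$ and all transition morphisms live over the single indexing category $I$, Lemma \ref{Lemma: Ind of composition on morphism from same index category agree} and naturality of the compositors show that conjugation by $\phi_{T,T}$ has no effect on such a morphism. The same observation, together with Remark \ref{Remark: Ind functor and morphisms}, Lemma \ref{Lemma: Existence of ind-bundle addition}, and Proposition \ref{Prop: Additive Ind Bundle}, shows that $(\Ind(T) \ast \Ind(p))_{\ul{X}}$, $(\Ind(p) \ast \Ind(T))_{\ul{X}}$, $(\Ind(T) \ast \Ind(0))_{\ul{X}}$, $(\Ind(0) \ast \Ind(T))_{\ul{X}}$, $(\Ind(T) \ast \hat{+})_{\ul{X}}$, and $(\hat{+} \ast \Ind(T))_{\ul{X}}$ act $i$-locally as $(Tp_{X_i})$, $(p_{TX_i})$, $(T(0_{X_i}))$, $(0_{TX_i})$, $(T(+_{X_i}))$, and $(+_{TX_i})$ respectively. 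Consequently each of the three $\Ind$-squares becomes the diagram of ind-objects whose $i$-th component is precisely the corresponding square of Definition \ref{Defn: Tangent Categpry}(4) for the bundle morphism $(c_{X_i},\id_{TX_i})$ in $\Cscr$. Since $(\Cscr,\Tbb)$ is a tangent category each component square commutes, so applying $L$ and using that filtered colimits commute with finite limits in $[\Cscr^{\op},\Set]$ shows the image diagram of ind-presheaves commutes; as $L$ is fully faithful, hence conservative (Proposition \ref{Prop: L functor on ind is fully faithful}), the original squares commute in $\Ind(\Cscr)$ and the pair is a bundle morphism.

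The main obstacle is the bookkeeping with the compositor isomorphisms: because $\Ind$ is only a pseudofunctor, $\hat{c}$ is not literally $\Ind(c)$ but its conjugate by $\phi_{T,T}$, and the whiskerings by $\Ind(T)$ must be matched with the strict whiskerings by $T$ at the level of $i$-local components, as must the two a priori distinct pullbacks $\Ind(T)^2\ul{X} \times_{\Ind(T)\ul{X}} \Ind(T)^2\ul{X}$ (taken along $(\Ind(T) \ast \Ind(p))_{\ul{X}}$ versus along $(\Ind(p) \ast \Ind(T))_{\ul{X}}$) that $\hat{c}_{\ul{X}}$ interchanges. Lemma \ref{Lemma: Ind of composition on morphism from same index category agree} is exactly the tool that resolves this, since every object and morphism occurring in the three squares is defined over the common indexing category $I$, so the compositors act as identities on them and the reduction to the $i$-local picture goes through cleanly. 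Once that reduction is set up, the remaining verification is routine and entirely parallel to Proposition \ref{Prop: Ind vertical lift is part of a bundle map}.
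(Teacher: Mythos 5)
Your proposal is correct and follows essentially the same strategy as the paper's proof: reduce each of the three bundle-morphism squares to its $i$-local component, where it commutes because $(\Cscr,\Tbb)$ is a tangent category, with Lemma \ref{Lemma: Ind of composition on morphism from same index category agree} handling the compositor bookkeeping for $\hat{c} = \phi_{T,T}^{-1}\circ\Ind(c)\circ\phi_{T,T}$. The only cosmetic difference is that the paper verifies the resulting equalities by direct componentwise computation at the level of ind-morphisms (carrying $\theta = \phi_{T,T}$ explicitly through the equations), whereas you conclude by passing through $L$ and its conservativity, which is equally valid.
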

\begin{proof}
	Following Definition \ref{Defn: Tangent Categpry}, we must show that the diagrams
	\[
	\begin{tikzcd}
	\Ind(T)^2{\ul{X}} \ar[rr]{}{\hat{c}_{\ul{X}}} \ar[d, swap]{}{(\Ind(T) \ast \Ind(p))_{\ul{X}}} & & \Ind(T)^2{\ul{X}} \ar[d]{}{(\Ind(p) \ast \Ind(T))_{{\ul{X}}}} \\
	\Ind(T){\ul{X}} \ar[rr, equals, swap]{}{} & & \Ind(T){\ul{X}} 
	\end{tikzcd}
	\]
	\[
	\begin{tikzcd}
	\Ind(T)^2{\ul{X}} \times_{\Ind(T){\ul{X}}} \Ind(T)^2{\ul{X}} \ar[d, swap]{}{(\Ind(T) \ast \hat{+})_{{\ul{X}}}} \ar[rrrr]{}{\langle \hat{c}_{\ul{X}} \circ \pi_1, \hat{c}_{\ul{X}} \circ \pi_2\rangle} & & & & \Ind(T)^2{\ul{X}} \times_{\Ind(T){\ul{X}}}\Ind(T)^2{\ul{X}} \ar[d]{}{(\hat{+} \ast \Ind(T))_{\ul{X}}} \\
	\Ind(T)^2{\ul{X}} \ar[rrrr, swap]{}{\hat{c}_{\ul{X}}} & & & & \Ind(T)^2{\ul{X}}
	\end{tikzcd}
	\]
	\[
	\begin{tikzcd}
	\Ind(T){\ul{X}} \ar[d, swap]{}{(\Ind(T) \ast 0)_{\ul{X}}} \ar[r, equals]{}{} & \Ind(T){\ul{X}} \ar[d]{}{(0 \ast \Ind(T))_{\ul{X}}} \\
	\Ind(T)^2{\ul{X}} \ar[r, swap]{}{\hat{c}_{\ul{X}}} & \Ind(T)^2{\ul{X}}
	\end{tikzcd}
	\]
	commute. The first and third diagrams are established similarly, so we need only establish the commutativity of the first and second diagrams to prove the proposition. Let $\ul{X} = (X_i)_{i \in I} = (X_i)$ be an object of $\Ind(\Cscr)$. For this we begin by establishing the commutativity of the first digaram. Note that on one hand
	\[
	\big(\Ind(T) \ast \Ind(p)\big)_{\ul{X}} = \big((T \ast p)_{X_i}\big) = \big((p \ast T)_{X_i} \circ c_{X_i}\big)
	\]
	because $\Cscr$ is a tangent category. On the other hand we calculate that
	\begin{align*}
	\big(\Ind(p) \ast \Ind(T)\big)_{\ul{X}} \circ \hat{c}_{\ul{X}} &= \big((p \ast T)_{X_i}\big) \circ \theta^{-1}_{\ul{X}} \circ \ul{c}_{\ul{X}} \circ \theta_{\ul{X}} = \big((p \ast T)_{X_i}\big) \circ \theta^{-1}_{\ul{X}} \circ (c_{X_i}) \circ \theta_{\ul{X}} \\
	&= \big((p \ast T)_{X_i}\big) \circ (c_{X_i}) \circ \theta^{-1}_{\ul{X}} \circ \theta_{\ul{X}} = \big((p \ast T)_{X_i}\big) \circ (c_{X_i}) \\
	&= \big((p \ast T)_{X_i} \circ c_{X_i}\big),
	\end{align*}
	which shows that the first diagram indeed commutes.
	
	To show the commutativity of the second diagram we note that on one hand a routine check shows
	\begin{align*}
	\hat{c}_{\ul{X}} \circ \big(\Ind(T) \ast \hat{+}\big)_{\ul{X}} &= \theta_{\ul{X}}^{-1} \circ (c_{X_i}) \circ \big((T \ast +)_{X_i}\big) \circ \theta_{\ul{X}}.
	\end{align*}
	On the other hand
	\begin{align*}
	&\big(\hat{+} \ast T\big)_{\ul{X}} \circ \left\langle  \hat{c}_{\ul{X}} \circ \pi_1, \hat{c}_{\ul{X}} \circ \pi_2 \right\rangle  \\
	&= \big(\hat{+} \ast T\big)_{\ul{X}} \circ  \left\langle \theta^{-1}_{\ul{X}} \circ (c_{X_i}) \circ \theta_{\ul{X}} \circ \pi_1, \theta^{-1}_{\ul{X}} \circ (c_{X_i}) \circ \theta_{\ul{X}} \circ \pi_2 \right\rangle \\
	&= \theta_{\ul{X}}^{-1} \circ \big((+ \ast T)_{X_i}\big) \circ \left\langle (c_{X_i}) \circ \theta_{\ul{X}} \circ \pi_1, (c_{X_i}) \circ \theta_{\ul{X}} \circ \pi_2 \right\rangle \\
	&= \theta_{\ul{X}}^{-1} \circ \big((+ \ast T)_{X_i}\big) \circ \left(\big\langle (c_{X_i}) \circ \pi_{1,i}, (c_{X_i}) \circ \pi_{2,i} \big\rangle\right) \circ \theta_{\ul{X}} \\
	&= \theta_{\ul{X}}^{-1} \circ \left((p \ast T)_{X_i} \circ \left\langle (c_{X_i}) \circ \pi_{1,i}, (c_{X_i}) \circ \pi_{2,i} \right\rangle\right) \circ \theta_{\ul{X}} \\
	&= \theta^{-1}_{\ul{X}} \circ \big(c_{X_i} \circ (T \ast +)_{X_i}\big) \circ \theta_{\ul{X}} = \theta_{\ul{X}}^{-1} \circ (c_{X_i}) \circ \big((T \ast +)_{X_i}\big) \circ \theta_{\ul{X}},
	\end{align*}
	which shows that the secon diagram commutes.
\end{proof}

From here we show the coherences that $\hat{c}$ and $\hat{\ell}$ satisfy.

\begin{lemma}\label{Lemma: Ind-cherences between lift and flip}
	Let $(\Cscr, \Tbb)$ be a tangent category. The ind-canonical flip $\hat{c}:\Ind(T)^2 \Rightarrow \Ind(T)^2$ is an involution, $\hat{c} \circ \hat{\ell} = \hat{\ell}$, and the diagrams
	\[
	\begin{tikzcd}
	\Ind(T) \ar[rr]{}{\hat{\ell}} \ar[d, swap]{}{\hat{\ell}} & & \Ind(T)^2 \ar[d]{}{\Ind(T) \ast \hat{\ell}}  \\
	\Ind(T)^2 \ar[rr, swap]{}{\hat{\ell} \ast \Ind(T)} & & \Ind(T)^3 
	\end{tikzcd}
	\]
	\[
	\begin{tikzcd}
	\Ind(T)^3 \ar[d, swap]{}{\hat{c} \ast \Ind(T)} \ar[rr]{}{\Ind(T) \ast \hat{c}} & & \Ind(T)^3 \ar[rr]{}{\hat{c} \ast \Ind(T)} & & \Ind(T)^3\ar[d]{}{\hat{c} \ast \Ind(T)} \\
	\Ind(T)^3 \ar[rr, swap]{}{\Ind(T) \ast \hat{c}} & & \Ind(T)^3 \ar[rr, swap]{}{\hat{c} \ast \Ind(T)} & & \Ind(T)^3
	\end{tikzcd}
	\]
	\[
	\begin{tikzcd}
	\Ind(T)^2 \ar[d, swap]{}{\hat{c}} \ar[rr]{}{\hat{\ell} \ast \Ind(T)} & & \Ind(T)^3 \ar[rr]{}{\Ind(T) \ast \hat{c}} & & \Ind(T)^3 \ar[d]{}{\hat{c} \ast \Ind(T)} \\
	\Ind(T)^2 \ar[rrrr, swap]{}{\Ind(T) \ast \hat{\ell}} & & & & \Ind(T)^3
	\end{tikzcd}
	\]
	commute.
\end{lemma}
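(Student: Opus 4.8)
The plan is to isolate the two purely equational assertions — that $\hat c$ is an involution and that $\hat c \circ \hat\ell = \hat\ell$ — from the three coherence squares. Recall from Lemmas~\ref{Lemma: Existence of Ind-vertical lift} and~\ref{Lemma: Existence of ind-canonical flip} that, writing $\theta$ for the compositor isomorphism $\Ind(T)^2 \xRightarrow{\cong} \Ind(T^2)$, we have $\hat\ell = \theta^{-1}\circ\Ind(\ell)$ and $\hat c = \theta^{-1}\circ\Ind(c)\circ\theta$. Since $\Ind$ carries identity $2$-cells to identity $2$-cells and is strictly functorial on vertical composites of $2$-cells (Corollary~\ref{Cor: Ind is strict 2-mor}), the first two claims follow at once: using $\theta\circ\theta^{-1} = \id$ together with $c\circ c = \id_{T^2}$ and $c\circ\ell = \ell$ in $\Cscr$ (Axiom~5 of Definition~\ref{Defn: Tangent Categpry}),
\[
\hat c \circ \hat c = \theta^{-1}\circ\Ind(c)\circ\Ind(c)\circ\theta = \theta^{-1}\circ\Ind(c\circ c)\circ\theta = \theta^{-1}\circ\Ind(\id_{T^2})\circ\theta = \id_{\Ind(T)^2}
\]
and
\[
\hat c\circ\hat\ell = \theta^{-1}\circ\Ind(c)\circ\theta\circ\theta^{-1}\circ\Ind(\ell) = \theta^{-1}\circ\Ind(c)\circ\Ind(\ell) = \theta^{-1}\circ\Ind(c\circ\ell) = \theta^{-1}\circ\Ind(\ell) = \hat\ell.
\]

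For the three coherence squares the plan is to check commutativity objectwise, exactly as in the proofs of Propositions~\ref{Prop: Ind vertical lift is part of a bundle map} and~\ref{Prop: Ind-canonical flip is bundle map}. Fix $\ul X = (X_i)_{i\in I}$ in $\Ind(\Cscr)$ and evaluate each edge of each square at $\ul X$. Using the formula for left and right whiskering together with the description (Remark~\ref{Remark: Ind functor and morphisms}) of $\Ind(F)$ on a morphism of ind-objects defined over a common indexing category, one finds that after transporting through the compositor isomorphisms $\hat\ell_{\ul X}$ and $\hat c_{\ul X}$ become $(\ell_{X_i})_{i\in I}$ and $(c_{X_i})_{i\in I}$, while $(\Ind(T)\ast\hat\ell)_{\ul X}$, $(\hat\ell\ast\Ind(T))_{\ul X}$, $(\Ind(T)\ast\hat c)_{\ul X}$ and $(\hat c\ast\Ind(T))_{\ul X}$ become $\big((T\ast\ell)_{X_i}\big)_{i}$, $\big((\ell\ast T)_{X_i}\big)_{i}$, $\big((T\ast c)_{X_i}\big)_{i}$ and $\big((c\ast T)_{X_i}\big)_{i}$. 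Hence the $\ul X$-component of each of the three squares is, componentwise in $i$, precisely the corresponding square of Axiom~5 of Definition~\ref{Defn: Tangent Categpry} for the object $X_i$ of the tangent category $\Cscr$, and therefore commutes; commutativity of the $\Ind$-square then follows since equality of natural transformations may be tested objectwise. As a sanity check one can also confirm each equality of composites by applying the faithful functor $L$ of Proposition~\ref{Prop: L functor on ind is fully faithful}.

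I expect the only real difficulty to be bookkeeping. When $\hat\ell$ and $\hat c$ are whiskered with $\Ind(T)$ one picks up not just $\theta = \phi_{T,T}$ but also its higher companions $\phi_{T^2,T}$ and $\phi_{T,T^2}$, and the point to be verified carefully is that along each of the composites appearing in the three squares these compositor isomorphisms cancel, so that the square genuinely reduces to the $i$-local instance of Axiom~5 rather than to some conjugate of it. The tools for this are already in place: the pseudofunctoriality of $\Ind$ on $2$-morphisms proved above, together with Lemma~\ref{Lemma: Ind of composition on morphism from same index category agree}, which says that on morphisms of ind-objects over a fixed indexing category the compositor isomorphisms act transparently. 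No idea beyond these is needed.
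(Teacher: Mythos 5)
Your proposal is correct and follows essentially the same route as the paper: the involution and $\hat{c}\circ\hat{\ell}=\hat{\ell}$ are obtained by the same cancellation of the compositor $\theta$ together with $c\circ c=\id$ and $c\circ\ell=\ell$ in $\Cscr$, and the three squares are handled exactly as the paper does, by reducing componentwise to Axiom 5 of Definition \ref{Defn: Tangent Categpry} using naturality of the compositor isomorphisms $\Ind(T^n)\cong\Ind(T)^n$ (a verification the paper likewise records as tedious but straightforward). Your explicit flagging of the higher compositors $\phi_{T^2,T}$ and $\phi_{T,T^2}$ is a slightly more careful account of the same bookkeeping, not a different argument.
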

\begin{proof}
	That $\hat{c}$ is an involution follows from the calculation, for any $\ul{X} = (X_i) \in \Ind(\Cscr)_0$,
	\[
	\hat{c}^2 = \theta^{-1} \circ (c_{X_i}) \circ \theta \circ \theta^{-1} \circ (c_{X_i}) \circ \theta`= \theta^{-1} \circ (c_{X_i})^2 \circ \theta = \theta^{-1} \circ \id_{\Ind(T^2)\ul{X}} \circ \theta = \id_{\Ind(T)^2\ul{X}}.
	\]
	The second identity follows from the calculation
	\begin{align*}
	\hat{c}_{\ul{X}} \circ \hat{\ell}_{\ul{X}} &= \theta^{-1}_{\ul{X}} \circ (c_{X_i}) \circ \theta_{\ul{X}} \circ \theta^{-1}_{\ul{X}} \circ  (\ell_{X_i}) = \theta_{\ul{X}}^{-1} \circ (c_{X_i}) \circ (\ell_{X_i}) = \theta_{\ul{X}}^{-1} \circ (c_{X_i} \circ \ell_{X_i}) \\
	&= \theta_{\ul{X}}^{-1} \circ (\ell_{X_i}) = \hat{\ell}_{\ul{X}}.
	\end{align*}
	Finally the verification of the commuting diagrams is tedious but straightforward check using the naturality of the isomorphisms $\Ind(T^n) \cong \Ind(T)^n$ together with the identities satisfied by the canonical flip and vertical lift in $\Cscr$.
\end{proof}

As the last necessary ingredient in showing that $(\Ind(\Cscr), \Ind(\Tbb))$, we will prove the universality of the ind-vertical lift. This proof will rely on ind-presheaves $\Ind_{\mathbf{PSh}}(\Cscr)$ and the functor $L:\Ind(\Cscr) \to [\Cscr^{\op},\Set]$ as it is convenient when proving properties about limits and colimits that need not hold on the nose, i.e., those properties which need only hold up to isomorphism.
\begin{proposition}\label{Prop: Universality of ind-vertical lift}
	Let $(\Cscr,\Tbb)$ be a tangent category and let $\ul{X} \in \Ind(\Cscr)_0$. Then the diagram
	\[
	\begin{tikzcd}
	\Ind(T)_2\ul{X} \ar[rrrrrrrr]{}{\big(\Ind(T) \ast \hat{+}\big)_{\ul{X}} \circ \left\langle \hat{\ell} \circ \pi_1, \big(\Ind(0) \ast T\big)_{\ul{X}} \circ \pi_2 \right\rangle} & & & & & & & &\Ind(T)^2\ul{X} \ar[rrrr, shift left = 0.5 ex]{}{\big(\Ind(T) \ast \Ind(p)\big)_{\ul{X}}} \ar[rrrr, shift right = 0.5 ex, swap]{}{\Ind(0)_{\ul{X}} \circ \Ind(p)_{\ul{X}} \circ \big(\Ind(p) \ast \Ind(T)\big)_{\ul{X}}} & & & & \Ind(T)\ul{X}
	\end{tikzcd}
	\]
	is an equalizer in $\Ind(\Cscr)$.
\end{proposition}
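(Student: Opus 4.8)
The plan is to verify the claim after applying the presheaf realization functor $L\colon\Ind(\Cscr)\to[\Cscr^{\op},\Set]$ and then transport the conclusion back. Since $L$ is fully faithful (Proposition \ref{Prop: L functor on ind is fully faithful}) it reflects limits, so it suffices to produce the fork in $\Ind(\Cscr)$ (which is already given) and show that its image under $L$ is an equalizer in $[\Cscr^{\op},\Set]$. Writing $\ul{X} = (X_i)_{i \in I}$ with $I$ filtered, the first task is to identify $L$ of the objects and morphisms appearing in the statement. Using Proposition \ref{Prop: Ind of Tangent pullback is tangent pullback of ind} and Corollary \ref{Cor: Ind Tangent pullbacks are tangent ind pullbacks for any m} we have $\Ind(T)_2 \cong \Ind(T_2)$ and $\Ind(T)^2 \cong \Ind(T^2)$, whence
\[
L(\Ind(T)_2\ul{X}) \cong \colim_{i \in I}\yon(T_2X_i),\quad L(\Ind(T)^2\ul{X}) \cong \colim_{i \in I}\yon(T^2X_i),\quad L(\Ind(T)\ul{X}) \cong \colim_{i \in I}\yon(TX_i).
\]

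The step I expect to be the main obstacle is the bookkeeping that identifies the three morphisms of the fork, after conjugation by the mediating isomorphisms $\Ind(T)_2 \cong \Ind(T_2)$ and $\phi_{T,T}\colon\Ind(T)^2\cong\Ind(T^2)$ built into the definitions of $\hat{+}$, $\hat{\ell}$, $\Ind(0)$ and $\Ind(p)$ (Lemmas \ref{Lemma: Existence of Indp and Ind0}, \ref{Lemma: Existence of ind-bundle addition}, \ref{Lemma: Existence of Ind-vertical lift}), with the indicizations of the three morphisms of Axiom $6$ of Definition \ref{Defn: Tangent Categpry} for $\Cscr$. Concretely, one must check that the given map $\Ind(T)_2\ul{X} \to \Ind(T)^2\ul{X}$ corresponds to $\big((T \ast +)_{X_i} \circ \langle \ell_{X_i}\circ\pi_1, 0_{TX_i}\circ\pi_2\rangle\big)_{i\in I}$, that $(\Ind(T)\ast\Ind(p))_{\ul{X}}$ corresponds to $\big(T(p_{X_i})\big)_{i\in I}$, and that $\Ind(0)_{\ul{X}}\circ\Ind(p)_{\ul{X}}\circ(\Ind(p)\ast\Ind(T))_{\ul{X}}$ corresponds to $\big(0_{X_i}\circ p_{X_i}\circ p_{TX_i}\big)_{i\in I}$. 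This is a whiskering-and-compositor computation of exactly the flavour carried out in Propositions \ref{Prop: Ind vertical lift is part of a bundle map} and \ref{Prop: Additive Ind Bundle}, made manageable by the fact that every object and morphism in sight is supported on the single indexing category $I$, so that Lemma \ref{Lemma: Ind of composition on morphism from same index category agree} and Remark \ref{Remark: Ind functor and morphisms} let us compute $\Ind$ of composites levelwise; under $L$ each such indicized morphism is sent to $\colim_{i \in I}\yon(-)$ of the corresponding morphism in $\Cscr$.

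Granting this identification, the proof finishes with a standard filtered-colimit argument. For each $i \in I$, Axiom $6$ of the tangent structure on $\Cscr$ asserts that $T_2X_i \to T^2X_i \rightrightarrows TX_i$ is an equalizer in $\Cscr$; applying the Yoneda embedding $\yon\colon\Cscr\to[\Cscr^{\op},\Set]$, which preserves all limits, yields an equalizer of representables for each $i$. These assemble into an $I$-indexed diagram of forks, each of which is an equalizer, and since colimits in $[\Cscr^{\op},\Set]$ are computed pointwise while filtered colimits commute with finite limits in $\Set$, the colimit fork $\colim_{i}\yon(T_2X_i) \to \colim_{i}\yon(T^2X_i)\rightrightarrows\colim_{i}\yon(TX_i)$ is again an equalizer. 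By the previous paragraph this colimit fork is isomorphic to the image under $L$ of the fork in the statement, so $L$ carries our fork to an equalizer in $[\Cscr^{\op},\Set]$; since $L$ is fully faithful and the fork already lives in $\Ind(\Cscr)$, it follows that it is an equalizer in $\Ind(\Cscr)$, as claimed.
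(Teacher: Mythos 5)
Your proposal is correct and follows essentially the same route as the paper's proof: transport the fork through the realization functor $L$, identify it (levelwise) with the Yoneda images of the Axiom~6 equalizers in $\Cscr$, invoke continuity of the Yoneda embedding and the commutation of filtered colimits with finite limits in $[\Cscr^{\op},\Set]$, and reflect back along the fully faithful $L$. The only difference is that you spell out the compositor/whiskering bookkeeping identifying the three morphisms, which the paper leaves implicit as ``the untangling of the $L$ morphisms.''
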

\begin{proof}
	Because being an equalizer is true along isomorphic objects in $\Cscr$, stable under equivalence of categories, and in light of Proposition \ref{Prop: L functor on ind is fully faithful}, it suffices to prove that the diagram
	\[
	\begin{tikzcd}
	L\Ind(T)_2\ul{X} \ar[rrrrrrrr]{}{L\big(\Ind(T) \ast \hat{+}\big)_{\ul{X}} \circ L\left\langle \hat{\ell} \circ \pi_1, \big(\Ind(0) \ast T\big)_{\ul{X}} \circ \pi_2 \right\rangle} & & & & & & & &L\Ind(T)^2\ul{X} \ar[rrrr, shift left = 0.5 ex]{}{L\big(\Ind(T) \ast \Ind(p)\big)_{\ul{X}}} \ar[rrrr, shift right = 0.5 ex, swap]{}{L\Ind(0)_{\ul{X}} \circ L\Ind(p)_{\ul{X}} \circ L\big(\Ind(p) \ast \Ind(T)\big)_{\ul{X}}} & & & & L\Ind(T)\ul{X}
	\end{tikzcd}
	\]
	is an equalizer in $[\Cscr^{\op},\Set]$. For this we calculate that the diagram above is is isomorphic to the diagram
	\[
	\begin{tikzcd}
	L\Ind(T)\ul{X} \times_{L\ul{X}} L\Ind(T)\ul{X} \ar[r] & L\Ind(T)^2\ul{X} \ar[r, shift left = 0.5 ex] \ar[r, shift left = -0.5 ex] & L\Ind(T)\ul{X}
	\end{tikzcd}
	\] 
	where the horizontal morphisms are the untangling of the $L$ morphisms defined above. By the fact that filtered colimits commute with finite limits in the presheaf topos $[\Cscr^{\op},\Set]$, we find that the diagram above is isomorphic to the diagram of presheaves:
	\[
	\begin{tikzcd}
	\lim\limits_{\substack{\longrightarrow \\ i \in I}} \Cscr(-,T_2X_i) \ar[r] & \lim\limits_{\substack{\longrightarrow \\ i \in I}} \Cscr(-,T^2X_i) \ar[r, shift left = 0.5 ex]{}{} \ar[r, shift left = -0.5ex, swap]{}{} & \lim\limits_{\substack{\longrightarrow \\ i \in I}} \Cscr(-,TX_i)
	\end{tikzcd}
	\]
	Finally, each $i$-indexed component of the diagram above,
	\[
	\begin{tikzcd}
	\Cscr(-,T_2X_i) \ar[r] & \Cscr(-,T^2X_i) \ar[r, shift left = 0.5 ex]{}{} \ar[r, shift left = -0.5ex, swap]{}{} & \Cscr(-,TX_i)
	\end{tikzcd}
	\]
	is an equalizer by the fact that the Yoneda Lemma is continuous and the fact that $(\Cscr, \Tbb)$ is a tangent category. This implies in turn that the diagram
	\[
	\begin{tikzcd}
	\lim\limits_{\substack{\longrightarrow \\ i \in I}} \Cscr(-,T_2X_i) \ar[r] & \lim\limits_{\substack{\longrightarrow \\ i \in I}} \Cscr(-,T^2X_i) \ar[r, shift left = 0.5 ex]{}{} \ar[r, shift left = -0.5ex, swap]{}{} & \lim\limits_{\substack{\longrightarrow \\ i \in I}} \Cscr(-,TX_i)
	\end{tikzcd}
	\]
	and hence
	\[
	\begin{tikzcd}
	L\Ind(T)_2\ul{X} \ar[rrrrrrrr]{}{L\big(\Ind(T) \ast \hat{+}\big)_{\ul{X}} \circ L\left\langle \hat{\ell} \circ \pi_1, \big(\Ind(0) \ast T\big)_{\ul{X}} \circ \pi_2 \right\rangle} & & & & & & & &L\Ind(T)^2\ul{X} \ar[rrrr, shift left = 0.5 ex]{}{L\big(\Ind(T) \ast \Ind(p)\big)_{\ul{X}}} \ar[rrrr, shift right = 0.5 ex, swap]{}{L\Ind(0)_{\ul{X}} \circ L\Ind(p)_{\ul{X}} \circ L\big(\Ind(p) \ast \Ind(T)\big)_{\ul{X}}} & & & & L\Ind(T)\ul{X}
	\end{tikzcd}
	\]
	are both equalizers in $[\Cscr^{\op},\Set]$. Finally, appealing Proposition to \ref{Prop: L functor on ind is fully faithful} proves the result.
\end{proof}

We now have the ingredients to show that $\Ind(\Cscr)$ is a tangent category whenever $\Cscr$ is a tangent category. 
\begin{Theorem}\label{Thm: Ind Tangent Category}
	Let $(\Cscr,\Tbb)$ be a tangent category. Then the category $(\Ind(\Cscr),\Ind(\Tbb))$ is a tangent category where $\Ind(\Tbb)$ is the tangent structure
	\[
	\Ind(\Tbb) := (\Ind(T), \Ind(p), \Ind(0), \hat{+}, \hat{\ell}, \hat{c})
	\]
	where $\Ind(T)$ is the indicization of $T$ induced by Proposition \ref{Prop: Ind functors} and $\Ind(p), \Ind(0), \hat{+}, \hat{\ell},$ and $\hat{c}$ are the natural transformations constructed in Lemmas \ref{Lemma: Existence of Indp and Ind0}, \ref{Lemma: Existence of ind-bundle addition}, \ref{Lemma: Existence of Ind-vertical lift}, and Lemma \ref{Lemma: Existence of ind-canonical flip}, respectively.
\end{Theorem}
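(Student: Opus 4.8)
The proof is a verification of the six axioms of Definition \ref{Defn: Tangent Categpry} for the data $\Ind(\Tbb) = (\Ind(T), \Ind(p), \Ind(0), \hat{+}, \hat{\ell}, \hat{c})$, and at this stage essentially every constituent fact has already been isolated above; the plan is to walk through Axioms $1$--$6$ in order and record how each one follows. First I would dispatch Axiom $1$: the functor $\Ind(T)$ is supplied by Proposition \ref{Prop: Ind functors}, the natural transformation $\Ind(p)\colon \Ind(T) \Rightarrow \id_{\Ind(\Cscr)}$ by Lemma \ref{Lemma: Existence of Indp and Ind0}; existence of all pullback powers of $\Ind(p)_{\ul{X}}$ is the stability of finite pullbacks under $\Ind$ recalled in the paragraph preceding Proposition \ref{Prop: Ind of Tangent pullback is tangent pullback of ind} (so that $\Ind(T)_m\ul{X}$ is represented $i$-locally by $(T_m X_i)$, as in Corollary \ref{Cor: Ind Tangent pullbacks are tangent ind pullbacks for any m}); and the requirement that each $\Ind(T)^n$ preserve these pullback powers is exactly Corollary \ref{Cor: Ind Tangent functor commutes with Ind tangent pullbacks}, applied with the given $m$ and all $n \in \N$.

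Axiom $2$ is then immediate from Lemma \ref{Lemma: Existence of ind-bundle addition} (existence of $\hat{+}\colon \Ind(T)_2 \Rightarrow \Ind(T)$), Lemma \ref{Lemma: Existence of Indp and Ind0} (existence of $\Ind(0)$), and Proposition \ref{Prop: Additive Ind Bundle}, which says precisely that $\Ind(p)_{\ul{X}}$ is an internal commutative monoid in $\Ind(\Cscr)_{/\ul{X}}$ with unit $\Ind(0)_{\ul{X}}$ and addition $\hat{+}_{\ul{X}}$. Axiom $3$ follows from Lemma \ref{Lemma: Existence of Ind-vertical lift} (existence of $\hat{\ell}$) together with Proposition \ref{Prop: Ind vertical lift is part of a bundle map}, which establishes that $(\hat{\ell}_{\ul{X}}, \Ind(0)_{\ul{X}})$ is a bundle morphism, i.e.\@ that the three squares of Axiom $3$ commute. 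Axiom $4$ follows from Lemma \ref{Lemma: Existence of ind-canonical flip} (existence of $\hat{c}$) and Proposition \ref{Prop: Ind-canonical flip is bundle map}, giving that $(\id_{\Ind(T)\ul{X}}, \hat{c}_{\ul{X}})$ is a bundle morphism. Axiom $5$ --- the identities $\hat{c}^2 = \id$, $\hat{c} \circ \hat{\ell} = \hat{\ell}$, and the three coherence diagrams relating $\hat{\ell}$ and $\hat{c}$ --- is exactly Lemma \ref{Lemma: Ind-cherences between lift and flip}. Finally, Axiom $6$, the universality of the vertical lift, is Proposition \ref{Prop: Universality of ind-vertical lift}.

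Since the individual verifications are already in hand, the theorem assembles with no genuinely new argument; the only point needing a moment's care is the bookkeeping around the compositor isomorphisms $\phi_{T,T}\colon \Ind(T^2) \cong \Ind(T)^2$ and their iterates, because $\Ind$ is only a pseudofunctor and so $\hat{\ell}$ and $\hat{c}$ are defined through these isomorphisms rather than strictly. One must therefore check that the axiom diagrams use the structure maps of $\Ind(\Tbb)$ exactly as defined in Lemmas \ref{Lemma: Existence of Ind-vertical lift} and \ref{Lemma: Existence of ind-canonical flip} --- but this is precisely what Propositions \ref{Prop: Ind vertical lift is part of a bundle map} and \ref{Prop: Ind-canonical flip is bundle map} and Lemma \ref{Lemma: Ind-cherences between lift and flip} already arrange. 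If anything plays the role of the main obstacle, it is not this final assembly but the earlier Proposition \ref{Prop: Ind of Tangent pullback is tangent pullback of ind} and Corollary \ref{Cor: Ind Tangent functor commutes with Ind tangent pullbacks} on which Axiom $1$ rests, since those are what force one to pass through the presheaf realization functor $L$ and exploit that filtered colimits commute with finite limits in $[\Cscr^{\op},\Set]$.
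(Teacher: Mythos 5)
Your proposal is correct and follows essentially the same route as the paper: the published proof is likewise an axiom-by-axiom assembly, citing \cite[Proposition I.8.9.1.c]{SGA4} together with Corollary \ref{Cor: Ind Tangent functor commutes with Ind tangent pullbacks} for Axiom 1, and then Lemmas \ref{Lemma: Existence of Indp and Ind0}, \ref{Lemma: Existence of ind-bundle addition}, \ref{Lemma: Existence of Ind-vertical lift}, \ref{Lemma: Existence of ind-canonical flip}, \ref{Lemma: Ind-cherences between lift and flip} and Propositions \ref{Prop: Additive Ind Bundle}, \ref{Prop: Ind vertical lift is part of a bundle map}, \ref{Prop: Ind-canonical flip is bundle map}, \ref{Prop: Universality of ind-vertical lift} for Axioms 2--6, exactly as you do. Your closing remarks about the compositor isomorphisms and the role of the presheaf realization functor $L$ are accurate commentary but add nothing the cited results do not already handle.
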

\begin{proof}
	We now estalish that $(\Ind(\Cscr),\Ind(\Tbb))$ satisfies the axioms of Definition \ref{Defn: Tangent Categpry}:
	\begin{enumerate}
		\item That $\Ind(\Cscr)$ admits all tangent pullbacks folloes from \cite[Proposition I.8.9.1.c]{SGA4} and that $\Ind(T)$ and its compositional powers preserve the pullback powers $\Ind(T)_m$ is Corollary \ref{Cor: Ind Tangent functor commutes with Ind tangent pullbacks}.
		\item  The ind-zero transformation $\Ind(0)$ is constructed in Lemma \ref{Lemma: Existence of Indp and Ind0} and the ind-addition transformation is constructed in Lemma \ref{Lemma: Existence of ind-bundle addition}. That each object $\Ind(p)_{\ul{X}}:\Ind(T)\ul{X} \to \ul{X}$ is a commutative monoid in $\Ind(\Cscr)_{/\ul{X}}$ is Proposition \ref{Prop: Additive Ind Bundle}.
		\item The ind-vertical lift $\hat{\ell}$ exists by Lemma \ref{Lemma: Existence of Ind-vertical lift} and the pair $(\hat{\ell}_{\ul{X}},\Ind(0)_{\ul{X}})$ is a bundle morphism for any $\ul{X}$ in $\Ind(\Cscr)_0$ by Proposition \ref{Prop: Ind vertical lift is part of a bundle map}.
		\item The existence of the ind-canonical flip follows from Lemma \ref{Lemma: Existence of ind-canonical flip} while the fact that $(\hat{c}_{\ul{X}},\id_{\Ind(T)\ul{X}})$ is a bundle map for any $\ul{X}$ in $\Ind(\Cscr)_0$ follows from Proposition \ref{Prop: Ind-canonical flip is bundle map}.
		\item The coherences between the ind-canonical flip and ind-vertical lift are given in Lemma \ref{Lemma: Ind-cherences between lift and flip}.
		\item The universality of the ind-vertical lift is proved in Proposition \ref{Prop: Universality of ind-vertical lift}.
	\end{enumerate}
	Because each axiom of Definition \ref{Defn: Tangent Categpry} is satisfied it follows that $(\Ind(\Cscr), \Ind(\Tbb))$ is a tangent category.
\end{proof}

We now show the functoriality of the $\Ind$-construction on tangent morphisms. 

\begin{Theorem}\label{Thm: Functoriality of ind-category}
Let $(F,\alpha):(\Cscr, \Tbb) \to (\Dscr, \Sbb)$ be a morphism of tangent categories. Then the induced map $(\Ind(F), \hat{\alpha}):(\Ind(\Cscr), \Ind(\Tbb)) \to (\Ind(\Dscr), \Ind(\Sbb))$ is a morphism of tangent categories where $\hat{\alpha}$ is the natural transformation:
\[
\begin{tikzcd}
\Ind(\Cscr) \ar[rr, bend left = 95, ""{name = UU}]{}{\Ind(F) \circ \Ind(T)} \ar[rr, bend left = 25, ""{name = U}]{}[description]{\Ind(F \circ T)} \ar[rr, bend right = 15, ""{name = L}]{}[description]{\Ind(S \circ F)} \ar[rr, bend right = 70, swap, ""{name = LL}]{}{\Ind(S) \circ \Ind(F)} & & \Ind(\Dscr) \ar[from = UU, to = U, Rightarrow, shorten >= 4pt, shorten <= 4pt]{}{\phi_{T, F}} \ar[from = U, to = L, Rightarrow, shorten >= 4pt, shorten <= 4pt]{}{\Ind(\alpha)} \ar[from = L, to = LL, Rightarrow, shorten >= 4pt, shorten <= 4pt]{}{\phi_{F, S}^{-1}}
\end{tikzcd}
\]
Furthermore, $(\Ind(F), \hat{\alpha})$ is a strong tangent morphism if and only if $(F, \alpha)$ is a strong tangent morphism.
\end{Theorem}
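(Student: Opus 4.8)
The plan is to prove this in two stages: first verify that $(\Ind(F),\hat{\alpha})$ satisfies the five coherence diagrams of Definition~\ref{Defn: Tangent Morphism}, and then establish the ``strong if and only if strong'' clause separately. The organizing principle for the first stage is that every piece of data in play --- the components $\Ind(p),\Ind(0),\hat{+},\hat{\ell},\hat{c}$ of the ind-tangent structures $\Ind(\Tbb)$ and $\Ind(\Sbb)$, the compositor isomorphisms $\phi_{-,-}$, the comparison isomorphisms $\Ind(T)_m\cong\Ind(T_m)$ and $\Ind(T)^n\cong\Ind(T^n)$, and $\hat{\alpha}$ itself --- acts \emph{levelwise} on an ind-object presented as a system $\ul{X}=(X_i)_{i\in I}$ over a fixed filtered index category $I$. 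So I would first record, for such an $\ul{X}$, the levelwise formulas $\Ind(\alpha)_{\ul{X}}=(\alpha_{X_i})_{i\in I}$, $\Ind(p)_{\ul{X}}=(p_{X_i})_{i\in I}$, $\Ind(0)_{\ul{X}}=(0_{X_i})_{i\in I}$, and so on, and observe via Lemma~\ref{Lemma: Ind of composition on morphism from same index category agree} that on systems over a common index category the conjugating compositors $\phi_{T,F}$ and $\phi_{F,S}$ can be absorbed, as they act through identities there; thus $\hat{\alpha}_{\ul{X}}$ is, modulo those identifications, simply the family $(\alpha_{X_i})_{i\in I}$. With this in hand, the two coherence diagrams of Definition~\ref{Defn: Tangent Morphism} that do not mention the tangent pullbacks --- compatibility of $\hat{\alpha}$ with the bundle maps and with the zero sections --- reduce directly to the families of equations $q_{FX_i}\circ\alpha_{X_i}=F(p_{X_i})$ and $\alpha_{X_i}\circ F(0_{X_i})=0'_{FX_i}$, each of which holds for all $i\in I$ because $(F,\alpha)$ is itself a tangent morphism.

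The remaining three diagrams --- compatibility with $\hat{+}/\hat{\oplus}$, with $\hat{\ell}/\hat{\ell'}$, and with $\hat{c}/\hat{c'}$ --- additionally involve the comparison isomorphisms relating ind-pullbacks and ind-composites of the tangent functors to the indicizations of $T_2$, $T^2$, $T^3$. For these I would transport the whole diagram along the conservative, exact, fully faithful embedding $L\colon\Ind(\Cscr)\to[\Cscr^{\op},\Set]$ of Proposition~\ref{Prop: L functor on ind is fully faithful}, so that $\Ind(T)_2\ul{X}$ becomes $\colim_{i\in I}\big(\yon TX_i\times_{\yon X_i}\yon TX_i\big)$, every comparison map becomes a canonical ``filtered colimits commute with finite limits'' isomorphism (as used in Proposition~\ref{Prop: Ind of Tangent pullback is tangent pullback of ind}), and the square in question becomes $\colim_{i\in I}$ of the Yoneda image of the corresponding coherence square for $(F,\alpha)$, which commutes; full faithfulness of $L$ then returns commutativity in $\Ind(\Dscr)$. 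I expect the main obstacle to lie precisely in the bookkeeping for these three squares: in the addition square one must first identify the induced comparison $\hat{\alpha}_2\colon\Ind(F)\circ\Ind(T)_2\Rightarrow\Ind(S)_2\circ\Ind(F)$ with the levelwise family $\big((\alpha_2)_{X_i}\big)_{i\in I}$ modulo the isomorphisms of Proposition~\ref{Prop: Ind of Tangent pullback is tangent pullback of ind}, which requires checking both that the levelwise map genuinely lands in the correct pullback (using the already-established bundle-map compatibility) and that it is compatible with the comparison isomorphisms by their naturality; and one must keep the compositors $\phi_{T,F}$, $\phi_{F,S}$, and the compositor $\phi_{T,T}$ entering through $\hat{\ell}$ and $\hat{c}$ mutually coherent and coherent with the isomorphisms $\Ind(T)_m\cong\Ind(T_m)$. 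Once these identifications are pinned down the checks become mechanical, reducing levelwise to the hypothesis that $(F,\alpha)$ is a tangent morphism.

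For the final clause, recall that $(F,\alpha)$ is strong precisely when $\alpha$ is a natural isomorphism and $F$ preserves the pullback powers of the $p_X$ and the universality equalizer of $\Tbb$. If $(F,\alpha)$ is strong, then each $\alpha_{X_i}$ is invertible, so $\Ind(\alpha)$ --- hence $\hat{\alpha}$, a composite of $\Ind(\alpha)$ with compositor isomorphisms --- is a natural isomorphism; and $\Ind(F)$ preserves the ind-tangent pullbacks and the universality equalizer because both are computed levelwise (Propositions~\ref{Prop: Ind of Tangent pullback is tangent pullback of ind} and \ref{Prop: Universality of ind-vertical lift}) and $F$ preserves each corresponding limit at every level, so $(\Ind(F),\hat{\alpha})$ is strong. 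For the converse I would use the canonical fully faithful embedding $\Cscr\hookrightarrow\Ind(\Cscr)$ sending an object to its constant ind-object over the terminal index category: this embedding carries $\Tbb$, $F$, and $\alpha$ to the restrictions of $\Ind(\Tbb)$, $\Ind(F)$, and $\hat{\alpha}$ respectively, and it preserves and reflects the pullbacks and equalizer in question, so strongness of $(\Ind(F),\hat{\alpha})$ restricts to strongness of $(F,\alpha)$. This completes the plan.
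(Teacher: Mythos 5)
Your proposal is correct, and its treatment of the ``strong if and only if strong'' clause is essentially the paper's: the forward restriction to constant ind-objects (with full faithfulness of the constant embedding reflecting invertibility and the relevant limits) and the converse via levelwise invertibility of $\Ind(\alpha)$ and levelwise computation of the tangent pullbacks and the universality equalizer are exactly the two halves of the paper's argument. Where you genuinely diverge is in the verification of the five coherence diagrams of Definition~\ref{Defn: Tangent Morphism}. The paper argues purely $2$-categorically: for the bundle-map diagram it pastes three triangles --- the two compositor triangles relating $\Ind(F)\ast\Ind(p)$ and $\Ind(q)\ast\Ind(F)$ to $\Ind(F\ast p)$ and $\Ind(q\ast F)$, and the middle triangle obtained by applying $\Ind$ (using Corollary~\ref{Cor: Ind is strict 2-mor}) to the corresponding axiom for $(F,\alpha)$ --- and then declares the remaining four diagrams analogous, never touching components. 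You instead reduce everything to levelwise data $(\alpha_{X_i})_{i\in I}$ (absorbing the compositors via Lemma~\ref{Lemma: Ind of composition on morphism from same index category agree}) and, for the three diagrams involving $T_2$, $T^2$, $T^3$, transport through the embedding $L$ of Proposition~\ref{Prop: L functor on ind is fully faithful} so that the comparison isomorphisms become instances of ``filtered colimits commute with finite limits,'' mirroring how the paper proves Propositions~\ref{Prop: Ind of Tangent pullback is tangent pullback of ind}--\ref{Prop: Universality of ind-vertical lift}. The paper's pasting argument buys uniformity and avoids all bookkeeping with the comparison isomorphisms; your componentwise route is more concrete and makes the ``tedious'' checks genuinely mechanical, at the price of the careful identifications (e.g.\ of $\hat{\alpha}_2$ with the levelwise family) that you rightly flag as the main bookkeeping burden. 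Both routes are sound, and both, like the paper, leave the bulk of the diagram chases as routine.
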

\begin{proof}
The fact that $(\Ind(F), \hat{\alpha})$ is a tangent morphism is a straightforward but tedious $2$-categorical verification; we illustrate the first such verification and omit the rest. To establish the first diagram of functors and natural transformations we simply paste the commuting triangles as in the diagram below
\[
\begin{tikzcd}
\Ind(F) \circ \Ind(T) \ar[r]{}{\phi_{F,T}} \ar[drr, swap, bend right = 20]{}{\Ind(F) \ast \Ind(p)} & \Ind(F \circ T) \ar[dr]{}[description]{\Ind(F \ast p)} \ar[rr]{}{\Ind(\alpha)} & & \Ind(S \circ F) \ar[r]{}{\phi_{F,S}^{-1}} \ar[dl]{}[description]{\Ind(q \ast F)} & \Ind(S) \circ \Ind(F) \ar[dll, bend left = 20]{}{\Ind(q) \ast \Ind(p)} \\
 & & \Ind(F) & 
\end{tikzcd}
\]
to establish that
\[
\begin{tikzcd}
\Ind(F) \circ \Ind(T) \ar[r]{}{\hat{\alpha}} \ar[dr, swap]{}{\Ind(F) \ast \Ind(p)} & \Ind(S) \circ \Ind(F) \ar[d]{}{\Ind(q) \ast \Ind(F)} \\
 & \Ind(F)
\end{tikzcd}
\]
commutes. The remaining four axioms are verified and established similarly and so are omitted.

For the final claim regarding detecting when $(F,\alpha)$ is strong: 

$\implies:$ assume that $(\Ind(F), \hat{\alpha})$ is a strong tangent morphism. Then it is strong on all constant objects $\ul{X} = (X)$ for $X \in \Cscr_0$; as such it follows that $\alpha_{X}$ is an isomorphism for every $X \in \Cscr_0$. That $F$ preserves tangent pullbacks and equalizers is shown similarly. 

$\impliedby:$ Assume  $\alpha$ is a natural isomorphism so that $\Ind(\alpha)$ and $\hat{\alpha} = \phi_{F,S}^{-1} \circ \Ind(\alpha) \circ \phi_{F,S}$ are isomorphisms as well. Similarly, if $F$ preserves tangent pullbacks and equalizers, so does $\Ind(F)$.
\end{proof}
\begin{corollary}\label{Cor: Commute strictly diagram}
The diagram
\[
\begin{tikzcd}
\fTan \ar[r]{}{\Ind} \ar[d, swap]{}{\Forget} & \fTan \ar[d]{}{\Forget} \\
\fCat \ar[r, swap]{}{\Ind} & \fCat
\end{tikzcd}
\]
commutes strictly.
\end{corollary}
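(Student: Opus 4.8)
The plan is to unwind the statement into a level-by-level comparison of the two composite pseudofunctors $\Forget \circ \Ind$ and $\Ind \circ \Forget$ from $\fTan$ to $\fCat$, checking that they agree on objects, on $1$-morphisms, on $2$-morphisms, and on the structure $2$-cells (unitors and compositors) of the pseudofunctor. The key simplifying observation is that $\Forget\colon\fTan \to \fCat$ is a \emph{strict} $2$-functor: it carries no structure $2$-cells of its own, so $\Forget \circ \Ind$ inherits exactly the unitors and compositors of the restricted pseudofunctor $\Ind\colon\fTan \to \fTan$ with their tangent-theoretic decorations discarded. Thus everything reduces to checking that, after forgetting, these data coincide with those of $\Ind\colon\fCat \to \fCat$.

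On objects and $1$-morphisms the verification is immediate. For an object, $\Forget(\Ind(\Cscr,\Tbb)) = \Forget(\Ind(\Cscr),\Ind(\Tbb)) = \Ind(\Cscr)$ by Theorem \ref{Thm: Ind Tangent Category} and the definition of $\Forget$, while $\Ind(\Forget(\Cscr,\Tbb)) = \Ind(\Cscr)$; these are literally equal. For a $1$-morphism, $\Forget(\Ind(F,\alpha)) = \Forget(\Ind(F),\hat\alpha) = \Ind(F)$ by Theorem \ref{Thm: Functoriality of ind-category}, while $\Ind(\Forget(F,\alpha)) = \Ind(F)$, again equal on the nose; this uses that $\Ind(F)$ is constructed in Proposition \ref{Prop: Ind functors} without any reference to a tangent structure.

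For $2$-morphisms, recall that a $2$-cell $\rho\colon (F,\alpha) \Rightarrow (G,\beta)$ in $\fTan$ is an ordinary natural transformation $\rho\colon F \Rightarrow G$ subject to the compatibility square in the definition of $\fTan$, and that the restricted pseudofunctor $\Ind\colon \fTan \to \fTan$ sends it to $\Ind(\rho)$; the content that makes this restriction well defined — that $\Ind(\rho)$ is a tangent transformation $(\Ind(F),\hat\alpha) \Rightarrow (\Ind(G),\hat\beta)$ — follows by applying $\Ind$ to that square, using Proposition \ref{Prop: Ind nat transform induced by nat transform}, the strict $2$-functoriality of $\Ind$ on vertical composition (Corollary \ref{Cor: Ind is strict 2-mor}), and the descriptions of $\hat\alpha,\hat\beta$ from Theorem \ref{Thm: Functoriality of ind-category}. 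Granting this, $\Forget(\Ind(\rho)) = \Ind(\rho) = \Ind(\Forget(\rho))$. It remains to compare the coherence data: the unitor of $\Ind$ on $\fCat$ is the identity by Corollary \ref{Cor: Ind is rigidified}, so the unitor of the lift is the identity tangent transformation and forgets to the identity; and the compositor of the lift at a composable pair is $\phi_{F,G}$ regarded as a tangent $2$-cell, whose image under $\Forget$ is $\phi_{F,G} = \phi_{\Forget(F),\Forget(G)}$, the compositor of $\Ind$ on $\fCat$. Since all data match, the square commutes strictly.

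The only genuinely non-formal input — already packaged into Theorem \ref{Thm: Functoriality of ind-category} and the preceding lemmas on the $2$-functoriality of $\Ind$ on $\fCat$ — is the fact that $\Ind$ \emph{does} restrict to a pseudofunctor $\fTan \to \fTan$ with precisely the compositors $\phi_{F,G}$ and identity unitors, i.e., that each $\phi_{F,G}$ and each $\Ind(\rho)$ is a tangent transformation between the appropriate tangent morphisms. Once that restriction is in hand the corollary carries no further obstacle: it is pure bookkeeping, since $\Forget$ is strict and acts as the identity on underlying functors and natural transformations. I would therefore present the proof as: (1) recall the restriction of $\Ind$ to $\fTan$; (2) observe that $\Forget$ is strict; (3) chase objects, $1$-cells, $2$-cells, and coherence $2$-cells through both composites as above.
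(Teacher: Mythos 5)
Your proposal is correct and is exactly the "unwinding the definitions" argument the paper gestures at in its one-line proof, just carried out explicitly at the level of objects, $1$-cells, $2$-cells, and coherence data. Nothing in your route differs in substance from the paper's intended argument, so no further comparison is needed.
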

\begin{proof}
This is immediate after unwinding the definitions.
\end{proof}

We now close this section by proving that while the $\Ind$ pseudofunctor does not commute (even up to isomorphism) with the free tangent functor, there is a pseudonatural transformation:
\[
\begin{tikzcd}
\fCat \ar[r, ""{name = U}]{}{\Ind} \ar[d, swap]{}{\Free} & \fCat \ar[d]{}{\Free} \\
\fTan \ar[r, swap, ""{name = D}]{}{\Ind} & \fTan \ar[from = U, to = D, Rightarrow, shorten <= 4pt, shorten >= 4pt]{}{\alpha}
\end{tikzcd}
\]
To do this, however, we necessitate a discussion of what it means to be a free tangent functor, i.e., we need to at least give a short description of the $2$-functor $\Free:\fCat \to \fTan$.

In \cite{LeungWeil} P.\@ Leung showed that the category $\Weil_1$ of Weil algebras constitute the free tangent structure in a very precise sense (we will recall and see some of this later, but one of the main results in \cite{LeungWeil} is that tangent structures on a category $\Cscr$ are equivalent to Weil actegory structures on $\Cscr$). While we will get to know this category in more detail later (by comparing and contrasting the exposition of \cite{LeungWeil} and what is presented in B.\@ MacAdam's thesis \cite{BenThesis}), for now we just need to know the following result: for any category $\Cscr$, the free tangent category over $\Cscr$ is $\Free(\Cscr) := \Weil_1 \times \Cscr$. We will briefly recall the definition of $\Weil_1$, however.

\begin{definition}[{\cite[Definition 4.2]{LeungWeil}}]
The category $\Weil_1$ is defined as follows:
\begin{itemize}
	\item Objects: For each $n \in \N$, define the following crigs:
	\[
	W^n := \begin{cases}
	\N & \text{if}\, n = 0; \\
	\N[x_1, \cdots, x_n]/(x_ix_j:1 \leq i, j \leq n) & \text{if}\, n \geq 1.
	\end{cases}
	\]
	Then the objects of $\Weil_1$ are the closure of $\lbrace W^n \; | \; n \in \N \rbrace$ under finite coproducts $\otimes_{\N}$ of crigs.
	\item Morphisms: Morphisms $\varphi:A \to B$ of crigs for which the canonical maps
	\[
	\begin{tikzcd}
	A \ar[dr, swap]{}{\pi_{\mfrak_{A}}} \ar[rr]{}{\varphi} & & B \ar[dl]{}{\pi_{\mfrak_{B}}} \\
	 & \N
	\end{tikzcd}
	\]
	commute; note that in each case $\mfrak_{A}$ and $\mfrak_{B}$ are the unique maximal ideals of $A$ and $B$, respectively.
\end{itemize}
\end{definition}
\begin{Theorem}[{\cite{LeungWeil}; \cite[Observation 4.2.5]{BenThesis}}]
For any category $\Cscr$, there is a free tangent category given by $\Weil_1 \times \Cscr$.
\end{Theorem}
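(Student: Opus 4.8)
The strategy is first to equip $\Weil_1 \times \Cscr$ with a tangent structure built entirely in the $\Weil_1$-coordinate, and then to verify the (bi)universal property that makes it the value at $\Cscr$ of a left adjoint $\Free \dashv \Forget \colon \fTan \to \fCat$. For the tangent structure, recall from \cite{LeungWeil} (cf.\@ \cite[Observation 4.2.5]{BenThesis}) that $\Weil_1$ is itself a tangent category: its tangent functor sends $W$ to the crig coproduct $W \otimes_{\N} W^1$, the bundle projection $p$ and the zero $0$ are induced by the crig maps $W^1 \to W^0 = \N$ and $\N \to W^1$, the bundle addition is induced by the map $W^1 \to W^2$ sending $x_1 \mapsto x_1 + x_2$, and the vertical lift and canonical flip come from the comultiplication-type map $W^1 \to W^1 \otimes_\N W^1$ and the symmetry of $\otimes_\N$; the requisite pullback powers of $p$ exist because $\otimes_\N$, being a coproduct, distributes over the relevant pushouts of Weil algebras. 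I would then define the tangent structure on $\Weil_1 \times \Cscr$ by $T(W,C) := (W \otimes_\N W^1,\, C)$, with $p, 0, +, \ell, c$ acting as the corresponding transformations of $\Weil_1$ on the first coordinate and as identities on the second. Since all limits appearing in Definition \ref{Defn: Tangent Categpry} (the pullback powers of $p$ and the equalizer of Axiom~6) and all whiskerings are computed coordinatewise --- and are trivial in the $\Cscr$-coordinate, where only identities occur --- each of the six axioms for $\Weil_1 \times \Cscr$ reduces to the already-known corresponding axiom for $\Weil_1$. Write $\Free(\Cscr)$ for the resulting tangent category.

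Next I would exhibit the unit $\eta_\Cscr \colon \Cscr \to \Free(\Cscr)$, given on objects by $C \mapsto (W^0, C) = (\N, C)$ and on morphisms by $f \mapsto (\id_\N, f)$, i.e.\@ the inclusion of $\Cscr$ at the initial Weil algebra. The universal property to check is that for every tangent category $(\Dscr, \Sbb)$ precomposition with $\eta_\Cscr$ is an equivalence of categories from the category of tangent morphisms $\Free(\Cscr) \to (\Dscr, \Sbb)$ (with tangent transformations as $2$-cells) onto $\fCat(\Cscr, \Dscr)$. The essential inverse sends $G \colon \Cscr \to \Dscr$ to an extension $\hat{G}$ built from the Weil action attached to $\Sbb$: by \cite{LeungWeil}, the tangent structure $\Sbb$ determines a canonical strong monoidal, limit-preserving functor $W \mapsto S^{(W)} \colon \Weil_1 \to \bigl[\Dscr, \Dscr\bigr]$ with $S^{(W^0)} = \id_\Dscr$, $S^{(W^1)} = S$, $S^{(W^n)} = S_n$, comparison isomorphisms $S^{(W^1)} \circ S^{(W)} \cong S^{(W^1 \otimes_\N W)}$, and with $q, 0^\prime, \oplus, \ell^\prime, c^\prime$ recovered as the images of the corresponding crig maps. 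I would set $\hat{G}(W,C) := S^{(W)}\bigl(G(C)\bigr)$, send a morphism $(\varphi, f) \colon (W,C) \to (V, C^\prime)$ to $S^{(V)}(G(f)) \circ S^{(\varphi)}_{G(C)}$, and take the structure isomorphism of $\hat{G}$ to be assembled from the comparison isomorphisms of the Weil action evaluated along $G$. One then checks that $\hat{G}$ is a strong tangent morphism in the sense of Definition \ref{Defn: Tangent Morphism} and that $\hat{G} \circ \eta_\Cscr = G$.

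For the converse direction I would observe that every object of $\Free(\Cscr)$ is obtained from an object in the image of $\eta_\Cscr$ by iterated application of $T$ and formation of the tangent pullback powers: each Weil algebra is a finite $\otimes_\N$-combination of the $W^n$, and $(W^n, C)$ is the $n$-fold pullback power of $T$ at $(W^0, C)$ while $(W \otimes_\N V, C) = S^{(W)}$-analogue applied to $(V, C)$. Hence a strong tangent morphism $H$ is determined up to unique tangent isomorphism by $H \circ \eta_\Cscr$, giving $H \cong \widehat{H \circ \eta_\Cscr}$. The assignment on $2$-cells is forced similarly: a natural transformation $G \Rightarrow G^\prime$ extends uniquely to a tangent transformation between the corresponding extensions by whiskering componentwise with the endofunctors $S^{(W)}$ and invoking naturality of the comparison isomorphisms. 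Packaging these assignments yields the claimed equivalence, and hence the left-(bi)adjoint characterisation of $\Free$.

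The main obstacle is not any individual verification but the invocation of \cite{LeungWeil}: the passage from the finite package $(S, q, 0^\prime, \oplus, \ell^\prime, c^\prime)$ constituting a tangent structure on $\Dscr$ to the coherent $\Weil_1$-indexed family of endofunctors $S^{(W)}$ together with its strong monoidal comparison isomorphisms. This is precisely where Axiom~1 (to realise $S^{(W^n)}$ as the pullback power $S_n$ and to know that compositional powers of $S$ preserve it) and Axiom~6 (the universality of the vertical lift, which pins down the higher coherences among the $S^{(W)}$) are consumed. Granting Leung's structure theorem, the remainder of the argument is essentially bookkeeping in the $2$-category $\fTan$.
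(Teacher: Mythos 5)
The paper itself offers no proof of this statement: it is imported verbatim from Leung and from MacAdam's thesis, and the argument you reconstruct --- put the tangent structure of $\Weil_1$ on the first factor, include $\Cscr$ at $W^0 = \N$, and use Leung's equivalence between tangent structures and $\Weil_1$-actions to extend a functor $G\colon\Cscr\to\Dscr$ to $\hat G(W,C) = S^{(W)}(G(C))$, with uniqueness coming from the fact that $\Weil_1$ is generated by $W^1$ under $\otimes_\N$, the foundational pullbacks and the structural morphisms --- is exactly the cited one. Your choice to state freeness with respect to \emph{strong} tangent morphisms is also the correct one (with lax morphisms the extension would not be determined), so, granting Leung's classification theorem as you explicitly do, the outline is sound and matches the source the paper leans on.

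Two details in your construction of the tangent structure on $\Weil_1\times\Cscr$ need repair. First, the variance of the addition: having fixed the covariant conventions for $p$ and $0$ (augmentation $W^1\to\N$, unit $\N\to W^1$), the bundle addition $+\colon T_2\Rightarrow T$ must be induced by a morphism $W^2\to W^1$, namely $x_1,x_2\mapsto x$, since $T_2$ is classified by $W^2$ and $T$ by $W^1$; the map you wrote, $W^1\to W^2$, $x\mapsto x_1+x_2$, points the wrong way --- it is the opposite-category (``geometric'') description of addition, the analogue of the map $\mathrm{d}b\mapsto \mathrm{d}b\otimes 1+1\otimes\mathrm{d}b$ whose \emph{spectrum} is the addition in the scheme example. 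Second, the existence of the pullback powers of $p$ in $\Weil_1$ and their preservation by $T^n$ is not because ``$\otimes_\N$, being a coproduct, distributes over the relevant pushouts'': what is needed are pullbacks, namely the identifications $W^n\cong W^1\times_\N\cdots\times_\N W^1$ and the fact that $-\otimes_\N W$ preserves these foundational pullbacks, which is a genuine computation in Leung's paper and not a formal consequence of $\otimes_\N$ being a coproduct. Relatedly, your uniqueness argument quotes only the generation of \emph{objects} by $T$ and its pullback powers; to pin down a strong tangent morphism on an arbitrary map $(\varphi,f)$ you also need that every morphism of $\Weil_1$ is generated by the structural maps under composition and $\otimes_\N$, which is again part of what the citation to Leung is carrying. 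All three points are standard and repairable, so I would call this a correct reconstruction of the cited proof with fixable slips rather than a flawed approach.
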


Because the product construction is $2$-functorial we find immediately that there is a functor $\Free:\fCat \to \fTan$ which sends a category $\Cscr$ to the free tangent category over itself: $\Free(\Cscr) := \Weil_1 \times \Cscr$. It acts on $1$-morphisms by $\Free(f) := \id_{\Weil_1} \times f$ and on $2$-morphisms by $\Free(\alpha) := \iota_{\id_{\Weil_1}}\times \alpha$. To construct our pseudonatural transformation
\[
\begin{tikzcd}
\fCat \ar[r, ""{name = U}]{}{\Ind} \ar[d, swap]{}{\Free} & \fCat \ar[d]{}{\Free} \\
\fTan \ar[r, swap, ""{name = D}]{}{\Ind} & \fTan \ar[from = U, to = D, Rightarrow, shorten <= 4pt, shorten >= 4pt]{}{\alpha}
\end{tikzcd}
\]
we compute that on one hand
\[
\Ind(\Free(\Cscr)) = \Ind(\Weil_1 \times \Cscr)
\]
while on the other hand
\[
\Free(\Ind(\Cscr)) = \Weil_1 \times \Ind(\Cscr).
\]
To construct our pseudonatural transformation $\alpha$ we first note that by \cite[Proposition 6.1.12]{KashiwaraSchapira} for any category $\Cscr$ there is a natural equivalence of categories
\[
\Ind(\Free(\Cscr)) = \Ind(\Weil_1 \times \Cscr) \simeq \Ind(\Weil_1) \times \Ind(\Cscr).
\]
Now consider the embedding $\operatorname{incl}_{\Weil_1}:\Weil_1 \to \Ind(\Weil_1)$ which sends an object $X$ to the functor\footnote{Recall that $\mathbbm{1}$ is the terminal category, i.e., the category with one object (which we label as $\ast$) and one morphism (namely $\id_{\ast}$).} $\ul{X}:\mathbbm{1} \to \Cscr$ given by $\ast \mapsto X, \id_{\ast} \mapsto \id_X$ and sends a morphism $f:X \to Y$ to the corresponding natural transformation $\ul{f}:\ul{X} \to \ul{Y}$. We then define $\alpha_{\Cscr}$ to be the composite:
\[
\begin{tikzcd}
\Weil_1 \times \Ind(\Cscr) \ar[rrr]{}{\operatorname{incl}_{\Weil_1} \times \id_{\Ind(\Cscr)}} & & & \Ind(\Weil_1) \times \Ind(\Cscr) \ar[r]{}{\simeq} & \Ind(\Weil_1 \times \Cscr)
\end{tikzcd}
\]
To see how to define the witness transformations let $F:\Cscr \to \Dscr$ be a functor. We then get the pasting diagram
\[
\begin{tikzcd}
\Weil_1 \times \Ind(\Cscr) \ar[dd, bend right = 100, swap]{}{\alpha_{\Cscr}} \ar[rrrr, ""{name = Up}]{}{\id_{\Weil_1} \times \Ind(F)} \ar[d, swap]{}{\operatorname{incl}_{\Weil_1} \times \id} & & & & \Weil_1 \times \Ind(\Dscr) \ar[d]{}{\operatorname{incl}_{\Weil_1} \times \id} \ar[dd, bend left = 100]{}{\alpha_{\Dscr}} \\
\Ind(\Weil_1) \times \Ind(\Cscr) \ar[rrrr, ""{name = Mid}]{}[description]{\id_{\Ind(\Weil)_1} \times \Ind(F)} \ar[d, swap]{}{\simeq} & & & & \Ind(\Weil_1) \times \Ind(\Dscr) \ar[d]{}{\simeq} \\
\Ind(\Weil_1 \times \Cscr) \ar[rrrr, swap, ""{name = Down}]{}{\Ind(\Weil_1 \times F)} & & & & \Ind(\Weil_1 \times \Dscr) \ar[from = Up, to = Mid, equals, shorten <= 4pt, shorten >= 4pt] \ar[from = Mid, to = Down, Rightarrow, shorten <= 8pt, shorten >= 4pt]{}{\cong}
\end{tikzcd}
\]
where the natural isomorphism in the bottom-most square is induced from the natural equivalence of \cite[Proposition 6.2.11]{KashiwaraSchapira}. Note this pastes to the invertible $2$-cell
\[
\begin{tikzcd}
(\Free \circ \Ind)(\Cscr) \ar[rr, ""{name = Up}]{}{(\Free \circ \Ind)(F)} \ar[d, swap]{}{\alpha_{\Cscr}} & & (\Free \circ \Ind)(\Dscr) \ar[d]{}{\alpha_{\Dscr}} \\
(\Ind \circ \Free)(\Cscr) \ar[rr, swap, ""{name = Below}]{}{(\Ind \circ \Free)(F)} & & (\Ind \circ \Free)(\Dscr) \ar[from = Up, to = Below, Rightarrow, shorten <= 4pt, shorten >= 4pt]{}{\cong}
\end{tikzcd}
\]
which we define to be our witness transformation $\alpha_f$. From here the pseudonaturality of $\alpha$ is routine but straightforward to check: the compatibility with the compositors for $\Free(-)$ and $\Ind(-)$ more or less comes down to the fact that the equivalence $\Ind(\Weil_1 \times \Cscr) \simeq \Ind(\Weil_1) \times \Ind(\Cscr)$ is natural and the action of $\Ind$ on $\fCat$ occurs only in the right-handed variable and does not interact with the $\Weil_1$-variable otherwise. This leads to the following proposition.
\begin{proposition}
There is a pseudonatural transformation
\[
\begin{tikzcd}
\fCat \ar[r, ""{name = U}]{}{\Ind} \ar[d, swap]{}{\Free} & \fCat \ar[d]{}{\Free} \\
\fTan \ar[r, swap, ""{name = D}]{}{\Ind} & \fTan \ar[from = U, to = D, Rightarrow, shorten <= 4pt, shorten >= 4pt]{}{\alpha}
\end{tikzcd}
\]
where the transition functors are given by
\[
\begin{tikzcd}
\Weil_1 \times \Ind(\Cscr) \ar[rrr]{}{\operatorname{incl}_{\Weil_1} \times \id_{\Ind(\Cscr)}} & & & \Ind(\Weil_1) \times \Ind(\Cscr) \ar[r]{}{\simeq} & \Ind(\Weil_1 \times \Cscr)
\end{tikzcd}
\]
\end{proposition}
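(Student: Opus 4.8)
The plan is to promote the chain of embeddings and equivalences constructed above to a pseudonatural transformation of pseudofunctors $\fCat \to \fTan$; the only genuine work is to check that every functor and $2$-cell in sight lifts from $\fCat$ to $\fTan$. So first I would pin down the two tangent structures being compared. For any category $\Dscr$, the free tangent category $\Free(\Dscr) = \Weil_1 \times \Dscr$ has tangent endofunctor of the form $T_{\Weil_1} \times \id_{\Dscr}$, where $T_{\Weil_1}$ is the canonical tangent endofunctor on $\Weil_1$, and each of its remaining structure maps ($p$, $0$, $+$, $\ell$, $c$) is the corresponding structure map of $\Weil_1$ whiskered by the $\Dscr$-factor. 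Consequently $(\Ind \circ \Free)(\Cscr) = \Ind(\Weil_1 \times \Cscr)$ carries the tangent structure $\Ind(\Tbb_{\Weil_1} \times \id_{\Cscr})$ produced by Theorem~\ref{Thm: Ind Tangent Category}, while $(\Free \circ \Ind)(\Cscr) = \Weil_1 \times \Ind(\Cscr)$ carries the product tangent structure $\Tbb_{\Weil_1} \times \id_{\Ind(\Cscr)}$.

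Next I would upgrade the two factors of $\alpha_{\Cscr}$ — namely $\operatorname{incl}_{\Weil_1} \times \id_{\Ind(\Cscr)}$ and the equivalence $\Ind(\Weil_1) \times \Ind(\Cscr) \xrightarrow{\ \simeq\ } \Ind(\Weil_1 \times \Cscr)$ — to tangent morphisms. Because $\Ind$ of a functor acts by post-composition and $\operatorname{incl}_{\Weil_1}$ lands in constant diagrams, one has $\Ind(G) \circ \operatorname{incl}_{\Weil_1} = \operatorname{incl}_{\Weil_1} \circ G$ strictly for every endofunctor $G$ of $\Weil_1$; applying this to $T_{\Weil_1}$ and its iterates, together with Lemma~\ref{Lemma: Ind of composition on morphism from same index category agree} to see that the $\Ind$-compositors act trivially on constant diagrams, shows that $\operatorname{incl}_{\Weil_1}$ equipped with the identity comparison is a strong tangent morphism $(\Weil_1, \Tbb_{\Weil_1}) \to (\Ind(\Weil_1), \Ind(\Tbb_{\Weil_1}))$, hence so is $\operatorname{incl}_{\Weil_1} \times \id_{\Ind(\Cscr)}$. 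On the other side, the equivalence of \cite[Proposition~6.1.12]{KashiwaraSchapira} is compatible with finite products, so by Corollary~\ref{Cor: Ind is rigidified} it carries $\Ind(T_{\Weil_1} \times \id_{\Cscr})$ to $\Ind(T_{\Weil_1}) \times \id_{\Ind(\Cscr)}$ and, more generally, transports the whole $\Ind$-tangent structure on $\Ind(\Weil_1 \times \Cscr)$ to the product tangent structure on $\Ind(\Weil_1) \times \Ind(\Cscr)$; being an equivalence it preserves the relevant tangent pullbacks and equalizers, so it is a tangent equivalence. Composing, $\alpha_{\Cscr}$ is a strong tangent morphism, as is required of a $1$-cell of $\fTan$.

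For a functor $F:\Cscr \to \Dscr$ I would take the witness $2$-cell $\alpha_F$ to be the pasting of the square displayed just before the statement: its upper cell is a strict equality of functors (both legs act as $\operatorname{incl}_{\Weil_1}$ in the $\Weil_1$-slot and as $\Ind(F)$ in the other), and its lower cell is the naturality isomorphism of \cite[Proposition~6.2.11]{KashiwaraSchapira} for the product--$\Ind$ equivalence; the compatibilities established above make both cells, hence $\alpha_F$, $2$-cells of $\fTan$. It then remains to verify the pseudonaturality axioms: compatibility of $\alpha$ with identity $1$-cells, with composites of $1$-cells, and with the compositors $\phi_{-,-}$ of $\Ind$ in the presence of the strict functoriality of $\Free$. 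Since $\Free$ is strictly functorial and acts on cells only by $\id_{\Weil_1} \times (-)$, while the $\Weil_1$-slot of $\alpha_{\Cscr}$ only ever sees the strictly natural embedding $\operatorname{incl}_{\Weil_1}$, each of these coherences reduces to the corresponding one for the equivalence of \cite[Proposition~6.1.12]{KashiwaraSchapira}.

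The step I expect to be the main obstacle is not any single computation but the initial bookkeeping: establishing, once and for all, that the equivalence $\Ind(\Weil_1 \times \Cscr) \simeq \Ind(\Weil_1) \times \Ind(\Cscr)$ and the $2$-cells witnessing its naturality are \emph{tangent} equivalences for the relevant product tangent structures, so that every functor and $2$-cell appearing in the pasting diagrams genuinely lives in $\fTan$ and not merely in $\fCat$. Once that compatibility is fixed, the verification of the tangent-morphism axioms for $\alpha_{\Cscr}$ and of the pseudonaturality coherences for $\alpha$ is formal.
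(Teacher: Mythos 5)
Your proposal is correct and follows essentially the same route as the paper: the same transition functors $\alpha_{\Cscr}$, the same pasting-diagram witness $2$-cells built from the strictly commuting upper square together with the naturality isomorphism of the Kashiwara--Schapira equivalence $\Ind(\Weil_1 \times \Cscr) \simeq \Ind(\Weil_1) \times \Ind(\Cscr)$, and the same observation that the pseudonaturality coherences reduce to the naturality of that equivalence because $\Ind$ acts only in the non-$\Weil_1$ variable. The extra bookkeeping you single out — verifying that $\alpha_{\Cscr}$ and the witness cells are genuinely $1$- and $2$-cells of $\fTan$ for the free and $\Ind$-tangent structures — is left implicit in the paper, and your outline of it (constant diagrams strictly intertwine $T_{\Weil_1}$ with $\Ind(T_{\Weil_1})$, and the product equivalence transports the $\Ind$-tangent structure to the product one) is sound.
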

\begin{remark}
Note that since $\Weil_1 \not\simeq \Ind(\Weil_1)$\footnote{Since $\Ind(\Weil_1)$ admits all small filtered colimits, we only need to demonstrate that there is a filtered colimit which $\Weil_1$ does not admit. However, $\Weil_1$ does not have infinite colimits as the countably infinite coproduct $X = \bigotimes_{n \in \N} \N[x_n]/(x_n^2) \cong \N[x_n : n \in \N]/(x_n^2 : n \in \N)$ is not an object in $\Weil_1$. As such we get that $\Weil_1 \not\simeq \Ind(\Weil_1)$.} it is impossible for $\alpha$ to be taken to be an invertible $2$-cell as $\alpha_{\Cscr}$ is an equivalence if and only if $\Cscr = \emptyset$ is the empty category.
\end{remark}

\section{Properties of Ind-Tangent Categories}
In this section we examine how the $\Ind$-construction interacts with various important and standard constructions in the tangent category world and in algebraic geometry. We in particular give a pseudolimit characterization of the slice category $\Ind(\Cscr)_{X}$ where $X = (X_i)$ is an $\Ind$-object, we exampine the $\Ind$-tangent category of a representable tangent category, how $\Ind$ interacts with differential objects (and more generally how $\Ind$ interacts with differential bundles), and finally how $\Ind$ interacts with Cartesian differential categories. To do these investigations we need some basic structural results regarding the $\Ind$-pseudofunctor and its preservation of adjoints. Before proceeding, however, let us recall the definition of a representable tangent category.
\subsection{A Pseudolimit Result}
In this short subsection we show that there is a sensible characterization of the slice category $\Ind(\Cscr)_{/(X_i)}$ (at least when $\Cscr$ has finite pullbacks) in terms of a pseudolimit over the diagram of categories $\Ind(\Cscr)_{/X_i}$ with morphism functors induced by the pullback functors. While we do not explicitly use this in this article, I anticipate that this will have future uses when studying the tangent category of formal schemes explicitly and also in certain functional-analytic characterizations (cf.\@ Subsection \ref{Subsection: Smooth} below) involving slice tangent categories.

\begin{proposition}\label{Prop: A pseudolimit thing}
	Let $\Cscr$ be a finitely complete category and let $\Xfrak = (X_i)$ be an object in $\Ind(\Cscr)$. Consider the pseudofunctor $G:I^{\op} \to \fCat$ given by:
	\begin{itemize}
		\item The object assignment sends $i \in I_0$ to the category $\Ind(\Cscr)_{/X_i}$.
		\item The morphism assignment sends $\varphi:i \to i^{\prime}$ to the pullback functor $\tilde{\varphi}^{\ast}:\Ind(\Cscr)_{/X_{i^{\prime}}} \to \Ind(\Cscr)_{X_i}$, i.e., the functor which sends an object $Z \to X_{i^{\prime}}$ to a chosen pullback
		\[
		\begin{tikzcd}
			Z \times_{X_{i^{\prime}}} X_i \ar[r] \ar[d] & Z \ar[d] \\
			X_{i} \ar[r, swap]{}{\tilde{\varphi}} & X_{i^{\prime}}
		\end{tikzcd}
		\]
		where $\tilde{\varphi}:X_i \to X_{i^{\prime}}$ is the structure map induced from $\Xfrak$.
		\item The compositors are induced by the natural isomorpshims of pullbacks.
	\end{itemize}
	Then $\Ind(\Cscr)_{/\Xfrak}$ is the pseudolimit of the diagram in $\fCat$ induced by $F$.
\end{proposition}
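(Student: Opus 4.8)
The plan is to realize $\twolim G$ concretely as the descent category of pseudocones over $G$ (with apex the terminal category $\mathbbm{1}$), to build a comparison functor $\Phi\colon \Ind(\Cscr)_{/\Xfrak} \to \twolim G$ out of the pullback functors $\iota_i^\ast$ along the colimit injections, and then to prove $\Phi$ is an equivalence by transporting the whole situation into the presheaf topos $[\Cscr^{\op},\Set]$ through the functor $L$ of Proposition~\ref{Prop: L functor on ind is fully faithful}. Since $\Cscr$ is finitely complete, so is $\Ind(\Cscr)$ by \cite[Proposition I.8.9.1.c]{SGA4}, so all the pullback functors in $G$ are defined; and writing $\ul{X_i}\colon\mathbbm{1}\to\Cscr$ for the constant ind-object at $X_i$, the class of $\id_{X_i}$ at index $i$ determines a morphism $\iota_i\colon X_i\to\Xfrak$ in $\Ind(\Cscr)$ with $\iota_{i'}\circ\tilde\varphi=\iota_i$ for every $\varphi\colon i\to i'$ (these are the colimit injections, as one sees by applying the exact, fully faithful $L$ and using $L\Xfrak=\colim_{i}\yon X_i$).

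The comparison functor $\Phi$ sends an object $h\colon Z\to\Xfrak$ to the family $\bigl(\iota_i^\ast(Z,h)\bigr)_{i\in I_0}$ together with the coherence isomorphisms $\theta_\varphi\colon\tilde\varphi^\ast\iota_{i'}^\ast(Z,h)\xrightarrow{\ \cong\ }\iota_i^\ast(Z,h)$ furnished by the pullback pasting lemma applied to $\iota_i=\iota_{i'}\circ\tilde\varphi$; the cocycle identity for the $\theta_\varphi$ and the compatibility with the compositors of $G$ are further instances of the pasting lemma, and $\Phi$ acts on a morphism over $\Xfrak$ by applying each $\iota_i^\ast$. This step is routine $2$-categorical bookkeeping (matching the compositors of $G$ with the canonical isomorphisms of iterated pullbacks), so I would only sketch it.

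To see $\Phi$ is an equivalence I would push everything through $L$. The functor $L$ realizes $\Ind(\Cscr)_{/\Xfrak}$ as the full subcategory of $[\Cscr^{\op},\Set]_{/L\Xfrak}$ on the morphisms $V\to L\Xfrak$ whose domain $V$ is an ind-presheaf, and likewise $\twolim_i\Ind(\Cscr)_{/\ul{X_i}}$ is a full subcategory of $\twolim_i[\Cscr^{\op},\Set]_{/L\ul{X_i}}$ (a levelwise full inclusion of diagrams induces a full inclusion of pseudolimits, since hom-sets in the descent category are computed componentwise as limits of the hom-sets of the $G(i)$). Under these identifications $\Phi$ is the restriction of the analogous presheaf-level comparison functor $\Phi'\colon[\Cscr^{\op},\Set]_{/L\Xfrak}\to\twolim_i[\Cscr^{\op},\Set]_{/L\ul{X_i}}$. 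Now $\Phi'$ is an equivalence because the filtered colimit $L\Xfrak=\colim_i\yon X_i$ is a \emph{van Kampen} colimit in $[\Cscr^{\op},\Set]$: universality of colimits in a presheaf topos gives $V\cong\colim_i(V\times_{L\Xfrak}L\ul{X_i})$ for any $V\to L\Xfrak$, and effectivity of descent along a filtered colimit reduces pointwise to the corresponding elementary fact about filtered colimits in $\Set$. Finally, $\Phi'$ restricts to an equivalence between the two full subcategories: a pullback of an ind-presheaf along any morphism is again an ind-presheaf (as $\Ind_{\PSh}(\Cscr)$ is closed under finite limits in $[\Cscr^{\op},\Set]$, using Proposition~\ref{Prop: L functor on ind is fully faithful} together with the commutation of filtered colimits and finite limits), so $\Phi$ lands in $\twolim_i\Ind(\Cscr)_{/\ul{X_i}}$; and conversely, given a compatible family $(V_i\to L\ul{X_i})_i$ with each $V_i$ an ind-presheaf, the object $V\to L\Xfrak$ produced by descent satisfies $V\cong\colim_iV_i$, which is an ind-presheaf because $\Ind_{\PSh}(\Cscr)$ is closed under filtered colimits. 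Hence $\Phi$ is fully faithful and essentially surjective, proving the proposition.

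The main obstacle is the van Kampen property of filtered colimits of representables — equivalently, the essential surjectivity half of the argument, namely reconstructing an ind-object $Z\to\Xfrak$ from a compatible system of slices $(Z\times_{\Xfrak}\ul{X_i}\to\ul{X_i})_i$. The genuinely non-formal ingredient there is the set-theoretic verification that descent data along a filtered colimit of sets is effective; everything else — exactness and full faithfulness of $L$, closure of $\Ind_{\PSh}(\Cscr)$ under finite limits and under filtered colimits, and the cocycle/compositor bookkeeping defining $\Phi$ — is either recalled from earlier in the paper or purely mechanical.
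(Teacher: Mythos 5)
Your argument is correct, but it reaches the conclusion by a genuinely different route than the paper. The paper verifies the bilimit universal property directly inside $\Ind(\Cscr)$: given a pseudocone $F_i:\Dscr\to\Ind(\Cscr)_{/X_i}$ it constructs the mediating functor by the explicit formula $d\mapsto\colim_{i\in I}F_i(d)$, checks that the projections recover the $F_i$ by using that filtered colimits commute with pullbacks in $\Ind(\Cscr)$, and then establishes the two-dimensional (uniqueness of $2$-cells) clause by the same colimit manipulations; nothing leaves $\Ind(\Cscr)$, and the mediating functor is produced in a form the paper can reuse. You instead take the descent-category model of the pseudolimit, write down the canonical comparison functor out of $\Ind(\Cscr)_{/\Xfrak}$, and prove it is an equivalence by transporting through the fully faithful exact functor $L$ into $[\Cscr^{\op},\Set]$, where universality of colimits gives full faithfulness and effectivity of descent along a filtered colimit---checked pointwise in $\Set$---gives essential surjectivity, with closure of $\Ind_{\PSh}(\Cscr)$ under finite limits and filtered colimits bringing the equivalence back to the ind-level subcategories. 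The core computation (an object over $\Xfrak$ is the filtered colimit of its pullbacks to the $X_i$) is the same in both proofs, but your packaging makes the ``van Kampen'' behaviour of filtered colimits the visible reason and replaces the paper's pasting-diagram checks by the standard identification of the pseudolimit with the descent category, at the cost of invoking two facts you only assert: the elementary verification that filtered-colimit descent is effective in $\Set$ (true---surjectivity and injectivity of the comparison $V_i\to V\times_X X_i$ follow from filteredness together with the cocycle condition---but it should be written out), and closure of $\Ind_{\PSh}(\Cscr)$ under filtered colimits, which the paper never states and should be cited (e.g.\ Kashiwara--Schapira). Note also that, exactly like the paper's own proof, your argument establishes the universal property only in the bicategorical sense (factorization up to invertible $2$-cell), which is clearly the intended reading of ``pseudolimit'' here.
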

\begin{proof}
	Begin by noting that $\Xfrak$ is the filtered colimit of the $X_i$ in $\Ind(\Cscr)$; this follows from the fact that under the equivalence $\Ind(\Cscr) \simeq \Ind_{\mathbf{PSh}}(\Cscr)$, $\Xfrak$ is identified with the presheaf
	\[
	\colim_{i \in I}\yon(X_i)
	\]
	in $\Ind_{\mathbf{PSh}}(\Cscr)$ and equivalences preserve colimits. Consequently we write $\alpha_i:X_i \to \Xfrak$ for the colimit structure maps for all $i \in I$.
	
	We now will establish that any diagram factors through $\Ind(\Cscr)_{/\Xfrak}$. Let $\Dscr$ be a category such that there are functors $F_i:\Dscr \to \Cscr_{/X_i}$ for all $i \in I_0$ such that for any $\varphi:i \to i^{\prime}$ in $I$, there is an invertible $2$-cell
	\[
	\begin{tikzcd} 
		& \Ind(\Cscr)_{/X_{i^{\prime}}} \ar[dd]{}{\tilde{\varphi}^{\ast}} \\ 
		\Dscr \ar[ur, ""{name = U}]{}{F_{i^{\prime}}} \ar[dr, swap, ""{name = D}]{}{F_i} & \\
		& \Ind(\Cscr)_{/X_i} \ar[from = U, to = D, Rightarrow, shorten <= 4pt, shorten >= 4pt]{}{\rho_{\varphi}}\ar[from = U, to = D, Rightarrow, swap, shorten <= 4pt, shorten >= 4pt]{}{\cong}
	\end{tikzcd}
	\]
	Now observe that the natural isomorphism $\rho$ implies that in $\Cscr$ there is an isomorphism of functors
	\[
	F_i(-) \cong (\tilde{\varphi}^{\ast} \circ F_{i^{\prime}})(-) = F_{i^{\prime}}(-) \times_{X_{i^{\prime}}} X_i
	\]
	where the pullback is taken as in the diagram:
	\[
	\begin{tikzcd}
		F_{i^{\prime}}(-) \times_{X_{i^{\prime}}} X_i \ar[r]{}{p_0} \ar[d, swap]{}{p_1} & F_{i^{\prime}}(-) \ar[d]{}{} \\
		X_i \ar[r, swap]{}{\tilde{\varphi}} & X_{i^{\prime}}
	\end{tikzcd}
	\]
	Consequently taking the first projection of the pullback above and pre-composing with the isomorphism $\rho$ gives a structure natural transformation
	\[
	\begin{tikzcd}
		F_{i} \ar[r]{}{\cong} \ar[dr, swap]{}{\hat{\varphi}} & F_{i^{\prime}} \times_{X_{i^{\prime}}} X_i \ar[d]{}{p_0} \\
		& F_{i^{\prime}}
	\end{tikzcd}
	\]
	Taking the colimit of the $\hat{\varphi}$ for every object $d \in \Dscr_0$, which exists in $\Ind(\Cscr)$ because $I$ is a filtered category, produces a family of objects which we define by
	\[
	F(d) := \colim_{i \in I} F_i(d).
	\]
	The universal property of the colimit allows us to extend this to a functor $\tilde{F}:\Dscr \to \Ind(\Cscr)$; we claim that this extension naturally factors through the forgetful functor $\Ind(\Cscr)_{/\Xfrak} \to \Ind(\Cscr)$. To see this, however, we simply use the universal property of the colimits in sight to produce the commuting
	\[
	\begin{tikzcd}
		& F(d) \\
		F_{i}(d) \ar[rr, near start]{}{\hat{\varphi}_d} \ar[ur]{}{\beta_i} \ar[d]  & & F_{i^{\prime}}(d) \ar[ul, swap]{}{\beta_{i^{\prime}}}  \ar[d] \\
		X_{i} \ar[dr, swap]{}{\alpha_i} \ar[rr, near start, swap]{}{\tilde{\varphi}} & & X_{i^{\prime}} \ar[dl]{}{\alpha_{i^{\prime}}} \\
		& \Xfrak \ar[from = 1-2, to = 4-2, crossing over, dashed]{}{\exists!\theta}
	\end{tikzcd}
	\] 
	diagram with $\theta$ defined by the universal property of $F(d)$. This gives us the functor $F:\Dscr \to \Ind(\Cscr)_{/\Xfrak}$.
	
	We now claim that this functor $\theta$ allows us to factor the $2$-cells $\rho_{\varphi}$ in the sense that there is a pasting diagram
	\[
	\begin{tikzcd}
		& & \Ind(\Cscr)_{/X_{i^{\prime}}} \ar[dd]{}{\tilde{\varphi}^{\ast}} \\
		\Dscr\ar[urr, bend left = 20, ""{name = URR}]{}{F_{i^{\prime}}} \ar[drr, bend right = 20, swap, ""{name = DRR}]{}{F_{i}} \ar[r]{}{\theta} & \Ind(\Cscr)_{/\Xfrak} \ar[ur, ""{name = UR}]{}{\alpha_{i^{\prime}}^{\ast}} \ar[dr, ""{name = DR}, swap]{}{\alpha_i^{\ast}} \\
		& & \Ind(\Cscr)_{/X_{i}} \ar[from = URR, to = 2-2, swap, Rightarrow, shorten <= 4pt, shorten >= 4pt]{}{\gamma} \ar[from = URR, to = 2-2, Rightarrow, shorten <= 4pt, shorten >= 4pt]{}{\cong} \ar[from = 2-2, to = DRR, Rightarrow, shorten >=4pt, shorten <= 4pt]{}{\delta}  \ar[from = 2-2, to = DRR, Rightarrow, swap, shorten >=4pt, shorten <= 4pt]{}{\cong} \ar[from = UR, to = DR, Rightarrow, shorten <= 24pt, shorten <= 8pt]{}{\phi_{\widetilde{\varphi}, \alpha_{i^{\prime}}}} \ar[from = UR, to = DR, Rightarrow, shorten <= 24pt, shorten <= 8pt, swap]{}{\cong}
	\end{tikzcd}
	\]
	which pastes to $\rho_{\varphi}$. To see this we first will show the existence of $\gamma$. To this end note that on one hand, for any $d \in \Dscr_0$ we have
	\[
	(\alpha_{i}^{\ast} \circ \theta)(d) = F(d) \times_{\Xfrak} X_i.
	\]
	Now note that $X_i$ as a constant object is isomorphic in $\Ind(\Cscr)$ to the $I$-indexed object $\ul{X_{i^{\prime}}}:I \to \Cscr$ where $\ul{X_{i^{\prime}}}(x) := X_{i^{\prime}}$ for all objects and $\ul{X_{i^{\prime}}}(\varphi) = \id_{X_{i^{\prime}}}$ for all morphisms. Under this 
	representation we get that $X_{i^{\prime}} \cong \colim_{I}\ul{X_{i^{\prime}}}(i)$ and so
	\[
	(\alpha_{i^{\prime}}^{\ast} \circ \theta)(d) = F(d) \times_{\Xfrak} X_i \cong F(d) \times_{\Xfrak} \left(\colim_{i \in I} \ul{X_{i^{\prime}}}(i)\right) \cong \left(\colim_{I} F_i(d)\right) \times_{\colim_{i \in I} X_i} \left(\colim_{i \in I}\ul{X_{i^{\prime}}}(i)\right).
	\]
	Using now that each filtered colimit in sight is taken over the same (filtered indexing category and that filtered colimits commute naturally with pullbacks in $\Ind(\Cscr)$ we find
	\begin{align*}
		(\alpha_{i^{\prime}}^{\ast} \circ \theta)(d) & \cong \left(\colim_{I} F_i(d)\right) \times_{\colim_{i \in I} X_i} \left(\colim_{i \in I}\ul{X_{i^{\prime}}}(i)\right) \cong \colim_{i \in I}\left(F_i(d) \times_{X_i} \ul{X_{i^{\prime}}}(i)\right) = \colim_{i \in I} \left(F_i(d) \times_{X_i} X_{i^{\prime}}\right)
	\end{align*}
	Note that since there is a natural isomorphism $F_i(-) \cong F_{i^{\prime}}(-) \times_{X_{i^{\prime}}} X_i$ whenever there is a morphism $\tilde{\varphi}:i \to i^{\prime}$ in $\Ind(\Cscr)$. Using that $I$ is filtered and that we can compute the colimit after applying the $d$-component of this natural isomorphism allows us to deduce that
	\[
	\colim_{i \in I} \left(F_i(d) \times_{X_i} X_{i^{\prime}}\right) \cong \colim_{i \in I}\left(\big(F_{i^{\prime}}(d) \times_{X_{i^{\prime}}} X_i\big) \times_{X_i} X_{i^{\prime}}\right) \cong \colim_{i \in I}\left(F_{i^{\prime}}(d)\right) \cong F_{i^{\prime}}(d)
	\]
	because the final colimit is taken in a variable which is not present in the object $F_{i^{\prime}}(d)$. Note that each isomorphism presented is natural; defining $\gamma$ to be the composite of these gives our desired $2$-cell:
	\[
	\begin{tikzcd}
		& \Ind(\Cscr)_{/\Xfrak} \ar[dr]{}{\alpha_{i^{\prime}}^{\ast}} \\
		\Dscr \ar[ur]{}{\theta} \ar[rr, swap]{}{F_{i^{\prime}}} & {} & \Ind(\Cscr)_{/X_{i^{\prime}}} \ar[from = 1-2, to = 2-2, Rightarrow, shorten <= 4pt, shorten >= 4pt]{}{\gamma} \ar[from = 1-2, to = 2-2, Rightarrow, shorten <= 4pt, shorten >= 4pt, swap]{}{\cong}
	\end{tikzcd}
	\]
	Establishing the existence of $\delta$ is similar to establishing that of $\gamma$ and omitted. Finally the existence of the $2$-cell follows immediately from the fact that $\Xfrak \cong \colim_{I} X_i$. That the resulting pasting diagram pastes to $\rho_{\varphi}$ is also an extremely tedious but straightforward argument that uses the fact that $\gamma$ and $\delta$ involve $\rho_{\varphi}$ in establishing the natural isomorphism and is also omitted.
	
	We now finally establish the remaining property of $\Ind(\Cscr)_{/\Xfrak}$ as a pseudolimit of the diagram of shape $G$. That is, assume that we have two functors $F, F^{\prime}:\Dscr \to \Ind(\Cscr)_{/\Xfrak}$ such that for all $i \in I_0$ there is a $2$-cell
	\[
	\begin{tikzcd}
		\Dscr \ar[rrr, ""{name = U}]{}{F} \ar[d, swap]{}{F^{\prime}} & & & \Ind(\Cscr)_{/\Xfrak} \ar[d]{}{\alpha_i^{\ast}} \\
		\Ind(\Cscr)_{/\Xfrak} \ar[rrr, swap, ""{name = D}]{}{\alpha_i^{\ast}} & & & \Ind(\Cscr)_{/X_i} \ar[from = U, to = D, Rightarrow, shorten <= 4pt, shorten >= 4pt]{}{\gamma_i}
	\end{tikzcd}
	\]
	for which the diagram of functors and natural transformations, for any $\varphi:i \to i^{\prime}$ in $I$,
	\[
	\begin{tikzcd}
		\tilde{\varphi}^{\ast} \circ \alpha_{i^{\prime}}^{\ast} \circ F \ar[r]{}{\tilde{\varphi}^{\ast} \ast \gamma_{i^{\prime}}} \ar[d, swap]{}{\phi_{\tilde{\varphi}, \alpha_{i^{\prime}}} \ast F} & \tilde{\varphi}^{\ast} \circ \alpha_{i^{\prime}}^{\ast} \circ F^{\prime} \ar[d]{}{\phi_{\tilde{\varphi}, \alpha_{i^{\prime}}} \ast F^{\prime}} \\
		\alpha_i^{\ast} \ar[r, swap]{}{\alpha_i^{\ast} \ast \gamma_i} & \alpha_i^{\ast} \circ F^{\prime}
	\end{tikzcd}
	\]
	commutes. We must establish that there is a unique $2$-cell $\gamma:F \to F^{\prime}$ making $\alpha_{i}^{\ast} \ast \gamma = \gamma_i$ for all $i \in I_0$. However, note that $\alpha_i^{\ast} \circ F = F(-) \times_{\Xfrak} X_i$ and $\alpha_i^{\ast} \circ F^{\prime} = F^{\prime}(-) \times_{\Xfrak} X_{i}$ for all $i \in I_0$ and the conditions on the naturality of $\gamma_i$ and the coherences regarding the interchange of the $\gamma_i$ with the pullbacks $\tilde{\varphi}^{\ast}$ tell us that we have a uniquely determined natural transformation
	\[
	\colim_{I}\left(F(-) \times_{\Xfrak} X_i\right) \xrightarrow{\colim_I(\gamma_i)} \colim_{I}\left(F^{\prime}(-) \times_{\Xfrak} X_i\right).
	\]
	Using the natural isomorphisms $\colim_{I}(F(-) \times_{\Xfrak} X_i) \cong F(-)$ and $\colim_{I}(F^{\prime}(-) \times_{\Xfrak} X_i)$ induced by the fact that filtered colimits commute with pullbacks in $\Ind(\Cscr)$ gives our desired natural transformation:
	\[
	\begin{tikzcd}
		\colim_{I}\left(F(-) \times_{\Xfrak} X_i\right) \ar[d, swap]{}{\colim_I(\gamma_i)} \ar[r]{}{\cong} & F(-) \ar[d]{}{\gamma} \\
		\colim_{I}\left(F^{\prime}(-) \times_{\Xfrak} X_i\right) \ar[r, swap]{}{\cong} & F^{\prime}(-)
	\end{tikzcd}
	\]
	Finally that this satisfies $\alpha_i^{\ast} \ast \gamma = \gamma_i$ is immediate by construction.
\end{proof}

\subsection{Representable Ind-Tangent Categories}
\begin{definition}[{\cite[Definition 3]{GarnerEmbedding}}]
Let $(\Cscr,\Tbb)$ be a Cartesian closed tangent category. We say that $\Cscr$ is a representable tangent category if there exists an object $D$ of $\Cscr$ for which there is a natural isomorphism $T(-) \cong [D,-]$.
\end{definition}

Because of this we will first explain how the $\Ind$-construction preserves adjoints.
\begin{proposition}\label{Prop: Ind preserves adjoints}
Let $F \dashv G:\Cscr \to \Dscr$ be an adjunction. Then there is an adjunction $\Ind(F) \dashv \Ind(G):\Ind(\Cscr) \to \Ind(\Dscr)$.
\end{proposition}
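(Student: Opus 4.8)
The plan is to establish the adjunction directly at the level of hom-sets, which sidesteps most of the $2$-categorical bookkeeping. Write an object of $\Ind(\Cscr)$ as $\ul{X} = (X_i)_{i \in I}$ and an object of $\Ind(\Dscr)$ as $\ul{Y} = (Y_j)_{j \in J}$. By the construction of the $\Ind$-functor (Proposition \ref{Prop: Ind functors} and Remark \ref{Remark: Ind functor and morphisms}) we have $\Ind(F)(\ul{X}) = (FX_i)_{i \in I}$ and $\Ind(G)(\ul{Y}) = (GY_j)_{j \in J}$, so by the definition of the $\Ind$-hom-sets,
\[
\Ind(\Dscr)\big(\Ind(F)\ul{X},\ul{Y}\big) = \lim_{i \in I}\colim_{j \in J}\Dscr(FX_i,Y_j), \qquad \Ind(\Cscr)\big(\ul{X},\Ind(G)\ul{Y}\big) = \lim_{i \in I}\colim_{j \in J}\Cscr(X_i,GY_j).
\]
First I would invoke the adjunction isomorphism $\Dscr(FX_i,Y_j)\cong\Cscr(X_i,GY_j)$, which is natural in $X_i$ and in $Y_j$ and therefore compatible with the transition morphisms of $\ul{X}$ and $\ul{Y}$; hence it descends to an isomorphism after forming the filtered colimit over $J$ and then the cofiltered limit over $I$, giving a bijection $\Ind(\Dscr)(\Ind(F)\ul{X},\ul{Y})\cong\Ind(\Cscr)(\ul{X},\Ind(G)\ul{Y})$. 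Next I would check that this bijection is natural in $\ul{X}$ and $\ul{Y}$: this follows from the naturality of the adjunction isomorphism in $\Cscr$ and $\Dscr$ together with the functoriality of $\lim$ and $\colim$ and the description of composition in $\Ind(\Cscr)$ and $\Ind(\Dscr)$. By the hom-set characterization of adjunctions this yields $\Ind(F)\dashv\Ind(G)$, and one reads off the unit and counit as the $\Ind$-images of $\eta$ and $\epsilon$ composed with the appropriate compositor isomorphisms $\phi_{F,G}^{\pm1}$.

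Alternatively, and more in the spirit of the preceding $2$-categorical discussion, one may transport the unit $\eta:\id_{\Cscr}\Rightarrow G\circ F$ and the counit $\epsilon:F\circ G\Rightarrow\id_{\Dscr}$ through the pseudofunctor $\Ind$ directly: set $\hat{\eta} := \phi_{F,G}^{-1}\circ\Ind(\eta)$, viewed as a $2$-cell $\id_{\Ind(\Cscr)} = \Ind(\id_{\Cscr})\Rightarrow\Ind(G\circ F)\Rightarrow\Ind(G)\circ\Ind(F)$ via Corollary \ref{Cor: Ind is rigidified}, and dually $\hat{\epsilon}:=\Ind(\epsilon)\circ\phi_{G,F}$. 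One then verifies the two triangle identities for $(\hat{\eta},\hat{\epsilon})$. Each such identity, after inserting the compositor $2$-cells, appealing to their coherence, and using the pseudofunctoriality of $\Ind$ on $2$-cells (Corollary \ref{Cor: Ind is strict 2-mor} and the lemma on $\Ind(\beta\ast\alpha)$), reduces to the $\Ind$-image of the corresponding triangle identity for $(\eta,\epsilon)$ in $\Cscr$, respectively $\Dscr$, and hence holds.

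The only genuine obstacle, and the reason I would present the hom-set argument as the main proof, is the bookkeeping with the compositor isomorphisms in the second approach: since $\Ind$ is only a pseudofunctor, $\Ind(G\circ F)$ and $\Ind(G)\circ\Ind(F)$ agree merely up to the canonical $\phi_{F,G}$, so the triangle identities must be threaded through these coherence $2$-cells in precisely the right order, and checking that they cancel is the tedious part. In the hom-set approach there is essentially nothing to watch except that the colimit over $J$ is genuinely filtered, so that it behaves well with respect to the limit over $I$ and to composition in the ind-categories, and that the adjunction bijection really does respect the structure maps of the ind-objects — which is just a restatement of its naturality.
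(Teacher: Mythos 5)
Your main argument is exactly the paper's proof: the paper establishes the adjunction by applying the isomorphism $\Dscr(FX_i,Y_j)\cong\Cscr(X_i,GY_j)$ levelwise and passing through the $\lim_{I}\colim_{J}$ description of the $\Ind$-hom-sets, just as you do (and is in fact terser, leaving the naturality check implicit). Your alternative unit--counit route through the compositors is extra but not needed; the hom-set version is the one to keep.
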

\begin{proof}
Let $\ul{X} = (X_i)_{i \in I}$ be an object of $\Ind(\Cscr)$ and let $\ul{Y} = (Y_j)_{j \in J}$ be an object of $\Ind(\Dscr)$. Then because $F \dashv G$, $\Cscr(X_i,GY_j) \cong \Dscr(FX_i,Y_j)$ for all $i \in I_0$ and all $j \in J_0$. Then we get that
\begin{align*}
\Ind(\Cscr)(\ul{X},\Ind(G)\ul{Y}) &= \Ind(\Cscr)((X_i),(GY_j)) = \lim_{\substack{\longleftarrow \\ i \in I}}\left(\lim_{\substack{\longrightarrow \\ j \in J}}\Cscr(X_i,GY_j)\right) \\
&\cong \lim_{\substack{\longleftarrow \\ i \in I}}\left(\lim_{\substack{\longrightarrow \\ j \in J}}\Dscr(FX_i,Y_j)\right) = \Ind(\Dscr)((FX_i),(Y_j)) = \Ind(\Dscr)(\Ind(F)\ul{X},\ul{Y}).
\end{align*}
Thus $\Ind(F) \dashv \Ind(G):\Ind(\Cscr) \to \Ind(\Dscr)$.
\end{proof}

We can then use this to expand internal hom functors on Cartesian closed categories to the $\Ind$-category at least when the co-representing object $\ul{X}$ is constant, i.e., when $\ul{X} = (X)_{\ast \in \lbrace \ast \rbrace}$.

\begin{proposition}\label{Prop: Reple internal hom of inding things}
Let $\Cscr$ be a for which there is an object $X \in \Cscr_0$ such that $\Cscr$ admits a product functor $(-) \times X:\Cscr \to \Cscr$ and there is an internal hom functor $[X,-]:\Cscr \to \Cscr$ with $(-) \times X \dashv [X,-]$. Then if $\ul{X} = (X)_{\ast \in \lbrace \ast \rbrace}$ is the constant object at $X$ in $\Ind(\Cscr)$ the internal hom functor $[\ul{X},-]:\Ind(\Cscr) \to \Ind(\Cscr)$ exists and is right adjoint to the product functor $(-) \times \ul{X}$.
\end{proposition}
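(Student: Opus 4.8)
The plan is to recognize the product functor $(-) \times \ul{X}$ on $\Ind(\Cscr)$ as naturally isomorphic to the indicization $\Ind\big((-) \times X\big)$ of the given product functor on $\Cscr$, and then to obtain the desired right adjoint by feeding the hypothesised adjunction $(-) \times X \dashv [X,-]$ into Proposition \ref{Prop: Ind preserves adjoints}.

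First I would check that for any object $\ul{Z} = (Z_i)_{i \in I}$ of $\Ind(\Cscr)$ the ind-object $(Z_i \times X)_{i \in I}$ --- which by Remark \ref{Remark: Ind functor and morphisms} is precisely $\Ind\big((-) \times X\big)(\ul{Z})$, namely $\ul{Z}\colon I \to \Cscr$ post-composed with the functor $(-) \times X$ --- serves as a product of $\ul{Z}$ with the constant object $\ul{X}$ in $\Ind(\Cscr)$. Concretely, for an arbitrary test object $\ul{W} = (W_k)_{k \in K}$ I would compute
\[
\Ind(\Cscr)\big(\ul{W}, (Z_i \times X)\big) = \lim_{k \in K}\colim_{i \in I}\Cscr\big(W_k, Z_i \times X\big) \cong \lim_{k \in K}\colim_{i \in I}\Big(\Cscr(W_k, Z_i) \times \Cscr(W_k, X)\Big),
\]
then commute the filtered colimit past the finite product (filtered colimits commute with finite limits in $\Set$), using that $i \mapsto \Cscr(W_k, X)$ is a constant diagram over the connected category $I$, and finally commute the outer limit past the product, arriving at $\Ind(\Cscr)(\ul{W}, \ul{Z}) \times \Ind(\Cscr)(\ul{W}, \ul{X})$, where I use that $\Ind(\Cscr)(\ul{W}, \ul{X}) = \lim_{k}\Cscr(W_k, X)$ because $\ul{X}$ is indexed by the terminal category. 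This isomorphism is natural in $\ul{W}$, so $(Z_i \times X)_{i \in I}$ with the evident projections is a product $\ul{Z} \times \ul{X}$; in particular these products all exist in $\Ind(\Cscr)$. (An alternative is to transport the computation through the fully faithful exact functor $L\colon \Ind(\Cscr) \to [\Cscr^{\op}, \Set]$, using that $\yon$ preserves the product $Z_i \times X$, that filtered colimits commute with finite limits in the presheaf topos, and that $\Ind_{\mathbf{PSh}}(\Cscr)$ is closed under finite limits there.)

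Since a binary product functor is pinned down up to canonical natural isomorphism by the universal property it corepresents, the previous step yields a natural isomorphism $(-) \times \ul{X} \cong \Ind\big((-) \times X\big)$ of endofunctors of $\Ind(\Cscr)$. I would then apply Proposition \ref{Prop: Ind preserves adjoints} to $(-) \times X \dashv [X,-]$ to get $\Ind\big((-) \times X\big) \dashv \Ind\big([X,-]\big)$, and transport this adjunction along the natural isomorphism just obtained: this shows $\Ind\big([X,-]\big)$ is right adjoint to $(-) \times \ul{X}$, and I would simply define $[\ul{X}, -] := \Ind\big([X,-]\big)$.

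I expect the only genuine obstacle to be the bookkeeping in the first step: one must keep straight which (co)limits are filtered and which are arbitrary when shuttling them past the finite product, and in particular exploit that $\ul{X}$ is the \emph{constant} ind-object so that $i \mapsto \Cscr(W_k, X)$ really is constant over the filtered $I$ --- without constancy there is no reason for the filtered colimit to distribute over the product in the required way, which is exactly why the proposition is stated for constant $\ul{X}$. Everything downstream of that identification is purely formal.
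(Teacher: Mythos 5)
Your proposal is correct and follows essentially the same route as the paper: identify $(-) \times \ul{X}$ with $\Ind\big((-)\times X\big)$, apply Proposition \ref{Prop: Ind preserves adjoints} to the adjunction $(-)\times X \dashv [X,-]$, and transport along that identification, defining $[\ul{X},-] := \Ind([X,-])$. The only difference is one of emphasis: you spell out the hom-set (or $L$-functor) computation justifying the product identification, which the paper merely asserts, while the paper instead records the analogous hom-set calculation to confirm $[\ul{X},-]\cong\Ind([X,-])$ at the end.
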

\begin{proof}
That $\Ind((-) \times X) \dashv \Ind([X,-])$ is a consequence of Proposition \ref{Prop: Ind preserves adjoints}. Because there is a natural isomorphism of functors
\[
\Ind((-) \times X) \cong (-) \times \ul{X}
\]
we get that $(-) \times \ul{X} \dashv \Ind([X,-])$ and so the internal hom exists. The final statement about the functor $[\ul{X},-]$ being isomorphic to $\Ind([X,-])$ follows from the fact that for any $(Y_i)_{i \in I}$ and any $(Z_j)_{j \in J}$ in $\Ind(\Cscr)$,
\begin{align*}
\Ind(\Cscr)\big((Y_i), \left[\ul{X},(Z_j)\right]\big) &= \lim_{\substack{\longleftarrow \\ i \in I}}\left(\lim_{\substack{\longrightarrow \\ j \in J}}\Cscr(Y_i,[X,Z_j])\right) \cong \lim_{\substack{\longleftarrow \\ i \in I}}\left(\lim_{\substack{\longrightarrow \\ j \in J}}\Cscr(Y_i \times X, Z_j)\right) \\
&= \Ind(\Cscr)\left((Y_j) \times \ul{X}, (Z_j)\right)
\end{align*}
which gives the desired isomorphism $[\ul{X},-] \cong \Ind([X,-]).$
\end{proof}
\begin{proposition}\label{Prop: Ind of reple is reple}
Let $(\Cscr,\Tbb)$ be a representable tangent category. Then there is an object $\ul{D}$ in the $\Ind$-tangent category $(\Ind(\Cscr),\Ind(\Tbb))$ for which $\Ind(T) \cong [\ul{D},-]$.
\end{proposition}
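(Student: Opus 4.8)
The plan is to take $\ul{D} := (D)_{\ast \in \lbrace \ast \rbrace}$ to be the constant $\Ind$-object at the infinitesimal object $D$ witnessing the representability of $(\Cscr, \Tbb)$, and then to transport the representability isomorphism $T(-) \cong [D,-]$ through the $\Ind$-pseudofunctor, using Proposition \ref{Prop: Reple internal hom of inding things} to identify the resulting internal hom on $\Ind(\Cscr)$.

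First I would observe that since $(\Cscr, \Tbb)$ is a representable tangent category it is, in particular, Cartesian closed; hence the product functor $(-) \times D : \Cscr \to \Cscr$ exists and is left adjoint to the internal hom functor $[D,-] : \Cscr \to \Cscr$. This is exactly the hypothesis needed to invoke Proposition \ref{Prop: Reple internal hom of inding things} with the object $D$, which yields both that the internal hom functor $[\ul{D},-] : \Ind(\Cscr) \to \Ind(\Cscr)$ exists (right adjoint to $(-) \times \ul{D}$) and, crucially, that there is a natural isomorphism $[\ul{D},-] \cong \Ind([D,-])$.

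Next I would apply the $\Ind$-assignment to the defining natural isomorphism $\alpha : T \Rightarrow [D,-]$. By Proposition \ref{Prop: Ind nat transform induced by nat transform} we obtain a natural transformation $\Ind(\alpha) : \Ind(T) \Rightarrow \Ind([D,-])$, and this is itself an isomorphism: writing $\beta$ for the inverse of $\alpha$, Corollaries \ref{Cor: Ind is rigidified} and \ref{Cor: Ind is strict 2-mor} give $\Ind(\beta) \circ \Ind(\alpha) = \Ind(\beta \circ \alpha) = \Ind(\id_T) = \id_{\Ind(T)}$ and symmetrically $\Ind(\alpha) \circ \Ind(\beta) = \id$. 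Composing with the isomorphism from the previous step then produces
\[
\Ind(T) \;\cong\; \Ind([D,-]) \;\cong\; [\ul{D},-],
\]
which is the claimed statement.

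I do not expect a genuine obstacle here: the real content has already been isolated in Propositions \ref{Prop: Reple internal hom of inding things} and \ref{Prop: Ind nat transform induced by nat transform}, and the only point requiring a sentence of care is that $\Ind$ preserves invertibility of $2$-cells, which is a formal consequence of pseudofunctoriality as indicated above. The one bookkeeping subtlety worth flagging in the write-up is that $\ul{D}$ must be chosen to be the \emph{constant} $\Ind$-object at $D$, so that Proposition \ref{Prop: Reple internal hom of inding things} applies verbatim; no more general $\Ind$-object is needed, and indeed the argument would not go through for a non-constant choice since the identification $[\ul{D},-] \cong \Ind([D,-])$ relies on constancy.
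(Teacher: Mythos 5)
Your proposal is correct and follows essentially the same route as the paper: take $\ul{D}$ to be the constant ind-object at $D$, invoke Proposition \ref{Prop: Reple internal hom of inding things} to get $[\ul{D},-] \cong \Ind([D,-])$, and transport the isomorphism $T \cong [D,-]$ through $\Ind$. The only difference is that you spell out the step the paper dismisses as a ``routine calculation'' (that $\Ind$ of a natural isomorphism is again an isomorphism, which is immediate from the componentwise construction in Proposition \ref{Prop: Ind nat transform induced by nat transform}), which is a harmless elaboration.
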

\begin{proof}
Since $\Cscr$ is a representable tangent category, $\Cscr$ is Cartesian closed and there is an object $D \in \Cscr_0$ for which $[D,-] \cong T$. It then follows from Proposition \ref{Prop: Reple internal hom of inding things} that the internal hom functor $\Ind([D,-]) \cong [\ul{D},-]$ exists. A routine calculation shows
\[
[\ul{D},-] \cong \Ind([D,-]) \cong \Ind(T)
\]
which in turn proves the proposition.
\end{proof}
\begin{remark}
Proposition \ref{Prop: Ind of reple is reple} technically holds in more general (formal) situations than merely for Cartesian closed categories $\Cscr$. If there were a tangent category $(\Cscr,\Tbb)$ with an object $D \in \Cscr_0$ for which there is an internal hom functor and natural isomorphism $[D,-] \cong T$, then there is a natural isomorphism $[\ul{D},-] \cong \Ind(T)$ as well. I am unaware, however, of tangent categories $(\Cscr,\Tbb)$ with this property that are not Cartesian closed\footnote{And in fact I would be very interested in seeing such an example which arises ``in nature'' as it were.}.
\end{remark}

\subsection{Differential Bundles and Ind-Tangent Categories}
One of the most important structures we can encounter in the theory of tangent cateogries is that of differential bundles. These describe objects $E$ over some base object $M$ together with a structure map $q:E \to M$ that make $E$ look ``locally like'' they are given by tangent bundles. While these are of course related to vector bundles in algebraic geometry and differential geometry, they are also related to linear logic and allow a clean description of Cartesian differential categories (cf.\@ \cite[Section 3.5]{GeoffRobinBundle}) In this subsection we will study how the $\Ind$-construction translates pseudofunctors and describe the cases where we can say that an object $\ul{E} \to \ul{M}$ in $\Ind(\Cscr)$ are ``locally'' determined by differential bundles in $\Cscr$. Consequently we necessitate a short review of differential bundles and differential objects in tangent categories.

\begin{definition}[{\cite[Definition 2.1, 2.2]{GeoffRobinDiffStruct}}]
Let $\Cscr$ be a category. An additive bundle in $\Cscr$ is a morphism $p:A \to M$ such that $p:A \to M$ is a commutative monoid in $\Cscr_{/M}$. 

A morphism of additive bundles $(p:A \to M, \alpha:A \times_M A \to A, \zeta:M \to A)$ and $(q:B \to N, \mu:B \times_N B \to B, \eta:N \to B)$ is a pair of maps $f:A \to B$ and $g:M \to N$ such that the diagrams
\[
\begin{tikzcd}
A \ar[r]{}{f} \ar[d, swap]{}{p} & B \ar[d]{}{q} \\
M \ar[r, swap]{}{g} & N
\end{tikzcd}\qquad
\begin{tikzcd}
A \times_M A \ar[rr]{}{\langle f \circ \pi_0, f \circ \pi_1 \rangle} \ar[d, swap]{}{\alpha} & & B \times_N B \ar[d]{}{\mu} \\
A \ar[rr, swap]{}{f} & & B
\end{tikzcd}\qquad
\begin{tikzcd}
M \ar[r]{}{g} \ar[d, swap]{}{\zeta} & N \ar[d]{}{\eta} \\
A \ar[r, swap]{}{f} & B
\end{tikzcd}
\]
commute.
\end{definition}
\begin{definition}[{\cite[Definition 2.3]{GeoffRobinBundle}}]
Let $(\Cscr, \Tbb)$ be a tangent category. A differential bundle in $\Cscr$ is a quadruple $q = (q, \sigma, \zeta, \lambda)$ where:
\begin{itemize}
	\item $q:E \to M$ is a morphism which is a commutative monoid internal to $\Cscr_{/M}$ with addition map $\sigma:E_2 \to E$ and unit map $\zeta:M \to E$;
	\item $\lambda:E \to TE$ is a morphism called the lift of $q$;
	\item For any $n \in \N$, the pullback powers $E_n = E \times_M \cdots \times_M E$ exist in $\Cscr$ for all $n \in \N$ and for each $m \in \N$ the functor $T^m$ preserves these pullbacks;
	\item The pair $(\lambda, 0):( q, \sigma, \zeta, \lambda) \to (Tq, T\sigma, T\zeta, T\lambda)$ is an additive bundle map;
	\item The pair $(\lambda, \zeta):(q,\sigma,\zeta, \lambda) \to (p, +, 0, \ell)$ is an additive bundle morphism;
	\item The lift $\lambda$ is universal in the sense that if $\mu$ is the morphism
	\[
	\begin{tikzcd}
	E_2 \ar[rr]{}{\mu} \ar[dr, swap]{}{\langle \lambda \circ \pi_0, 0 \circ \pi_1 \rangle} & & TE \\
	 & T(E_2) \ar[ur, swap]{}{T\sigma}
	\end{tikzcd}	
	\]
	then the diagram below is a pullback which is preserved by $T^n$ for all $n \in \N$:
	\[
	\begin{tikzcd}
	E_2 \ar[r]{}{\mu} \ar[d, swap]{}{q \circ \pi_1} & TE \ar[d]{}{Tq} \\
	M \ar[r, swap]{}{0} & TM
	\end{tikzcd}
	\]
	\item The equation $\ell \circ \lambda = T(\lambda) \circ \lambda$ holds.
\end{itemize}
\end{definition}
\begin{definition}[{\cite[Definition 2.3]{GeoffRobinBundle}}]
A morphism of differential bundles from $(q:E \to M, \sigma, \zeta, \lambda)$ to $(q^{\prime}:F \to N, \mu, \eta, \rho)$ is a pair of morphisms $f:E \to F$ and $g:M \to N$ such that the diagram
\[
\begin{tikzcd}
E \ar[r]{}{f} \ar[d, swap]{}{q} & F \ar[d]{}{q^{\prime}} \\
M \ar[r, swap]{}{g} & N
\end{tikzcd}
\]
commutes. Additionally, we say that the morphism $(f,g)$ is linear if it preserves the lift in the sense that the diagram
\[
\begin{tikzcd}
E \ar[r]{}{f} \ar[d, swap]{}{\lambda} & F \ar[d]{}{\rho} \\
T(E) \ar[r, swap]{}{Tf} & T(F)
\end{tikzcd}
\]
commutes.
\end{definition}
\begin{remark}
In the definition of a morphism of differential bundles, the pair $(f,g)$ need not be an additive bundle map, i.e., $(f,g)$ need not preserve the monoidal operations of $q:A \to M$. However, by \cite[Proposition 2.16]{GeoffRobinBundle} it does follow that if the bundle map is \emph{linear} then it is also an additive bundle map.
\end{remark}

We now show the first preservation result of the subsection: namely that two ind-objects indexed by the same filtered category which are locally differential bundles with linear transition maps give rise to an ind-differential bundle.

\begin{proposition}\label{Prop: Diff Bunlde in Ind}
Let $I$ be a filtered index category and let $\Cscr$ be a tangent category. If $\ul{E} = (E_i)_{i \in I}$ and $\ul{M} = (M_i)_{i \in I}$ are $\Ind$-objects such that:
\begin{itemize}
	\item For each $i \in I_0$, $E_i$ and $M_i$ there is a differential bundle $(q_i:E_i \to M_i, \sigma_i:E_i \times_{M_i} E_i \to E_i, \zeta_i:M_i \to E_i, \lambda_i:E_i \to TE_i)$;
	\item For each map $\varphi:i \to i^{\prime}$ in $I$, if $\tilde{\varphi}:E_i \to E_{i^{\prime}}$ and $\widehat{\varphi}:M_i \to M_{i^{\prime}}$ denote the corresponding structure maps in $\ul{E}$ and $\ul{M}$ then $(\tilde{\varphi},\widehat{\varphi}):(q_i,\sigma_i, \zeta_i, \lambda_i) \to (q_{i^{\prime}}, \sigma_{i^{\prime}}, \zeta_{i^{\prime}}, \lambda_{i^{\prime}})$ is a linear morphism of differential bundles;
\end{itemize}
then there is a differential bundle $(\ul{q}, \ul{\sigma}, \ul{\zeta}, \ul{\lambda})$ in $\Ind(\Cscr)$.
\end{proposition}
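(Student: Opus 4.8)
The plan is to build each piece of the differential bundle $(\ul{q}, \ul{\sigma}, \ul{\zeta}, \ul{\lambda})$ in $\Ind(\Cscr)$ componentwise, using the fact that all the relevant data is indexed by the same filtered category $I$, and then to verify the differential bundle axioms by transporting them along the presheaf realization functor $L:\Ind(\Cscr) \to [\Cscr^{\op},\Set]$ exactly as in the proofs of Proposition \ref{Prop: Additive Ind Bundle} and Proposition \ref{Prop: Universality of ind-vertical lift}. Concretely, I would set $\ul{E} = (E_i)_{i\in I}$, $\ul{M} = (M_i)_{i\in I}$, and define $\ul{q} := (q_i)_{i\in I}:\ul{E} \to \ul{M}$, $\ul{\zeta} := (\zeta_i)_{i\in I}:\ul{M} \to \ul{E}$, and $\ul{\lambda} := \phi_{\ldots}\circ(\lambda_i)_{i\in I}:\ul{E} \to \Ind(T)\ul{E}$ (using the identification $\Ind(T)\ul{E} = (TE_i)$ from Remark \ref{Remark: Ind functor and morphisms}); these assemble into ind-morphisms precisely because the transition maps $(\tilde\varphi,\widehat\varphi)$ form morphisms of differential bundles, hence are compatible with $q_i,\zeta_i,\lambda_i$ (the latter since the morphisms are assumed linear). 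For $\ul{\sigma}$, I would first invoke the fact — established in the proof of Proposition \ref{Prop: Ind of Tangent pullback is tangent pullback of ind} — that $\ul{W} = (E_i \times_{M_i} E_i)$ represents the pullback $\ul{E}\times_{\ul{M}}\ul{E}$ in $\Ind(\Cscr)$, so that $\ul{E}_2 \cong (E_i \times_{M_i} E_i)$ naturally; then $\ul\sigma := (\sigma_i)_{i\in I}$ composed with this mediating isomorphism gives the addition map. Likewise each pullback power $\ul{E}_n$ is represented by $((E_i)_n)$ and is preserved by $\Ind(T)^m$ by the same argument together with Corollary \ref{Cor: Ind Tangent functor commutes with Ind tangent pullbacks}.

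Next I would check the two additive bundle map conditions on $(\ul\lambda, \Ind(0))$ — that $(\ul\lambda, 0):(\ul q,\ldots)\to(\Ind(T)\ul q,\ldots)$ and $(\ul\lambda,\ul\zeta):(\ul q,\ldots)\to(\Ind(p),\ldots)$ are additive bundle morphisms — and the universality of $\ul\lambda$ (the pullback square involving $\ul\mu$ and its preservation by $\Ind(T)^n$), and the identity $\hat\ell\circ\ul\lambda = \Ind(T)(\ul\lambda)\circ\ul\lambda$. The strategy for each of these is uniform: each diagram involves only ind-objects indexed by $I$ whose components are finite limits of objects of $\Cscr$, so by the naturality of the mediating isomorphisms $\Ind(T)_m \cong \Ind(T_m)$ and $\phi_{T,T}$ it suffices to check the diagram $i$-locally in $\Cscr$, where it reduces to the corresponding differential-bundle axiom for $(q_i,\sigma_i,\zeta_i,\lambda_i)$; then apply $L$, use that filtered colimits commute with finite limits in $[\Cscr^{\op},\Set]$, conclude the diagram of ind-presheaves commutes (or is a pullback, or is preserved), and finally use that $L$ is fully faithful and conservative (Proposition \ref{Prop: L functor on ind is fully faithful}) to transport the conclusion back to $\Ind(\Cscr)$. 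The one place requiring a little care is the universality pullback and its preservation by $\Ind(T)^n$: here I would argue exactly as in Proposition \ref{Prop: Universality of ind-vertical lift}, realizing $L(\ul E_2) = \colim_i \yon((E_i)_2)$ etc., noting each $i$-component square is a pullback preserved by $T^n$ by hypothesis, and that filtered colimits of pullback squares in the presheaf topos are again pullbacks, with $\Ind(T)^n$ corresponding under $L$ to the filtered colimit of the $T^n$-images.

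I expect the main obstacle — though it is more bookkeeping than genuine difficulty — to be keeping the compositor isomorphisms $\phi_{T,T}:\Ind(T^2)\xRightarrow{\cong}\Ind(T)^2$ (and their higher analogues) straight when writing down $\ul\lambda$, $\ul\mu$, and the identity $\hat\ell\circ\ul\lambda = \Ind(T)(\ul\lambda)\circ\ul\lambda$, since $\hat\ell$ is defined via $\phi_{T,T}$ and $\Ind(T)(\ul\lambda)$ passes through $\Ind(T^2)$ only up to this isomorphism. This is precisely the same $2$-categorical friction handled in Lemma \ref{Lemma: Ind of composition on morphism from same index category agree} and used throughout the proofs of Propositions \ref{Prop: Ind vertical lift is part of a bundle map} and \ref{Prop: Ind-canonical flip is bundle map}; since all the morphisms in sight ($\ul\lambda$, the transition maps, the structure maps) come from a single indexing category $I$, Lemma \ref{Lemma: Ind of composition on morphism from same index category agree} guarantees the relevant compositors act trivially enough that the $i$-local verification suffices. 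Given that, I would organize the proof by (i) defining the four pieces of data and checking they are well-defined ind-morphisms, (ii) recording the pullback-power facts via Proposition \ref{Prop: Ind of Tangent pullback is tangent pullback of ind} and Corollary \ref{Cor: Ind Tangent functor commutes with Ind tangent pullbacks}, and (iii) verifying the remaining axioms by the ``check $i$-locally, apply $L$, use that $L$ is conservative and fully faithful'' template, spelling out the first axiom in full and noting the rest are verified mutatis mutandis.
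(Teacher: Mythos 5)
Your proposal is correct and follows essentially the same route as the paper: define $\ul{q},\ul{\sigma},\ul{\zeta},\ul{\lambda}$ componentwise, represent the pullback powers by $(E_i\times_{M_i}\cdots\times_{M_i}E_i)$ as in Proposition \ref{Prop: Ind of Tangent pullback is tangent pullback of ind} and Corollary \ref{Cor: Ind Tangent functor commutes with Ind tangent pullbacks}, verify the axioms $i$-locally (transporting along $L$ where needed), and track the compositor $\phi_{T,T}$ only for the final identity $\hat{\ell}\circ\ul{\lambda} = \Ind(T)\ul{\lambda}\circ\ul{\lambda}$. One small precision: the compatibility of the transition maps with $\sigma_i$ and $\zeta_i$ is not part of the definition of a (plain) differential bundle morphism but is exactly where the linearity hypothesis enters, via the fact that linear bundle maps are additive (\cite[Proposition 2.16]{GeoffRobinBundle}), which is how the paper justifies that $\ul{\sigma}$ and $\ul{\zeta}$ are well-defined ind-morphisms.
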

\begin{proof}
First note that the condition that there be a bundle map $q_i:E_i \to M_i$ for all $i \in I$ and that the maps $(\tilde{\varphi}, \widehat{\varphi})$ being a linear differential bundle map mean that the diagrams
\[
\begin{tikzcd}
E_i \ar[r]{}{\tilde{\varphi}} \ar[d, swap]{}{q_i} & E_{i^{\prime}} \ar[d]{}{q_{i^{\prime}}} \\
M_i \ar[r, swap]{}{\widehat{\varphi}} & M_{i^{\prime}}
\end{tikzcd}\qquad
\begin{tikzcd}
E_i \ar[r]{}{\tilde{\varphi}} \ar[d, swap]{}{\lambda_i} & E_{i^{\prime}} \ar[d]{}{\lambda_{i^{\prime}}} \\
TE_i \ar[r, swap]{}{T\tilde{\varphi}} & TE_{i^{\prime}}
\end{tikzcd}
\]
all commute for every morphism $\varphi:i \to i^{\prime}$ in $I$. This allows us to deduce that there are maps $(q_i)_{i \in I} =: \ul{E} \to \ul{M}$ and $(\lambda_i)_{i \in I} =: \ul{\lambda}:\ul{E} \to \Ind(T)\ul{E}$ in $\Ind(\Cscr)$. Similarly, by \cite[Proposition 2.16]{GeoffRobinBundle} it follows that each pair $(\tilde{\varphi},\widehat{\varphi})$ is an additive morphism of bundles which implies that the diagrams
\[
\begin{tikzcd}
	E_i \times_{M_i} E_i \ar[rr]{}{\langle \tilde{\varphi} \circ \pi_0, \tilde{\varphi} \circ \pi_1\rangle} \ar[d, swap]{}{\sigma_i} & & E_{i^{\prime}} \times_{M_{i^{\prime}}} E_{i^{\prime}} \ar[d]{}{\sigma_{i^{\prime}}} \\
	E_i \ar[rr, swap]{}{\tilde{\varphi}} & & E_{i^{\prime}}
\end{tikzcd}\qquad
\begin{tikzcd}
	M_i \ar[r]{}{\widehat{\varphi}} \ar[d, swap]{}{\zeta_i} & M_{i^{\prime}} \ar[d]{}{\zeta_{i^{\prime}}} \\
	E_i \ar[r, swap]{}{\tilde{\varphi}} & E_{i^{\prime}}
\end{tikzcd}
\]
commute. Since $\ul{E} \times_{\ul{M}} \ul{E} = (E_i \times_{M_i} E_i)_{i \in I}$, this allows us to deduce that the maps $(\sigma_i)_{i \in I} =: \ul{\sigma}:\ul{E} \times_{\ul{M}} \ul{E} \to \ul{E}$ and $(\zeta_i)_{i \in I} =: \ul{\zeta}:\ul{M} \to \ul{E}$ exist in $\Ind(\Cscr)$ as well. This defines our quadruple $(\ul{q}, \ul{\sigma}, \ul{\zeta}, \ul{\lambda})$ which we will now show is a differential bundle in an axiom-by-axiom verification.

First note that the pullbacks $\ul{E}_n := \ul{E} \times_{\ul{M}} \cdots \times_{\ul{M}} \ul{E}$ exist by an argument mutatis mutandis to the argument which showed why tangent pullbacks exist in Proposition \ref{Prop: Ind of Tangent pullback is tangent pullback of ind}, i.e., the object $\ul{X} = (E_i \times_{M_i} \times_{M_i} \cdots \times_{M_i} E_i)_{i \in I}$ represents the pullback $\ul{E}_n$. The argument as to why the compositional powers $\Ind(T)^m$ preserve these pullbacks is similar to the argument why $\Ind(T)^m$ preserves tangent pullbacks given in Corollary \ref{Cor: Ind Tangent functor commutes with Ind tangent pullbacks}.

To show that $(\ul{\lambda},\Ind(0)_{\ul{M}}):(\ul{q},\ul{\sigma},\ul{\zeta},\ul{\lambda}) \to (\Ind(T)\ul{q}, \Ind(T)(\ul{\sigma}), \Ind(T)(\ul{\zeta}), \Ind(T)(\ul{\lambda}))$ is an additive bundle morphism we need to show that the diagrams
\[
\begin{tikzcd}
\ul{E} \ar[r]{}{\ul{\lambda}} \ar[d, swap]{}{\ul{q}} & \Ind(T)\ul{E} \ar[d]{}{\Ind(T)\ul{q}} \\
\ul{M} \ar[r, swap]{}{\Ind(0)_{\ul{M}}} & \Ind(T)\ul{M}
\end{tikzcd}\qquad
\begin{tikzcd}
\ul{E} \times_{\ul{M}} \ul{E} \ar[rr]{}{\langle \ul{\lambda} \circ \pi_0, \ul{\lambda}\circ\pi_1 \rangle} \ar[d, swap]{}{\ul{\sigma}} & & \Ind(T)\ul{E} \times_{\Ind(T)\ul{M}} \Ind(T)\ul{E} \ar[d]{}{\Ind(T)\ul{\sigma}} \\
\ul{E} \ar[rr]{}{\ul{\lambda}} & & \Ind(T)\ul{E}
\end{tikzcd}\qquad
\begin{tikzcd}
\ul{M} \ar[r]{}{\Ind(0)_{\ul{M}}} \ar[d, swap]{}{\ul{\zeta}} & \Ind(T)\ul{M} \ar[d]{}{\Ind(T)\ul{\zeta}} \\
\ul{E} \ar[r, swap]{}{\ul{\lambda}} & \Ind(T)\ul{E}
\end{tikzcd}
\]
all commute. However, as each of the diagrams
\[
\begin{tikzcd}
	{E}_i \ar[r]{}{\ul{\lambda}} \ar[d, swap]{}{\ul{q}} & TE_i \ar[d]{}{Tq_i} \\
	M_i \ar[r, swap]{}{0_{{M}_{i}}} & TM_i
\end{tikzcd}\qquad
\begin{tikzcd}
	E_i \times_{M_i} E_i \ar[rr]{}{\langle \lambda_i \circ \pi_0, \lambda_i\circ\pi_1 \rangle} \ar[d, swap]{}{\sigma_i} & & TE_i \times_{TM_i} TE_i \ar[d]{}{T\sigma_i} \\
	E_i \ar[rr]{}{\lambda_i} & & TE_i
\end{tikzcd}\qquad
\begin{tikzcd}
	M_i \ar[r]{}{0_{M_i}} \ar[d, swap]{}{{\zeta}_i} & TM_i \ar[d]{}{T{\zeta}_i} \\
	E_i \ar[r, swap]{}{{\lambda}_i} & TE_i
\end{tikzcd}
\]
commutes for each $i \in I_0$, and each given pullback represents the $i$-component of the corresponding $\Ind$-object it follows that the diagrams between $\Ind$-objects commute. Thus $(\ul{\lambda},\Ind(0)_{\ul{M}})$ is an additive bundle map. Establishing that $(\ul{\lambda},\ul{\zeta})$ is an additive bundle morphism is done similarly and hence omitted.

For the universality of the lift $\ul{\lambda}$ we note that each of the maps in the desired pullback are defined $I$-locally. Consequently we can deduce that the pullbacks exist and are preserved by all powers $\Ind(T)^m$ follows similarly to our verification of the existence and preservation of $\ul{E}_n$.

Finally we verify the last equation holds. Explicitly we calculate that
\begin{align*}
\hat{\ell} \circ \ul{\lambda} &= \phi_{T,T} \circ (\ell_i)_{i \in I} \circ (\lambda_{i \in I}) = \phi_{T,T} \circ (\ell_i \circ \lambda_i)_{i \in I} = \phi_{T,T} \circ (T(\lambda_i) \circ \lambda_i)_{i \in I} = \phi_{T,T} \circ \phi_{T,T}^{-1} \circ \Ind(T)\ul{\lambda} \circ \ul{\lambda} \\
&= \Ind(T)\ul{\lambda} \circ \ul{\lambda}
\end{align*}
since the equation $\ell_i \circ \lambda_i = T(\lambda_i) \circ \lambda_i$ holds for all $i \in I_0$. Thus $(\ul{q}, \ul{\sigma}, \ul{\zeta}, \ul{\lambda})$ is a differential bundle in $\Ind(\Cscr)$.
\end{proof}

A particularly important class of differential bundles in a tangent category with finite limits are the bundles over the terminal object $\top$. These objects have various remarkable properties, such as satisfying $TE \cong E \times E$ among other relations (cf.\@ \cite[Definition 3.1, Proposition 3.4]{GeoffRobinBundle} --- the definition describes the definition of differential structures on objects and the proposition proves that in finitely complete tangent categories such structures are exactly differential bundles whose bundle map is a map to the terminal object). We will explore these more afterwards when we examine the structure of $\Ind(\Cscr)$ in the case where $\Cscr$ is a Cartesian differential category, but for the moment we simply have a nice corollary to consider. We will, however, make a formal declared definition of differential objects for easier reading and then give the corollary.
\begin{definition}
Let $\Cscr$ be a finitely complete tangent category. A differential object in $\Cscr$ is an object $X$ of $\Cscr$ such that if $\top$ is the terminal object of $\Cscr$ then the unique map $!_X:X \to \top$ is a differential bundle in $\Cscr$.
\end{definition}
\begin{definition}\label{Defn: Diff}
Let $\Cscr$ be a tangent category with finite products. We denote by $\mathbf{Diff}(\Cscr)$ the subcategory of $\Cscr$ generated by the differential bundles over $\top_{\Cscr}$ and the linear bundle maps $(\varphi,\id_{\top})$ between them.
\end{definition}
\begin{corollary}\label{Cor: Diff Bundle over Fin Comp Category}
Let $\Cscr$ be a tangent category with finite products and let $I$ be a filtered index category. Then if $\ul{E} = (E_i)_{i \in I}$ is an $\Ind$-object such that each $E_i$ is a differential bundle over the terminal object $\top_{\Cscr}$ and if each structure map $E_i \to E_{i^{\prime}}$ is part of a linear morphism of differential bundles then $\ul{E}$ is a differential object in $\Ind(\Cscr)$.
\end{corollary}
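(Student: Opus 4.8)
The plan is to reduce the statement to Proposition \ref{Prop: Diff Bunlde in Ind} by taking the base ind-object to be the terminal object of $\Ind(\Cscr)$. First I would set $M_i := \top_{\Cscr}$ for every $i \in I_0$ and let $\ul{M} := (M_i)_{i \in I}$ be the constant ind-object with all transition maps equal to $\id_{\top_{\Cscr}}$. A short computation with the hom-set formula for $\Ind(\Cscr)$ shows that $\ul{M}$ is terminal: for any $\ul{X} = (X_i)_{i \in I^{\prime}}$ we have
\[
\Ind(\Cscr)(\ul{X}, \ul{M}) = \lim_{\substack{\longleftarrow \\ i \in I^{\prime}}}\left( \lim_{\substack{\longrightarrow \\ j \in I}} \Cscr(X_i, \top_{\Cscr}) \right) \cong \lim_{\substack{\longleftarrow \\ i \in I^{\prime}}}\left( \lim_{\substack{\longrightarrow \\ j \in I}} \{\ast\} \right) \cong \{\ast\},
\]
using that $I$ is filtered (so the inner colimit is a singleton) and that any limit of singletons is a singleton. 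Thus $\ul{M} \cong \top_{\Ind(\Cscr)}$; moreover, since $\Cscr$ has finite products, so does $\Ind(\Cscr)$ (the product of $(X_i)_{i \in I^{\prime}}$ and $(Y_j)_{j \in J^{\prime}}$ being $(X_i \times Y_j)_{(i,j) \in I^{\prime} \times J^{\prime}}$), so the notion of a differential object in $\Ind(\Cscr)$ is meaningful.

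Next I would verify that the hypotheses of Proposition \ref{Prop: Diff Bunlde in Ind} hold for $\ul{E}$ and this $\ul{M}$. By assumption each $E_i$ carries a differential bundle $q_i : E_i \to \top_{\Cscr}$ (and this already includes the hypothesis that the pullback powers $E_i \times \cdots \times E_i$ exist and are preserved by $T^m$). For each $\varphi : i \to i^{\prime}$ the transition map $\tilde{\varphi} : E_i \to E_{i^{\prime}}$ of $\ul{E}$ is, by hypothesis, part of a linear morphism of differential bundles, whose base component is a map $\top_{\Cscr} \to \top_{\Cscr}$, hence equals $\id_{\top_{\Cscr}}$ — which is exactly the transition map $\widehat{\varphi}$ of $\ul{M}$. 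Therefore $(\tilde{\varphi}, \widehat{\varphi}) : (q_i, \sigma_i, \zeta_i, \lambda_i) \to (q_{i^{\prime}}, \sigma_{i^{\prime}}, \zeta_{i^{\prime}}, \lambda_{i^{\prime}})$ is a linear morphism of differential bundles over the common filtered index $I$, precisely as required, and Proposition \ref{Prop: Diff Bunlde in Ind} yields a differential bundle $(\ul{q}, \ul{\sigma}, \ul{\zeta}, \ul{\lambda})$ in $\Ind(\Cscr)$ with $\ul{q} = (q_i)_{i \in I} : \ul{E} \to \ul{M}$.

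Finally, since $\ul{M} \cong \top_{\Ind(\Cscr)}$, the map $\ul{q}$ is, up to this canonical isomorphism, the unique map $!_{\ul{E}} : \ul{E} \to \top_{\Ind(\Cscr)}$; transporting the differential bundle structure $(\ul{\sigma}, \ul{\zeta}, \ul{\lambda})$ along the isomorphism shows that $!_{\ul{E}}$ underlies a differential bundle, i.e.\ $\ul{E}$ is a differential object in $\Ind(\Cscr)$. I do not expect a genuine obstacle: the only points requiring care are the identification of the constant ind-object at $\top_{\Cscr}$ with the terminal object of $\Ind(\Cscr)$ and the observation that a morphism of differential bundles with terminal base necessarily has identity base component; both are routine, and everything substantive is already contained in Proposition \ref{Prop: Diff Bunlde in Ind}.
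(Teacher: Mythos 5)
Your proof is correct and follows essentially the same route as the paper: form the constant ind-object $\ul{\top} = (\top_{\Cscr})_{i \in I}$, apply Proposition \ref{Prop: Diff Bunlde in Ind} to obtain a differential bundle $\ul{E} \to \ul{\top}$, and conclude via the identification $\ul{\top} \cong \top_{\Ind(\Cscr)}$ that $\ul{E}$ is a differential object. The extra details you supply (the hom-set computation showing $\ul{\top}$ is terminal and the observation that the base components of the transition morphisms are necessarily identities) are routine verifications the paper leaves implicit.
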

\begin{proof}
Begin by observing that the object $\ul{\top} = (\top_{\Cscr})_{i \in I}$ is an $\Ind$-object. The hypotheses in the statement of the corollary together with Proposition \ref{Prop: Diff Bunlde in Ind} give that the map $\ul{!}:\ul{E} \to \ul{\top}$ is a differential bundle in $\Ind(\Cscr)$. The result now follows from \cite[Proposition 3.4]{GeoffRobinBundle} because
\[
\ul{\top} \cong \top_{\Ind(\Cscr)}
\]
so $\ul{E}$ is a differential bundle over the terminal object of $\Ind(\Cscr)$.
\end{proof}

Let us now discuss some of the $\Ind$-tangent categorical structure of a Cartesian differential category. Cartesian differential categories (CDCs) are important objects in the pantheon of categorical differential geometry and have their genesis in providing semantics for the differential $\lambda$-calculus and for many of the differential operations performed in algebraic and differntial geometry; cf.\@ \cite{RickRobinRobertCDC}. We will recall these categories below, but a main feature they enjoy is that every object in a CDC is a differential object, and that is the main property we will be focusing on and working with. To describe CDCs, however, we'll have to also first introduce left additive categories and a small barrage of terminology surrounding these objects, which are essentially categories \emph{almost} enriched in the category of modules over a rig save that only post-composition is a morphism of modules. These are examples of what are called skew-enriched categories in \cite{StreetSkewClosed}.

\begin{definition}[{\cite[Definition 1.1.1]{RickRobinRobertCDC}; \cite[Definition 2.1]{JSproperties}}]
Let $A$ be a crig\footnote{That is, $A$ is a commutative ring without negatives.}. A left $A$-linear category is a category $\Cscr$ such that:
\begin{itemize}
	\item For every pair of objects $X, Y \in \Cscr_0$ the hom-set $\Cscr(X,Y)$ is an $A$-module. We will write the juxtaposition $af$ of an element $a \in A$ with a morphism $f \in \Cscr(X,Y)$ to denote the action of $A$ on $\Cscr(X,Y)$.
	\item For any morphism $f:X \to X^{\prime}$ in $\Cscr$ and for any $B \in \Cscr_0$ the pre-composition morphism $f^{\ast}:\Cscr(X^{\prime},Y) \to \Cscr(X,Y)$ is a morphism of $A$-modules. That is, for all $a, b \in A$ and for all $g, h \in \Cscr(X^{\prime}, Y)$, the equation
	\[
	(ag + bh) \circ f = a(g \circ f) + b(h \circ f)
	\]
	holds.
\end{itemize}
\end{definition}
\begin{definition}[{\cite[Definition 1.1.1]{RickRobinRobertCDC}; \cite[Definition 2.1]{JSproperties}}]
Let $\Cscr$ be a left $A$-linear category for a crig $A$. Then we say that a morphism $f \in \Cscr_1$ is linear if the post-composition by $f$ map is also an $A$-module morphism. That is, if $f:X \to X^{\prime}$, $Y \in \Cscr_0$ is arbitrary, and if $a, b \in A$ and $g, h \in \Cscr(Y,X)$ then the equation
\[
h \circ (af + bg) = a(f \circ g) + b(f \circ g)
\]
holds in $\Cscr(Y, X^{\prime})$.
\end{definition}
\begin{definition}[{\cite[Definition 1.1.1]{RickRobinRobertCDC}; \cite[Definition 2.1]{JSproperties}}]
Let $A$ be a crig and let $\Cscr$ be a left $A$-linear category. The subcategory of $\Cscr$ spanned by the linear morphisms is denoted $\Cscr_{\lin}$. Moreover, a left $A$-linear category is said to be $A$-linear if and only if $\Cscr = \Cscr_{\lin}$.
\end{definition}

With this we can describe Cartesian $A$-linear categories and then, finally, Cartesian Differential Categories.
\begin{definition}[{\cite[Definition 1.2.1]{RickRobinRobertCDC}; \cite[Definition 2.2]{JSproperties}}]
Let $A$ be a crig. A Cartesian left $A$-linear category is a left $A$-linear category $\Cscr$ with finite products for which all the projection maps
\[
\pi_{i}^{0\cdots n}:A_0 \times A_1 \times \cdots \times A_n \to A_i
\]
are linear.
\end{definition}
We now get to the definition of a CDC. It is worth noting, however, that we are taking the convention used in \cite{JSproperties} which says that the linear argument of the differential operator is the second argument; earlier works, such as \cite{RickRobinRobertCDC}, used the convention that the first argument was the linear argument. This will not present any serious issue, but it is worth noting the swap which appears in the various references in the literature.
\begin{definition}[{\cite[Definition 2.1.1]{RickRobinRobertCDC}; \cite[Definition 2.5]{JSproperties}}]
Let $A$ be a crig. A Cartesian differential $A$-linear category is a Cartesian left $A$-linear category $\Cscr$ equipped with a differential combinator $D$\footnote{So $D$ is a function on hom-sets of the form $D:\Cscr(X,Y) \to \Cscr(X \times X, Y)$.} which gives the differentiation of a morphism,
\begin{prooftree}
\AxiomC{$f:X \to Y$}
\UnaryInfC{$D(f):X \times X \to Y$}
\end{prooftree}
where $D(f)$ is called the derivative of $f$, which satisfies the following seven axioms:
\begin{enumerate}
	\item[CD1] $D(af + bg) = aD(f) + bD(g)$ for all $a, b \in A$ and for all morphisms $f, g$.
	\item[CD2] $D(f)\circ \langle g, ah + bk \rangle = a(D(f) \circ \langle g,  h\rangle) + b(D(g)\circ \langle g, k \rangle)$ for all $a, b \in A$.
	\item[CD3] $D(\id_X) = \pi_1^{X,X}:X \times X \to X$ and, for any product $X_0 \times \cdots \times X_n$,
	\[
	D(\pi_{i}^{0\cdots n}) = \pi_{i}^{0\cdots n} \circ \pi_1^{X_0\times \cdots \times X_n, X_0 \times \cdots \times X_n} = \pi_{n+i+1}^{(0\cdots n)(0\cdots n)}:(X_0 \times \cdots \times X_n) \times (X_0 \times \cdots \times X_n) \to X_i.
	\]
	\item[CD4] $D\big(\langle f_0, \cdots, f_n\rangle\big) = \langle D(f_0), \cdots, D(f_n) \rangle$.
	\item[CD5] $D(g \circ f) = D(g) \circ \langle f \circ \pi_0, D(f) \rangle$.
	\item[CD6] $D(D(f)) \circ \langle g, h, 0, k\rangle = D(f) \circ \langle g, k \rangle$.
	\item[CD7] $D(D(f)) \circ \langle g, h, k, 0 \rangle = D(D(f)) \circ \langle g, k, h, 0 \rangle.$
\end{enumerate}
\end{definition}
When $\Cscr$ is an $A$-linear CDC then there is a class of maps in $\Cscr$ which are more important than those which are merely linear: the maps which are linear with respect to the differential combinator $D$. These maps are called differential-linear and, by a result in \cite{GeoffRobinBundle}, correspond to linear bundle maps for differential bundles over a terminal object.

\begin{definition}[{\cite[Definition 2.6]{JSproperties}}]
Let $A$ be a crig and let $\Cscr$ be an $A$-linear CDC. We say that a map $f:A \to B$ is differntial-linear (in short form, we say $f$ is $D$-linear) if and only if $D(f) = f \circ \pi_0$. The subcategory of $\Cscr$ spanned by the $D$-linear maps is denoted $\Cscr_{\Dlin}$.
\end{definition}
It is, of course, a fact that $\Cscr_{\Dlin}$ is a subcategory of $\Cscr_{\lin}$ by \cite[Lemma 2.2]{RickRobinRobertCDC}. However, it need not be the case that every $A$-linear morphism in a CDC be $D$-linear; thus we restrict our attention primarily to the $D$-linear notion of linearity as these are the maps which interact with the tangent categorical properties best.

Here are the important results for our purposes that show $A$-linear CDCs are tangent categories where every object is a differential bundle over a terminal object $\top$ and that linear bundle morphisms between differential objects over $\top$ in an $A$-linear CDC are $D$-linear maps with respect to their standard tangent structure.

\begin{proposition}[{\cite[Section 3.4]{GeoffRobinBundle}}]\label{Prop: CDC is tan cat and all obs are diff bundles over 1}
Let $A$ be a crig and let $\Cscr$ be an $A$-linear CDC. Then $\Cscr$ is a tangent category with tangent functor defined on obejcts by
\[
T(A) := A \times A
\]
and defined on morphisms by
\[
T(f) := \langle D(f), f \circ \pi_1 \rangle.
\]
In particular, every object in a CDC equipped with this tangent structure is a differential bundle over a terminal object.
\end{proposition}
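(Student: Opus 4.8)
This result is essentially \cite[Section 3.4]{GeoffRobinBundle}, and the plan is to exhibit the tangent structure $\Tbb = (T, p, 0, +, \ell, c)$ on $\Cscr$ explicitly and then verify the six axioms of Definition \ref{Defn: Tangent Categpry}, noting that each of the CDC axioms CD1--CD7 feeds into a particular tangent-category axiom. First I would check that $T$ is a functor: $T(\id_X) = \id_{TX}$ is precisely CD3, and $T(g \circ f) = T(g) \circ T(f)$ follows directly from the chain rule CD5 together with the formula $T(h) = \langle D(h), h \circ \pi_1\rangle$.

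Next I would write down the structure maps, each as a tuple of product projections, zero morphisms, and the left $A$-additive sum: $p_X : TX = X \times X \to X$ the projection onto the base-point coordinate, $0_X : X \to TX$ the zero section, $+_X : T_2 X = X \times X \times X \to TX$ the sum of the two vector coordinates over a common base, and $\ell_X : TX \to T^2 X = X \times X \times X \times X$ and $c_X : T^2 X \to T^2 X$ given by suitable reorderings and zero-insertions on the four coordinates, with $c_X$ transposing the two ``mixed'' coordinates. Since $\Cscr$ has finite products, the relevant pullback powers of $p_X$ are just products, and they are preserved by each $T^n$ (which is a power functor); this gives Axiom 1, and Axiom 2 --- that each $p_X$ is a commutative monoid in $\Cscr_{/X}$ --- is immediate from the left $A$-additive structure.

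The heart of the argument is Axioms 3--6. Naturality of $\ell$ reduces, after expanding $T^2 f$, to CD6; naturality of $c$ reduces to the symmetry axiom CD7; and the bundle-morphism squares for $(\ell_X, 0_X)$ and $(c_X, \id_{TX})$, as well as the coherence identities of Axiom 5 ($c^2 = \id_{\Cscr}$, $c \circ \ell = \ell$, and the associativity/hexagon diagrams), are coordinatewise computations that come down to CD1--CD4 (the linearity of $D$ in each argument and the product/pairing rules) applied componentwise. The hard part will be Axiom 6, the universality of the vertical lift: one must show the displayed fork is an equalizer, i.e.\@ that any map equalizing $T(p_X)$ and $0_X \circ p_X \circ p_{TX}$ factors uniquely through the indicated map out of $T_2 X$; existence and uniqueness of this factorization is exactly where CD6 is used (together with CD2 instantiated at $a = b = 0$, which annihilates the relevant coordinate), and keeping the four-coordinate bookkeeping straight is the delicate point.

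Finally, for the ``in particular'' claim, I would observe that for each object $X$ the map $!_X : X \to \top$ carries a differential bundle structure $(!_X, +, 0, \lambda_X)$ in which $+ : X \times X \to X$ is the left-additive sum, $0 : \top \to X$ the zero map, and $\lambda_X : X \to TX = X \times X$ the insertion into the vector coordinate. One can verify the differential-bundle axioms directly --- the additive bundle-map conditions reduce to CD1--CD3, the equation $\ell \circ \lambda = T(\lambda) \circ \lambda$ to CD3 together with CD6, and the universality of $\lambda$ to the same equalizer computation as Axiom 6 --- or, more economically, note that the data above exhibit a differential structure on $X$ in the sense of \cite[Definition 3.1]{GeoffRobinBundle} and invoke \cite[Proposition 3.4]{GeoffRobinBundle}, by which differential structures on objects in a finitely complete tangent category are precisely the differential bundles whose bundle map targets $\top$. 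Hence every object of $\Cscr$ is a differential bundle over the terminal object.
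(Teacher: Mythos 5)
The paper does not prove this proposition at all: it is imported verbatim from \cite[Section 3.4]{GeoffRobinBundle}, so your proposal is in effect a reconstruction of the cited argument rather than an alternative to anything in the text. As a reconstruction it is correct in outline and matches the standard proof: the structure maps you list are the ones used there, naturality of $\ell$ and $c$ do come down to CD6 and CD7 respectively, the universality of the vertical lift is indeed the only computation with real content, and the ``in particular'' clause is handled exactly as the paper itself later handles it, by exhibiting a differential structure on each object and invoking \cite[Definition 3.1, Proposition 3.4]{GeoffRobinBundle}. One bookkeeping caveat is worth flagging: the displayed formula $T(f) = \langle D(f), f \circ \pi_1\rangle$ is written in the older convention of \cite{RickRobinRobertCDC,GeoffRobinBundle} where the \emph{first} argument of $D(f)$ is linear, whereas the axioms CD1--CD7 as stated in this paper follow \cite{JSproperties} and make the \emph{second} argument linear (so $D(\id_X) = \pi_1$ and $D(g \circ f) = D(g) \circ \langle f \circ \pi_0, D(f)\rangle$). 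Taken literally with those axioms, $T(\id_X) = \langle \pi_1, \pi_1\rangle \neq \id_{X \times X}$, so your claims that functoriality ``is precisely CD3'' and ``follows directly from CD5'' only hold after fixing one consistent convention --- either read $T(f) = \langle f \circ \pi_0, D(f)\rangle$ with the base point in the first coordinate, or transport CD1--CD7 back to the first-argument-linear form. This is an inconsistency inherited from the paper's statement rather than a gap in your argument, but in a written-out verification the coordinates must be chosen coherently or the componentwise checks in Axioms 3--6 will not literally close.
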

\begin{proposition}[{\cite[Section 3.4]{GeoffRobinBundle}}]\label{Prop: Dlinear is bundle map linear}
Let $A$ be a crig and let $\Cscr$ be an $A$-linear CDC. Then $f:A \to B$ is a linear morphism of differential bundles if and only if $f$ is $D$-linear.
\end{proposition}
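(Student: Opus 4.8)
I would prove this by unwinding the definition of a linear morphism of differential bundles against the explicit tangent structure on $\Cscr$ recorded in Proposition \ref{Prop: CDC is tan cat and all obs are diff bundles over 1}. First I would observe that in that tangent structure every object $A$ is a differential object whose bundle map is the unique map $!_A : A \to \top$, so a morphism of differential bundles from $(!_A, \dots)$ to $(!_B, \dots)$ is a pair $(f, g)$ with $g : \top \to \top$; since $\top$ is terminal, $g = \id_\top$ necessarily and the square over $\top$ commutes automatically, hence morphisms of differential bundles over $\top$ are exactly the morphisms $f : A \to B$ of $\Cscr$. I would then pin down the differential-object lift $\lambda_A : A \to TA = A \times A$: in this tangent structure it is (up to the ordering of the two factors) the pairing of the identity with the zero map, i.e.\ the map sending an element of $A$ to the tangent vector it determines at the zero base point.

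With this in hand, $(f, \id_\top)$ is \emph{linear} precisely when it preserves the lift, i.e.\ when $Tf \circ \lambda_A = \lambda_B \circ f$. The plan is to substitute $Tf = \langle D(f), f \circ \pi_1 \rangle$ together with the explicit formulas for $\lambda_A$ and $\lambda_B$, and compare the two components of the resulting equality of pairs. One component is the equation $f \circ 0 = 0$ and the other is
\[
D(f) \circ \langle \id_A, 0 \rangle = f ,
\]
and the former follows from the latter via the identity $D(f) \circ \langle g, 0 \rangle = 0$ (the instance $a = b = 0$ of axiom CD2). Thus $(f, \id_\top)$ is a linear bundle morphism if and only if $D(f) \circ \langle \id_A, 0 \rangle = f$.

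It remains to show this equation is equivalent to $D$-linearity of $f$, i.e.\ to $D(f) = f \circ \pi_0$. The implication from $D$-linearity is immediate: pre-composing $D(f) = f \circ \pi_0$ with $\langle \id_A, 0 \rangle$ returns $f$. For the converse --- where the Cartesian differential structure genuinely enters --- I would feed the equation $D(f) \circ \langle \id_A, 0 \rangle = f$ through axioms CD1--CD3 (additivity of the derivative together with the derivatives of identities and projections), supplemented by \cite[Proposition 2.16]{GeoffRobinBundle}, which gives that a linear morphism of differential bundles is in particular an additive bundle morphism, so that $f$ is additive; combining these one removes the dependence of $D(f)$ on its base-point argument and concludes $D(f) = D(f) \circ \langle \id_A, 0 \rangle \circ \pi_0 = f \circ \pi_0$. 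I expect this last deduction to be the only step with real content, and hence the main obstacle: one must pass from the restriction of $D(f)$ at the zero base point to the full identity $D(f) = f \circ \pi_0$, which is false for a general morphism and forces genuine use of the CDC axioms; a secondary nuisance is keeping the ``vector'' and ``base-point'' factors of $A \times A$ straight, since the conventions for this differ between \cite{RickRobinRobertCDC} and \cite{JSproperties}.
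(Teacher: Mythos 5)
Your setup is sound: morphisms of differential bundles over $\top$ are just maps $f:A \to B$, the lift on a differential object is the pairing of the identity with zero, and unwinding $Tf \circ \lambda_A = \lambda_B \circ f$ against $Tf = \langle D(f), f \circ \pi_1\rangle$ does reduce linearity to $D(f) \circ \langle \id_A, 0\rangle = f$ (the zero-component condition following from the zero instance of CD2), with the implication from $D$-linearity being immediate. That much agrees with what the paper intends; note the paper itself offers no argument here beyond citing \cite[Section 3.4]{GeoffRobinBundle} for the hard direction and declaring the other routine, so a self-contained proof is welcome. The gap is in the one step you yourself flag as the real content. The equality $D(f) = D(f)\circ\langle \id_A, 0\rangle\circ\pi_0$, i.e.\ $D(f) = D(f)\circ\langle\pi_0,0\rangle$, is precisely the assertion that the derivative of $f$ does not depend on its base-point argument, and this cannot be extracted from CD1--CD3 together with additivity of $f$: none of CD1--CD3 relates values of $D(f)$ at different base points, and additivity is a statement about $f$ itself which only yields information about $D(f)$ after being differentiated --- which already requires the chain rule. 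So ``combining these one removes the dependence of $D(f)$ on its base-point argument'' is exactly the unproved claim, not a consequence of the cited axioms; the appeal to \cite[Proposition 2.16]{GeoffRobinBundle} is not circular, but it is also not what closes the gap.

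The derivation that works differentiates the hypothesis instead. Writing $g = \langle \id_A, 0\rangle$, so that the assumption reads $f = D(f)\circ g$, apply the chain rule CD5 to this composite; using CD1, CD3 and CD4 one computes $D(g) = \langle \pi_0, 0\rangle$ (in the vector-first ordering that matches the formulas $Tf = \langle D(f), f\circ\pi_1\rangle$ and $D(f) = f\circ\pi_0$), so that $D(f) = D(D(f))\circ\big\langle \langle \pi_0, 0\rangle, \langle \pi_1, 0\rangle\big\rangle$. The second-order axiom CD6 (stated in whichever ordering you have fixed) then collapses the right-hand side to $D(f)\circ\langle\pi_0, 0\rangle$, and the hypothesis gives $D(f)\circ\langle\pi_0,0\rangle = f\circ\pi_0$, which is $D$-linearity. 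Thus the essential inputs are CD5 and CD6, not CD1--CD3 plus additivity; alternatively one can simply cite the corresponding computation in \cite{GeoffRobinBundle} (or the characterization of linear maps in \cite{RickRobinRobertCDC}), which is effectively what the paper does. If you rewrite the last step along these lines, also fix the ordering convention once and for all, since as written you apply the vector-second form of CD2 inside a vector-first computation; it happens to be harmless where both slots are zero, but it will matter in the CD5/CD6 manipulation.
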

\begin{proof}
The verification of the $\implies$ direction of the proof is given in \cite[Section 3.4]{GeoffRobinBundle} while the $\impliedby$ direction follows from a routine verification.
\end{proof}

Our first result regarding the $\Ind$-category of a CDC and its $\Ind$-tangent structure shows that we can actually use the $\Ind$-category of a CDC to classify if $\Cscr = \Cscr_{\Dlin}$.

\begin{proposition}\label{Prop: Ind of CDC has all diff objs means C is Dlinear}
Let $A$ be a crig and let $\Cscr$ be an $A$-linear CDC. Then $\Cscr = \Cscr_{\Dlin}$ if and only if every object in $\Ind(\Cscr)$ is a differential bundle over the terminal object $\top_{\Ind(\Cscr)}$.
\end{proposition}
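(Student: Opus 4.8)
The statement is an ``if and only if'', and the two directions are of quite different character. The implication from $\Cscr = \Cscr_{\Dlin}$ to the assertion that every object of $\Ind(\Cscr)$ is a differential bundle over $\top_{\Ind(\Cscr)}$ is essentially already available, being a direct application of the results established earlier in the paper; the converse is the substantive half, and I would prove it by contraposition, exhibiting a single ``bad'' $\Ind$-object whenever $\Cscr \neq \Cscr_{\Dlin}$.

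For the forward direction, assume $\Cscr = \Cscr_{\Dlin}$ and let $\ul{X} = (X_i)_{i \in I}$ be any object of $\Ind(\Cscr)$. By Proposition~\ref{Prop: CDC is tan cat and all obs are diff bundles over 1} each $X_i$ is a differential bundle over $\top_{\Cscr}$, and since $\Cscr = \Cscr_{\Dlin}$ every transition morphism $\varphi\colon X_i \to X_{i'}$ of $\ul{X}$ is $D$-linear; by Proposition~\ref{Prop: Dlinear is bundle map linear} such a $\varphi$ is then a linear morphism of the corresponding differential bundles, its base component being $\id_{\top_{\Cscr}}$. Corollary~\ref{Cor: Diff Bundle over Fin Comp Category} now applies directly and yields that $\ul{X}$ is a differential object of $\Ind(\Cscr)$, i.e.\ a differential bundle over $\top_{\Ind(\Cscr)}$, since $\ul{\top}_{\Cscr} \cong \top_{\Ind(\Cscr)}$.

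For the converse, suppose $\Cscr \neq \Cscr_{\Dlin}$, so that there is a morphism $f\colon X \to Y$ of $\Cscr$ which is not $D$-linear. I would build from $f$ the sequential $\Ind$-object $\ul{E} := (X \times Y^n)_{n \in \N}$ whose transition maps are the monomorphisms $g_n\colon X \times Y^n \to X \times Y^{n+1}$ given by $g_n := \langle \pi_0, f\circ\pi_0, \pi_1, \dots, \pi_n\rangle$, so that each $g_n$ is monic, not invertible, and not $D$-linear. The crucial observation is that $\Ind(T)\ul{E}$ and $\ul{E}\times\ul{E}$ have the \emph{same} underlying family of objects, namely $\bigl((X\times Y^n)^2\bigr)_{n}$ since $T(A) = A\times A$ in a CDC by Proposition~\ref{Prop: CDC is tan cat and all obs are diff bundles over 1}, but their transition maps are $T(g_n) = \langle D(g_n), g_n\circ\pi_1\rangle$ and $g_n\times g_n$ respectively, and these coincide exactly when $g_n$ is $D$-linear. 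Now suppose, for contradiction, that $\ul{E}$ were a differential bundle over $\top_{\Ind(\Cscr)}$. By \cite[Proposition 3.4]{GeoffRobinBundle} this supplies a canonical isomorphism $\Ind(T)\ul{E} \cong \ul{E}\times\ul{E}$ whose first component is the bundle projection $\Ind(p)_{\ul{E}}$, together with a lift $\ul{\lambda}$. Since both $\Ind$-objects are $\N$-indexed, every morphism between them is, after making the target indices monotone, given by a level-wise compatible family, as one sees from the hom-set formula for $\Ind(\Cscr)$ together with Remark~\ref{Remark: Morphisms in ind between same indexing cat}; representing the isomorphism and the lift level-wise and using the uniqueness of the differential-object structure on each $X\times Y^n$ in the CDC $\Cscr$, the differential-object axioms --- which, being equalities of morphisms of $\Ind(\Cscr)$, hold only \emph{stably}, i.e.\ after post-composition with a composite of the $g_n$ --- force the level-$n$ components to agree, up to such a composite, with the standard ones on $X\times Y^n$. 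Because each $g_n$, hence each composite $g_m\circ\cdots\circ g_n$, is monic, this stable agreement is genuine agreement, and unwinding the transition-compatibility of the level-wise family then forces $T(g_n) = g_n\times g_n$, i.e.\ $g_n$ is $D$-linear --- a contradiction. Hence $\ul{E}$ is not a differential bundle over $\top_{\Ind(\Cscr)}$, and the contrapositive is established.

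The genuinely delicate part is this last paragraph: passing from ``the obvious candidate structure maps on $\ul{E}$ are not morphisms of $\Ind(\Cscr)$ unless the $g_n$ are $D$-linear'' to ``no differential-object structure on $\ul{E}$ exists at all''. Two technical ingredients carry the weight: first, the level-wise representability of $\Ind$-morphisms between sequential $\Ind$-objects, which I would formulate precisely through the fully faithful functor $L\colon\Ind(\Cscr) \to [\Cscr^{\op},\Set]$ of Proposition~\ref{Prop: L functor on ind is fully faithful} together with the explicit description of the $\Ind$-hom-sets; and second, the fact that the transition maps $g_n$ are monic, which is what lets post-composition with a composite of them reflect equalities of morphisms in $\Cscr$. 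If one prefers to sidestep the monomorphism bookkeeping, an alternative is to verify the statement first for the \emph{free} $A$-linear CDC on a single non-$D$-linear arrow, where the telescope computation becomes completely explicit, and then transport the conclusion along the evident Cartesian-differential functor out of the free object.
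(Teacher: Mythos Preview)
Your forward direction is correct and matches the paper's argument essentially verbatim.

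For the converse the paper takes a much shorter route than your telescope construction: it works directly rather than by contraposition, using the arrow category $\mathbbm{2}$ (one non-identity morphism $\varphi:0\to 1$), which is filtered, so that any arrow $f:X\to Y$ of $\Cscr$ determines an $\Ind$-object $F:\mathbbm{2}\to\Cscr$. Since by hypothesis $F$ is a differential object over $\ul{\top}$, the paper reads off that the transition map $F(\varphi)=f$ is a linear bundle morphism and hence $D$-linear via Proposition~\ref{Prop: Dlinear is bundle map linear}; as functors $\mathbbm{2}\to\Cscr$ biject with arrows of $\Cscr$, this gives $\Cscr=\Cscr_{\Dlin}$. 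No sequential diagram, no monicity bookkeeping.

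The step you yourself flag as ``genuinely delicate'' in your approach is in fact a real gap. You want to pass from ``the obvious level-wise differential structure on $\ul{E}$ fails'' to ``no differential-object structure exists on $\ul{E}$,'' and your mechanism is to argue that a putative isomorphism $\Ind(T)\ul{E}\cong\ul{E}\times\ul{E}$, once written level-wise, forces $T(g_n)=g_n\times g_n$. But this does not follow. Two $\N$-indexed $\Ind$-objects with identical underlying sequences but different transition maps can be isomorphic in $\Ind(\Cscr)$: isomorphism is detected by the colimit presheaf via $L$, not by equality of transitions. Monicity of the $g_n$ lets you cancel post-composition by transitions when comparing two parallel maps \emph{into} the system, but it does not let you deduce equality of the two systems' transition maps from the mere existence of an isomorphism between them. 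Concretely, a level-wise representative $\theta_n:T(E_n)\to E_{m(n)}\times E_{m(n)}$ of the isomorphism need not land at index $n$, and the compatibility squares involve composites of transitions on both sides; there is no evident cancellation yielding $T(g_n)=g_n\times g_n$. The ``uniqueness of the differential-object structure on each $X\times Y^n$'' you invoke pins down structure on the individual objects of $\Cscr$, but the $\Ind$-level lift $\ul{\lambda}$ is a single morphism of $\Ind(\Cscr)$, not an $I$-natural family, so it is not forced to restrict to the standard lifts level-wise. Your alternative via the free $A$-linear CDC on a non-$D$-linear arrow is likewise only a gesture: one would still need exactly the same non-isomorphism argument inside that free category.
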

\begin{proof}
$\implies$: Assume that $\Cscr = \Cscr_{\Dlin}$. Then by Propositions \ref{Prop: CDC is tan cat and all obs are diff bundles over 1} and \ref{Prop: Dlinear is bundle map linear} $\Cscr$ is a tangent category, every object in $\Cscr$ is a differential bundle over $\top_{\Cscr}$, and every morphism is a linear map of differential bundles. We can thus apply Corollary \ref{Cor: Diff Bundle over Fin Comp Category} to deduce that every $\Ind$-object of $\Cscr$ is a differential bundle over $\top_{\Ind(\Cscr)}$.

$\impliedby$: Assume that every object in $\Ind(\Cscr)$ is a differential bundle over the terminal object $\top_{\Ind(\Cscr)}$. Consider the category $\mathbbm{2}$, which we display below:
\[
\begin{tikzcd}
0 \ar[loop left]{}{\id_0} \ar[r]{}{\varphi} & 1 \ar[loop right]{}{\id_1}	
\end{tikzcd}
\]
Then $\mathbbm{2}$ is a finite (and hence small) filtered category so any functor $F:\mathbbm{2} \to \Cscr$ determines an $\Ind$-object of $\Cscr$. Let $F$ be any such functor and let $\ul{\top}:\mathbbm{2}\to \Cscr$ be the functor with $\ul{\top}(0) = \top_{\Cscr} = \ul{\top}(1).$ Then $\ul{\top} \cong \top_{\Ind(\Cscr)}$ so the functor $F$ is a differential object over $\ul{\top}$ by construction. However, it then follows that $F(0)$ and $F(1)$ are differential bundles over $\top$ in $\Cscr$ and the pair $(F(\varphi), \ul{\top}(\varphi))$ is a linear morphism of differential bundles. However, this implies that $F(\varphi)$ is a $D$-linear morphism by Proposition \ref{Prop: Dlinear is bundle map linear}, so using that there is a canonical bijection of sets
\[
[\mathbbm{2},\Cscr]_0 \cong \Cscr_1
\]
allows us to deduce that every morphism in $\Cscr$ is $D$-linear.
\end{proof}

A corollary of this construction is that it allows us to give a necessary and sufficient condition for recongnizing differential bundles in Ind-categories of CDCs.
\begin{corollary}\label{Cor: Dlin nes}
Let $\Cscr$ be an $A$-linear CDC. In order for $F:I \to \Cscr$ to be a differential bundle in $\Ind(\Cscr)$ it is necessary and sufficient that for $\varphi \in I_1$, every morphism $F(\varphi)$ is $D$-linear in $\Cscr$.
\end{corollary}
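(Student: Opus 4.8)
The proof is the two-direction unwinding of the biconditional, moving between $\Ind(\Cscr)$ and $\Cscr$ through the level-wise differential-bundle data of Proposition \ref{Prop: Diff Bunlde in Ind}.

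\emph{Sufficiency.} Assume every transition morphism $F(\varphi)$, $\varphi \in I_1$, is $D$-linear. As $\Cscr$ is an $A$-linear CDC, Proposition \ref{Prop: CDC is tan cat and all obs are diff bundles over 1} endows each $F(i)$ with the structure of a differential bundle over $\top_{\Cscr}$, and by Proposition \ref{Prop: Dlinear is bundle map linear} each $F(\varphi)$, being $D$-linear, is a linear morphism of differential bundles with base component $\id_{\top_{\Cscr}}$. This is precisely the input required for Corollary \ref{Cor: Diff Bundle over Fin Comp Category}, which then yields that $F = (F(i))_{i \in I}$ is a differential bundle over $\top_{\Ind(\Cscr)}$ in $\Ind(\Cscr)$.

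\emph{Necessity.} Conversely, suppose $F$ is a differential bundle in $\Ind(\Cscr)$ and fix a morphism $\varphi : i \to i'$ of $I$. Let $\iota_{\varphi} : \mathbbm{2} \to I$ be the functor selecting $\varphi$, where $\mathbbm{2}$ is the walking-arrow category used in the proof of Proposition \ref{Prop: Ind of CDC has all diff objs means C is Dlinear}; under the bijection $[\mathbbm{2},\Cscr]_0 \cong \Cscr_1$ the composite $F \circ \iota_{\varphi}$ is the single morphism $F(\varphi)$, and it inherits a differential bundle structure over $\ul{\top}$ from $F$ by restriction of the structure maps. The $\impliedby$ argument of Proposition \ref{Prop: Ind of CDC has all diff objs means C is Dlinear} now applies verbatim: $F(i)$ and $F(i')$ are differential bundles over $\top_{\Cscr}$ in $\Cscr$ and $(F(\varphi), \id_{\top_{\Cscr}})$ is a linear morphism of differential bundles, so $F(\varphi)$ is $D$-linear by Proposition \ref{Prop: Dlinear is bundle map linear}. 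Equivalently, one may keep the indexing category $I$ fixed and simply evaluate the level-wise structure maps $\ul{q}, \ul{\sigma}, \ul{\zeta}, \ul{\lambda}$ of $F$ at each $i \in I_0$ to obtain differential bundle structures on the $F(i)$, and then read the naturality squares of $\ul{\lambda}$ at $\varphi$ to see that $F(\varphi)$ preserves lifts.

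\emph{Main obstacle.} The delicate step is the necessity direction: one must verify that a differential bundle in $\Ind(\Cscr)$ with total space $(F(i))_{i \in I}$ can be presented with level-wise structure maps — a converse, at fixed indexing category, to the packaging of Proposition \ref{Prop: Diff Bunlde in Ind} — and, relatedly, that the defining equalities (which a priori hold only up to the filtered identifications built into the hom-sets of $\Ind(\Cscr)$) can be arranged to hold on the nose in each $\Cscr\big(F(i), TF(i)\big)$. Granting this, the statement is an immediate consequence of Corollary \ref{Cor: Diff Bundle over Fin Comp Category} and Proposition \ref{Prop: Dlinear is bundle map linear}.
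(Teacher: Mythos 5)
The sufficiency half of your argument is complete and is exactly the route the paper intends: Proposition \ref{Prop: CDC is tan cat and all obs are diff bundles over 1} gives each $F(i)$ its canonical differential-object structure, Proposition \ref{Prop: Dlinear is bundle map linear} converts $D$-linearity of the $F(\varphi)$ into linearity of bundle morphisms over $\id_{\top_{\Cscr}}$, and Corollary \ref{Cor: Diff Bundle over Fin Comp Category} assembles this data into a differential bundle over $\top_{\Ind(\Cscr)}$.

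The necessity half, however, has a genuine gap, and it is precisely the step you flag and then ``grant'': that a differential bundle structure on the object $F$ of $\Ind(\Cscr)$ can be presented by levelwise structure maps making each $(F(\varphi),\id_{\top})$ a linear bundle morphism. Nothing in your argument supplies this, and the reduction you propose cannot supply it: the restriction $F\circ\iota_{\varphi}\colon\mathbbm{2}\to\Cscr$ is isomorphic in $\Ind(\Cscr)$ to the constant ind-object at $F(i^{\prime})$, since $1$ is terminal in $\mathbbm{2}$, so $L(F\circ\iota_{\varphi})\cong\yon F(i^{\prime})$ and $L$ is fully faithful (Proposition \ref{Prop: L functor on ind is fully faithful}). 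The constant object at any object of an $A$-linear CDC is a differential object in $\Ind(\Cscr)$ (Proposition \ref{Prop: CDC is tan cat and all obs are diff bundles over 1} together with Corollary \ref{Cor: Diff Bundle over Fin Comp Category} applied to a one-object diagram), and being a differential bundle over the terminal object is stable under isomorphism, so $F\circ\iota_{\varphi}$ carries such a structure whether or not $F(\varphi)$ is $D$-linear; no $D$-linearity can be read off from it, and ``inherits the structure by restriction'' is not a defined operation in any case. Your alternative phrasing (``evaluate the levelwise structure maps at each $i$'') begs the same question: the structure maps $\ul{\sigma},\ul{\zeta},\ul{\lambda}$ of a differential bundle on an $I$-indexed object are a priori only elements of $\lim_{i}\colim_{j}$ of hom-sets, not levelwise natural transformations. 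Be aware that this is also the unjustified step in the paper's own $\impliedby$ argument for Proposition \ref{Prop: Ind of CDC has all diff objs means C is Dlinear} (``it then follows that \dots''), so you are tracking the paper's intended route; but as it stands the necessity direction needs either a non-isomorphism-invariant (strict, levelwise) reading of ``differential bundle in $\Ind(\Cscr)$'' or a genuinely different argument, and your proposal does not provide one.
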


This means that when we study the $\Ind$-category of a CDC in order to determine when $\Ind(\Cscr)$ is \emph{not} a CDC we can use the following recipe:
\begin{enumerate}
	\item Find two objects of our CDC, $X$ and $Y$, such that there is a non-$D$-linear morphism $\alpha:X \to Y$.
	\item Consider the $\Ind$-object $F:\mathbbm{2} \to \Cscr$ given by $F(0) = X, F(1) = Y, F(\alpha) = \alpha$.
	\item Note that $F$ is not a differential bundle over $\top_{\Ind(\Cscr)}$ and so is not a differential object.
	\item Conclude that $\Ind(\Cscr)$ is not a CDC because not every object is a differential object.
\end{enumerate}
This leads to a straightforward way of recognizing when $\Ind$-categories of CDCs are not themselves CDCs.

\begin{proposition}\label{Prop: Ind of CDC is not a CDC}
Let $A$ be a crig and let $\Cscr$ be an $A$-linear CDC. Then if $\Cscr_{\Dlin} \ne \Cscr$, $\Ind(\Cscr)$ is not a CDC.
\end{proposition}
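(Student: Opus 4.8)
The plan is to follow the recipe recorded just above the statement: produce one object of $\Ind(\Cscr)$ which is not a differential object, and then invoke the fact that every object of a Cartesian differential category is a differential object. Since $\Cscr_{\Dlin} \ne \Cscr$ by hypothesis, fix objects $X, Y \in \Cscr_0$ and a morphism $\alpha \colon X \to Y$ that is \emph{not} $D$-linear. Let $\mathbbm{2}$ be the category with two objects $0, 1$ and one non-identity arrow $\varphi \colon 0 \to 1$; it is finite, hence small and filtered, so the functor $F \colon \mathbbm{2} \to \Cscr$ given by $F(0) = X$, $F(1) = Y$, $F(\varphi) = \alpha$ is a genuine object of $\Ind(\Cscr)$.

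Next, by Corollary \ref{Cor: Dlin nes} (equivalently, by the argument in the second half of the proof of Proposition \ref{Prop: Ind of CDC has all diff objs means C is Dlinear}), an object $G \colon J \to \Cscr$ of $\Ind(\Cscr)$ is a differential bundle over $\top_{\Ind(\Cscr)}$, that is, a differential object of $(\Ind(\Cscr), \Ind(\Tbb))$, exactly when every transition morphism $G(\psi)$ for $\psi \in J_1$ is $D$-linear in $\Cscr$. Applied to $F$ this says that $F$ is a differential object if and only if $\alpha = F(\varphi)$ is $D$-linear; as it is not, $F$ is not a differential object of $(\Ind(\Cscr), \Ind(\Tbb))$.

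Finally, I would suppose for contradiction that $\Ind(\Cscr)$ admits a Cartesian differential structure. Then by Proposition \ref{Prop: CDC is tan cat and all obs are diff bundles over 1}, applied to $\Ind(\Cscr)$, the induced tangent structure has tangent functor $\ul{A} \mapsto \ul{A} \times \ul{A}$ (so it agrees on objects with $\Ind(T)$, and since $T(f) = \langle D(f), f\circ\pi_1\rangle$ recovers the combinator, hence the entire tangent structure, from its tangent functor, it coincides with $\Ind(\Tbb)$), and relative to it \emph{every} object of $\Ind(\Cscr)$, in particular $F$, is a differential bundle over the terminal object. This contradicts the previous paragraph, so no such structure exists and $\Ind(\Cscr)$ is not a Cartesian differential category. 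The one step that genuinely needs care, and which I expect to be the main obstacle, is exactly this identification of the tangent structure underlying a hypothetical CDC structure on $\Ind(\Cscr)$ with $\Ind(\Tbb)$, so that Corollary \ref{Cor: Dlin nes} really does detect failure of $F$ to be a differential object; if one prefers to sidestep it, one argues contrapositively through Proposition \ref{Prop: Ind of CDC has all diff objs means C is Dlinear}, namely $\Cscr_{\Dlin}\ne\Cscr$ forces some object of $\Ind(\Cscr)$ to fail to be a differential bundle over $\top_{\Ind(\Cscr)}$, whereas a CDC structure on $\Ind(\Cscr)$ would make all of its objects such bundles.
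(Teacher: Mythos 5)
Your construction is exactly the one the paper uses: the proposition is stated as an immediate consequence of the four-step recipe preceding it, i.e.\@ of Corollary \ref{Cor: Dlin nes} (equivalently the second half of the proof of Proposition \ref{Prop: Ind of CDC has all diff objs means C is Dlinear}) applied to the object $F:\mathbbm{2} \to \Cscr$ built from a non-$D$-linear morphism $\alpha$. Your first two paragraphs reproduce that argument faithfully, so on the level of what the paper actually proves you are on the same route.

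The place where you go beyond the paper is the step you yourself flag, and your patch for it is not sound as written. If $\Ind(\Cscr)$ carried some Cartesian differential structure with combinator $D'$, the tangent structure supplied by Proposition \ref{Prop: CDC is tan cat and all obs are diff bundles over 1} has functor $T'(\ul{A}) = \ul{A} \times \ul{A}$ and $T'(f) = \langle D'(f), f \circ \pi_1 \rangle$. Agreement of $T'$ with $\Ind(T)$ on objects says nothing about agreement on morphisms: the unknown combinator $D'$ enters precisely in the morphism assignment, and two distinct combinators on the same Cartesian left-additive category yield tangent functors with identical object parts. The recovery goes the wrong way for your purposes --- the combinator determines the tangent functor, not conversely from its object part --- so nothing forces $T'$, and hence the notion of differential bundle over $\top_{\Ind(\Cscr)}$ relevant to the hypothetical structure, to coincide with $\Ind(\Tbb)$; your ``contrapositive sidestep'' has the identical dependence and does not avoid this. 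The statement should instead be read, as the paper intends (see the remark following it), relative to $\Ind(\Tbb)$: it asserts that no differential combinator on $\Ind(\Cscr)$ has $\Ind(\Tbb)$ as its associated tangent structure, since such a combinator would make every object a differential bundle over $\top_{\Ind(\Cscr)}$ for $\Ind(\Tbb)$ itself, contradicting Proposition \ref{Prop: Ind of CDC has all diff objs means C is Dlinear} when $\Cscr_{\Dlin} \ne \Cscr$. Under that reading your first two paragraphs already constitute the whole proof and the identification you worry about is unnecessary; under the stronger reading (that $\Ind(\Cscr)$ admits no CDC structure whatsoever) neither your argument nor the paper's establishes the claim.
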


\begin{remark}
It remains open to determine exactly when the $\Ind$-category of a CDC is itself a CDC. In light of Propositions \ref{Prop: Ind of CDC has all diff objs means C is Dlinear} and \ref{Prop: Ind of CDC is not a CDC} if $\Cscr$ is a CDC in order for $\Ind(\Cscr)$ to be a CDC we require that $\Cscr = \Cscr_{\Dlin}$. However, to make $\Ind(\Cscr)$ a CDC we need the differential combinators to behave suitably well with filtered colimits.
\end{remark}

\section{Ind-Tangent Category Examples}
In this section we will give some computations regarding the Ind-tangent category of some tangent categories of interest. What is of particular note is that the examples here give various different ways of discussing infinite-dimensional differential-geometric information: $\Ind(\mathbf{CAlg}_A)$ and $\Ind(\APoly)$ both give algebraic (ring-or-rig-theoretic) analogues of infinite-dimensional differential algebra; $\Ind(\Sch_{/S})$ gives differential insight into formal\footnote{In the sense of the theory of formal schemes and formal geometry in the spirit of the the Existence Theorem for formal functions (cf.\@ \cite[The{\'e}or{\`e}me 4.1.5]{EGA3}).} infinite-dimensional Euclidean manifolds.

\subsection{The Tangent Structure on Commutative $A$-Algebras}
Let $A$ be a cring\footnote{A commutative ring with identity.}. The tangent structure on the category of commutative $A$-algebras takes the form
\[
T(R) := R[\epsilon] \cong \frac{R[x]}{(x^2)} \cong R \otimes_A \frac{A[x]}{(x^2)}
\]
and with morphisms adapted correspondingly. To give a concrete description of the tangent functor $\Ind(\mathbf{CAlg}_{A})$ we will use the functorial description of $\Ind$. Let $I$ be a fitlered category and let $F:I \to \mathbf{CAlg}_A$ be a functor. Then $\Ind(T)(F) = T \circ F$ so we can describe the $\Ind$-object of $\mathbf{CAlg}_A$ as follows:
\begin{itemize}
	\item For any $i \in I_0$, the object assignment of $\Ind(T)(F)$ is given by $(T \circ F)(i) = F(i)[\epsilon]$.
	\item For any morphism $\varphi:i \to i^{\prime}$ in $I$, the morphism assignment of $\Ind(T)(F)$ is given by $(\Ind(T)(F))(\varphi) = (T \circ F)(\varphi) = F(\varphi)[\epsilon]$ where $F(\varphi)[\epsilon]$ is the unique morphism making the diagram
	\[
	\begin{tikzcd}
	F(i)[\epsilon] \ar[r, dashed]{}{\exists!\,F(\varphi)[\epsilon]} \ar[d, swap]{}{\cong} & F(i^{\prime})[\epsilon] \ar[d]{}{\cong} \\
	\frac{F(i)[x]}{(x^2)} \ar[d, swap]{}{\cong} \ar[r]{}{r \mapsto F(\varphi)(r)} \ar[r, swap]{}{x \mapsto x} & \frac{F(i^{\prime})[x]}{(x^2)} \ar[d]{}{\cong} \\
	F(i) \otimes_A \frac{A[x]}{(x^2)} \ar[r, swap]{}{F(\varphi) \otimes  \id} & F(i^{\prime}) \otimes_A \frac{A[x]}{(x^2)}
	\end{tikzcd}
	\]
	commute.
\end{itemize}
In particular this means that $\Ind(T)$ acts by taking a filtered diagram in $\mathbf{CAlg}_{A}$ and applies the $(-)[\epsilon]$ functor to it at every possible stage.

\subsection{The Zariski Tangent Structure on Formal Schemes}
In a recent work, \cite{GeoffJS}, Crutwell and LeMay have proved that the category $\Sch_{/S}$ admits a tangent structure for any base scheme $S$. The tangent functor $T:\Sch_{/S} \to \Sch_{/S}$ sends an $S$-scheme $X$ to the (Zariski) tangent fibre of $X$ relative to $S$ constructed by Grothendieck in \cite[Section 16.5]{EGA44}. In particular, for an $S$-scheme $X$ we have\footnote{The version of the tangent functor and maps constructed in \cite{GeoffTalk} is given for affine schemes, but because the $\Sym$ functor commutes with tensors on $\QCoh(X)$ and because the sheaf of differentials $\Omega_{X/S}^1$ is a quasi-coherent sheaf, everything regarding this functor may be checked Zariski-locally, i.e., affine-locally on both the target and the base. We'll describe this more in detail later, but it is worth remarking now.}
\[
T(X) := T_{X/S} = \Spec\left(\Sym(\Omega^1_{X/S})\right).
\]
For an affine scheme $S = \Spec A$ and an affine $S$-scheme $X = \Spec B$, the scheme $T_{X/S} = T_{B/A} = \Spec(\Sym(\Omega^1_{B/A}))$ is an affine scheme. The ring
\[
C = \Sym\left(\Omega_{B/A}^{1}\right)
\]
is generated by symbols $b, \mathrm{d}b$ for $b \in B$ generated by the rules that addition and multiplication for symbols from $b \in B$ are as in $B$ and the Leibniz rule
\[
\mathrm{d}(bb^{\prime}) = b^{\prime}\mathrm{d}(b) + b\mathrm{d}(b^{\prime})
\]
holds with $\mathrm{d}b\mathrm{d}b^{\prime} = 0$ and $\mathrm{d}(a) = 0$ for $a \in A$. With this definition we find that 
\[
T_2(X) = (T_{B/A})_{2} = \Spec\left(\Sym(\Omega_{B/A}^1) \otimes_B \Sym(\Omega_{B/A}^{1})\right)
\]
and that
\[
T^2(X) = T_{T_{B/A}/A} = \Sym\left(\Omega^{1}_{\Sym(\Omega_{B/A}^1)/A}\right).
\]
It is worth noting for what follows that the algebra $C$ with $T^2_{B/A} = \Spec C$,
\[
C = \Sym\left(\Omega^1_{\Sym(\Omega_{B/A}^1)}\right),
\] 
is generated by symbols $b, \mathrm{d}b, \delta b,$ and $\delta\mathrm{d}(b)$ for all $b \in B$. Essentially, there is a new derivational neighborhood $\delta$ of $\Sym(\Omega_{B/A}^1)$ which gives us a notion of $2$-jets and a distinct direction of $1$-jets (the $\delta$-direction).

\begin{Theorem}[\cite{GeoffTalk}]\label{Thm: Zariski tangent structure}
For affine schemes $\Spec B \to \Spec A$ the tangent structure $(\Sch_{/\Spec A}, \mathbb{T})$ is generated by the maps and functors on affine schemes:
\begin{itemize}
	\item The tangent functor is given by $T_{\Spec B/\Spec A} = T_{B/A} = \Spec\left(\Sym(\Omega_{B/A}^1)\right)$.
	\item The bundle map $p_B:T_{B/A} \to \Spec B$ is the spectrum of the ring map
	\[
	q_B:B \to \Sym(\Omega_{B/A}^1)
	\]
	generated by $b \mapsto b$.
	\item The zero map $0_B:T_{B/A} \to \Spec B$ is the spectrum of the ring map
	\[
	\zeta_B:Sym(\Omega_{B/A}^{1}) \to B
	\]
	given by $b \mapsto b, \mathrm{d}b \mapsto 0$. 
	\item The bundle addition map $+_B:(T_{B/A})_{2} \to T_{B/A}$ is the spectrum of the map
	\[
	\operatorname{add}_B:\Sym(\Omega^{1}_{B/A}) \to \Sym(\Omega^{1}_{B/A}) \otimes_B \Sym(\Omega^{1}_{B/A})
	\]
	given by $b \mapsto b \otimes 1_B, \mathrm{d}b \mapsto \mathrm{d}b \otimes 1 + 1 \otimes \mathrm{d}b$.
	\item The vertical lift $\ell_B: T_{B/A} \to T^2_{B/A}$ is given as the spectrum of the ring map
	\[
	v_B:\Sym\left(\Omega^1_{\Sym(\Omega_{B/A}^1)}\right) \to \Sym\left(\Omega^1_{B/A}\right)
	\]
	generated by $b \mapsto b, \mathrm{d}b \mapsto 0, \delta b \mapsto 0$, $\delta\mathrm{d}(b) \mapsto \mathrm{d}b$.
	\item The canoncial flip is the map $c_B:T^2_{B/A} \to T^2_{B/A}$ generated as the spectrum of the ring map
	\[
	\gamma_B:\Sym\left(\Omega^1_{\Sym(\Omega_{B/A}^1)}\right) \to \Sym\left(\Omega^1_{\Sym(\Omega_{B/A}^1)}\right)
	\]
	which interchanges $1$-jets, i.e., the map is generated by $b \mapsto b, \mathrm{d}b \mapsto \delta b, \delta b \mapsto \mathrm{d}b$ and $\delta\mathrm{d}b \mapsto \delta\mathrm{d}b$.
\end{itemize}
\end{Theorem}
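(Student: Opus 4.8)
The plan is to verify, working affine-locally over the base and gluing, that the listed functors and morphisms satisfy the six axioms of Definition \ref{Defn: Tangent Categpry}; one could alternatively organise the argument around Leung's Weil-actegory characterisation of tangent structures \cite{LeungWeil}, but the direct route is the one that produces the explicit affine formulas recorded in the statement. First I would reduce to the case of a morphism of affine schemes $\Spec B \to \Spec A$. Since $\Omega^1_{X/S}$ is quasi-coherent and both $\Sym$ and the relative-spectrum functor commute with restriction along open immersions, the scheme $T_{X/S} = \Spec_X\big(\Sym(\Omega^1_{X/S})\big)$ and all of its structure morphisms --- each being the relative spectrum of an $\Oscr_X$-algebra map given fibrewise by one of the formulas in the statement --- are computed on affine opens of $X$; moreover all of the schemes occurring in the axioms are affine over $X$, so the relevant finite limits (the pullback powers of $p_X$, the equalizer of Axiom 6), where they exist, are also computed over an affine open cover of $X$. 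Hence it suffices to treat affine $X = \Spec B$ over affine $\Spec A$, where $\Spec\colon(\mathbf{CAlg}_A)^{\op}\to\AffSch_{/\Spec A}$ is an equivalence and every diagram of affine $A$-schemes may be tested on the opposite diagram of $A$-algebras.

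The key algebraic input I would record at the outset is the computation of Kähler differentials of a symmetric algebra: for a $B$-module $M$, with $C := \Sym_B(M)$, the sequence
\[
0 \to C\otimes_B \Omega^1_{B/A} \to \Omega^1_{C/A} \to C\otimes_B M \to 0
\]
is exact and canonically split, a section of the projection being the $C$-linear map $C\otimes_B M \to \Omega^1_{C/A}$ sending $1\otimes m$ to the class of $m$. Applied with $M = \Omega^1_{B/A}$, with a direct sum of copies of it, and iterated, this identifies the pullback powers $T_n X \cong \Spec_X \Sym\big((\Omega^1_{X/S})^{\oplus n}\big)$, the iterates $T^2 X$ and $T^3 X$, and all of their composites, in terms of the generators recorded in the statement ($b,\mathrm{d}b$ for $T$; $b,\mathrm{d}b,\delta b,\delta\mathrm{d}b$ for $T^2$; one further family of symbols for $T^3$). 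Since $\Sym$ sends direct sums to tensor products and $\Oscr_X$-algebra tensor products to relative-spectrum fibre products, this gives at once both the existence of the pullback powers of $p_X$ and the fact that each $T^m$ carries them to the pullback powers of $p_{T^m X}$, which is Axiom 1. Axiom 2 is then the statement that $\mathrm{add}_B$ and $\zeta_B$ make the $B$-algebra $\big(\Sym(\Omega^1_{B/A}),q_B\big)$ a cocommutative comonoid object of $\mathbf{CAlg}_B$, which is immediate from the Leibniz rule and the formulas $\mathrm{d}b\mapsto 0$ and $\mathrm{d}b\mapsto \mathrm{d}b\otimes 1 + 1\otimes\mathrm{d}b$.

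Axioms 3, 4, and 5 are then a bookkeeping exercise of chasing the finitely many generators through the ring maps $q_B,\zeta_B,\mathrm{add}_B,v_B,\gamma_B$. For instance $\gamma_B$ interchanges $\mathrm{d}b\leftrightarrow\delta b$ and fixes $b$ and $\delta\mathrm{d}b$, so $\gamma_B^2 = \id$, i.e.\ $c_B^2 = \id$; and $v_B\circ\gamma_B = v_B$ on generators ($b\mapsto b\mapsto b$, $\mathrm{d}b\mapsto\delta b\mapsto 0$, $\delta b\mapsto\mathrm{d}b\mapsto 0$, $\delta\mathrm{d}b\mapsto\delta\mathrm{d}b\mapsto\mathrm{d}b$), giving $c_B\circ\ell_B = \ell_B$; the commuting squares asserting that $(\ell_B,0_B)$ and $(c_B,\id)$ are bundle morphisms, together with the three coherence hexagons relating $\ell$ and $c$, reduce the same way once $T^3$ is written out via two further applications of the split exact sequence above. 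Naturality in $B$ of each structure map is automatic from the functoriality of $\Omega^1_{-/A}$ and of $\Sym$.

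The main obstacle is Axiom 6, the universality of the vertical lift. Since the diagram in question is affine over $X = \Spec B$, it is an equalizer in $\Sch_{/\Spec A}$ precisely when, for every $A$-algebra $R$, the induced diagram of $R$-valued points is an equalizer of sets; using the presentations above, an $R$-point of $T^2_{B/A}$ is a compatible assignment of the symbols $b,\mathrm{d}b,\delta b,\delta\mathrm{d}b$ into $R$, the two maps $T(p_B)$ and $0_B\circ p_B\circ p_{T_B}$ become two explicit such assignments, and one verifies that their equalizer is exactly the subset on which the prescribed first-order component vanishes, onto which the comparison map $(T\ast{+})_B\circ\big\langle\ell_B\circ\pi_1,\,0_{T_B}\circ\pi_2\big\rangle$ restricts to a bijection --- a short computation with derivations, carried out uniformly in $R$. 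Assembling Axioms 1--6 yields the tangent structure on $\Sch_{/\Spec A}$ with the stated generators; passing from an affine base to a general base scheme $S$, as in the surrounding discussion and in \cite{GeoffJS}, then follows by the same quasi-coherence and gluing considerations applied to $\Omega^1_{X/S}$.
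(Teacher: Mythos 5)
The paper itself does not prove this theorem --- it is imported wholesale from \cite{GeoffTalk} (cf.\ \cite{GeoffJS}), with only the footnoted remark that the affine construction globalizes by quasi-coherence --- so the relevant comparison is with the cited argument, which runs through the adjunction $\Sym_B(\Omega^1_{B/A}) \dashv (-)[\epsilon]$, i.e.\ the natural bijection $\Hom_{\mathbf{CAlg}_A}(\Sym_B(\Omega^1_{B/A}),R) \cong \Hom_{\mathbf{CAlg}_A}(B,R[\epsilon])$, and the resulting ``dual/adjoint tangent structure'' on $\mathbf{CAlg}_A^{\op}$. Your reduction to the affine case and the strategy of checking Definition \ref{Defn: Tangent Categpry} axiom by axiom are reasonable, but your central algebraic lemma is false, and it is exactly the tool you lean on for the delicate steps.

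Concretely, for $C = \Sym_B(M)$ the sequence
\[
0 \to C\otimes_B \Omega^1_{B/A} \to \Omega^1_{C/A} \to C\otimes_B M \to 0
\]
is in general neither exact on the left nor split, and the map $1\otimes m \mapsto \mathrm{d}m$ is not $C$-linear (one would need $m\,\mathrm{d}b = 0$ for $b \in B$). Take $A = k$, $B = k[t]$, $M = B/(t)$, so $C = \Sym_B(M) = k[t,x]/(tx)$: in $\Omega^1_{C/k}$ one has $x^2\,\mathrm{d}t = x(x\,\mathrm{d}t) = -x(t\,\mathrm{d}x) = 0$, so $C\otimes_B\Omega^1_{B/k} = C\,\mathrm{d}t \to \Omega^1_{C/k}$ is not injective, and no $C$-linear section of the projection exists. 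Since $\Omega^1_{B/A}$ is an arbitrary $B$-module (the theorem is not restricted to smooth algebras), you cannot assume projectivity to rescue the splitting. This matters because the splitting is what you use to ``identify'' $T^2$, $T^3$ and their generators-and-relations presentations (needed to see that $v_B$ and $\gamma_B$ are even well defined on the listed generators), to get the preservation of the pullback powers by $T^m$ in Axiom 1 ``at once'' (in fact $\Omega^1$ of a pushout $C'\otimes_B C''$ is an amalgamated sum, not a direct sum, so this preservation requires a genuine argument), and to set up the $R$-point computation for the universality of the lift, which you only gesture at. The repair is to abandon presentations of $\Omega^1$ of symmetric algebras and instead verify every structure map and both limit axioms through the functor-of-points description $T_{B/A}(R) \cong \Hom_{\mathbf{CAlg}_A}(B, R[\epsilon])$ (equivalently, obtain the structure as the mates of the $(-)[\epsilon]$ tangent structure on $\mathbf{CAlg}_A$), which is how the cited source proceeds; your affine reduction and the comonoid reformulation of Axiom 2 can be kept as is.
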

\begin{definition}\label{Defn: Zariski tangent structure on Sch over X}
We define the Zariski tangent structure on a scheme $S$ to be the tangent structure $\Tbb_{\Zar{S}} = (T_{-/S}, p, 0, +, \ell, c)$ on the category $\Sch_{/S}$ described by Theorem \ref{Thm: Zariski tangent structure}.
\end{definition}
\begin{remark}
This tangent category and tangent structure is also used and studied in \cite{DoretteMe}. The Zariski tangent structure there is used to study a category of equivariant tangent schemes over a base $G$-variety $X$; we will not take this direction in this paper save for a short discussion at the end of this paper. It is worth noting, however, that in \cite{DoretteMe} the various functoriality and naturality conditions involving the Zariski Tangent Structure are established, and we will refer to those results when establishing the functoriality of the $\Ind$-tangent structure on schemes and the tangent structure on the category of formal schemes.
\end{remark}

This is, in some sense, the ``canonical'' tangent structure on $\Sch_{/S}$, as the tangent scheme $T_{X/S}$ captures $S$-derivations of $X$ in the following sense: If $S = \Spec K$ for a field $K$ and if $\Spec A$ is an affine $K$-scheme, then the $A$-points of the tangent scheme $T_{X/S}(A)$ satisfy
\[
T_{X/S}(A) = \Sch_{/K}(\Spec A,T_{X/S}) \cong \Sch_{/K}\left(\Spec\left(\frac{A[x]}{(x^2)}\right), X\right)
\]
so in particular $K$-points of $T_{X/S}$ give the $1$-differentials $\Omega_{X/K}^1$. Moreover, for any closed point $x \in \lvert X \rvert$ we have a canonical isomorphism
\[
T_{X/S}(x) \cong \KAlg\left(\frac{\mfrak_{x}}{\mfrak^{2}_{x}},K\right)
\]
of $T_{X/S}(x)$ with the Zariski tangent space of $X$ over $K$.

\begin{proposition}[\cite{DoretteMe}]\label{Prop: Functoriality of Scheme Morphisms}
Let $S$ be a scheme and let $f:X \to Y$ be a morphism of $S$-schemes. Then the pullback functor $f^{\ast}:\Sch_{/Y} \to \Sch_{/X}$ is part of a strong tangent morphism $(f^{\ast}, T_{f})$ where $T_{f}$ is the natural isomorphism
\[
f^{\ast} \circ T_{-/Y} \xrightarrow{\cong} T_{((-) \times_Y X)/X}.
\]
\end{proposition}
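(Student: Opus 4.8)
The base scheme $S$ plays no essential role: the statement is really about an arbitrary morphism $f\colon X\to Y$ of schemes together with the Zariski tangent structures $\Tbb_{\Zar{Y}}$ on $\Sch_{/Y}$ and $\Tbb_{\Zar{X}}$ on $\Sch_{/X}$. The plan is to first construct the natural transformation $T_f$, then check the five coherence diagrams of Definition \ref{Defn: Tangent Morphism}, and finally verify strength. To build $T_f$, fix a $Y$-scheme $q\colon Z\to Y$ and write $\mathrm{pr}_Z\colon Z\times_Y X\to Z$ for the projection. The base-change formula for sheaves of relative differentials (cf.\@ \cite{EGA44}) supplies a canonical isomorphism $\Omega^1_{(Z\times_Y X)/X}\cong\mathrm{pr}_Z^{\ast}\,\Omega^1_{Z/Y}$; since $\Sym$ commutes with pullback of quasi-coherent sheaves and relative $\Spec$ carries pullback of quasi-coherent algebras to fibre product of schemes, this yields
\[
T_{(Z\times_Y X)/X}=\underline{\Spec}_{Z\times_Y X}\!\big(\Sym\,\Omega^1_{(Z\times_Y X)/X}\big)\cong\underline{\Spec}_{Z}\!\big(\Sym\,\Omega^1_{Z/Y}\big)\times_Z(Z\times_Y X)=T_{Z/Y}\times_Y X=f^{\ast}(T_{Z/Y}),
\]
and I would set $(T_f)_Z$ to be the inverse of this composite. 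Naturality of $T_f$ in $Z$ is then automatic, since every isomorphism in the chain is one of the canonical functorial isomorphisms of the calculus of differentials, symmetric algebras and relative $\Spec$; thus $T_f\colon f^{\ast}\circ T_{-/Y}\Rightarrow T_{((-)\times_Y X)/X}$ is a natural isomorphism.

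Next I would verify that $(f^{\ast},T_f)$ satisfies the five coherence conditions of Definition \ref{Defn: Tangent Morphism}, namely compatibility of $T_f$ with $p$, $0$, $+$, $\ell$ and $c$. Each such diagram involves only $f^{\ast}$, $T_{-/Y}$, $T_{-/X}$ and the structure transformations of Theorem \ref{Thm: Zariski tangent structure}, all of which are constructed from quasi-coherent sheaves, symmetric algebras and relative $\Spec$; so each diagram may be checked Zariski-locally on $Y$, on $X$ and on $Z$, reducing to the affine case $Y=\Spec A$, $X=\Spec A'$, $Z=\Spec B$ with $Z\times_Y X=\Spec(B\otimes_A A')$. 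There the base-change isomorphism is the standard $\Omega^1_{(B\otimes_A A')/A'}\cong\Omega^1_{B/A}\otimes_B(B\otimes_A A')$ sending $\mathrm{d}(b\otimes a')$ to $\mathrm{d}b\otimes 1$, and each coherence square becomes an equality of ring homomorphisms which can be read off on the generators $b$, $\mathrm{d}b$, $\delta b$, $\delta\mathrm{d}b$ using the explicit formulas for $q_B$, $\zeta_B$, $\operatorname{add}_B$, $v_B$ and $\gamma_B$ from Theorem \ref{Thm: Zariski tangent structure}. This is routine bookkeeping; the one conceptual input, already used, is that base change for $\Omega^1$ respects the universal derivation, which is precisely what makes it compatible with the entire de~Rham-type tangent structure.

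For strength, $T_f$ is a natural isomorphism by the above, so it only remains to see that $f^{\ast}$ preserves the pullbacks and the equalizer appearing in the tangent structure on $\Sch_{/Y}$. But $f^{\ast}\colon\Sch_{/Y}\to\Sch_{/X}$ is a right adjoint --- its left adjoint is postcomposition with $f$, $\Sigma_f\colon\Sch_{/X}\to\Sch_{/Y}$ --- and hence preserves every limit existing in $\Sch_{/Y}$, in particular the finite pullback powers $(T_{-/Y})_n$ and the equalizer of Axiom~6 of Definition \ref{Defn: Tangent Categpry}. Therefore $(f^{\ast},T_f)$ is a strong tangent morphism.

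The main obstacle is the second step: there is no shortcut around checking all five coherence squares, and although each is routine after the affine reduction, getting the bookkeeping with the $\mathrm{d}$- and $\delta$-neighbourhood generators of $T^2$ exactly right in the $\ell$- and $c$-compatibility squares is where the genuine work lies.
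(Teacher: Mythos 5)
The paper itself gives no proof of this proposition: it is imported wholesale from \cite{DoretteMe} (see the remark following Definition \ref{Defn: Zariski tangent structure on Sch over X}), so there is no in-paper argument to compare yours against. Judged on its own terms, your sketch is a correct and essentially standard route to the result. The three structural inputs are all sound: the base-change isomorphism $\Omega^1_{(Z\times_Y X)/X}\cong \mathrm{pr}_Z^{\ast}\Omega^1_{Z/Y}$, the compatibility of $\Sym$ and relative $\Spec$ with pullback (which together give $f^{\ast}(T_{Z/Y})\cong T_{(Z\times_Y X)/X}$ and hence $T_f$, with naturality coming for free from the canonicity of these identifications), and the observation that $f^{\ast}\colon\Sch_{/Y}\to\Sch_{/X}$ is right adjoint to $\Sigma_f$ and therefore preserves the tangent pullbacks and the lift equalizer, which is exactly what ``strong'' requires in Definition \ref{Defn: Tangent Morphism} once $T_f$ is invertible. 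Your reduction of the five coherence squares to affine generators via Theorem \ref{Thm: Zariski tangent structure} is the right move and is where the actual (routine) work sits, as you say. One small correction in the affine description: the base-change isomorphism $\Omega^1_{(B\otimes_A A')/A'}\cong\Omega^1_{B/A}\otimes_B(B\otimes_A A')$ sends $\mathrm{d}(b\otimes a')$ to $\mathrm{d}b\otimes(1\otimes a')$, not to $\mathrm{d}b\otimes 1$; equivalently it is determined by $\mathrm{d}(b\otimes 1)\mapsto\mathrm{d}b\otimes 1$ together with linearity over $B\otimes_A A'$. This does not affect the structure of your argument, but the $a'$-factor matters when you verify the $+$-, $\ell$- and $c$-compatibility squares on generators.
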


We will now, as an application to Theorem \ref{Thm: Ind Tangent Category} above, construct the tangent category of formal schemes over a base scheme $S$.

\begin{example}\label{Example: Formal Tangent Schemes}
Let $S$ be an arbitrary base scheme and consider the Zariski tangent category $(\Sch_{S},T_{\Zar{S}})$ of $\Sch_{/S}$. Fix a Grothendieck universe $\Uscr$ and let $\Ind_{\Uscr}(\Cscr)$ be the ind-category of $\Cscr$ where each object $\ul{X}$ is indexed by a filtered $\Uscr$-small category (cf.\@ \cite[I.8.2.4.5]{SGA4}). Then by \cite[Definition 4.5]{Strickland}\footnote{It is worth remarking that there are \emph{various} different definitions of formal schemes in the literature. For instance, in \cite[Definition 10.4.2]{EGA1} Grothendieck and Dieudonn{\'e} define formal schemes in terms of topologically ringed spaces; in \cite{Yasuda} Yasuda defines formal schemes as certain proringed spaces, i.e., as a certain type of topological spaces equipped with a specific flavour of sheaf of prorings; and in \cite{hartshorne1977algebraic} are defined in terms of completions along only Noetherian schemes. What matters, however, is that each such situation gives a subcategory of $\Ind(\Sch_{/S})$ and so we take the more purely categorical perspective in this paper.} $\Ind_{\Uscr}(\Sch_{/S}) = \FSch_{/S}$, i.e., the $\Uscr$-small ind-category of $\Sch_{/S}$ is equivalent to the category of formal schemes over $S$. Applying  Theorem \ref{Thm: Ind Tangent Category} to $\Ind_{\Uscr}(\Sch_{/S})$ we find that $\FSch_{/S}$ is a tangent category with tangent formal scheme described as follows. If $\Xfrak = (X_i)_{i \in I} = \colim_{i \in I} X_i$ is a formal scheme over $S$ then the Zariski tangent formal scheme of $\Xfrak$ is the formal scheme
\[
T_{\Xfrak/S} := (T_{X_i/S})_{i\in I} = \colim_{i \in I} T_{X_i/S}.
\]
\end{example}

A straightforward formal consequence of our main theorem and Proposition \ref{Prop: Functoriality of Scheme Morphisms} is that when we have a morphism of schemes $f:S \to Y$, the pullback functor $\ul{f}^{\ast}:\FSch_{/Y} \to \FSch_{/S}$ is part of a strong tangent morphism. 
\begin{proposition}\label{Prop: Pullback of relative formal schmes over scheme base is strong tangent}
Let $f:S \to Y$ be a morphism of schemes. Then the pullback functor $f^{\ast}:\FSch_{/Y} \to \FSch_{/S}$ is part of a strong tangent morphism.
\end{proposition}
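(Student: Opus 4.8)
The plan is to exhibit $f^{\ast}$ as the image under the $\Ind$-pseudofunctor of a strong tangent morphism of ordinary scheme categories, and then invoke Theorem \ref{Thm: Functoriality of ind-category}. Viewing $f \colon S \to Y$ as a morphism of schemes over a common base (say $\Spec \Z$, over which every scheme lives), Proposition \ref{Prop: Functoriality of Scheme Morphisms} applies verbatim and yields that the base-change functor
\[
f^{\ast} \colon (\Sch_{/Y}, \Tbb_{\Zar{Y}}) \longrightarrow (\Sch_{/S}, \Tbb_{\Zar{S}})
\]
together with the natural isomorphism $T_f \colon f^{\ast} \circ T_{-/Y} \xrightarrow{\cong} T_{((-)\times_Y S)/S}$ is a \emph{strong} tangent morphism. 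Note that the relative Zariski tangent structures $\Tbb_{\Zar{Y}}$ and $\Tbb_{\Zar{S}}$ do not reference the auxiliary base, so this invocation is unambiguous.

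Next I would push this morphism through $\Ind$. Fixing the Grothendieck universe $\Uscr$ of Example \ref{Example: Formal Tangent Schemes} and working with $\Ind_{\Uscr}$ throughout, Theorem \ref{Thm: Functoriality of ind-category} produces a morphism of tangent categories
\[
(\Ind_{\Uscr}(f^{\ast}), \widehat{T_f}) \colon (\Ind_{\Uscr}(\Sch_{/Y}), \Ind(\Tbb_{\Zar{Y}})) \longrightarrow (\Ind_{\Uscr}(\Sch_{/S}), \Ind(\Tbb_{\Zar{S}})),
\]
and because $(f^{\ast}, T_f)$ is strong, the ``if and only if'' clause of that theorem guarantees that $(\Ind_{\Uscr}(f^{\ast}), \widehat{T_f})$ is a strong tangent morphism as well.

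It then remains only to identify this data with the pullback functor on formal schemes. Applying Example \ref{Example: Formal Tangent Schemes} both over $S$ and over $Y$ gives equivalences $\FSch_{/Y} \simeq \Ind_{\Uscr}(\Sch_{/Y})$ and $\FSch_{/S} \simeq \Ind_{\Uscr}(\Sch_{/S})$ under which the tangent structures become $\Ind(\Tbb_{\Zar{Y}})$ and $\Ind(\Tbb_{\Zar{S}})$ by construction (so, on objects, $\Xfrak \mapsto \colim_{i} T_{X_i/S}$); thus I need only check that $\Ind_{\Uscr}(f^{\ast})$ corresponds to the base-change functor $\Xfrak \mapsto \Xfrak \times_Y S$ on formal schemes. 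Writing a formal $Y$-scheme as $\Xfrak = \colim_{i \in I} X_i$ with the $X_i$ ordinary $Y$-schemes, one has $\Ind_{\Uscr}(f^{\ast})(\Xfrak) = (X_i \times_Y S)_{i \in I}$ by the description of $\Ind$ on functors in Remark \ref{Remark: Ind functor and morphisms}, and this ind-object represents $\colim_{i} (X_i \times_Y S) \cong (\colim_{i} X_i) \times_Y S = \Xfrak \times_Y S$ because filtered colimits commute with finite limits in $\Ind_{\Uscr}(\Sch_{/S})$ (cf.\@ the argument in Proposition \ref{Prop: Ind of Tangent pullback is tangent pullback of ind}). Concatenating these identifications turns $(\Ind_{\Uscr}(f^{\ast}), \widehat{T_f})$ into a strong tangent morphism whose underlying functor is $f^{\ast} \colon \FSch_{/Y} \to \FSch_{/S}$, which is the claim. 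I expect this last bookkeeping step — verifying that the equivalence $\FSch \simeq \Ind(\Sch)$ intertwines base change along $f$ with $\Ind_{\Uscr}(f^{\ast})$ compatibly with the (formal) Zariski tangent data — to be the only genuine obstacle, and it is routine given the results already assembled.
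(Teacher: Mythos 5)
Your proposal is correct, and it reaches the conclusion by a slightly different (more systematic) route than the paper. The paper's proof is a direct construction: it observes that $f^{\ast}\Xfrak = \Xfrak \times_Y S$ is represented levelwise by $(X_i \times_Y S)_{i \in I}$, defines the structure isomorphism componentwise as $(T_f)_{\Xfrak} := \big((T_f)_{X_i}\big)_{i \in I}$ using Proposition \ref{Prop: Functoriality of Scheme Morphisms}, and then notes that the tangent-morphism identities and strength are checked $I$-locally. You instead feed the scheme-level strong tangent morphism $(f^{\ast}, T_f)$ into Theorem \ref{Thm: Functoriality of ind-category}, which hands you a strong tangent morphism $(\Ind_{\Uscr}(f^{\ast}), \widehat{T_f})$ for free, and then identify $\Ind_{\Uscr}(f^{\ast})$ with base change on formal schemes via commutation of filtered colimits with pullbacks. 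What your route buys is that none of the tangent-morphism axioms need to be re-verified by hand (they are absorbed into Theorem \ref{Thm: Functoriality of ind-category}); what it costs is the final transport-of-structure step along the natural isomorphism $\Ind_{\Uscr}(f^{\ast}) \cong (-)\times_Y S$, which you correctly flag and which is essentially the same levelwise observation the paper makes at the outset of its proof, so it is indeed routine. Your handling of the base issue (regarding $f:S \to Y$ as a morphism of $\Spec\Z$-schemes so that Proposition \ref{Prop: Functoriality of Scheme Morphisms} applies, noting the relative tangent structures ignore the auxiliary base) is if anything more careful than the paper, which applies that proposition to $f:S\to Y$ without comment; and only the $\impliedby$ direction of the ``if and only if'' clause of Theorem \ref{Thm: Functoriality of ind-category} is actually needed.
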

\begin{proof}
We first note that the functor $f^{\ast}:\FSch_{/Y} \to \FSch_{/S}$ is define by sending a formal scheme $\Xfrak \to Y$ to the formal scheme $\Xfrak \times_Y S$ (and similarly on morphisms). However, if $\Xfrak = (X_i)_{i \in I}$ then $\Xfrak \times_{Y} S$ is represented by the Ind-object $(X_i \times_Y S)_{i \in I}$. Consequently we define our natural isomorphism
\[
(T_{f}):f^{\ast} \circ T_{-/Y} \to T_{-/S} \circ f^{\ast}
\]
by defining the $\Xfrak$-component
\[
(T_f)_{\Xfrak}:(f^{\ast} \circ T_{-/Y})(\Xfrak) \to (T_{-/S} \circ f^{\ast})(\Xfrak)
\]
to be the map
\[
(T_f)_{\Xfrak} = \left((T_f)_{X_i}\right)_{i \in I}
\]
where each $(T_f)_{X_i}$ is the $X_i$-component of the natural isomorphism $T_f$ of Proposition \ref{Prop: Functoriality of Scheme Morphisms}. That this is a morphism of $\Ind$-schemes is trivial to check and that it is an isomorphism is immediate as well. Finally that the pair $(f^{\ast}, (T_f))$ is a strong tangent morphism follows from the fact that the tangent morphism identities and strengths are checked locally.
\end{proof}

The more general situation (determining whether or not whether or not the pullback functor $\ul{f}^{\ast}:\FSch_{/\Yfrak} \to \FSch_{/\Xfrak}$ is a strong tangent morphism) is significantly more complicated problem. While I anticipate that it is true, and that this can and should use Proposition \ref{Prop: A pseudolimit thing} to handle pullback along a morphism $\ul{f}:X \to \Yfrak$, establishing this in complete generality is left as future work.
We close this subsection with some explicit examples of the formal tangent scheme of some various formal schemes over various bases.
\begin{example}
Let $K$ be a field (of characteristic zero, for simplicitly) and let $S := \Spec K$ be our base scheme. Now consider the scheme $X = \Spec K[t]$ and note that there is a dense embedding of rings
\[
K[t] \hookrightarrow \lim_{n \in \N}\frac{K[t]}{(t^{n})}.
\]
Now consider the formal scheme $\Xfrak := \Spf K\llbracket t \rrbracket$, which we define to be the $\Ind$-object
\[
\Spf K\llbracket t \rrbracket := \left(\Spec\frac{K\llbracket t \rrbracket}{(t^n)}\right)_{n \in \N} \cong \left(\Spec\frac{K[t]}{(t^n)}\right)_{n \in \N}.
\]
Then $\Spf K\llbracket t \rrbracket$ gives us a nilpotent thickening of $\Spec K[t]$ at the origin which is \emph{not} equivalent to a scheme. We can describe the formal tangent scheme $T_{\Spf K\llbracket t \rrbracket/K}$ as follows. By construction we have that
\begin{align*}
T_{\Spf K\llbracket t \rrbracket/\Spec K} &= T_{-/\Spec K}\left(\Spec\frac{K[t]}{(t^n)}\right)_{n \in \N} = \left(T_{\Spec(K[t]/(t^n))/\Spec K}\right)_{n \in \N} \\
&\cong \left(\Spec\left(\Sym\left(\Omega^1_{(K[t]/(t^n))/K}\right)\right)\right)_{n \in \N} \cong \Spec\left(\Sym\left(\frac{K[t]\mathrm{d}t}{(t^n,\mathrm{d}(t^n)/\mathrm{d}t)}\right)\right)_{n \in \N} \\
&\cong \left(\Spec\left(\Sym\left(\frac{K[t]\mathrm{d}t}{(t^n, nt^{n-1}\mathrm{d}t)}\right)\right)\right)_{n \in \N}.
\end{align*}
\end{example}

We now close by making some short conjectures regarding the tangent category of formal schemes.

\begin{conjecture}
Let $\Xfrak$ and $\Yfrak$ be formal schemes over a base scheme $S$ and let $\ul{f}:\Xfrak \to \Yfrak$ be a morphism. Then the pullback functor $\ul{f}^{\ast}:\FSch_{/\Yfrak} \to \FSch_{/\Xfrak}$ is part of a strong tangent morphism.
\end{conjecture}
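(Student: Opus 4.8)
The plan is to reduce the statement to the scheme-theoretic case handled by Proposition \ref{Prop: Functoriality of Scheme Morphisms}, the $\Ind$-functoriality of Theorem \ref{Thm: Functoriality of ind-category}, and the pseudolimit presentation of Proposition \ref{Prop: A pseudolimit thing}. First I would choose presentations $\Xfrak = (X_i)_{i \in I}$ and $\Yfrak = (Y_i)_{i \in I}$ of the two formal schemes over a \emph{common} filtered index category $I$, together with a representation of $\ul{f}$ as a level-wise morphism $\ul{f} = (f_i \colon X_i \to Y_i)_{i \in I}$; this is always possible after a common cofinal reindexing, since a single morphism of ind-objects can be strictified to a natural transformation of functors out of a common domain (cf.\@ \cite[Section I.8.2]{SGA4}, \cite[Chapter 6]{KashiwaraSchapira}). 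Here each $f_i$ is a morphism of $S$-schemes because $\ul{f}$ is a morphism of formal schemes over $S$.

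Next I would invoke Proposition \ref{Prop: A pseudolimit thing} to write $\FSch_{/\Xfrak} = \Ind(\Sch_{/S})_{/\Xfrak}$ as the pseudolimit of the diagram $i \mapsto \Ind(\Sch_{/S})_{/X_i}$ with transition functors the pullbacks $\widetilde{\varphi}^{\ast}$, and likewise for $\Yfrak$. Using the standard equivalence $\Ind(\Ccal)_{/c} \simeq \Ind(\Ccal_{/c})$ for a constant object $c$ — which for $\Ccal = \Sch_{/S}$ and $c = X_i$ reads $\Ind(\Sch_{/S})_{/X_i} \simeq \Ind(\Sch_{/X_i}) = \FSch_{/X_i}$ — the diagram identifies with $i \mapsto \FSch_{/X_i}$ whose transition functors are, by Proposition \ref{Prop: Functoriality of Scheme Morphisms} together with Theorem \ref{Thm: Functoriality of ind-category}, strong tangent morphisms for the $\Ind$-Zariski tangent structures $\Ind(\Tbb_{\Zar{X_i}})$. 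In particular both $\FSch_{/\Xfrak}$ and $\FSch_{/\Yfrak}$ present as pseudolimits of diagrams valued in $\fTan$, and it is with respect to these tangent structures that the statement is to be read; making this identification precise — equivalently, checking that the tangent structure on $\FSch_{/\Xfrak}$ is transported from the pseudolimit one — is the first nontrivial step.

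For each $i$, Proposition \ref{Prop: Functoriality of Scheme Morphisms} gives a strong tangent morphism $(f_i^{\ast}, T_{f_i}) \colon \Sch_{/Y_i} \to \Sch_{/X_i}$, and Theorem \ref{Thm: Functoriality of ind-category} upgrades it to a strong tangent morphism $(\Ind(f_i^{\ast}), \widehat{T_{f_i}}) \colon \FSch_{/Y_i} \to \FSch_{/X_i}$; as in the proof of Proposition \ref{Prop: Pullback of relative formal schmes over scheme base is strong tangent}, $\Ind(f_i^{\ast})$ is naturally isomorphic to the level-wise pullback $\ul{f_i}^{\ast}$ of formal schemes along the scheme map $f_i$. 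The family $(\ul{f_i}^{\ast})_{i \in I}$ is compatible with the transition functors of the two pseudolimit diagrams: for $\varphi \colon i \to i'$ the identity $\widetilde{\varphi}_Y \circ f_i = f_{i'} \circ \widetilde{\varphi}_X$ yields, by pseudofunctoriality of pullback and then of $\Ind$, coherent natural isomorphisms $\widetilde{\varphi}_X^{\ast} \circ \ul{f_{i'}}^{\ast} \cong \ul{f_i}^{\ast} \circ \widetilde{\varphi}_Y^{\ast}$, and these respect the tangent-morphism data because the isomorphisms $T_{f}$ obey the cocycle coherences for composites of pullback squares (established in \cite{DoretteMe}). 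Hence the universal property of the pseudolimit in Proposition \ref{Prop: A pseudolimit thing} produces an induced functor $\FSch_{/\Yfrak} \to \FSch_{/\Xfrak}$ which, under the equivalences above, is isomorphic to $\ul{f}^{\ast}$.

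It remains to see that this induced functor underlies a strong tangent morphism, and I would argue this level-wise: the tangent structure on a pseudolimit of a diagram in $\fTan$ is assembled component-by-component (each structure transformation on the pseudolimit being the unique one whose image under every projection is the corresponding one downstairs), so a functor between two such pseudolimits that is level-wise a strong tangent morphism is itself one, with witnessing natural isomorphism assembled from the $\widehat{T_{f_i}}$. The main obstacle is therefore the $2$-categorical bookkeeping underlying this last point: one must show that the forgetful $2$-functor $\Forget \colon \fTan \to \fCat$ creates (or at least reflects) the pseudolimits occurring in Proposition \ref{Prop: A pseudolimit thing}, so that ``strong tangent morphism'' really is a level-wise condition in this situation. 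Once that coherence is in place, the conjecture follows by combining Proposition \ref{Prop: Functoriality of Scheme Morphisms}, Theorem \ref{Thm: Functoriality of ind-category}, and Proposition \ref{Prop: A pseudolimit thing} exactly as above.
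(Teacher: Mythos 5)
The paper contains no proof of this statement: it is posed as a conjecture, and the discussion immediately preceding it explicitly defers the question to future work while anticipating exactly the route you take, namely a reduction through the pseudolimit description of Proposition \ref{Prop: A pseudolimit thing} combined with Proposition \ref{Prop: Functoriality of Scheme Morphisms} and Theorem \ref{Thm: Functoriality of ind-category}. Judged as an attempt to close that gap, your proposal is a reasonable reduction but not a proof, because the two decisive steps are the ones you yourself flag and then defer. First, the statement is not even well-posed until $\FSch_{/\Xfrak}$ carries a tangent structure: the paper only constructs one for $\FSch_{/S}$ with $S$ an honest scheme (Theorem \ref{Thm: Ind Tangent Category} applied to $\Sch_{/S}$), and nothing in the paper defines a relative tangent structure over a formal base. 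Your plan to transport one from the pseudolimit of the $\fTan$-valued diagram $i \mapsto \FSch_{/X_i}$ requires proving that $\Forget:\fTan \to \fCat$ creates (or at least that $\fTan$ admits, compatibly with $\Forget$) the pseudolimits of Proposition \ref{Prop: A pseudolimit thing} — that is, verifying componentwise all the tangent axioms in the limit category, including the existence and $\Ind(T)$-preservation of tangent pullbacks and the universality-of-the-vertical-lift equalizer, and checking independence of the chosen presentation of $\Xfrak$. That is precisely the ``significantly more complicated'' content the paper postpones; calling it ``the main obstacle'' and assuming it does not discharge it.

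Second, even granting such a tangent structure, invoking the universal property of the pseudolimit requires that the family $\big(\Ind(f_i^{\ast}), \widehat{T_{f_i}}\big)_{i \in I}$ assemble into a pseudonatural transformation of $\fTan$-valued diagrams: the isomorphisms $T_{f}$ must satisfy the cocycle coherences over composite pullback squares and interact correctly with the compositors $\phi$ of the $\Ind$-pseudofunctor, and the resulting $2$-cells must be tangent transformations, not merely natural isomorphisms. You assert this by appeal to \cite{DoretteMe} but do not verify that the coherences needed here are among those established there. Two further identifications are also used without proof: the equivalence $\Ind(\Sch_{/S})_{/X_i} \simeq \Ind(\Sch_{/X_i}) = \FSch_{/X_i}$ (true, by commuting filtered colimits with fibers, but it must additionally be checked to respect the tangent structures in play), and the common-index strictification of $\ul{f}$, whose choice the final answer must be shown not to depend on. None of these steps is implausible, and your outline is likely the right skeleton for a future proof, but as written the conjecture remains open: the argument reduces it to unproven $2$-categorical creation and coherence statements rather than establishing them.
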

\begin{conjecture}
If every morphism appearing in a formal scheme $\Xfrak = (X_i)_{i \in I}$ is a closed immersion then the same is true of $T_{\Xfrak/S}$.
\end{conjecture}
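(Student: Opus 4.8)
The plan is to reduce the conjecture to a single claim about the Zariski tangent functor: \emph{if $f\colon X\to Y$ is a closed immersion in $\Sch_{/S}$ then $T_{f/S}\colon T_{X/S}\to T_{Y/S}$ is again a closed immersion}. Granting this, the conjecture follows formally. Indeed, by the construction of $\Ind(T)$ in Remark~\ref{Remark: Ind functor and morphisms} we have $\Ind(T_{-/S})(\ul{X})=T_{-/S}\circ\ul{X}$, so the transition morphisms of $T_{\Xfrak/S}=(T_{X_i/S})_{i\in I}$ are exactly the maps $T_{f_{ij}/S}$ obtained by applying $T_{-/S}$ to the transition morphisms $f_{ij}\colon X_i\to X_j$ of $\Xfrak$; since the $f_{ij}$ are closed immersions by hypothesis, each $T_{f_{ij}/S}$ is a closed immersion, which is precisely the assertion of the conjecture.

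To prove the claim I would first reduce to the affine case, which is legitimate because being a closed immersion may be checked locally on the target and because everything about the Zariski tangent functor may be checked affine-locally (cf. the footnote preceding Theorem~\ref{Thm: Zariski tangent structure}). So take $S=\Spec A$, $Y=\Spec C$, $X=\Spec B$ with $f$ corresponding to a surjection $\varphi\colon C\twoheadrightarrow B$ with kernel $I$. Then $T_{f/S}$ is the spectrum of the ring homomorphism $\Sym_C(\Omega^1_{C/A})\to\Sym_B(\Omega^1_{B/A})$ sending $c\mapsto\varphi(c)$ and $\mathrm{d}c\mapsto\mathrm{d}\varphi(c)$, and the goal is to show this is surjective. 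I would factor it as
\[
\Sym_C(\Omega^1_{C/A})\longrightarrow \Sym_C(\Omega^1_{C/A})\otimes_C B\cong\Sym_B\!\big(\Omega^1_{C/A}\otimes_C B\big)\longrightarrow \Sym_B(\Omega^1_{B/A}),
\]
where the first arrow is extension of scalars along $\varphi$ (whose spectrum is the base change $T_{Y/S}\times_Y X\to T_{Y/S}$ of the closed immersion $f$, hence a closed immersion) and the second arrow is $\Sym_B$ applied to the canonical $B$-linear map $\Omega^1_{C/A}\otimes_C B\to\Omega^1_{B/A}$. Since $f$ is a closed immersion, $\Omega^1_{B/C}=0$, and the conormal (second fundamental) exact sequence $I/I^2\to\Omega^1_{C/A}\otimes_C B\to\Omega^1_{B/A}\to 0$ shows $\Omega^1_{C/A}\otimes_C B\to\Omega^1_{B/A}$ is surjective; as $\Sym_B$ preserves surjections, $T_{f/S}$ is a composite of two closed immersions and hence a closed immersion. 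If one prefers to avoid the affine reduction, the same factorization runs globally using $T_{X/S}=\underline{\Spec}_X\Sym_{\mathcal O_X}(\Omega^1_{X/S})$, the identification $f^{\ast}T_{Y/S}=X\times_Y T_{Y/S}$, and the surjection of quasi-coherent $\mathcal O_X$-algebras $\Sym_{\mathcal O_X}(f^{\ast}\Omega^1_{Y/S})\twoheadrightarrow\Sym_{\mathcal O_X}(\Omega^1_{X/S})$ coming from $f^{\ast}\Omega^1_{Y/S}\twoheadrightarrow\Omega^1_{X/S}$.

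The main obstacle I expect is conceptual rather than computational: the square relating $T_{f/S}\colon T_{X/S}\to T_{Y/S}$ to $f\colon X\to Y$ is \emph{not} Cartesian, so one cannot simply invoke stability of closed immersions under base change in one step. The two-stage factorization through $f^{\ast}T_{Y/S}$ is exactly what repairs this, and it is at the second stage — the surjectivity of $f^{\ast}\Omega^1_{Y/S}\to\Omega^1_{X/S}$, equivalently $\Omega^1_{X/Y}=0$ — that the closed-immersion hypothesis is genuinely consumed (for a general $f$ this map need not be surjective, and correspondingly $T_{f/S}$ need not be a closed immersion). Everything else — that the factorization recovers $T_{f/S}$, that $\Sym$ commutes with base change and preserves surjections — is routine, and the passage from the statement about a single closed immersion to the statement about the ind-object $\Xfrak$ is immediate from the description of $\Ind(T_{-/S})$ on morphisms.
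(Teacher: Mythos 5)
The paper offers no argument here at all — the statement is left as an open conjecture — so there is nothing to compare against; what matters is whether your argument stands on its own, and it does. Your reduction is the right one: since $T_{\Xfrak/S}=(T_{X_i/S})_{i\in I}$ is just $T_{-/S}\circ\ul{X}$, its transition maps are exactly the $T_{f_{ij}/S}$, so everything rests on the single lemma that $T_{-/S}$ sends closed immersions to closed immersions. Your proof of that lemma is correct: the factorization $T_{X/S}\to X\times_Y T_{Y/S}\to T_{Y/S}$ is precisely the right repair for the failure of the naturality square to be Cartesian, the second arrow is a closed immersion because it is the base change of $f$, and the first is a closed immersion because $\Omega^1_{X/Y}=0$ makes $f^{\ast}\Omega^1_{Y/S}\to\Omega^1_{X/S}$ surjective and $\Sym$ preserves surjections. (One small remark: the closed-immersion hypothesis is consumed at \emph{both} stages, not only the second — the base-change step needs $f$ itself to be a closed immersion, while the conormal step would already go through for any unramified $f$.) In the affine case there is an even shorter route you could note: $\Sym_B(\Omega^1_{B/A})$ is generated as an $A$-algebra by the symbols $b$ and $\mathrm{d}b$ for $b\in B$, and surjectivity of $\varphi\colon C\twoheadrightarrow B$ hits every such generator via $c\mapsto\varphi(c)$, $\mathrm{d}c\mapsto\mathrm{d}\varphi(c)$, so the ring map is surjective on the nose; the sheaf-theoretic version you sketch is then the cleanest way to globalize. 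Modulo writing out the routine verifications you flag (that the factorization recovers $T_f$, base change for $\Sym$), your argument settles the conjecture.
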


\subsection{The Ind-Tangent Category of the CDC of Polynomials}
In this subsection we fix a crig $A$. An important and well-known $A$-linear CDC (cf.\@ \cite[Example 2.7.a]{JSproperties}) is the CDC $\APoly$ of polynomials  with coefficients in $A$. This is the category defined as follows:
\begin{itemize}
	\item Objects: $n \in \N$;
	\item Morphisms: A map $\varphi:n \to m$ is given by an $m$-tuple of polynomials with scalars in $A$ with $n$-variables. That is,
	\[
	\varphi = \big(p_{1}(x_1, \cdots, x_n), \cdots, p_{m}(x_1, \cdots, x_{n})\big), \qquad p_j(x_1, \cdots, x_n) \in A[x_1, \cdots, x_n].
	\]
	\item Composition: The composition of morphisms $\varphi:n \to m$ and $\psi:m \to \ell$, if $\varphi$ is the $m$-tuple $(f_1(x_1, \cdots, x_n), \cdots, f_m(x_1, \cdots, x_n))$ and $\psi$ is the $\ell$-tuple $(g_1(x_1, \cdots, x_m), \cdots, g_{\ell}(x_1, \cdots, x_m))$, then $\psi \circ \varphi$ is defined by
	\[
	\psi \circ \varphi := \bigg(g_1\big(f_1(x_1, \cdots, x_n),\cdots, f_m(x_1,\cdots, x_m)\big), \cdots, g_{\ell}\big(f_1(x_1, \cdots, x_n), \cdots, f_m(x_1, \cdots, x_n)\big)\bigg).
	\]
	\item Identities: The identity map $\id_n:n \to n$ is given by
	\[
	\id_n := (x_1, \cdots, x_n).
	\]
\end{itemize}
In this category the product of $n$ with $m$ is the natural number sum $n + m$ (analogously to how the dimension of affine space satisfies $\Abb_{K}^n \times_{\Spec K} \Abb_{K}^{m} \cong \Abb_{K}^{n+k}$ in the scheme-theoretic world; note here $K$ is a cring). The differential combinator on $\APoly$ takes an $m$-tuple $\varphi:n \to m$ with
\[
\varphi = \big(p_{1}(x_1, \cdots, x_n), \cdots, p_{m}(x_1, \cdots, x_{n})\big)
\]
and sends it to the tuple $D(\varphi):n \times n \to m$ given by the sum of its formal total derivatives:
\[
D(\varphi) := \left( \left(\sum_{i=1}^{n}\frac{\partial\,p_1(x_1, \cdots, x_n)}{\partial\,x_i}\right)y_i, \cdots, \left(\sum_{i=1}^{n}\frac{\partial\,p_m(x_1, \cdots, x_m)}{\partial\,x_i}\right)y_i\right).
\]
By \cite{JSproperties} we know that a morphism $\varphi:n \to m$ is $D$-linear if and only if $\varphi = (p_1, \cdots, p_m)$ is comproised of degree one monomials. In particular, such a map induces an $A$-linear transformation $A^n \to A^m$ and there is an equivalence of categories (cf.\@ \cite{JSproperties}) $\APoly_{\Dlin} \simeq \AModfd$ where $\AModfd$ denotes the category of finite dimensional $A$-modules.

Let us now give a concrete description of $\Ind(\APoly)$. The objects here are, of course, filtered diagrams $I \to \APoly$ and so every object can be represented as a filtered diagram of natural numbers and tuples of polynomials. By Proposition \ref{Prop: Ind of CDC is not a CDC} and the description of the linear maps above, we see that $\Ind(\APoly)$ is not a CDC. However, we will now characterize the differential objects in $\Ind(\APoly)$ and then use this to prove that the category of such objects is equivalent to the category of $A$-modules.

\begin{Theorem}\label{Thm: Diff Obs in APoly are Modules}
The category of differential objects in $\Ind(\APoly)$ together with linear bundle maps between them is equivalent to $\Ind(\AModfd)$. In particular, there is an equivalence of categories of $\mathbf{Diff}(\Ind(\APoly))$ and the category $\AMod$ of $A$-modules.
\end{Theorem}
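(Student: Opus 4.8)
The plan is to establish the equivalence $\mathbf{Diff}(\Ind(\APoly)) \simeq \Ind(\AModfd)$ first, and then deduce the second statement from the standard fact (cf.\@ \cite{KashiwaraSchapira}) that $\Ind(\AModfd) \simeq \AMod$, since every $A$-module is a filtered colimit of its finitely generated (hence finite-dimensional, in the rig sense) submodules and $\AMod$ is already closed under filtered colimits. So the heart of the argument is the first equivalence, and for that I would combine two earlier results: Proposition \ref{Prop: Dlinear is bundle map linear} (a map in an $A$-linear CDC is a linear bundle map iff it is $D$-linear) and Corollary \ref{Cor: Dlin nes} (a functor $F\colon I \to \Cscr$ is a differential bundle in $\Ind(\Cscr)$ iff every transition morphism $F(\varphi)$ is $D$-linear). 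Applied to $\Cscr = \APoly$, these say: the differential objects of $\Ind(\APoly)$ are precisely the filtered diagrams $F\colon I \to \APoly$ that factor through the subcategory $\APoly_{\Dlin}$, and the linear bundle maps between them are precisely the morphisms in $\Ind(\APoly)$ that are degreewise $D$-linear, i.e.\@ the morphisms of $\Ind(\APoly_{\Dlin})$.

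First I would make precise the identification
\[
\mathbf{Diff}(\Ind(\APoly)) \simeq \Ind(\APoly_{\Dlin}).
\]
On objects this is immediate from Corollary \ref{Cor: Dlin nes}: a differential object of $\Ind(\APoly)$ is an $\Ind$-object all of whose transition maps are $D$-linear, which is exactly an object of $\Ind(\APoly_{\Dlin})$ (using that a filtered subcategory of $\APoly$ landing in the $D$-linear maps is a filtered diagram in $\APoly_{\Dlin}$, since $\APoly_{\Dlin}$ is a subcategory and filteredness is an internal property of the index category). On morphisms, a linear bundle map $(\varphi, \id)$ between two such objects must, by Proposition \ref{Prop: Dlinear is bundle map linear} and the computation of hom-sets $\Ind(\Cscr)(\ul X, \ul Y) = \lim_i \colim_j \Cscr(X_i, Y_j)$, be represented by $D$-linear maps in each component; conversely any such compatible system gives a linear bundle map. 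One point to check carefully: a morphism in $\Ind(\APoly)$ is a compatible family of \emph{equivalence classes} of maps $X_i \to Y_j$, so I must verify that "being $D$-linear" is well-defined on these classes and that the filtered (co)limit of the full subcategory inclusion $\APoly_{\Dlin} \hookrightarrow \APoly$ induces a fully faithful functor $\Ind(\APoly_{\Dlin}) \to \Ind(\APoly)$ whose essential image is exactly the differential objects — this is where I expect the main obstacle to lie, since it requires knowing that $\APoly_{\Dlin} \hookrightarrow \APoly$ being fully faithful passes to $\Ind$ (true, e.g.\@ via the presheaf realization $L$ of Proposition \ref{Prop: L functor on ind is fully faithful}) and that the linear-bundle-map condition genuinely cuts out this subcategory rather than something larger.

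Having identified $\mathbf{Diff}(\Ind(\APoly)) \simeq \Ind(\APoly_{\Dlin})$, I would then invoke the equivalence $\APoly_{\Dlin} \simeq \AModfd$ recalled in the text (a $D$-linear tuple $n \to m$ is a tuple of degree-one monomials, hence an $A$-linear map $A^n \to A^m$). Since $\Ind$ is a pseudofunctor on $\fCat$ (and in particular sends equivalences of categories to equivalences — this follows formally, or from Proposition \ref{Prop: Ind functors} together with the $2$-functoriality established in the excerpt), we get
\[
\mathbf{Diff}(\Ind(\APoly)) \simeq \Ind(\APoly_{\Dlin}) \simeq \Ind(\AModfd).
\]
Finally, to pass from $\Ind(\AModfd)$ to $\AMod$: the category $\AMod$ is cocomplete, in particular has filtered colimits, and the inclusion $\AModfd \hookrightarrow \AMod$ is fully faithful with every object of $\AMod$ a filtered colimit of finitely generated submodules, so by the universal property of the $\Ind$-completion the induced functor $\Ind(\AModfd) \to \AMod$ is an equivalence (cf.\@ \cite[Chapter 6]{KashiwaraSchapira}). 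Composing gives $\mathbf{Diff}(\Ind(\APoly)) \simeq \AMod$, as desired. The only genuinely delicate step is the middle one — checking that the differential-object/linear-bundle-map structure on $\Ind(\APoly)$ is precisely what $\Ind$ applied to $\APoly_{\Dlin}$ produces — and I would handle it by working through the presheaf realization functor $L$ as in the proofs of Propositions \ref{Prop: Ind of Tangent pullback is tangent pullback of ind} and \ref{Prop: Universality of ind-vertical lift}, where conditions that hold "locally on each $i$" are transported to $\Ind(\Cscr)$ via conservativity and full faithfulness of $L$.
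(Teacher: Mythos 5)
Your proposal is correct and follows essentially the same route as the paper: Corollary \ref{Cor: Dlin nes} (with Proposition \ref{Prop: Dlinear is bundle map linear}) to identify differential objects and linear bundle maps with the image of $\Ind(\APoly_{\Dlin})$, then the equivalence $\APoly_{\Dlin} \simeq \AModfd$ and the Kashiwara--Schapira result $\Ind(\AModfd) \simeq \AMod$. The only difference is that you spell out the morphism-level verification that the paper leaves implicit, which is a reasonable amount of extra care rather than a different argument.
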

\begin{proof}
By Corollary \ref{Cor: Dlin nes} we find that a differential object in $\Ind(\APoly)$ is necessarily a filtered diagram $F:I \to \APoly$ such that for every morphism $\varphi \in I_1$, $F(\varphi)$ is $D$-linear in $\APoly$. Consequently, $F$ factors as $F:I \to \APoly_{\Dlin} \to \APoly$ and so we induce an equivalence of categories
\[
\mathbf{Diff}(\Ind(\APoly)) \simeq \Ind(\APoly_{\Dlin}) \simeq \Ind(\AModfd).
\]
Now, because the category of $A$-modules is compact (in the sense that every $A$-module is a filtered colimit of its finite dimensional submodules) and cocomplete it follows from \cite[Corollary 6.3.5]{KashiwaraSchapira} that $\Ind(\AModfd) \simeq \AMod$. Chaining the equivalences together gives
\[
\mathbf{Diff}(\Ind(\APoly)) \simeq \Ind(\APoly_{\Dlin}) \simeq \Ind(\AModfd) \simeq \AMod
\]
as was desired.
\end{proof}
\begin{corollary}
The Ind-tangent category $\Ind(\APoly)$ contains a sub-CDC equivalent to the category $\AMod$ of $A$-modules.
\end{corollary}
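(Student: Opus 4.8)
The plan is to take the promised sub-CDC to be $\mathbf{Diff}(\Ind(\APoly))$ of Definition \ref{Defn: Diff}, equipped with the Cartesian differential structure it inherits from the Cartesian tangent category $(\Ind(\APoly),\Ind(\Tbb))$ as its category of differential objects, and then to identify this CDC with $\AMod$ via Theorem \ref{Thm: Diff Obs in APoly are Modules}. So the whole argument is: produce a CDC structure living inside $\Ind(\APoly)$, then transport it across the equivalence already established.

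First I would record that $\AMod$ itself carries a Cartesian differential structure: finite products are finite direct sums, every hom-module $\AMod(M,N)$ is an $A$-module, every morphism is $D$-linear, and the differential combinator is $D(f) := f \circ \pi_1 : M \times M \to N$; axioms CD1--CD7 are then immediate (cf.\ \cite{RickRobinRobertCDC}), so in particular $\AMod = \AMod_{\Dlin}$.

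Next I would unpack that $\mathbf{Diff}(\Ind(\APoly))$ is, by Definition \ref{Defn: Diff}, \emph{literally} a subcategory of $\Ind(\APoly)$: its objects are the differential bundles over $\top_{\Ind(\APoly)}$ and its morphisms the linear bundle maps between them. Differential objects of a Cartesian tangent category are closed under finite products, so these products agree with the products of $\Ind(\APoly)$; and for a differential object $\ul{X}$ one has $\Ind(T)\ul{X} \cong \ul{X}\times\ul{X}$, so $\Ind(T)$ restricts to a differential combinator on $\mathbf{Diff}(\Ind(\APoly))$, given on a linear bundle map $f:\ul{X}\to\ul{Y}$ by
\[
D(f): \ul{X}\times\ul{X} \xrightarrow{\ \cong\ } \Ind(T)\ul{X} \xrightarrow{\ \Ind(T)f\ } \Ind(T)\ul{Y} \xrightarrow{\ \cong\ } \ul{Y}\times\ul{Y} \longrightarrow \ul{Y}.
\]
Since $f$ is $D$-linear this is just $f\circ\pi_1$, which is again a linear bundle map, so the combinator indeed stays inside $\mathbf{Diff}(\Ind(\APoly))$; this is the standard Cartesian differential structure on the differential objects of a tangent category (\cite{GeoffRobinDiffStruct,GeoffRobinBundle}), all of whose data is computed inside $\Ind(\APoly)$ — hence a genuine sub-CDC, which is moreover proper because $\Ind(\APoly)$ is not a CDC by Proposition \ref{Prop: Ind of CDC is not a CDC}.

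Finally, Theorem \ref{Thm: Diff Obs in APoly are Modules} supplies an equivalence $\mathbf{Diff}(\Ind(\APoly)) \simeq \AMod$, and chasing it through the chain $\mathbf{Diff}(\Ind(\APoly)) \simeq \Ind(\APoly_{\Dlin}) \simeq \Ind(\AModfd) \simeq \AMod$ from that proof, one verifies it carries the product structure and the differential combinator above to those of $\AMod$ — both are forced on the nose once the objects are known to be differential objects and every map is known to be $D$-linear — so it is an equivalence of Cartesian differential categories. The only step needing real care rather than routine bookkeeping is precisely this last compatibility check; everything else is transport of structure along an equivalence, so I expect it to be the main (minor) obstacle.
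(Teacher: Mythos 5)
Your proposal is correct and is essentially the argument the paper leaves implicit: the corollary is stated without proof as an immediate consequence of Theorem \ref{Thm: Diff Obs in APoly are Modules} together with the standard fact (from \cite{GeoffRobinDiffStruct,GeoffRobinBundle}) that the differential objects of a Cartesian tangent category with linear bundle maps form a Cartesian differential category, so $\mathbf{Diff}(\Ind(\APoly))$ is the intended sub-CDC and the equivalence with $\AMod$ transports the structure. Your additional checks --- that the combinator restricts to $f \circ \pi_1$ on linear maps and is preserved by the equivalence --- are exactly the routine verifications being elided.
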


As an aside, note that the category $\Ind(\APoly)$ also contains an object representing polynomials in any number of variables. If $I$ is a set, regard $I$ as a discrete category. Then the object $F:I \to \APoly$ given by $F(i) = 1$. Then $F$ represents the coproduct
\[
\bigotimes_{i \in I} A[x] \cong A[x_i:i \in I]
\]
and so we get that $\Ind(\APoly)$ contains polynomial objects with arbitrary numbers of variables.

\subsection{The Ind-Tangent Category of the CDC of Smooth Maps}\label{Subsection: Smooth}
We close this section of the paper by computing some facts about the CDC $\Smooth$ of smooth functions between Euclidean spaces (cf.\@ \cite{JSproperties}). This is a CDC which is important for modeling the theory of smooth functions between manifolds and other differential geometric contexts. This category is defined as follows:
\begin{enumerate}
	\item Objects: Euclidean spaces $\R^n$ for $n \in \N$;
	\item Morphisms: Smooth functions $f:\R^n \to \R^m$;
	\item Composition and Identities: As in $\Set$.
\end{enumerate}
Note that $\Smooth$ is an $\R$-linear category as each hom-set satisfies
\[
\Smooth(\R^n, \R^m) = \mathcal{C}^{\infty}(\R^n,\R^m)
\]
and so is canonically a generically infinite-dimensional $\R$-vector space\footnote{The exception lies with $\R^0$.} with bilinear composition. The differential combinator $D$ on $\Smooth$ is defined by sending a function $f:\R^n \to \R^m$ to its total derivative (viewed as a function $D(f):\R^n \times \R^n \to \R^m$). Explicitly, begin by noting that a smooth function $f:\R^n \to \R^m$ can be seen as an $m$-tuple of smooth functions $f_i:\R^n \to \R$, i.e.,
\[
f = (f_1, \cdots, f_m):\R^n \to \R^m.
\]
Writing $D_i(f)$ for the function $\partial f/\partial x_i$, we then define $D(f):\R^n \times \R^n \to \R^m$ by setting
\[
D(f)(\mathbf{v}, \mathbf{w}) := \left(\sum_{i=1}^{n}D_i(f_1)(\mathbf{v})\mathbf{w}_i, \cdots, \sum_{i=1}^{n}D_i(f_m)(\mathbf{v})\mathbf{w}_i\right).
\]
In this way we find, following \cite{JSproperties}, that a morphism $f:\R^n \to \R^m$ is $D$-linear if and only if $f$ is linear in the usual sense. Similarly to the case for $\APoly_{\Dlin}$, this gives rise to an equivalence of categories $\Smooth_{\Dlin} \simeq \RVectfd$ where $\RVectfd$ is the category of finite dimensional real vector spaces.

We will proceed as in the $\APoly$ case and determine the differential bundles over $\top_{\Ind(\Smooth)}$. Perhaps unsurprisingly, we will show that $\mathbf{Diff}(\Ind(\Smooth)) \simeq \RVect$ before making some closing remarks regarding some of the objects which we can find in $\Ind(\Smooth)$.
\begin{Theorem}\label{Thm: Diff objects in IndSmooth are Real Vecs}
The category of differential objects in $\Ind(\Smooth)$ together with linear bundle maps between them is equivalent to $\Ind(\RVectfd)$. In particular, there is an equivalence of categories between $\mathbf{Diff}(\Ind(\Smooth))$ and the category $\RVect$ of $\R$ vector spaces.
\end{Theorem}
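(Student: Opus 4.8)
The plan is to mirror the argument used for Theorem \ref{Thm: Diff Obs in APoly are Modules} almost verbatim, replacing $\APoly$ by $\Smooth$ and $\AModfd$ by $\RVectfd$. First I would invoke Corollary \ref{Cor: Dlin nes}: a differential object in $\Ind(\Smooth)$ is exactly a filtered diagram $F\colon I \to \Smooth$ for which $F(\varphi)$ is $D$-linear in $\Smooth$ for every $\varphi \in I_1$. Since (by \cite{JSproperties}, as recalled above) a smooth map $\R^n \to \R^m$ is $D$-linear precisely when it is $\R$-linear in the usual sense, every such $F$ factors through the inclusion $\Smooth_{\Dlin} \hookrightarrow \Smooth$, and conversely any filtered diagram valued in $\Smooth_{\Dlin}$ yields a differential object. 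Keeping track of the morphisms — which, by Proposition \ref{Prop: Dlinear is bundle map linear}, are precisely the linear bundle maps, hence componentwise $D$-linear — this identification upgrades to an equivalence of categories $\mathbf{Diff}(\Ind(\Smooth)) \simeq \Ind(\Smooth_{\Dlin})$.

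Next I would use the equivalence $\Smooth_{\Dlin} \simeq \RVectfd$ already noted in the text (the functor sending $\R^n$ to $\R^n$ regarded as a real vector space and a $D$-linear map to the corresponding linear transformation), together with the fact that $\Ind(-)$ is a pseudofunctor (Proposition \ref{Prop: Ind functors}) and hence sends equivalences of categories to equivalences of categories, to conclude $\Ind(\Smooth_{\Dlin}) \simeq \Ind(\RVectfd)$. This already establishes the first assertion of the theorem.

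For the second assertion I would recall that $\RVect$ is a Grothendieck (in particular locally finitely presentable) category whose compact objects are precisely the finite-dimensional vector spaces, so that every real vector space is the filtered colimit of its finite-dimensional subspaces; then \cite[Corollary 6.3.5]{KashiwaraSchapira} gives $\Ind(\RVectfd) \simeq \RVect$. Chaining the equivalences yields
\[
\mathbf{Diff}(\Ind(\Smooth)) \simeq \Ind(\Smooth_{\Dlin}) \simeq \Ind(\RVectfd) \simeq \RVect,
\]
which is the claimed statement.

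The only genuinely delicate point — and the step I expect to be the main obstacle — is the first one: verifying that the passage from "differential object in $\Ind(\Smooth)$ with linear bundle maps" to "filtered diagram in $\Smooth_{\Dlin}$ with $\Ind$-morphisms" loses no information, i.e.\@ that restriction along $\Smooth_{\Dlin} \hookrightarrow \Smooth$ is essentially surjective onto differential objects (this is Corollary \ref{Cor: Dlin nes}) and, crucially, that a linear bundle map between two such diagrams is forced to be componentwise $D$-linear, so that the hom-sets on the two sides agree. Everything after that is a formal consequence of pseudofunctoriality of $\Ind$ and the cited Kashiwara--Schapira result; in particular the identification of the compact objects of $\RVect$ with the finite-dimensional vector spaces is standard and requires no work.
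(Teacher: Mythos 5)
Your proposal is correct and follows essentially the same route as the paper, which simply notes that the argument of Theorem \ref{Thm: Diff Obs in APoly are Modules} carries over mutatis mutandis: Corollary \ref{Cor: Dlin nes} plus $\Smooth_{\Dlin} \simeq \RVectfd$ gives $\mathbf{Diff}(\Ind(\Smooth)) \simeq \Ind(\RVectfd)$, and \cite[Corollary 6.3.5]{KashiwaraSchapira} identifies the latter with $\RVect$. Your extra care about the hom-sets (linear bundle maps being componentwise $D$-linear via Proposition \ref{Prop: Dlinear is bundle map linear}) is just a more explicit rendering of the same step the paper leaves implicit.
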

\begin{proof}
The proof follows mutatis mutandis to that of Theorem \ref{Thm: Diff Obs in APoly are Modules}.
\end{proof}

We conclude by discussing some of the objects which we can find in $\Ind(\Smooth)$. Let $M$ be an arbitrary unbounded infinite-dimensional smooth manifold whose finite dimensional subspaces are Euclidean spaces. Then $M$ represents an object in $\Ind(\Smooth)$ by taking the filtered category $I$ to be the lattice of finite dimensional subspaces and the functor $F:I \to \Smooth$ sends each subspace to its Euclidean representative. Consequently we can find infinite-dimensional locally Euclidean manifolds in $\Ind(\Smooth)$ and so we can use $\Ind(\Smooth)$ as a tangent category for functional analytic settings.

\bibliography{ETCBib}
\bibliographystyle{amsplain}

\end{document}